\documentclass[12pt]{amsart}
\usepackage{amsmath,amssymb,amsthm}
\usepackage{graphicx}
\usepackage{xcolor}
\usepackage{enumitem}
\usepackage{tikz}
\usetikzlibrary{calc,arrows.meta,decorations.text} 
\allowdisplaybreaks[4]
\usepackage{aliascnt}
\usepackage[margin=1.3in]{geometry}
\usepackage[
colorlinks = true,
linkcolor  = blue,
citecolor  = blue,
urlcolor   = blue,
pagebackref = true
]{hyperref}

\newtheorem*{rmks}{Remarks}

\newtheorem{thm}{Theorem}[section]

\newaliascnt{lma}{thm}
\newtheorem{lma}[lma]{Lemma}
\aliascntresetthe{lma}

\newaliascnt{prop}{thm}
\newtheorem{prop}[prop]{Proposition}
\aliascntresetthe{prop}

\newaliascnt{cor}{thm}

\aliascntresetthe{cor}

\newaliascnt{conj}{thm}

\aliascntresetthe{conj}

\theoremstyle{remark}
\newaliascnt{rmk}{thm}
\newtheorem{rmk}[rmk]{Remark}
\aliascntresetthe{rmk}

\theoremstyle{definition}
\newaliascnt{defn}{thm}
\newtheorem{defn}[defn]{Definition}
\aliascntresetthe{defn}

\theoremstyle{plain}

\numberwithin{equation}{section}
\newcommand{\R}{\mathbb{R}}

\begin{document}
\title[Symmetry in a sub-spherical sector]{Symmetry properties for positive solutions of mixed boundary value problems in a sub-spherical sector}
\author{Ruofei Yao}
\address{School of Mathematics, South China University of Technology, Guangzhou, 510641, P.R. China}
\email{yaorf5812@126.com; ruofeiyaopde@gmail.com}
\date{\today}
\begin{abstract}
In this paper, we investigate the symmetry properties of positive solutions $u$ to a semilinear elliptic equation under mixed Dirichlet-Neumann boundary conditions in symmetric domains. First, we establish a maximum principle tailored to mixed-boundary problems in domains of either small volume or narrow width, thereby enabling the application of the moving plane method. Secondly, in contrast to the purely Dirichlet case, a key challenge is to establish the non-vanishing of the tangential derivative of $u$ along the Neumann boundary. To address this, we employ local analysis techniques of angular derivatives, as introduced by Hartman and Wintner [Amer. J. Math., 1953]. Thirdly, we identify the signs of directional derivatives of $u$ along sections of the moving line. Using a planar sub-spherical sector as an example, we illustrate how these new innovative techniques and the moving plane method can be combined to derive symmetry and monotonicity results, particularly when the amplitude is less than or equal to $2\pi/3$. 
\end{abstract}
\subjclass[2020]{35J61, 35B06, 35M12, 35B50}
\keywords{Semilinear elliptic equations; Symmetry; Moving plane method; Monotonicity}

\maketitle




\section{Introduction}

\subsection{Background and motivation}
This article is devoted to the qualitative properties of positive solutions to semilinear elliptic partial differential equations with mixed Dirichlet-Neumann boundary conditions: 
\begin{equation} \label{Yao0102}
\begin{cases}
\Delta {u} + {f}({u}) = 0 & \text{in } {\Omega}, \\
{u} = 0 & \text{on } \Gamma_{D}, \\
\frac{\partial {u}}{\partial \nu} = 0 & \text{on } \Gamma_{N}, 
\end{cases}
\end{equation}
where $\Omega$ is a bounded Lipschitz domain, $\Gamma_{D}$ is a portion of the boundary $\partial \Omega$, $\Gamma_{N} = \partial \Omega \setminus \Gamma_{D}$, and $\nu$ denotes the unit outward normal to $\partial \Omega$. 

Research on the qualitative properties of solutions is a central topic in the theory of partial differential equations. A classical and influential approach is the method of moving planes, introduced by~\cite{Ale56, Ser71} and further developed by Gidas, Ni, and Nirenberg~\cite{GNN79}. In particular, \cite{GNN79} established radial symmetry for positive solutions of semilinear elliptic equations under purely Dirichlet boundary conditions (i.e., $\Gamma_{D}=\partial\Omega$). Later, Berestycki and Nirenberg~\cite{BN91} refined these results using a maximum principle in domains of small volume and introduced the sliding method to obtain monotonicity properties. These developments sparked extensive research on symmetry and monotonicity; see, for instance, \cite{CGS89, CL91, LZ95, BCN97a, BCN97b, FMR16, GM18, EFMS22}. Motivated by these developments, it is natural to ask whether, when both the domain and the boundary decomposition are symmetric, positive solutions of \eqref{Yao0102} inherit the same symmetry, as raised in~\cite{Ni11}. In general, however, such a conclusion fails once Neumann boundary conditions are present; see~\cite{NT91, GG98, dPFW00, GWW00, Lin01} and the references therein. Establishing analogous qualitative properties becomes even more subtle for sign-changing solutions. For instance, the monotonicity of the second Neumann eigenfunction on certain planar domains has been investigated in~\cite{JN00, AB04, Siu15, CWY26, CGY26}. 

Researchers have devoted increasing attention to qualitative properties (in particular, symmetry and monotonicity) for elliptic problems with mixed boundary conditions, where Dirichlet and Neumann conditions coexist on $\partial\Omega$. In particular, for spherical sectors, Berestycki and Pacella~\cite{BP89} established radial symmetry results when the opening angle is at most $\pi$, while Zhu~\cite{Zhu01} extended these results to certain supercritical nonlinearities when the opening angle exceeds $\pi$. Building on~\cite{BP89}, further radial symmetry results for infinite sectorial cones were obtained in~\cite{dCP89, CW92, SGC97}. The author has contributed to this area by establishing symmetry and monotonicity properties of positive solutions to elliptic equations with mixed boundary conditions in various domains; see~\cite{CY18, YCL18, CLY21, YCG21, CWY23, LrY24, MY26}. In recent years, analogous questions have also been investigated for mixed Dirichlet--Neumann eigenfunctions; see, e.g., \cite{AR25, LrY24, Hat24, Hat25a, Hat25b, Hat25d}. For broader discussions of qualitative properties in nonlinear mixed boundary value problems, we refer to~\cite{Ter95, DR04, WH07, DP19, DP20, GG23} and the references therein.

This paper is part of a series of works~\cite{CY18, YCL18, CLY21, YCG21, MY26} investigating the symmetry and monotonicity of solutions to semilinear elliptic equations with mixed boundary conditions, in bounded symmetric domains. In particular, the symmetry result for super-spherical sectors was established in~\cite{YCG21}, while a partial result for sub-spherical sectors was obtained in~\cite{CLY21}. We now extend this investigation by studying the symmetry and monotonicity of solutions to \eqref{Yao0102} in a planar sub-spherical sector, under a broad range of geometric assumptions as described in \eqref{Yao0106}. 

\subsection{Problem setting and main result}
We begin by introducing the notation and the geometric setting. 
For $\alpha \in (0, \pi]$ and $\beta \in (0, 2\pi)$, let $\Sigma = \Sigma_{\alpha, \beta}$ denote a planar domain bounded by an arc $\Gamma_D$ and a portion of the boundary of a sector $\partial \mathcal{C}$, where $\Gamma_{D} = \{(\cos\theta, \sin\theta) \in \R^{2}: |\theta| \leq \alpha/2\}$ is a portion of the unit circle with opening angle $\alpha$, and $\mathcal{C} = \mathcal{C}_{\alpha, \beta}$ denotes the open sector with vertex ${V} = ({a}, 0)$ and opening $\beta$, that is, 
\begin{equation*}
\mathcal{C}_{\alpha, \beta} = \big\{ ({x}_{1}, {x}_{2}) \in \R^{2}: \; {x}_{1} - {a} > |{x}_{2}|\cot\frac{\beta}{2} \big\}. 
\end{equation*}
Here, the constant ${a}$, which depends on $\alpha$ and $\beta$, is given by
\begin{equation} \label{Yao0103b}
{a} = \cos\frac{\alpha}{2} - \sin\frac{\alpha}{2}\cot\frac{\beta}{2} = - \sin\frac{\alpha - \beta }{2} \csc\frac{\beta}{2}. 
\end{equation}
This relation guarantees that both endpoints ${P}_{\pm} = (\cos(\alpha/2), \pm\sin(\alpha/2))$ of the arc $\Gamma_{D}$ lie on the boundary $\partial \mathcal{C}$. Thus, $\Sigma = \Sigma_{\alpha, \beta}$ can be characterized as 
\begin{equation*}
\Sigma_{\alpha, \beta} = \big\{ ({x}_{1}, {x}_{2}) \in \R^{2}: \; {a} + |{x}_{2}|\cot\tfrac{\beta}{2} < {x}_{1} < \textstyle\sqrt{1 - |{x}_{2}|^{2}} \big\}. 
\end{equation*}
For ${a} \in [ - 1, 1)$, it is clear that $\Sigma_{\alpha, \beta}$ is the intersection of the sector $\mathcal{C}_{\alpha, \beta}$ with the unit ball ${B} = \{{x} \in \R^{2}: |{x}| < 1\}$. 
We refer to $\Sigma_{\alpha, \beta}$ as a (standard) spherical sector if ${a} = 0$ (i.e., $\beta = \alpha$); as a sub-spherical sector if ${a} < 0$ (i.e., $\beta < \alpha$); and as a super-spherical sector if ${a} \in (0, 1)$. See \autoref{Fig01Sector}. 

\begin{figure}[htp] \centering 
\begin{tikzpicture}[scale = 2] 
\pgfmathsetmacro\ALPHA{65.04}; \pgfmathsetmacro\radius{1.00}; 
\pgfmathsetmacro\Move{ - 1.8}; \pgfmathsetmacro\BETA{28.29}; 
\pgfmathsetmacro\aaa{\radius*sin(\BETA - \ALPHA)/sin(\BETA)}; 
\pgfmathsetmacro\xA{\radius*cos(\ALPHA)}; \pgfmathsetmacro\yA{\radius*sin(\ALPHA)}; 
\fill[fill = gray, fill opacity = 0.3, draw = black, thick] ({\Move + \xA}, {0 - \yA}) arc ( - \ALPHA : \ALPHA : \radius) -- (\Move + \aaa, 0) -- cycle; 
\draw[thin, dotted] ({\Move + \xA}, {0 + \yA}) -- (\Move + 0, 0) node[left] {\footnotesize ${O}$} -- ({\Move + \xA}, {0 - \yA}); 
\node[left] at (\Move + \aaa, 0) {\footnotesize ${V}$}; 
\node[right] at ({\Move + \radius*cos(\ALPHA/3)}, {\radius*sin(\ALPHA/3)}) {\small $\Gamma_{D}$}; 
\node[below] at ({\Move + (\aaa + \xA)/2}, { - \yA/2}) {\small $\Gamma_{N}$}; 
\node[above] at ({\Move + (\aaa + \xA)/2}, {\yA/2}) {\small $\Gamma_{N}$}; 
\node[left] at (\Move + \xA, \yA) {\footnotesize ${P}_{ + }$}; \node[left] at (\Move + \xA, - \yA) {\footnotesize ${P}_{ - }$}; 
\node[right] at (\Move + \aaa + \radius*0.1, 0) {\small $\beta$}; 
\node[right] at (\Move + \radius*0.04, 0) {\small $\alpha$}; 
\pgfmathsetmacro\Move{0}; \pgfmathsetmacro\BETA{\ALPHA}; 
\pgfmathsetmacro\aaa{\radius*sin(\BETA - \ALPHA)/sin(\BETA)}; 
\pgfmathsetmacro\xA{\radius*cos(\ALPHA)}; \pgfmathsetmacro\yA{\radius*sin(\ALPHA)}; 
\fill[fill = gray, fill opacity = 0.3, draw = black, thick] ({\Move + \xA}, {0 - \yA}) arc ( - \ALPHA : \ALPHA : \radius) -- (\Move + \aaa, 0) -- cycle; 
\node[left] at (\Move + 0, 0) {\footnotesize ${O}({V})$}; 
\pgfmathsetmacro\Move{1.4}; \pgfmathsetmacro\BETA{81.29}; 
\pgfmathsetmacro\aaa{\radius*sin(\BETA - \ALPHA)/sin(\BETA)}; 
\pgfmathsetmacro\xA{\radius*cos(\ALPHA)}; \pgfmathsetmacro\yA{\radius*sin(\ALPHA)}; 
\fill[fill = gray, fill opacity = 0.3, draw = black, thick] ({\Move + \xA}, {0 - \yA}) arc ( - \ALPHA : \ALPHA : \radius) -- (\Move + \aaa, 0) -- cycle; 
\draw[thin, dotted] ({\Move + \xA}, {0 + \yA}) -- (\Move + 0, 0) node[left] {\footnotesize ${O}$} -- ({\Move + \xA}, {0 - \yA}); 
\node[left] at (\Move + \aaa, 0) {\footnotesize ${V}$}; 
\end{tikzpicture}
\caption[Sub-spherical sector]{Sub-spherical sector (left), spherical sector (middle) and super-spherical sector (right).}
\label{Fig01Sector}
\end{figure}
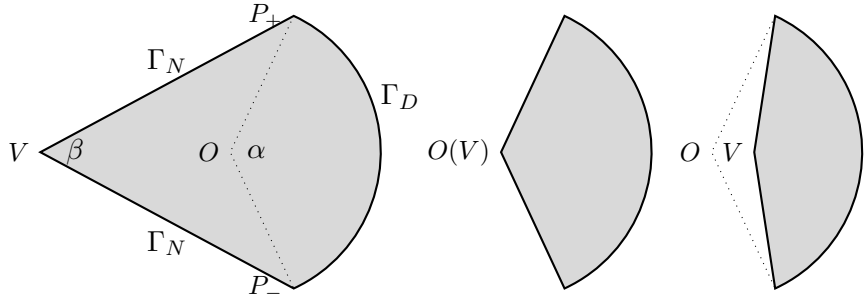

We consider the following equation with mixed boundary conditions: 
\begin{equation} \label{Yao0105}
\begin{cases}
\Delta {u} + {f}({u}) = 0 & \text{in } \Sigma, \\
{u} > 0 & \text{in } \Sigma, \\
\frac{\partial {u}}{\partial \nu} = 0 & \text{on } \Gamma_{N}, \\
{u} = 0 & \text{on } \Gamma_{D}
\end{cases}
\end{equation}
where $\nu$ denotes the unit outward normal to $\partial \Sigma$. Here, we set $\Gamma_{D} = \{(\cos\theta, \sin\theta) \in \R^{2}: |\theta| \leq \alpha/2\}$ as the spherical part, and $\Gamma_{N} = \partial \Sigma \setminus \Gamma_{D}$ as the union of two straight segments. The nonlinearity ${f}$ is assumed to be locally Lipschitz continuous (${f} \in \mathrm{Lip}_{\mathrm{loc}}(\R)$), so that the method of moving planes applies. A solution ${u}$ to \eqref{Yao0105} is always understood in the classical sense, namely, ${u} \in {C}^{2}(\Sigma) \cap {C}(\overline{\Sigma}) \cap {C}^{1}(\Sigma \cup \Gamma_{N})$. By standard elliptic theory, it follows that ${u}$ is also ${C}^{2}$ at the smooth boundary points. Therefore, throughout this paper, we always have ${u} \in {C}(\overline{\Sigma}) \cap {C}^{2}(\overline{\Sigma} \setminus \{{V}, {P}_{\pm}\})$. 

The main result of this paper is as follows: 

\begin{thm} \label{thm101main}
Let ${f} \in \mathrm{Lip}_{\mathrm{loc}}(\R)$, and let $\Sigma$, $\Gamma_{D}$, and $\Gamma_{N}$ be as defined above. Suppose that $0 < \beta < \alpha \leq \pi$ and
\begin{equation} \label{Yao0106}
\beta \leq \frac{2\pi}{3}. 
\end{equation}
Then any solution ${u}$ of \eqref{Yao0105} satisfies the following properties: 
\begin{enumerate}[label = {\rm(\roman*)}, start = 1]
\item \label{Yaoit11a}
${u}$ is symmetric with respect to the line ${x}_{2} = 0$; 
\item \label{Yaoit11b}
${u}$ is monotone in ${x}_{2}$ in a half-domain, that is, ${x}_{2} {u}_{{x}_{2}} < 0$ for ${x} \in \Sigma$ with ${x}_{2} \neq 0$; 
\item \label{Yaoit11c}
${u}$ is monotone in ${x}_{1}$, that is, ${u}_{{x}_{1}} < 0$ in $\Sigma$. 
\end{enumerate}
\end{thm}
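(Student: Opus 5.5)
We would run the moving plane method in the $x_2$-direction, reflecting across lines $T_\lambda=\{x_2=\lambda\}$. For $\lambda\in(0,\sin(\alpha/2))$ let $\Sigma_\lambda=\{x\in\Sigma: x_2>\lambda\}$. Let $x^\lambda=(x_1,2\lambda-x_2)$ denote the reflection of $x$, and set $w_\lambda(x)=u(x^\lambda)-u(x)$ on $\Sigma_\lambda$. The reflected cap $\Sigma_\lambda'$ stays inside $\Sigma$ as long as the reflection of the upper Neumann segment does not leave the sector. This is where the geometry enters, and we expect the restriction $\beta\le 2\pi/3$ to be used here together with the arc $\Gamma_D$.

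The function $w_\lambda$ satisfies a linear equation $\Delta w_\lambda+c_\lambda w_\lambda=0$ with $c_\lambda\in L^\infty$, and $w_\lambda\ge0$ on $T_\lambda\cap\Sigma$. On the part of $\partial\Sigma_\lambda$ lying on $\Gamma_D$ we have $w_\lambda=u(x^\lambda)\ge0$. The new feature is the upper Neumann segment, where we only know $\partial_\nu u=0$ and nothing a priori about $\partial_\nu u(x^\lambda)$.

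\textbf{Start of the moving plane.} For $\lambda$ close to $\sin(\alpha/2)$, $\Sigma_\lambda$ is a thin region near $P_+$ with mixed boundary. We apply the maximum principle for mixed problems in narrow or small-volume domains to get $w_\lambda\ge0$. To handle the Neumann portion, we would instead reflect in a direction orthogonal to the Neumann edge near $P_+$, or use the fact that for $\beta\le 2\pi/3$ the reflected Neumann segment lies on the appropriate side. On that segment, $\partial_\nu w_\lambda$ should then acquire a sign once we know the sign of the tangential derivative of $u$ along $\Gamma_N$.

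\textbf{Main obstacle.} The hardest step is to show that the tangential derivative of $u$ along $\Gamma_N$ does not vanish, and in particular has the right sign near the corners $V$ and $P_\pm$. For this we would use the Hartman--Wintner local expansion at the corners. Near $V$, the solution behaves like $c\,r^{\pi/\beta}\cos(\pi\theta/\beta)$ plus lower-order terms. Near $P_\pm$, where the Dirichlet and Neumann sides meet at an angle, it behaves like $c\,r^{\pi/(2\gamma)}\sin(\cdots)$. From these expansions we read off the signs of the angular derivatives. Combined with the Hopf lemma on the smooth part of $\Gamma_N$ (applied to $\partial_\tau u$, which solves the linearized equation with homogeneous Neumann data along the flat segment), this yields $\partial_\tau u\ne0$ on the open segments.

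\textbf{Continuation and conclusion.} We decrease $\lambda$ to $\lambda_0=\inf\{\lambda: w_\mu\ge0 \text{ in } \Sigma_\mu \text{ for all } \mu\ge\lambda\}$. If $\lambda_0>0$, the strong maximum principle gives $w_{\lambda_0}>0$, and the Hopf lemma on $T_{\lambda_0}$ gives positivity there. The corner analysis excludes degeneracy at the corners, and the small-volume maximum principle then lets us push slightly past $\lambda_0$, a contradiction. Hence $\lambda_0=0$, so $u(x_1,-x_2)\ge u(x_1,x_2)$ for $x_2>0$; applying the same argument from below gives (i). Part (ii) follows from $w_\lambda>0$ together with Hopf on $T_\lambda$.

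For (iii), we would move lines in the $x_1$-direction, or rotate lines through directions between $e_1$ and the normals of $\Gamma_N$. At each stage we identify the signs of directional derivatives of $u$ along the sections of the moving line, using the monotonicity from (ii) and $\beta\le2\pi/3$. Finally, $u_{x_1}<0$ is obtained as a limit of $\partial_e u<0$ over these directions, with the strong maximum principle applied to $u_{x_1}$.
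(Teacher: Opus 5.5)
There is a genuine gap, and it sits exactly at the step you call ``the new feature'' and then leave open. Take $x\in\Gamma_N^+\cap\partial\Sigma_\lambda$, with $\nu=(-\sin\frac{\beta}{2},\cos\frac{\beta}{2})$ and $\partial_\nu u(x)=0$. Then
\[
\partial_\nu w_\lambda(x)=-\Bigl(u_{x_1}\sin\tfrac{\beta}{2}+u_{x_2}\cos\tfrac{\beta}{2}\Bigr)(x^\lambda),
\]
where $x^\lambda$ is an \emph{interior} point of $\Sigma$ on the reflected segment, which is parallel to $\Gamma_N^-$.

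So the Neumann-type condition your maximum principle needs is monotonicity of $u$ in the direction normal to $\Gamma_N^-$ along an interior segment. This is the mirror image of \eqref{Yao0315a} at $\vartheta=\beta$. No information about the tangential derivative of $u$ on $\Gamma_N$ yields it.

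Producing it requires moving tilted lines, whose own reflected Neumann pieces require yet other directions. Worse, consider tilted lines that separate $O$ from the far mixed point, i.e.\ $\vartheta_A(\mu)<\vartheta<\vartheta_B(\mu)$. This range contains $\vartheta=\beta$ as soon as the foot point is at distance $\mu<\lambda_\sharp$ from $V$, which is the case for your horizontal lines near the axis. For such lines the reflected arc leaves $B$, and the full-line monotonicity is in fact false near $\Gamma_D$ by \autoref{lma206GNN}.

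Switching to lines orthogonal to $\Gamma_N$ near $P_+$ does not help. It only moves the difficulty to the reflected opposite edge $\Gamma^{2B}$, and once continued, the resulting monotonicity \eqref{Yao0502b} fails whenever $\pi/2\notin J_\lambda$.

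This is why the paper proceeds as follows:
\begin{itemize}
\item it runs the whole continuous family $T_{\lambda,\vartheta}$, $\vartheta\in J_\lambda$ (\autoref{lma301});
\item it starts this family for large $\lambda$ by a bootstrap (\autoref{prop305});
\item for small $\lambda$ it replaces Neumann-type data on $\Gamma^{2A}$ by Dirichlet-type comparison chains $u(\bar x)<u(\bar z)<u(\bar y)$, built from $w^{\lambda,\vartheta_A(\lambda)}<0$ (\autoref{lma308}).
\end{itemize}
Only after that does it move your horizontal lines $T_{\lambda,\beta/2}$.

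Note also that the reflected horizontal cap always lies in $\Sigma$: $O$ is on $\{x_2=0\}$, and the reflected edge is parallel to the opposite one. So $\beta\le 2\pi/3$ would never be used in your scheme. In the paper it enters through \autoref{lma208}\ref{Yaoit23c}, namely $2\vartheta_B-\vartheta_A<\pi$. That inequality is what supplies the boundary condition on $\Gamma^{2A}$ for the moving line through $P_+$ (\autoref{lma401}, \autoref{thm403}).

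Your argument that $\partial_\tau u\neq 0$ on the open segments is circular. It is true that $\partial_\tau u$ has zero Neumann data on a flat piece of $\Gamma_N$. But the Hopf lemma applies only if $\partial_\tau u$ already has a sign in a one-sided neighbourhood of the point, and that is exactly the monotonicity to be proved. The corner expansions at $V$ and $P_\pm$ only control small neighbourhoods of the corners.

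The paper uses a different device. The rotation function $v_\lambda$ about a point $x_\lambda\in\Gamma_N^-$ has \emph{Dirichlet} data on $\Gamma_N^-$. So at the non-corner point $x_\lambda$ it expands as $C_0r^l\sin(l\vartheta)+O(r^{l+1})$ (\autoref{prop207}). The tilted moving lines at the same $\lambda$ give positivity of $v_\lambda$ on the sectors $(0,\vartheta^A]\cup[\vartheta_B,\vartheta_C]$ with $\vartheta_C>\pi/2$. Together with $\vartheta_B<2\vartheta^A$, this forces $l=1$ (\autoref{lma307}).

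Thus the non-vanishing of the tangential derivative is an output of the continuity argument in $\lambda$, not an a priori input. Without the tilted family, neither it nor the boundary condition above is available, and the same objection applies to your sketch of (iii).
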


\begin{rmk}
In the previous paper~\cite{CLY21}, two results were established: 
\begin{enumerate}[label = \rm(\arabic*)]
\item
\autoref{thm101main} holds under the assumptions $0 < \beta < \alpha \leq \pi$ and
\begin{equation} \label{Yao0107}
\alpha + \beta \leq \pi \quad \quad \text{and }\quad \quad \alpha/3 \leq \beta \leq \pi/3. 
\end{equation}
\item
The properties \ref{Yaoit11a}, \ref{Yaoit11b}, and \ref{Yaoit11c} are equivalent without the condition \eqref{Yao0107}. 
\end{enumerate}
\end{rmk}

\subsection{Ideas of the proof}
We briefly explain the main ideas of the proof. 
The mixed Dirichlet--Neumann boundary conditions force the expected symmetry to be an even symmetry (rather than a spherical symmetry), since the center of the ball does not coincide with the vertex of the sector $\mathcal{C}$. 
The proof of \autoref{thm101main} relies heavily on the method of moving planes~\cite{GNN79, BP89, BN91}. As the moving plane method crucially depends on the monotonicity (such as property \ref{Yaoit11c}) of the positive solution, the condition $\alpha \leq \pi$ in \autoref{thm101main} is required. Condition \eqref{Yao0106} is a technical assumption; for general $\beta$ (without \eqref{Yao0106}), it remains unknown whether the symmetry result holds. 

On the Dirichlet boundary, the direction of the gradient of a positive solution ${u}$ is known, as it points along the interior normal to the Dirichlet boundary. However, on the Neumann boundary, the direction of $\nabla {u}$ is unknown. Thus, a primary challenge is to establish the desired monotonicity property on $\Gamma_{N}$, namely, 
\begin{equation} \label{Yao0109a}
(\cos\tfrac{\beta}{2}, - \sin\tfrac{\beta}{2}) \cdot \nabla {u} < 0 \quad \text{on } \Gamma_{N}^{ - }, \quad
(\cos\tfrac{\beta}{2}, \sin\tfrac{\beta}{2}) \cdot \nabla {u} < 0 \quad \text{on } \Gamma_{N}^{ + }, 
\end{equation}
where $\Gamma_{N}^{ - } = \{{x} \in \Gamma_{N}: {x}_{2} < 0\}$ and $\Gamma_{N}^{ + } = \{{x} \in \Gamma_{N}: {x}_{2} > 0\}$ are the open line segments. 
In the previous paper~\cite{CLY21}, the key steps involved demonstrating that
\begin{equation} \label{Yao0109b}
{w}^{\lambda, \pi/2} < 0 \quad \text{in } {D}_{\lambda, \pi/2}
\end{equation}
and
\begin{equation} \label{Yao0109c}
{u}_{{x}_{1}}\cos\frac{\beta}{2} - {u}_{{x}_{2}}\sin\frac{\beta}{2} < 0 \quad \text{in } {T}_{\lambda, \pi/2} \cap \Sigma
\end{equation}
hold for all $\lambda > 0$, where ${T}_{\lambda, \pi/2}$ is the moving line perpendicular to $\Gamma_{N}^{ - }$, and ${w}^{\lambda, \pi/2}$ is the difference function between ${u}$ and its reflection; see \eqref{Yao0304a} and \eqref{Yao0305b} for details. However, in the case $\alpha + \beta > \pi$, it is known from the monotonicity near the Dirichlet boundary $\Gamma_{D}$ (see, e.g., \autoref{lma206GNN}) that \eqref{Yao0109c}, and hence \eqref{Yao0109b}, fails for some positive $\lambda$. On the other hand, to apply the moving plane method, it is also necessary to obtain a priori information about the boundary condition for the difference function ${w}^{\lambda, \vartheta}$ on the portion of the boundary corresponding to $\Gamma_{N}$. Therefore, to establish the required symmetry and monotonicity properties, new contributions are needed to verify both \eqref{Yao0109a} and the appropriate boundary conditions for the difference function ${w}^{\lambda, \vartheta}$. 

In~\cite{BP89, CLY21, YCG21}, the sign of the tangential derivative of ${u}$ along the Neumann boundary is obtained by proving \eqref{Yao0109b} and applying Serrin's boundary lemma. In this paper, to derive \eqref{Yao0109a}, we employ local analytical techniques for certain angular derivatives, developed by Hartman and Wintner~\cite{HW53}. The details of this approach are presented in \autoref{lma307} below. By leveraging the strict monotonicity along the Neumann boundary, we are able to establish the symmetry result when $\pi/2 \leq \beta \leq 2\pi/3$, as detailed in \autoref{Sec04Large}. 

The condition $\beta \leq \pi/3$ plays a crucial role in~\cite{CLY21}, and proving \eqref{Yao0109b} for both large and small values of $\lambda$ is relatively straightforward. However, bridging the gap and addressing the intermediate range of $\lambda$ requires a careful use of a continuous family of moving planes with varying directions; see \autoref{Sec05Small}. 

For the case $\pi/3 < \beta < \pi/2$, the main difficulty lies in establishing monotonicity on the moving line passing through the upper mixed boundary point. To overcome this, we extend the solution to a double domain via mirror reflection along the lower Neumann boundary, and utilize a monotonicity property on a portion of the moving line (see \autoref{lma606}). 

\subsection{Organization of the paper}
We conclude this introduction by describing the organization of the paper.
In \autoref{Sec02Pre}, we present some preliminaries, including the Sobolev inequality in infinite sectors, the maximum principle for mixed boundary value problems in domains of small volume, the structure of nodal lines, and certain geometric angle inequalities. In \autoref{Sec03MMP}, we define the notions of moving lines and moving domains, and show that the moving plane method can be carried out up to a maximal position, namely for all $\lambda \geq \lambda_{\sharp}$. The detailed proofs of \autoref{thm101main} are then divided according to the range of $\beta$ and presented in the following sections: \autoref{Sec04Large} deals with the case $\beta \in [\pi/2, 2\pi/3]$; \autoref{Sec05Small} covers the case $\beta \in (0, \pi/3]$; and finally, the proof for $\beta \in (\pi/3, \pi/2)$ is given in \autoref{Sec05Mid}. 


\section{Some preliminaries} \label{Sec02Pre}

\subsection{The maximum principle with mixed boundary conditions}

The method of moving planes often relies on a clever use of the maximum principle, a fundamental tool in the study of elliptic and parabolic partial differential equations~\cite{PW67, GT83}. In the present context, we establish a new version of the maximum principle specifically tailored for mixed boundary problems in sectorial-like domains. These novel maximum principles significantly enhance the efficacy of the moving plane method for this problem. 

Consider the linear equation with mixed boundary conditions: 
\begin{equation}
\begin{cases}
\mathcal{L}[{u}] = \Delta {u} + {c}({x}){u} = {f} & \text{in } \Omega, \\
\mathcal{B}[{u}] = {u} = {g} & \text{on } \Gamma_{0}, \\
\mathcal{B}[{u}] = \nabla {u} \cdot \gamma + \beta {u} = {h} & \text{on } \Gamma_{1}, 
\end{cases}
\end{equation}
where $\Omega \subset \R^{n}$ is a bounded Lipschitz domain whose boundary is partitioned into two disjoint subsets $\Gamma_{0}$ and $\Gamma_{1}$. We assume that the vector-valued function $\gamma$ satisfies $|\gamma| = 1$ and $\gamma \cdot \nu > 0$ on $\Gamma_{1}$, where $\nu$ is the outward unit normal vector. Furthermore, $\beta \geq 0$ on $\Gamma_{1}$ and ${c}$ is bounded in $\Omega$: 
\begin{equation*}
|{c}({x})| < {c}_{0}, \quad {x} \in \Omega \quad \text{for some } {c}_{0} \in \R^{ + }. 
\end{equation*}
As usual, we say that the maximum principle holds for $(\mathcal{L}, \mathcal{B})$ in $\Omega$ if
\begin{equation*}
\text{${f} \leq 0$ in $\Omega$, ${g} \geq 0$ on $\Gamma_{0}$, ${h} \geq 0$ on $\Gamma_{1}$}
\end{equation*}
implies that ${u} \geq 0$ in $\Omega$. 
When $\Gamma_{1}$ is empty, this reduces to the usual maximum principle~\cite{BN91}. Several well-known sufficient conditions for the maximum principle include~\cite{PW67, GT83, BN91, BNV94, HL11}: 
\begin{enumerate}[label = {\rm(\arabic*)}]
\item
\textbf{Nonpositive Coefficient: }
${c} \leq 0$ in $\Omega$. 
\item
\textbf{Existence of a Barrier Function: }
There exists a positive continuous function ${g}$ on $\overline{\Omega}$ such that ${g} \in W_{loc}^{2, \infty}(\Omega)$ and $\mathcal{L}[{g}] \leq 0$ in $\Omega$. 
\item
\textbf{Narrow Domains: }
$\Omega$ lies in a narrow band, $\Omega \subset \{0 < ({x} - {x}_{0}) \cdot \mathbf{e} < \eta\}$ for some ${x}_{0} \in \R^{n}$, $|\mathbf{e}| = 1$, where $\eta > 0$ is a constant depending only on ${c}_{0}$. 
\item
\textbf{Small Volume: }
The measure $|\Omega|$ is less than $\eta$, where $\eta > 0$ depends only on ${c}_{0}$. 
\item
\textbf{Positive Principal Eigenvalue: } $\lambda_{1}(\mathcal{L}, \Omega) > 0$; this is a necessary and sufficient condition. Here, the first (principal) eigenvalue $\lambda_{1}(\mathcal{L}, \Omega)$ is defined by
\begin{equation}
\lambda_{1}(\mathcal{L}, \Omega) = \sup \left\{ \lambda \in \R: \; \exists \; \phi > 0 \text{ in } \Omega \text{ satisfying } (\mathcal{L} + \lambda)\phi \leq 0 \text{ in } \Omega \right\}
\end{equation}
where $\phi \in W_{loc}^{2, n}(\Omega) \cap {C}(\overline{\Omega})$. 
\end{enumerate}
The maximum principle can be generalized to unbounded domains and more general operators; see~\cite{BR15, CLL17, DSV17, CL18, WC20, Nor21, DQ23} and the references therein. 

When $\Gamma_{1}$ is non-empty, Zhu~\cite{Zhu01} established the maximum principle for a narrow annulus, and Damascelli and Pacella~\cite{DP19} employed a different type of maximum principle in small volume domains to address nonlinear boundary problems. The second and fourth sufficient conditions were extended in~\cite{YCL18} to mixed boundary problems, while further sufficient conditions for narrow domains were given in~\cite{CLY21, YCG21, LrY24, MY26}. Since the mixed boundary problem depends heavily on the geometric shape of the oblique derivative boundary $\Gamma_{1}$, finding a good sufficient condition analogous to the narrow or small volume domain cases remains an open problem. 

\begin{lma} \label{lma201MP}
Let $\Omega \subset \R^{n}$ be a bounded Lipschitz domain with boundary partitioned as $\partial \Omega = \Gamma_{0} \cup \Gamma_{1}$, where $\Gamma_{0}$ is a relatively closed subset. Suppose that ${w}$ satisfies 
\begin{equation} \label{Yao0203w}
\begin{cases}
\Delta {w} + {c}({x}) {w} \geq 0 & \text{in } \Omega, \\
{w} \leq 0 & \text{on } \Gamma_{0}, \\
\frac{\partial {w}}{\partial \nu} \leq 0 & \text{on } \Gamma_{1}, 
\end{cases}
\end{equation}
where $\nu$ is the outward unit normal vector on $\Gamma_{1}$, and 
\begin{equation} \label{Yao0203c}
{c} \in {L}^{\infty}(\Omega) \quad \text{and} \quad \|{c}^{ + }\|_{{L}^{\infty}(\Omega)} < {c}_{0} \quad \text{for some } {c}_{0} \in \R^{ + }. 
\end{equation}
If $\Omega$ and $\Gamma_{1}$ satisfy the following conditions: 
\begin{enumerate}[label = \rm(\arabic*)]
\item
$\Omega \cup \Gamma_{1} \subset \{{x} \in \R^{n}: \; 0 < {x}_{1}\}$; 
\item
$\nu \cdot {e}_{1} \geq 0$ on $\Gamma_{1}$, where ${e}_{1} = (1, 0, \ldots, 0)$ is the unit vector in $\R^{n}$; 
\item
$\Omega \subset \{{x}: \; {x} \cdot {e}_{1} < \eta\}$ where $\eta = \pi/(2\sqrt{{c}_{0}})$. 
\end{enumerate}
Then ${w} \leq 0$ in $\Omega$. 
\end{lma}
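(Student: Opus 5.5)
The plan is a barrier argument: divide $w$ by a positive function that depends only on $x_1$ and whose normal derivative on $\Gamma_{1}$ has the right sign.

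\textbf{The barrier.} Set
\[
g(x) = \cos\bigl(\sqrt{c_0}\,x_1\bigr).
\]
By condition (3), $0<\sqrt{c_0}\,x_1<\pi/2$ on $\Omega$. Hence $g$ is bounded below by a positive constant on $\overline{\Omega}$: it stays close to $\cos(\pi/2-\epsilon)$ at worst, because $\overline\Omega$ is compact and lies in $\{x_1\le \eta'\}$ with $\eta'<\eta$. To get such an $\eta'$ I may need to shrink slightly, i.e.\ replace $c_0$ by a slightly larger $\tilde c_0$ with $\|c^+\|_\infty<c_0<\tilde c_0$. It is cleaner to note that $\|c^+\|_\infty<c_0$ is strict, pick $c_1\in(\|c^+\|_\infty,c_0)$, and use $g=\cos(\sqrt{c_1}\,x_1)$. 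Since $\sqrt{c_1}\,\eta<\pi/2$, $g$ is positive on the closure.

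We have
\[
\Delta g + c g = (c - c_1)g \le (c^+ - c_1) g < 0 .
\]
Moreover $\partial_\nu g = -\sqrt{c_1}\,\sin(\sqrt{c_1}x_1)\,(\nu\cdot e_1)\le 0$ on $\Gamma_1$, using conditions (1) and (2): $x_1\ge 0$ on $\overline{\Gamma_1}$, so the sine is nonnegative there.

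\textbf{The quotient equation.} Put $v=w/g$. A standard computation gives
\[
\Delta v + \frac{2\nabla g\cdot\nabla v}{g} + \frac{\Delta g + c g}{g}\,v \ge 0 \quad\text{in }\Omega,
\]
where the zeroth-order coefficient $(\Delta g+cg)/g<0$. On $\Gamma_0$ we have $v\le 0$. On $\Gamma_1$,
\[
\partial_\nu v = \frac{\partial_\nu w}{g} - \frac{w\,\partial_\nu g}{g^2}.
\]

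\textbf{The contradiction.} Suppose $\sup v>0$; the supremum is attained at some $x_0\in\overline\Omega$.
\begin{itemize}
\item If $x_0\in\Omega$, then at $x_0$ we have $\nabla v=0$ and $\Delta v\le 0$, while the zeroth-order term is strictly negative. This contradicts the differential inequality. If $w$ is merely $W^{2,n}_{loc}$, the Bony maximum principle does the same job.
\item $x_0\in\Gamma_0$ is excluded, since $v\le 0$ there.
\item If $x_0\in\Gamma_1$, then $w(x_0)>0$ and $\partial_\nu g\le 0$, so $\partial_\nu v\le 0$ at $x_0$. I then want the Hopf lemma to say that $\partial_\nu v>0$ at a strict boundary maximum, or that $v$ is constant.
\end{itemize}

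\textbf{Main obstacle.} Hopf requires an interior sphere condition at $x_0$, and $\Gamma_1$ is only Lipschitz. So the boundary case is the delicate step. I would handle it in one of two ways:
\begin{itemize}
\item Interpret the Neumann inequality weakly and run an energy argument. Test the $v$-equation, written in divergence form $\operatorname{div}(g^2\nabla v)+g(\Delta g+cg)v\ge0$, against $(v-k)^+$ for $0\le k<\sup v$. The boundary term is $\int_{\Gamma_1} g^2\,\partial_\nu v\,(v-k)^+\le0$. This gives $\int g^2|\nabla (v-k)^+|^2\le \int g(\Delta g+cg)v(v-k)^+\le 0$, so $(v-k)^+$ is constant. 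Since it vanishes near $\Gamma_0$, or the zeroth-order term is strictly negative, this forces $(v-k)^+\equiv0$. This avoids Hopf entirely, and is the route I would take.
\item Assume enough boundary regularity at $\Gamma_1$ points for Hopf to apply.
\end{itemize}
Either way we conclude $v\le0$, hence $w\le 0$.
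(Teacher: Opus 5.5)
\textbf{There is a genuine gap: the sign of $\partial_\nu g$ on $\Gamma_1$.} It breaks exactly the step you treated as routine. With $v=w/g$ you have $\partial_\nu v=\partial_\nu w/g-w\,\partial_\nu g/g^{2}$. At a point of $\Gamma_1$ where $w>0$, the first term is $\le 0$. But your barrier has $\partial_\nu g\le 0$, so the second term $-w\,\partial_\nu g/g^{2}$ is $\ge 0$. Hence ``$\partial_\nu v\le 0$ at $x_0$'' does not follow (the sign goes the other way), and Hopf yields no contradiction. Your energy variant fails for the same reason: the integrand $g^{2}\partial_\nu v\,(v-k)^{+}=(g\,\partial_\nu w-w\,\partial_\nu g)(v-k)^{+}$ has no sign.

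This cannot be patched while keeping a barrier of this type. A positive strict supersolution whose normal derivative is $\le 0$ on the Neumann part does not imply the mixed maximum principle. Take, in one dimension, $\Omega=(0,L)$, $\Gamma_0=\{0\}$, $\Gamma_1=\{L\}$, $c\equiv k^{2}$ with $\pi/2<kL<\pi$. Then $w=\sin(kx)$ satisfies $w''+k^{2}w=0$, $w(0)=0$, $w'(L)=k\cos(kL)<0$, yet $w>0$. Meanwhile $g=\cos\bigl(k'(x-L/2)\bigr)$ with $k<k'<\pi/L$ is positive on $[0,L]$, with $g''+k^{2}g<0$ and $g'(L)<0$. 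So the barrier must satisfy $\partial_\nu g\ge 0$ on $\Gamma_1$.

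\textbf{The fix.} The paper's choice $g=\sin\bigl(\pi x_1/(2\eta)\bigr)=\sin(\sqrt{c_0}\,x_1)$ achieves this. It gives $\Delta g+cg=(c-c_0)g<0$, and $g>0$ on $\Omega\cup\Gamma_1$ by (1) and (3). On $\Gamma_1$, $\partial_\nu g=\sqrt{c_0}\cos(\sqrt{c_0}\,x_1)\,(\nu\cdot e_1)\ge 0$ by (2) and (3). So hypothesis (3) is there to keep the \emph{derivative} of the barrier nonnegative, and (1) keeps the barrier itself positive on $\Gamma_1$. The paper then invokes the generalized maximum principle of \cite{YCL18} (Lemmas 6 and 7), which tolerates $g$ vanishing on $\{x_1=0\}$.

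If you want to keep your self-contained quotient argument, combine your $c_1$ trick with a shift. Take $c_1\in(\|c^{+}\|_{L^\infty(\Omega)},c_0)$ and $\epsilon>0$ with $\sqrt{c_1}(\eta+\epsilon)\le\pi/2$, and set $g=\sin\bigl(\sqrt{c_1}(x_1+\epsilon)\bigr)$. Then:
\begin{itemize}
\item $g\ge\sin(\sqrt{c_1}\,\epsilon)>0$ on $\overline{\Omega}$;
\item $\Delta g+cg\le(\|c^{+}\|_{L^\infty(\Omega)}-c_1)g<0$;
\item $\partial_\nu g\ge 0$ on $\Gamma_1$.
\end{itemize}
Consequently $\partial_\nu v\le 0$ wherever $w\ge 0$ on $\Gamma_1$, and both your Hopf argument and your energy argument (with $k=0$) then go through as written.
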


\begin{proof}
We construct a positive upper solution associated to \eqref{Yao0203w}: 
\begin{equation*}
{g}({x}) = \sin \frac{\pi {x}_{1}}{2\eta}. 
\end{equation*}
Then ${g}$ is positive in $\overline{\Omega} \setminus \{{x}_{1} = 0\}$ and satisfies
\begin{equation*}
\begin{cases}
\Delta {g} + {c}({x}) {g} < 0 & \text{in } \Omega, \\
\nabla {g} \cdot \nu \geq 0 & \text{on } \Gamma_{1}. 
\end{cases}
\end{equation*}
Therefore, by Lemma 6 and Lemma 7 in~\cite{YCL18}, we conclude that ${w}$ is nonpositive in $\Omega$. 
\end{proof}

\begin{rmk}
If the double domain $\tilde{\Omega}$, obtained by mirroring $\Omega$ along the hyperplane $\{{x}_{1} = 0\}$, is convex, then $\Omega$ satisfies $\nu \cdot {e}_{1} \geq 0$ on $\Gamma_{1}$. This condition is both natural and valid when applying the moving plane method, with $\Omega$ as the moving domain. 
\end{rmk}

Now, we turn to the maximum principle in sectorial-like domains. 
Let $\mathcal{A}_{\beta} \subset \R^{n}$ be an (infinite) sector with amplitude $\beta \in (0, 2\pi)$, 
\begin{equation*}
\mathcal{A}_{\beta} = \{{x} \in \R^{n}: \; \theta_{1}, \ldots, \theta_{n - 2} \in (0, \pi), \; \theta_{n - 1} \in (0, \beta)\}. 
\end{equation*}
Here $({r}, \theta_{1}, \ldots, \theta_{n - 1})$ denotes the usual polar coordinates in $\R^{n}$. 

\begin{lma} \label{lma203MP}
Suppose that the domain $\Omega$ is contained in an infinite sector $\mathcal{A}_{\beta}$ for some $\beta \in (0, 2\pi)$, with $\Gamma_{1} \subset \partial \Omega \cap \partial \mathcal{A}_{\beta}$ and $\Gamma_{0} = \partial \Omega \setminus \Gamma_{1}$. Suppose that ${c}$ satisfies \eqref{Yao0203c} and ${w}$ satisfies \eqref{Yao0203w}. If $\Omega$ is bounded and satisfies
\begin{equation}
\Omega \subset \{{x} \in \R^{n}: \; {x}_{n - 1}^{2} + {x}_{n}^{2} < \eta^{2}\}
\end{equation}
where $\eta = {j}_{0}/\sqrt{{c}_{0}}$ and ${j}_{0}$ is the first zero of the Bessel function of the first kind ${J}_{0}$, then
${w} \leq 0$ in $\Omega$. 
\end{lma}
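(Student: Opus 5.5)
Following the proof of \autoref{lma201MP}, the plan is to construct a positive barrier adapted to the sector. I would take the radial Bessel function in the $(x_{n-1},x_n)$-variables,
\begin{equation*}
g(x) = J_{0}\Big(\frac{j_{0}\,\rho}{\eta'}\Big), \qquad \rho = \sqrt{x_{n-1}^{2}+x_{n}^{2}},
\end{equation*}
where $\eta' > \eta$ is chosen so close to $\eta$ that $\Omega$, which is bounded and lies in $\{\rho<\eta\}$, still has $\rho \leq \eta_1 < \eta'$ on $\overline{\Omega}$ for some $\eta_1$. Then $g \ge \delta > 0$ on $\overline{\Omega}$, since $J_0>0$ on $[0,j_0)$. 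The function $g$ is smooth, because $J_0$ is even and analytic. It also satisfies
\begin{equation*}
\Delta g = -\frac{j_0^2}{\eta'^2}\, g .
\end{equation*}
Hence
\begin{equation*}
\Delta g + c g \le \Big(c^+ - \frac{j_0^2}{\eta'^2}\Big) g .
\end{equation*}
Because $\|c^+\|_\infty < c_0 = j_0^2/\eta^2$, this expression is $<0$ once $\eta'$ is close enough to $\eta$. Alternatively, one can keep $\eta'=\eta$ and use the strict inequality $\|c^+\|_\infty<c_0$ together with a compactness argument to get $g\ge\delta$.

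The key boundary check is on $\Gamma_1 \subset \partial\mathcal{A}_\beta$. Away from the edge, the boundary of $\mathcal{A}_\beta$ consists of pieces of the two half-hyperplanes $\theta_{n-1}=0$ and $\theta_{n-1}=\beta$. These contain the $(n-2)$-dimensional axis $\{x_{n-1}=x_n=0\}$ and are radial in the $(x_{n-1},x_n)$-plane. Their normal $\nu$ therefore lies in that plane and is perpendicular to the radial vector $(x_{n-1},x_n)/\rho$. Since $\nabla g$ is a multiple of that radial vector, $\nabla g\cdot\nu = 0$ on $\Gamma_1$. This holds in any dimension and for any $\beta\in(0,2\pi)$, with no convexity needed, and it is the reason a Bessel barrier in the angular plane is used. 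If the paper's $\mathcal{A}_\beta$ is instead the wedge $\{\theta_{n-1}\in(0,\beta)\}$ with the other angles free, the same computation applies. Points of $\Gamma_1$ on the edge $\rho=0$ form a set of zero capacity relative to the argument, where $\nabla g = 0$ anyway.

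With $g>0$ on $\overline{\Omega}$, $\Delta g + cg<0$ in $\Omega$, and $\partial_\nu g \ge 0$ on $\Gamma_1$, I would invoke Lemma 6 and Lemma 7 of \cite{YCL18}, exactly as in \autoref{lma201MP}, to conclude $w\le 0$. In case a self-contained argument is wanted, set $v = w/g$. Then $v$ satisfies an equation
\begin{equation*}
\Delta v + 2\,\frac{\nabla g}{g}\cdot\nabla v + \frac{\Delta g + cg}{g}\, v \ge 0
\end{equation*}
with negative zeroth-order coefficient, together with $v\le 0$ on $\Gamma_0$ and
\begin{equation*}
\partial_\nu v = \frac{\partial_\nu w}{g} - \frac{w\,\partial_\nu g}{g^2} \le 0
\end{equation*}
at boundary points where $w>0$. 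A positive maximum of $v$ in $\Omega$ is excluded by the equation. A positive maximum on the smooth part of $\Gamma_1$ is excluded by the Hopf lemma, once we note that $\partial_\nu v \le 0$ there is incompatible with it.

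The main obstacle is the non-smooth part of $\Gamma_1$: the corner and edge points where $\Gamma_1$ meets $\Gamma_0$, or the axis $\rho=0$, where the Hopf lemma is unavailable. Here I would rely on the weak (variational) form used in \cite{YCL18}. Test the inequality for $w$ against $(w/g)^+ g$, or equivalently carry out the energy argument with $(w-\epsilon g)^+$. The Neumann inequality on the Lipschitz boundary enters only through the boundary integral, which has the right sign because $\partial_\nu w\le 0$ and $\partial_\nu g = 0$. The corners have measure zero and so do not matter. The resulting Picone-type identity
\begin{equation*}
\int_\Omega g^2 |\nabla (w/g)^+|^2 \le \int_\Omega (c g+\Delta g)\, g\, \big((w/g)^+\big)^2 \le 0
\end{equation*}
forces $(w/g)^+\equiv 0$.
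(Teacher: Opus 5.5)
Your proposal is correct and follows the paper's proof. Both use the barrier $J_0(j_0\rho/\eta)$ in the $(x_{n-1},x_n)$-plane, observe that $\partial_\nu g=0$ on the flat faces of $\mathcal{A}_\beta$ because those faces contain the axis $\{\rho=0\}$, and then apply the generalized maximum principle (the paper cites \cite{PW67} and \cite{YCL18}, while you supply the Picone computation with test function $(w/g)^+g=w^+$).

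Your enlargement $\eta'>\eta$ fixes a loose point in the paper, whose claim $g>0$ on $\overline{\Omega}$ fails if $\overline{\Omega}$ touches $\{\rho=\eta\}$. For the same reason, your aside about keeping $\eta'=\eta$ and using compactness does not work: compactness of $\overline{\Omega}\subset\{\rho\le\eta\}$ gives no positive lower bound for $g$.
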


\begin{proof}
Let
\begin{equation*}
{g}({x}) = {J}_{0}\big(\frac{{j}_{0}}{\eta} {\varrho}\big), \quad \varrho = \sqrt{{x}_{n - 1}^{2} + {x}_{n}^{2}}. 
\end{equation*}
Then
\begin{equation*}
\begin{cases}
\Delta {g} + {c}({x}) {g} = \big({c} - \big(\frac{{j}_{0}}{\eta}\big)^{2}\big) {g} < 0 & \text{in } \overline{\Omega}, \\
{g} > 0 & \text{in } \overline{\Omega}, \\
\partial_{\nu} {g} = 0 & \text{on } \Gamma_{1}. 
\end{cases}
\end{equation*}
Thus, ${g}$ is a strict upper solution corresponding to \eqref{Yao0203w}. We conclude the proof by the generalized maximum principle~\cite{PW67}; see also \cite[Lemma 2]{YCL18}. 
\end{proof}

In 1991, Berestycki and Nirenberg~\cite{BN91} demonstrated that the maximum principle with Dirichlet boundary conditions holds for bounded domains with small volume, specifically when $\operatorname{meas}(\Omega) < \delta$, where $\delta$ depends on the dimension ${n}$, the diameter of $\Omega$, and the coefficients of the elliptic operator. This result is proved using the fundamental inequality of Alexandroff, Bakelman, and Pucci; see \cite[Proposition 1.1]{BN91}. Later, Berestycki, Nirenberg, and Varadhan improved this result by showing that the refined maximum principle holds if $\operatorname{meas}(\Omega) < \delta$, with $\delta$ independent of the diameter of $\Omega$; see \cite[Theorem 2.6]{BNV94}. 

Before extending these results to mixed boundary problems in sector-like domains, we recall some well-known inequalities in sector domains (see any book dealing with Sobolev inequalities, e.g.,~\cite{GT83, AF03}). 

\begin{lma} \label{lma204Sob}
Let ${n} \geq 2$ and $1 \leq {p} < {n}$. Let $\mathcal{A}_{\beta}$ be a sector with amplitude $\beta \in (0, 2\pi)$. There exists a constant ${C}({n}, {p}) > 0$ such that
\begin{equation}
\bigg( \int_{ \mathcal{A}_{\beta} } |{v}|^{\frac{{n}{p}}{{n} - {p}}} d{x} \bigg)^{\frac{{n} - {p}}{{n}{p}}}
\leq {C}({n}, {p}) \beta^{ - \frac{1}{{n}} } 
\bigg( \int_{ \mathcal{A}_{\beta} } | \nabla {v} |^{ {p} } d{x} \bigg)^{ \frac{1}{ {p} } }
\end{equation}
for ${v} \in W^{1, 2}( \mathcal{A}_{\beta} )$. 
\end{lma}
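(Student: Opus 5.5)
The plan is to prove the case $p = 1$ first, with the sharp dependence on $\beta$, and then obtain general $1 \leq p < n$ by the usual power trick. Note that $\mathcal{A}_{\beta}$ is a cone with vertex at the origin, invariant under dilations, and that the inequality is invariant under $v \mapsto v(t\,\cdot)$. So any constant produced by a scale-invariant argument will depend only on $n$, $p$ and $\beta$. By density, it suffices to take $v$ Lipschitz with bounded support in $\overline{\mathcal{A}_{\beta}}$, not necessarily vanishing on $\partial \mathcal{A}_{\beta}$.

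\emph{Step 1 ($p=1$, $\beta \leq \pi$).} In this range $\mathcal{A}_{\beta}$ is a convex cone. I would invoke the relative isoperimetric inequality in convex cones (Lions--Pacella):
\begin{equation*}
P(E; \mathcal{A}_{\beta}) \geq n\, |\mathcal{A}_{\beta} \cap B_{1}|^{1/n} |E|^{(n-1)/n} \quad \text{for } E \subset \mathcal{A}_{\beta},
\end{equation*}
where $P(E;\mathcal{A}_{\beta})$ is the perimeter of $E$ relative to the open set $\mathcal{A}_{\beta}$. The volume $|\mathcal{A}_{\beta} \cap B_{1}|$ is a dimensional constant times $\beta$, since the angular measure is linear in the last angle $\theta_{n-1}$. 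Applying the coarea formula and the layer-cake representation to $|v|$, exactly as in the Federer--Fleming/Maz'ya derivation of the Gagliardo--Nirenberg inequality, gives
\begin{equation*}
\|v\|_{L^{n/(n-1)}(\mathcal{A}_{\beta})} \leq C(n)\, \beta^{-1/n} \|\nabla v\|_{L^{1}(\mathcal{A}_{\beta})}.
\end{equation*}
Here the traces on $\partial\mathcal{A}_{\beta}$ do not enter, because only the relative perimeter appears.

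\emph{Step 2 ($p=1$, $\pi < \beta < 2\pi$).} Here $\beta^{-1/n}$ is comparable to $1$, so any $\beta$-independent bound suffices, but the cone is no longer convex. I would use a bounded extension operator $E : W^{1,1}(\mathcal{A}_{\beta}) \to W^{1,1}(\mathbb{R}^{n})$. For the planar wedge in the $(x_{n-1},x_{n})$ variables one can build $E$ from reflections across the two faces together with a cutoff in the angular variable that is supported away from the faces. Since the cutoff is homogeneous of degree $0$, its gradient is of size $1/\varrho$, with $\varrho = \sqrt{x_{n-1}^{2}+x_{n}^{2}}$. This is the main obstacle, because $1/\varrho$ is not controlled in $L^{1}$ by $\nabla v$ alone. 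I would handle it either by appealing to Stein's extension theorem for Lipschitz graphs, whose norm is dilation invariant for a cone and hence uniform in the scale, or by noting that $\mathcal{A}_{\beta}$ with $\beta \in (\pi, 2\pi)$ is the union of two convex cones of opening $\beta/2$ overlapping in a sector of fixed angle. A second alternative is to split $v$ with a cutoff in the single Cartesian variable normal to the bisector of the overlap, rather than in the angle. That cutoff is not scale invariant, so I would combine it with a Hardy-type inequality to absorb the error, or I would fall back on the extension argument. Applying the Euclidean Gagliardo--Nirenberg inequality to $Ev$ then finishes Step 2.

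\emph{Step 3 (general $p$).} Apply the $p=1$ inequality to $|v|^{\gamma}$ with $\gamma = p(n-1)/(n-p)$. Then $|\nabla |v|^{\gamma}| \leq \gamma |v|^{\gamma-1}|\nabla v|$, and Hölder's inequality with exponents $p$ and $p' = p/(p-1)$ gives
\begin{equation*}
\|v\|_{L^{np/(n-p)}}^{\gamma} \leq C\gamma\,\beta^{-1/n}\, \|v\|_{L^{np/(n-p)}}^{\gamma-1} \|\nabla v\|_{L^{p}}.
\end{equation*}
To justify this, note that $(\gamma-1)p' = np/(n-p)$. Dividing through, which is legitimate after first truncating $v$ so that all norms are finite, yields the claim with $C(n,p) = C(n)\gamma$. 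A final density argument extends the inequality from compactly supported Lipschitz functions to the stated Sobolev class.
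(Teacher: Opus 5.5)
Your Steps 1 and 3 are sound, and for $\beta\le\pi$ they take a different route from the paper. The Lions--Pacella relative isoperimetric inequality, combined with $|\mathcal{A}_{\beta}\cap B_{1}|=\frac{\beta}{2\pi}|B_{1}|$ and the coarea formula, yields the sharp factor $\beta^{-1/n}$ in one stroke. The paper never uses an isoperimetric inequality. It starts from the whole-space constant $C_{2\pi}$ and obtains $\beta^{-1/n}$ by even reflection across a face and dyadic iteration: $C_{\pi}\le 2^{1/n}C_{2\pi}$, $C_{\beta_{1}}\le 2^{1/n}C_{2\beta_{1}}$, and $C_{\beta_{1}}\le 2^{1/p}C_{\beta_{2}}$ for $\beta_{1}<\beta_{2}\le 2\beta_{1}$. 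That argument needs only the Euclidean Sobolev inequality, while yours is more direct but rests on a deeper theorem.

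The gap is Step 2. You correctly say that for $\beta\in(\pi,2\pi)$ you need a bound independent of $\beta$, but none of your proposed tools gives one.
\begin{itemize}
\item Stein's extension theorem gives an operator norm depending on the Lipschitz constant of $\partial\mathcal{A}_{\beta}$ as a graph. Over the hyperplane orthogonal to the bisector this constant is $|\cot(\beta/2)|$, and no choice of direction keeps it bounded as $\beta\to2\pi$, since the complement is a wedge of angle $2\pi-\beta\to0$. Dilation invariance gives uniformity in scale, not in $\beta$.
\item The covering by two convex cones ``of opening $\beta/2$ overlapping in a sector of fixed angle'' does not exist. Two cones of opening $\beta/2$ can only tile $\mathcal{A}_{\beta}$ with disjoint interiors. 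Two overlapping convex sub-wedges, each of opening at most $\pi$, overlap in an angle at most $2\pi-\beta\to0$, so any cutoff adapted to the overlap degenerates.
\item The Hardy-type absorption you mention is never specified.
\end{itemize}

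The missing observation is that no cutoff or extension is needed. Your Step 1 inequality holds without any boundary condition on the faces. So restrict $v$ to $A_{1}=\{0<\theta_{n-1}<\beta/2\}$ and $A_{2}=\{\beta/2<\theta_{n-1}<\beta\}$. Each is congruent to the convex cone $\mathcal{A}_{\beta/2}$, with $\beta/2<\pi$, and $v|_{A_{j}}\in W^{1,p}(A_{j})$. Write $C_{\beta}$ for the best constant on $\mathcal{A}_{\beta}$ and $q=np/(n-p)\ge p$. Then
\[
\|v\|_{L^{q}(\mathcal{A}_{\beta})}^{q}=\sum_{j=1}^{2}\|v\|_{L^{q}(A_{j})}^{q}\le C_{\beta/2}^{q}\sum_{j=1}^{2}\|\nabla v\|_{L^{p}(A_{j})}^{q}\le C_{\beta/2}^{q}\Big(\sum_{j=1}^{2}\|\nabla v\|_{L^{p}(A_{j})}^{p}\Big)^{q/p}.
\]
This gives $C_{\beta}\le C_{\beta/2}\le 2^{1/n}C(n,p)\,\beta^{-1/n}$ uniformly in $\beta\in(\pi,2\pi)$, which is exactly the paper's step $C_{2\beta}\le C_{\beta}$.
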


\begin{proof}
Let ${q} = {n}{p}/({n} - {p})$. It is well-known that the constant
\begin{equation*}
{C}_{2\pi} = \sup\left\{ \frac{ \| {v} \|_{{L}^{ {q} }( \R^{n} )} }
{ \| \nabla {v} \|_{{L}^{ {p} }( \R^{n} )} }: \; {v} \in W^{1, {p}}( \R^{n} ), \; {v} \neq 0 \right\}
\end{equation*}
is positive and finite, and ${C}_{2\pi}$ depends only on ${n}$ and ${p}$. For more details, see \cite[Theorem 4.31]{AF03} or \cite[Theorem 7.10]{GT83}. For $\beta \in (0, 2\pi)$, define
\begin{equation} \label{Yao0205a}
{C}_{\beta} = \sup\bigg\{ \frac{ \| {v} \|_{{L}^{ {q} }( \mathcal{A}_{\beta} )} }
{ \| \nabla {v} \|_{{L}^{ {p} }( \mathcal{A}_{\beta} )} }: \; {v} \in W^{1, {p}}( \mathcal{A}_{\beta} ), \; {v} \neq 0 \bigg\}. 
\end{equation}
From Sobolev inequality in~\cite{AF03, GT83}, ${C}_{\beta}$ is also positive and finite for $\beta \in (0, 2\pi)$. Now, we illustrate the relationship between the constant ${C}_{\beta}$ and $\beta$. 

Let $\beta \in (0, \pi)$. Set ${A}_{j} = \{ {x} \in \mathcal{A}_{2\beta}: \; ({j} - 1)\beta < \theta_{n - 1} < {j}\beta \}$ for ${j} = 1, 2$. For ${v} \in W^{1, 2}( \mathcal{A}_{2\beta} )$, we have
\begin{equation*}
\begin{aligned}
\| {v} \|_{ {L}^{ {q} }( \mathcal{A}_{2\beta} ) }
& = \bigg( \sum_{ {j} = 1 }^{ 2 } \| {v} \|_{ {L}^{ {q} }( {A}_{j} ) }^{ {q} } \bigg)^{ 1/{q} }
\leq \bigg( \sum_{ {j} = 1 }^{ 2 } \big( {C}_{\beta} \| \nabla {v} \|_{ {L}^{ {p} }( {A}_{j} ) } \big)^{ {q} } \bigg)^{ 1/{q} }, \\
\| \nabla {v} \|_{ {L}^{ {p} }( \mathcal{A}_{2\beta} ) }
& = \bigg( \sum_{ {j} = 1 }^{ 2 } \| \nabla {v} \|_{ {L}^{ {p} }( {A}_{j} ) }^{ {p} } \bigg)^{ 1/{p} }
\geq \bigg( \sum_{ {j} = 1 }^{ 2 } \| \nabla {v} \|_{ {L}^{ {p} }( {A}_{j} ) }^{ {q} } \bigg)^{ 1/{q} }, 
\end{aligned}
\end{equation*}
which implies that
\begin{equation} \label{Yao0206a}
{C}_{2\beta} \leq {C}_{\beta} \quad \text{for } \beta \in (0, \pi). 
\end{equation}
Let $0 < \beta_{1} < \beta_{2} \leq 2\beta_{1} < 2\pi$ and ${v} \in W^{1, {p}}( \mathcal{A}_{\beta_{1}} )$. By reflecting with respect to the flat boundary $\partial \mathcal{A}_{\beta_{1}}$, we obtain a new function $\tilde{v}$ in $\mathcal{A}_{2\beta_{1}}$, with $\tilde{v} \in W^{1, {p}}( \mathcal{A}_{2\beta_{1}} )$. By direct calculation, one has 
\begin{gather*}
\| {v} \|_{ {L}^{ {q} }( \mathcal{A}_{\beta_{1}} ) }
\leq
\| \tilde{v} \|_{ {L}^{ {q} }( \mathcal{A}_{\beta_{2}} ) }
\leq
{C}_{\beta_{2}} \| \nabla \tilde{v} \|_{ {L}^{ {p} }( \mathcal{A}_{\beta_{2}} ) }, \\
\| \nabla {v} \|_{ {L}^{ {p} }( \mathcal{A}_{\beta_{1}} ) }
\geq
2^{ - \frac{1}{ {p} } } \| \nabla \tilde{v} \|_{ {L}^{ {p} }( \mathcal{A}_{2\beta_{1}} ) }
\geq
2^{ - \frac{1}{ {p} } } \| \nabla \tilde{v} \|_{ {L}^{ {p} }( \mathcal{A}_{\beta_{2}} ) }, 
\end{gather*}
and
\begin{equation*}
\| {v} \|_{ {L}^{ {q} }( \mathcal{A}_{\beta_{1}} ) }
 = 2^{ - \frac{1}{ {q} } } \| \tilde{v} \|_{ {L}^{ {q} }( \mathcal{A}_{2\beta_{1}} ) }
\leq 2^{ - \frac{1}{ {q} } } {C}_{2\beta_{1}} \| \nabla \tilde{v} \|_{ {L}^{ {p} }( \mathcal{A}_{2\beta_{1}} ) }
 = 2^{ \frac{1}{ {p} } - \frac{1}{ {q} } } {C}_{2\beta_{1}} \| \nabla {v} \|_{ {L}^{ {p} }( \mathcal{A}_{\beta_{1}} ) }. 
\end{equation*}
It follows that
\begin{equation*}
\begin{aligned}
\| {v} \|_{ {L}^{ {q} }( \mathcal{A}_{\beta_{1}} ) }
&\leq 2^{ \frac{1}{ {p} } } {C}_{\beta_{2}} \| \nabla {v} \|_{ {L}^{ {p} }( \mathcal{A}_{\beta_{1}} ) }, \\
\| {v} \|_{ {L}^{ {q} }( \mathcal{A}_{\beta_{1}} ) }
&\leq 2^{ \frac{1}{ {n} } } {C}_{2\beta_{1}} \| \nabla {v} \|_{ {L}^{ {p} }( \mathcal{A}_{\beta_{1}} ) }, 
\end{aligned}
\end{equation*}
and hence
\begin{equation} \label{Yao0206b}
{C}_{\beta_{1}} \leq 2^{ \frac{1}{ {p} } } {C}_{\beta_{2}}, \quad
{C}_{\beta_{1}} \leq 2^{ \frac{1}{ {n} } } {C}_{2\beta_{1}}
\quad \text{for } 0 < \beta_{1} < \beta_{2} \leq 2\beta_{1} < 2\pi. 
\end{equation}
In the case $\beta = \pi$, any function ${u}$ on the half-space $\mathcal{A}_{\pi}$ can be reflected to the whole space $\R^{n}$, yielding
\begin{equation*}
{C}_{\pi} \leq 2^{ \frac{1}{ {n} } } {C}_{2\pi}. 
\end{equation*}

Now for $\beta \leq \pi$, there exists a nonnegative integer ${k}$ such that $2^{ - {k} - 1 }\pi < \beta \leq 2^{ - {k} }\pi$. From \eqref{Yao0206b}, we deduce that
\begin{equation*}
{C}_{\beta} \leq 2^{ \frac{1}{ {p} } } {C}_{ 2^{ - {k} }\pi }
\leq 2^{ \frac{1}{ {p} } } \big( 2^{ \frac{1}{ {n} } } \big)^{ {k} } {C}_{\pi }
\leq 2^{ \frac{1}{ {p} } } \big( \frac{ \pi }{ \beta } \big)^{ \frac{1}{ {n} } } {C}_{\pi }
 = 2^{ \frac{1}{ {p} } } \pi^{ \frac{1}{ {n} } } \beta^{ - \frac{1}{ {n} } } {C}_{\pi }. 
\end{equation*}
Combining this with \eqref{Yao0206a}, we get for $\beta \in (\pi, 2\pi)$, 
\begin{equation*}
{C}_{\beta} \leq {C}_{\beta/2} \leq 2^{ \frac{1}{ {p} } + \frac{1}{ {n} } } \pi^{ \frac{1}{ {n} } } \beta^{ - \frac{1}{ {n} } } {C}_{\pi }. 
\end{equation*}
Thus, 
${C}_{\beta} \leq {C}({n}, {p}) \beta^{ - \frac{1}{ {n} } }$ with ${C}({n}, {p}) = 2^{ \frac{1}{ {p} } + \frac{2}{ {n} } } \pi^{ \frac{1}{ {n} } } {C}_{2\pi }$. 
\end{proof}

By considering a radially symmetric test function ${v} \in {C}_{c}^{1}(\R^{n})$, the quantity ${C}_{\beta} \beta^{1/{n}}$ is bounded from below by a constant independent of $\beta$. As a result, the quantity ${C}_{\beta} \beta^{1/{n}}$ is uniformly bounded above and below by two positive constants. 

\begin{lma}[Maximum principle with small volume] \label{lma205MP}
Suppose that the domain $\Omega$ is contained in a sector $\mathcal{A}_{\beta}$ for some $\beta \in (0, 2\pi)$, 
$\Gamma_{1} \subset \partial \Omega \cap \partial \mathcal{A}_{\beta}$ and $\Gamma_{0} = \partial \Omega \setminus \Gamma_{1}$. Suppose that ${c}$ satisfies \eqref{Yao0203c}. Assume that ${w} \in W^{1, 2}(\Omega)$ satisfies \eqref{Yao0203w}, i.e., ${w}^{ + } \in H_{0}^{1}(\Omega \cup \Gamma_{1})$ and for all ${\phi} \in H_{0}^{1}(\Omega \cup \Gamma_{1})$, ${\phi} \geq 0$, there holds: 
\begin{equation} \label{Yao0208}
 \int_{\Omega} \nabla {w} \nabla {\phi}\, d{x} - \int_{\Omega} {c} {w} {\phi}\, d{x} \leq 0. 
\end{equation}
Then there exists a small constant $\eta > 0$, depending only on the ${L}^{\infty}$ bound of ${c}^{ + }$ and the dimension ${n}$, such that if
\begin{equation} \label{Yao0209}
\operatorname{meas}(\Omega) < \eta \beta, 
\end{equation}
then ${w} \leq 0$ in $\Omega$. 
\end{lma}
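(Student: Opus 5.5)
We test the weak inequality \eqref{Yao0208} against $\phi = w^{+}$ and combine the Sobolev inequality in sectors (\autoref{lma204Sob}) with H\"older's inequality. The point is that the volume smallness condition \eqref{Yao0209} carries the factor $\beta$, which exactly compensates the $\beta^{-1/n}$ growth of the sector Sobolev constant.

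\textbf{Step 1 (energy inequality).} By hypothesis $w^{+} \in H_{0}^{1}(\Omega \cup \Gamma_{1})$, so $\phi = w^{+}$ is admissible. Since $\nabla w \cdot \nabla w^{+} = |\nabla w^{+}|^{2}$ and $c\, w\, w^{+} = c\,(w^{+})^{2} \le c^{+}(w^{+})^{2}$, we obtain
\begin{equation*}
\int_{\Omega} |\nabla w^{+}|^{2}\, dx \leq \int_{\Omega} c^{+} (w^{+})^{2}\, dx \leq c_{0} \int_{\Omega} (w^{+})^{2}\, dx.
\end{equation*}

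\textbf{Step 2 (extension to the sector).} Extend $w^{+}$ by zero to $\mathcal{A}_{\beta} \setminus \Omega$ and call the result $v$. The function $w^{+}$ vanishes (in the trace sense) on $\Gamma_{0} = \partial\Omega \setminus \Gamma_{1}$, and $\Gamma_{1} \subset \partial \mathcal{A}_{\beta}$. Hence the zero extension belongs to $W^{1,p}(\mathcal{A}_{\beta})$ for $p\le 2$ (as $\Omega$ is bounded), with $\nabla v = \chi_{\Omega} \nabla w^{+}$. This is where the definition of $H_{0}^{1}(\Omega \cup \Gamma_{1})$ enters: it is the closure of functions smooth up to $\Gamma_1$ and vanishing near $\Gamma_{0}$, whose zero extensions are clearly in $W^{1,p}(\mathcal{A}_\beta)$. \autoref{lma204Sob} is stated for all of $W^{1,p}(\mathcal{A}_{\beta})$; no vanishing on $\partial\mathcal{A}_{\beta}$ is required, which is essential since $v \neq 0$ on $\Gamma_1$ in general.

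\textbf{Step 3 (Sobolev and H\"older).} The dimension determines the exponent.

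For $n \geq 3$, take $p = 2$ and $q = 2n/(n-2)$. H\"older gives $\|v\|_{L^{2}} \leq |\Omega|^{1/n} \|v\|_{L^{q}}$, so
\begin{equation*}
\|w^{+}\|_{L^{2}(\Omega)} \leq |\Omega|^{1/n} C(n,2) \beta^{-1/n} \|\nabla w^{+}\|_{L^{2}(\Omega)} \leq \Big(\frac{|\Omega|}{\beta}\Big)^{1/n} C(n,2) \sqrt{c_{0}}\, \|w^{+}\|_{L^{2}(\Omega)}.
\end{equation*}

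For $n = 2$, $p=2$ is not allowed. We apply \autoref{lma204Sob} with $p = 1$ (so $q = 2$) to $v^{2}$. Then $\|v\|_{L^{4}}^{2} \leq C\beta^{-1/2} \int 2|v||\nabla v| \le 2C\beta^{-1/2}\|v\|_{L^2}\|\nabla v\|_{L^2}$. Combining this with $\|v\|_{L^{2}} \leq |\Omega|^{1/4}\|v\|_{L^{4}}$ gives
\begin{equation*}
\|w^{+}\|_{L^{2}} \leq 2C (|\Omega|/\beta)^{1/2} \|\nabla w^{+}\|_{L^{2}},
\end{equation*}
which is the same form as in the case $n \geq 3$. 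This device is a general alternative in any $n$.

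\textbf{Step 4 (conclusion).} Choose $\eta$ so that $\eta^{1/n} C(n,2)\sqrt{c_{0}} < 1$ (respectively $2C\eta^{1/2}\sqrt{c_0}<1$ when $n=2$); this $\eta$ depends only on $n$ and $c_{0}$. If \eqref{Yao0209} holds, the inequality of Step 3 forces $\|w^{+}\|_{L^{2}(\Omega)} = 0$, that is, $w \leq 0$ in $\Omega$.

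\textbf{Main obstacle.} I expect the delicate point to be Step 2. One must check that the zero extension lies in $W^{1,p}(\mathcal{A}_{\beta})$ even though $\Gamma_{0}$ may meet $\partial\mathcal{A}_{\beta}$ at corners, and that Lipschitz regularity of $\Omega$ suffices for the trace and density argument. The choice of exponents in dimension two is a secondary technical point.
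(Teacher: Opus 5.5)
Your proof is correct and follows the paper's argument: test with $w^{+}$, apply the sector Sobolev inequality of Lemma~\ref{lma204Sob} (constant of order $\beta^{-1/n}$), and use H\"older to absorb, so that smallness of $|\Omega|/\beta$ forces $w^{+}\equiv 0$. The only difference is cosmetic and occurs when $n=2$: the paper takes an exponent $r\in(1,2)$ and applies H\"older to $\nabla w^{+}$ and $w^{+}$ separately, whereas you apply the $p=1$ inequality to $v^{2}$; both routes give the same factor $(|\Omega|/\beta)^{1/n}$.
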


\begin{proof}
Let us fixed any ${r}$ such that ${r} \in (1, 2)$ if ${n} = 2$ and ${r} \in [2{n}/({n} + 2), 2]$ if ${n} \geq 3$. 
Set ${\phi} = {w}^{ + }$ and $\Omega_{ + } = \{ {x}: {w}({x}) > 0 \} \subset \Omega$. Then ${\phi} \in W^{1, 2}(\mathcal{A}_{\beta})$ with $\operatorname{supp}({\phi}) \subset \Omega \cup \partial \mathcal{A}_{\beta}$. Taking ${\phi}$ as a test function in \eqref{Yao0208}, we get
\begin{equation*}
\| \nabla {w}^{ + } \|_{ {L}^{2}(\mathcal{A}_{\beta}) } \leq {c}_{0}^{1/2} \| {w}^{ + } \|_{ {L}^{2}(\mathcal{A}_{\beta}) }. 
\end{equation*}
From the Sobolev inequality in \autoref{lma204Sob}, we have
\begin{equation*}
\| {w}^{ + } \|_{ {L}^{ \frac{ {n}{r} }{ {n} - {r} } }( \mathcal{A}_{\beta} ) } \leq {C}_{ {n}, {r} } \beta^{ - 1/{n} } \| \nabla {w}^{ + } \|_{ {L}^{ {r} }( \mathcal{A}_{\beta} ) }. 
\end{equation*}
Combining these inequalities with H\"older's inequality, we deduce that
\begin{equation*}
\| \nabla {w}^{ + } \|_{ {L}^{ {r} }( \mathcal{A}_{\beta} ) }
\leq |\Omega_{ + }|^{ \frac{1}{{r}} - \frac{1}{2} } \| \nabla {w}^{ + } \|_{ {L}^{2}( \mathcal{A}_{\beta} ) }
\leq |\Omega_{ + }|^{ \frac{1}{{r}} - \frac{1}{2} } {c}_{0}^{\frac{1}{2}} \| {w}^{ + } \|_{ {L}^{2}( \mathcal{A}_{\beta} ) }, 
\end{equation*}
and
\begin{equation*}
\| {w}^{ + } \|_{ {L}^{2}( \mathcal{A}_{\beta} ) }
\leq |\Omega_{ + }|^{ \frac{1}{2} - \frac{1}{{r}} + \frac{1}{{n}} } \| {w}^{ + } \|_{ {L}^{ \frac{ {n}{r} }{ {n} - {r} } }( \mathcal{A}_{\beta} ) }
\leq |\Omega_{ + }|^{ \frac{1}{2} - \frac{1}{{r}} + \frac{1}{{n}} } {C}_{ {n}, {r} } \beta^{ - 1/{n} } \| \nabla {w}^{ + } \|_{ {L}^{ {r} }( \mathcal{A}_{\beta} ) } 
\end{equation*}
where $|\Omega_{ + }| = \operatorname{meas}(\Omega^{ + })$ denotes the measure of $\Omega_{ + }$. It follows that
\begin{equation*}
\| \nabla {w}^{ + } \|_{ {L}^{ {r} }( \mathcal{A}_{\beta} ) }
\leq {C}_{ {n}, {r} } {c}_{0}^{\frac{1}{2}} |\Omega_{ + }|^{\frac{1}{{n}}} \beta^{ - \frac{1}{{n}} } \| \nabla {w}^{ + } \|_{ {L}^{ {r} }( \mathcal{A}_{\beta} ) }. 
\end{equation*}
Let $\eta = (2 {c}_{0}^{1/2} {C}_{ {n}, {r} })^{ - {n} }$. Since ${r}$ depends only on ${n}$, we know $\eta$ depends only on ${n}$ and ${c}_{0}$, and is independent of $\beta \in (0, 2\pi)$. Thus, if $|\Omega_{ + }| < \eta \beta$, i.e., ${C}_{ {n}, {r} } {c}_{0}^{1/2} (|\Omega_{ + }|/\beta)^{ 1/{n} } < 1/2$, then $\| \nabla {w}^{ + } \|_{ {L}^{ {r} }( \mathcal{A}_{\beta} ) } = 0$ and hence ${w}^{ + } \equiv 0$. Therefore, ${w} \leq 0$ in $\Omega$. In particular, ${w}^{ + } \equiv 0$ if $|\Omega| < \eta \beta$. 
\end{proof}

\begin{rmks}
It is worth noting that the domains $\Omega$ considered in \autoref{lma205MP} can be either bounded or unbounded, as long as they have finite and small volume $|\Omega|$. Importantly, the result established in \autoref{lma205MP} can be applied to general operators of divergence form. 
\end{rmks}

\begin{rmks}
If the amplitude $\beta$ corresponding to the domain $\Omega$ is $\pi/{k}$ for some integer ${k}$, it is possible to extend the functions ${w}$ and ${c}$ to a sectorial domain with a flat amplitude by a finite number of reflections with respect to the flat boundary $\partial \mathcal{A}_{\beta}$. By reflecting once again, we obtain new functions $\tilde{w}$ and $\tilde{c}$ defined in a new domain $\tilde{\Omega}$. It follows that $|\tilde{\Omega}| = 2{k}|\Omega| = 2\pi|\Omega|/\beta$. Moreover, $\tilde{w}$ satisfies a similar elliptic inequality as in \eqref{Yao0203w}, but with Dirichlet boundary condition: $\tilde{w} \leq 0$ on $\partial \tilde{\Omega}$. To establish the non-positivity of $\tilde{w}$, a sufficient condition is given in~\cite{BN91, BNV94}, which states that $|\tilde{\Omega}| < \delta$, where $\delta$ is a positive constant. In terms of the original domain $\Omega$, this condition translates to $|\Omega| < \delta \beta / (2\pi)$. Remarkably, this coincides with the condition \eqref{Yao0209} in \autoref{lma205MP}. Hence, it appears that the condition \eqref{Yao0209} is optimal in ensuring the desired non-positivity of ${w}$. 
\end{rmks}


\subsection{The monotonicity near the Dirichlet boundary}

Consider a solution ${u}({x})$ to the equation 
\begin{equation} \label{Yao0211}
\Delta {u} + {f}( {u} ) = 0 \mbox{ in } \Omega, 
\end{equation}
where ${f}$ is a locally Lipschitz continuous function, and $\Omega$ is a bounded domain. 

\begin{lma} \label{lma206GNN}
Let $\bar{x} \in \partial\Omega$ and let $\nu(\bar{x})$ denote the outward unit normal vector at $\bar{x} \in \partial\Omega$. Let $\gamma$ be a unit vector in $\R^{n}$ satisfying $\nu(\bar{x}) \cdot \gamma > 0$. For some $\epsilon > 0$, define $\Omega_{\epsilon, \bar{x}} = \Omega \cap \{ |{x} - \bar{x}| < \epsilon \}$ and $\Gamma_{\epsilon, \bar{x}} = \partial\Omega \cap \{ |{x} - \bar{x}| < \epsilon \}$. Assume that ${u}$ is a $C^{2}$ solution of \eqref{Yao0211} in $\overline{\Omega_{\epsilon, \bar{x}}}$ satisfying 
\begin{equation*}
{u} \geq, \not\equiv 0 \mbox{ in } \Omega_{\epsilon} \mbox{ and } {u} = 0 \mbox{ on } \Gamma_{\epsilon, \bar{x}}.
\end{equation*}
Moreover, suppose that the boundary $\Gamma_{\epsilon, \bar{x}}$ is of class $C^{2}$. Then there exists a $\delta > 0$ such that
\begin{equation*}
\frac{\partial {u}}{\partial \gamma} < 0 \mbox{ in } \Omega_{\delta, \bar{x}} = \Omega \cap \{ |{x} - \bar{x}| < \delta \}. 
\end{equation*}
\end{lma}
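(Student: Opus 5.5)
The plan is to treat this as the classical boundary-monotonicity lemma of Gidas--Ni--Nirenberg and split according to the sign of $f(0)$. First I will show that $u>0$ in $\Omega_{\epsilon,\bar x}$. Near $\bar x$ write $f(u)=c(x)u+f(0)$ with $c\in L^\infty$, using $f\in\mathrm{Lip}_{\mathrm{loc}}$ on the bounded range of $u$. If $f(0)\ge0$, then $\Delta u - c^- u\le 0$, and the strong maximum principle gives $u>0$, since $u\not\equiv0$. (Here I interpret the hypothesis as $u\not\equiv 0$ on the connected component of $\Omega_{\epsilon,\bar x}$ near $\bar x$; I shrink $\epsilon$ if necessary so that $\Omega_{\epsilon,\bar x}$ is connected, which is possible because the boundary is $C^2$.)

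\textbf{Case $f(0)\ge 0$.} Hopf's lemma at boundary points of $\Gamma_{\epsilon,\bar x}$, which satisfies the interior sphere condition, gives $\partial u/\partial\nu<0$ there. Since $u=0$ on $\Gamma_{\epsilon,\bar x}$, the gradient is $\nabla u=(\partial_\nu u)\nu$, so
\[
\partial_\gamma u(\bar x)=\partial_\nu u(\bar x)\,\nu(\bar x)\cdot\gamma<0 .
\]
Because $u\in C^1$ up to the boundary near $\bar x$, continuity gives $\partial_\gamma u<0$ in $\Omega_{\delta,\bar x}$ for small $\delta$.

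\textbf{Case $f(0)<0$.} Hopf's lemma may fail, so I follow GNN's argument. The key observation is that $\nabla u(\bar x)=0$ is still possible, but then the second derivatives help. If $\partial_\nu u(\bar x)<0$, conclude as above. Suppose instead $\nabla u(\bar x)=0$. Since $u=0$ on the $C^2$ boundary and $u\ge0$, all tangential second derivatives at $\bar x$ vanish together with $\nabla u$. The equation then gives $u_{\nu\nu}(\bar x)=-f(0)>0$, and $u_{\nu\tau}(\bar x)=0$, which follows by differentiating $\nabla u=(\partial_\nu u)\nu$ along the boundary and using $\nabla u(\bar x)=0$. Hence
\[
D^2u(\bar x)=-f(0)\,\nu\otimes\nu .
\]
Set $v=\partial_\gamma u$. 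Then $v(\bar x)=0$ and
\[
\nabla v(\bar x)=D^2u(\bar x)\gamma=-f(0)(\nu\cdot\gamma)\,\nu .
\]
This points in the outward normal direction, with positive coefficient. So $v$ decreases as we move inward: $v(\bar x - t\nu)\approx f(0)(\nu\cdot\gamma)t<0$.

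\textbf{Uniformity (the main obstacle).} The estimate above is only along the normal ray at $\bar x$. To get a full neighbourhood, I argue by contradiction, as in GNN. Suppose there are $x^k\to\bar x$ in $\Omega$ with $\partial_\gamma u(x^k)\ge0$. Let $y^k$ be the nearest boundary points, so $x^k=y^k-t_k\nu(y^k)$. Since $\nu(y^k)\cdot\gamma>0$ for large $k$, we have $\partial_\gamma u(y^k)=\partial_\nu u(y^k)\,\nu(y^k)\cdot\gamma\le0$, because $\partial_\nu u\le 0$ (as $u\ge0$ and $u=0$ on the boundary). By the mean value theorem there is $z^k$ on the segment with
\[
\partial_\nu\partial_\gamma u(z^k)\le0 .
\]
Letting $k\to\infty$, using $D^2u$ continuous up to the boundary near $\bar x$ (valid since $\Gamma$ is $C^2$ and $f$ is Lipschitz, so $u\in C^{2}$ up to $\Gamma$ as in the paper's standing regularity), we get
\[
\partial_\nu\partial_\gamma u(\bar x)=-f(0)\,\nu\cdot\gamma>0
\]
in the case $\nabla u(\bar x)=0$, which is a contradiction. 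In the case $\partial_\nu u(\bar x)<0$, continuity already gives the conclusion directly.

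The delicate point is exactly this need for $C^2$ regularity up to the boundary. With only $C^{1,\alpha}$, I would instead replace the pointwise second derivative by a quantitative lower bound $u(x)\ge c\,\mathrm{dist}(x,\partial\Omega)^2$, obtained via a barrier. In the paper's setting $u$ is $C^2$ at smooth boundary points, so I will simply invoke that and cite \cite{GNN79} for the details.
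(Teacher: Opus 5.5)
Your proof is the Gidas--Ni--Nirenberg argument, which is exactly what the paper invokes; its proof of this lemma is only a citation of \cite[Lemma 2.1]{GNN79}. The structure matches: Hopf's lemma when $f(0)\ge 0$, and, when $f(0)<0$ and $\nabla u(\bar x)=0$, the identity $D^2u(\bar x)=-f(0)\,\nu\otimes\nu$ combined with $\partial_\gamma u\le 0$ on $\Gamma_{\epsilon,\bar x}$ and a mean value argument. You run the segment along the inner normal and use $\partial_\nu\partial_\gamma u$. One could instead run it along $\gamma$ and use $\partial_\gamma^2u(\bar x)=-f(0)(\nu\cdot\gamma)^2>0$; the difference is immaterial.

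One justification needs repair: the claim $u_{\nu\tau}(\bar x)=0$.
\begin{itemize}
\item Differentiating $\nabla u=(\partial_\nu u)\nu$ along the boundary at a point where $\nabla u(\bar x)=0$ only gives $D^2u(\bar x)\tau=\partial_\tau(\partial_\nu u)(\bar x)\,\nu$. In other words, $u_{\nu\tau}(\bar x)=\partial_\tau(\partial_\nu u)(\bar x)$.
\item This need not vanish for a general function that is zero on the boundary. For example, $u=x_1x_2$ on $\{x_2>0\}$ has $\nabla u(0)=0$ but $u_{x_1x_2}(0)=1$.
\item What forces it to vanish here is the sign condition $u\ge 0$. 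It gives $\partial_\nu u\le 0$ on $\Gamma_{\epsilon,\bar x}$, so $\partial_\nu u|_{\Gamma_{\epsilon,\bar x}}$ attains its maximum $0$ at $\bar x$, and its tangential derivatives vanish there.
\item Alternatively: $u\ge 0$, $u(\bar x)=0$ and $\nabla u(\bar x)=0$ force $D^2u(\bar x)$ to be positive semidefinite. A semidefinite form that vanishes on tangent vectors annihilates them.
\end{itemize}
This step is essential, not cosmetic. Writing $\gamma_T$ for the tangential part of $\gamma$, you have $\partial_\nu\partial_\gamma u(\bar x)=(\nu\cdot\gamma)\,u_{\nu\nu}(\bar x)+u_{\nu\gamma_T}(\bar x)$, so the sign you need depends on the mixed term being zero. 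With the corrected justification, your contradiction argument goes through.
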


\begin{proof}
See the proof in \cite[Lemma 2.1]{GNN79}.
\end{proof}


\subsection{The zero set}

Now we state the properties of the zero set of elliptic solutions, which is a key tool for obtaining the signs of tangential derivatives of the solution ${u}$ of \eqref{Yao0105} along the Neumann boundary $\Gamma_{N}$. 

\begin{prop} \label{prop207}
Let $\Omega$ be a planar domain with smooth boundary. Let ${V} \in {L}^{\infty}(\Omega)$, and let ${u}$ be a weak solution of $\Delta {u} + {V}({x}) {u} = 0$ in $\Omega$. Then ${u} \in {C}^{1}(\Omega)$. Furthermore, ${u}$ has the following properties: 

{\rm(i)}
If ${u}$ has a zero of any order at ${x}^{0}$ in $\Omega$, then ${u} \equiv 0$ in $\Omega$. 

{\rm(ii)}
If ${u}$ has a zero of order (exactly) ${l}$ at ${x}^{0}$ in $\Omega$, then the Taylor expansion of ${u}$ is
\begin{equation*}
{u}({x}) = {H}_{l}({x} - {x}^{0}) + O(|{x} - {x}^{0}|^{{l} + 1}), 
\end{equation*}
where ${H}_{l}$ is a real-valued, non-zero, harmonic, homogeneous polynomial of degree ${l}$. Therefore, 
$\{{u} = 0\}$ has exactly $2{l}$ branches at ${x}^{0}$. 

{\rm(iii)}
If ${u}$ has a zero of order (exactly) ${l}$ at ${x}^{0}$ on $\partial \Omega$ and if ${u}$ satisfies the Neumann (respectively Dirichlet) boundary condition, then
\begin{equation*}
{u}({x}) = {C}_{0} {r}^{l} \cos( {l} \theta ) + O({r}^{{l} + 1}), 
\end{equation*}
for some ${C}_{0} \in \R \setminus \{ 0 \}$, where $({r}, \theta)$ are polar coordinates of ${x}$ around ${x}^{0}$. The angle $\theta$ is chosen so that the tangent to the boundary at ${x}^{0}$ is given by the equation $\sin \theta = 0$ (respectively $\cos \theta = 0$). 
\end{prop}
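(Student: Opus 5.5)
This is the Hartman--Wintner local theory for planar elliptic equations with bounded potential, supplemented by a reflection argument for boundary points. I would not reprove the core asymptotic expansion from scratch. Instead I would reduce everything to the interior statement of \cite{HW53}, together with Carleman-type unique continuation.

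\textbf{Regularity and part (i).} Since $V\in L^\infty$ and $u$ is a weak solution, we have $\Delta u=-Vu\in L^\infty_{loc}$. Calder\'on--Zygmund estimates then give $u\in W^{2,p}_{loc}$ for every $p<\infty$, so $u\in C^{1,\gamma}(\Omega)$ by Sobolev embedding. Up to a smooth boundary portion, the same holds with the Neumann or Dirichlet condition, by the corresponding boundary $W^{2,p}$ estimates. Part (i) is strong unique continuation for $\Delta+V$ with bounded $V$ in the plane. I would quote the Carleman estimate (or Hartman--Wintner directly): a zero of infinite order at $x^0$ forces $u\equiv0$ near $x^0$. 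The set of infinite-order zeros is then open and closed in the connected set $\Omega$, which gives $u\equiv 0$.

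\textbf{Part (ii).} This is the main step. In complex notation, set $F=u_{z}=\frac12(u_{x_1}-iu_{x_2})$. Then $\partial_{\bar z}F=\frac14\Delta u=-\frac14Vu$, and near a zero of finite order we have $|u|\le C|z|\,|F|$ after integrating. Hence $F$ is a pseudo-analytic (Carleman--Bers--Vekua) function: $|\partial_{\bar z}F|\le C|z||F|$ locally, up to an integrated bound. The Bers--Vekua similarity principle then writes $F=e^{s}g$ with $g$ holomorphic and $s$ H\"older continuous. This is the step where the real work lies, and it is where I would follow \cite{HW53} rather than redo it. Consequently $F=c z^{l-1}+O(|z|^{l-1+\gamma})$ with $c\neq0$. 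Integrating back gives $u=\mathrm{Re}(c' z^l)+O(|z|^{l+\gamma})$, and one more bootstrap through the equation upgrades the error term to $O(r^{l+1})$. The leading term $H_l=\mathrm{Re}(c'z^l)$ is a nonzero harmonic homogeneous polynomial, so it equals $C r^l\cos(l(\theta-\theta_0))$, which has $2l$ simple nodal rays. A standard implicit-function argument in the blown-up variables (using the $C^1$ control of the remainder on dyadic annuli) shows that $\{u=0\}$ consists of exactly $2l$ $C^1$ arcs emanating from $x^0$.

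\textbf{Part (iii).} I would reduce this to part (ii) by flattening and reflecting. Take a conformal map $\Phi$ from a half-disc onto a neighbourhood of $x^0$ in $\overline\Omega$, with the boundary mapped to the segment $\{y_2=0\}$. The equation becomes $\Delta v+|\Phi'|^2(V\circ\Phi)v=0$, still with a bounded potential, and the boundary condition is preserved because a conformal map sends normals to normals. Extend $v$ evenly across $\{y_2=0\}$ in the Neumann case and oddly in the Dirichlet case, extending the potential evenly in both cases. The extension is a weak solution of the same type in a full disc; the boundary condition exactly kills the interface terms. Part (ii) now gives $v=\mathrm{Re}(c y^l)+O(|y|^{l+1})$. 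Even symmetry forces the expansion $C_0\rho^l\cos(l\varphi)$, with $\varphi$ measured from the tangent line $\sin\varphi=0$. Odd symmetry forces $C_0\rho^l\sin(l\varphi)$, which equals $\pm C_0\rho^l\cos(l\theta)$ once the angle is measured from the normal, so that the tangent is $\cos\theta=0$. Finally, since $\Phi(y)=x^0+\Phi'(0)y+O(|y|^2)$ and $\Phi'(0)$ is a rotation-dilation, pulling back changes only $C_0$ and the $O(r^{l+1})$ remainder.
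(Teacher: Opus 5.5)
For the regularity statement and parts (i)--(ii) you rely on \cite{HW53}, as the paper does. The paper's entire proof is a citation: \cite{HW53} for (i)--(ii) and \cite[Proposition 4.1]{HHHO99} for (iii). Your similarity-principle sketch is not an argument on its own. Integration only bounds $|u(z)|$ by a multiple of $|z|\sup_{[0,z]}|F|$, and the pointwise bound $|u|\le C|z|\,|F|$ is an a posteriori consequence of the expansion. So, as you say, the citation is what carries (ii).

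For (iii) you take a genuinely different and more self-contained route: conformal flattening followed by even or odd reflection reduces the boundary case to the interior expansion (ii). This is sound. The extended function is a weak solution with a bounded even potential, and the symmetry forces $c\in\mathbb{R}$ in the even case and $c\in i\mathbb{R}$ in the odd case. For the pull-back step you need $\Phi$ to be $C^{2}$ up to the flat edge; Kellogg--Warschawski gives this for a smooth boundary.

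The final step of the Dirichlet case, however, fails. With $\varphi=\theta\pm\pi/2$ one gets $\sin(l\varphi)=\sin(l\theta\pm l\pi/2)$. This equals $\pm\cos(l\theta)$ only for odd $l$; for even $l$ it is $\pm\sin(l\theta)$.

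No argument can repair this, because for even $l$ the Dirichlet clause as literally stated is false. Take $u=x_1x_2$ on the upper half-plane, with $V=0$ and $u=0$ on $\{x_2=0\}$. It has a zero of order exactly $2$ at the origin. On the tangent, where $\cos\theta=0$, one has $C_0r^2\cos(2\theta)=-C_0r^2$, while $u\equiv 0$ there; this forces $C_0=0$.

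What your reflection argument actually proves is $u=C_0r^l\sin(l\varphi)+O(r^{l+1})$, with $\varphi$ measured from the tangent (so $\sin\varphi=0$ there). This is exactly the form the paper uses in \autoref{lma307} for $v_\lambda$, which vanishes on $\Gamma_{N}^{-}$. You should state the Dirichlet conclusion in this sine form, noting that the literal $\cos(l\theta)$ version holds only for odd $l$, rather than assert the identity.
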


\begin{proof}
(i) and (ii) are well-known~\cite{HW53}. (iii) is taken from \cite[Proposition 4.1]{HHHO99}. See also~\cite{HHT09}. 
\end{proof}


\subsection{The relation of angles}

We usually denote $|{P}{Q}|$ as the length of the line segment ${P}{Q}$ with endpoints ${P}$ and ${Q}$. In~\cite{CLY21}, the angles $\vartheta_{A}$ and $\vartheta_{B}$, given in \eqref{Yao0310a} and \eqref{Yao0310b}, have a key relationship. 
We now state further properties which will be used later. 

\begin{lma} \label{lma208}
Let ${H}$ be the midpoint of side ${A}_{2}{A}_{1}$ of the isosceles triangle ${V}{A}_{2}{A}_{1}$, where $|{V}{A}_{2}| = |{V}{A}_{1}|$. 
Let ${O}$ be a point on the line segment ${V}{H}$ such that
\begin{equation*}
\beta < \alpha \leq \pi, 
\end{equation*}
where $\beta = \angle {A}_{1}{V}{A}_{2}$ and $\alpha = \angle {A}_{1}{O}{A}_{2}$. 
Let ${P}$ be any point located on the (open) line segment ${V}\bar{{P}}$, where $\bar{{P}}$ denotes the intersection of the lines ${A}_{2}{O}$ and ${V}{A}_{1}$. 
We denote the angle ${A}_{1}{P}{O}$ as $\vartheta_{A}$ and the angle ${A}_{1}{P}{A}_{2}$ as $\vartheta_{B}$; see \autoref{Fig02Angles}. 
Then we have
\begin{enumerate}[label = {\rm(\roman*)}, start = 1]
\item \label{Yaoit23a}
$\vartheta_{B} < 2\vartheta_{A}$; 
\item \label{Yaoit23b}
$2\vartheta_{B} - \vartheta_{A} < (\pi + \beta)/2$ provided $\vartheta_{B} < \pi/2$; 
\item \label{Yaoit23c}
$2\vartheta_{B} - \vartheta_{A} < \pi$ provided $\beta \leq 2\pi/3$; 
\item \label{Yaoit23d}
$2\vartheta_{B} - \vartheta_{A} < \pi$ provided $\vartheta_{A} \geq \beta$. 
\end{enumerate}
\end{lma}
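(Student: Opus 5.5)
We place the configuration in coordinates with $V$ at the origin and $VH$ along the positive $x_1$-axis, so that $A_{1}$ and $A_{2}$ lie on the rays of direction angle $\beta/2$ and $-\beta/2$. The point $O$ lies on the segment $VH$ strictly between $V$ and $H$, since $\alpha > \beta$. The plan is to reduce all four statements to inequalities between two auxiliary angles, and then to a one-parameter trigonometric inequality.

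\textbf{Step 1: exterior-angle reduction.} Put
\begin{equation*}
s = \angle VOP, \qquad t = \angle VA_{2}P .
\end{equation*}
Since $P$ lies on the open segment $V\bar{P} \subset VA_{1}$, the angles $\vartheta_A$ and $\vartheta_B$ are exterior angles at $P$ of the triangles $VPO$ and $VPA_{2}$. This gives
\begin{equation*}
\vartheta_{A} = \tfrac{\beta}{2} + s, \qquad \vartheta_{B} = \beta + t .
\end{equation*}
As $P$ runs from $V$ to $\bar P$, the angle $s$ increases from $0$ to $\angle VO\bar P = \alpha/2$, because $O\bar P$ is opposite to $OA_{2}$ and $\angle VOA_{2} = \pi - \alpha/2$. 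In the same range, $t$ increases from $0$ to $\angle VA_{2}O = (\alpha-\beta)/2$. The four claims become:
\begin{itemize}[label={}]
\item (i) $t < 2s$;
\item (ii)--(iv) bounds on $2\vartheta_B - \vartheta_A = \tfrac{3\beta}{2} + 2t - s$, namely $2t - s < \tfrac{\pi-\beta}{2} - \tfrac{\beta}{2}$ under $\beta + t < \pi/2$;
\item (iii) $2t - s < \pi - \tfrac{3\beta}{2}$ when $\beta \le 2\pi/3$;
\item (iv) the same bound as (iii), but under $s \ge \beta/2$ instead.
\end{itemize}

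\textbf{Step 2: the relation between $s$ and $t$.} The law of sines in the triangles $VPO$, $VPA_{2}$ and $VOA_{2}$ eliminates $|VP|$ and gives
\begin{equation*}
\frac{\sin t}{\sin(\beta+t)} = \rho\, \frac{\sin s}{\sin(\beta/2+s)}, \qquad
\rho = \frac{|VO|}{|VA_{2}|} = \frac{\sin\frac{\alpha-\beta}{2}}{\sin\frac{\alpha}{2}} = \cos\tfrac{\beta}{2} - \cot\tfrac{\alpha}{2}\sin\tfrac{\beta}{2}.
\end{equation*}
Both $g(t) = \sin t/\sin(\beta+t)$ and $h(s) = \sin s/\sin(\beta/2+s)$ are increasing on the relevant ranges, so $t = g^{-1}(\rho\, h(s))$ is increasing in $s$. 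Moreover $\rho$ is increasing in $\alpha$, so the worst case is $\alpha = \pi$, where $\rho = \cos(\beta/2)$. This extremal case is exactly when $O$ is the midpoint of the circle configuration, i.e.\ $|VO| = |VA_{2}|\cos(\beta/2)$. On the other hand, a larger $\alpha$ also enlarges the admissible range $s < \alpha/2$.

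\textbf{Step 3: the individual claims.} For each claim the approach is the same: since $g$ is increasing, an inequality $t < F(s)$ is equivalent to
\begin{equation*}
\rho\, h(s) < g(F(s)).
\end{equation*}
We then check this at $\rho = \cos(\beta/2)$ on $s \in (0,\pi/2)$, and use monotonicity in $\rho$ for general $\alpha$.
\begin{itemize}[label={}]
\item For (i), take $F(s) = 2s$. After clearing denominators and using product-to-sum formulas, the inequality $\cos\frac{\beta}{2}\,\sin s\,\sin(\beta+2s) < \sin 2s\,\sin(\frac{\beta}{2}+s)$ should reduce to a sign condition such as $\sin\frac{\beta}{2}\sin(\cdots) > 0$.
\item For (ii)--(iv), take $F(s) = (s + c)/2$ with $c$ the relevant constant. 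We use the side conditions to restrict the $s$-range: $\beta + t < \pi/2$ in (ii), $s \ge \beta/2$ in (iv), and $\beta \le 2\pi/3$ in (iii). We also use the endpoint $s \to \alpha/2$, where $t = (\alpha-\beta)/2$. At this endpoint $2t - s = \alpha/2 - \beta \le \pi/2 - \beta$, which already lies within the bounds in (iii) and (iv) when $\beta \le 2\pi/3$ (respectively given $s \ge \beta/2$).
\end{itemize}

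\textbf{Expected obstacle.} The main difficulty is claims (iii) and (ii). There the map $s \mapsto 2t(s) - s$ need not be monotone, so the endpoint check is not enough. I would first try to prove that $2t - s$ is either monotone or convex in $s$, by differentiating the implicit relation:
\begin{equation*}
t'(s) = \rho\, \frac{h'(s)}{g'(t)} = \rho\, \frac{\sin(\beta/2)\,\sin^{2}(\beta+t)}{\sin\beta\,\sin^{2}(\beta/2+s)} .
\end{equation*}
If $2t'(s) - 1$ changes sign at most once from negative to positive, then the maximum of $2t - s$ is attained at an endpoint, and the endpoint computations above finish the proof. If this convexity argument fails, the fallback is a direct trigonometric verification on the critical set, using $\beta \le 2\pi/3$ to fix the sign of $\cos(\tfrac{3\beta}{2})$-type terms.
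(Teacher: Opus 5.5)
\textbf{Verdict: genuine gap.} Steps 1--2 are correct: the exterior-angle formulas, the relation $g(t)=\rho\,h(s)$, and the reduction to $\rho=\cos\frac{\beta}{2}$ via monotonicity in $\rho$. Part (i) also goes through. For $2s<\pi-\beta$ the cross-multiplied difference is $2\sin\frac{\beta}{2}\,\sin s\,\sin^{2}(\frac{\beta}{2}+s)>0$, and for $2s\ge\pi-\beta$ the claim is trivial because $t<\frac{\pi-\beta}{2}\le s$.

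What is missing is (ii) and (iii), which are the real content of the lemma. Step 3 only says that $\rho\,h(s)<g(F(s))$ ``should'' be checked. The route you then propose (show $2t-s$ is monotone or convex, then use endpoint values) cannot work. At $\alpha=\pi$ you have $2t'(0)-1=4\rho\cos\frac{\beta}{2}-1=1+2\cos\beta>0$ for $\beta<2\pi/3$, so $2t-s$ starts out increasing. It then exceeds its endpoint values in the interior: for $\beta=\pi/3$, $2t-s=\pi/6$ at $s=\pi/2$ but $2t-s\approx 0.56$ at $s=\pi/3$. So $2t'-1$ changes sign from positive to negative, the maximum is interior, and no endpoint or convexity argument gives (ii) or (iii). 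Your fallback is left unspecified.

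Part (iv) needs none of this. Your own Step 1 gives $t<\frac{\alpha-\beta}{2}$, so $s\ge\frac{\beta}{2}$ yields $2t-s<\alpha-\frac{3\beta}{2}\le\pi-\frac{3\beta}{2}$. This is the paper's one-line proof.

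\textbf{How to close the gap.} Carry out the direct comparison from Step 3; it needs no information about the shape of $2t-s$. Take $\rho=\cos\frac{\beta}{2}$, and set $u=\frac{s}{2}+\frac{\pi}{4}-\frac{\beta}{2}$ for (ii) and $u=\frac{s}{2}+\frac{\pi}{2}-\frac{3\beta}{4}$ for (iii); in both cases $u\in(0,\pi-\beta)$. Then
\[
\sin u\,\sin\bigl(s+\tfrac{\beta}{2}\bigr)-\cos\tfrac{\beta}{2}\,\sin s\,\sin(\beta+u)=
\begin{cases}
\sin\frac{\beta}{2}\,\cos\bigl(\frac{\pi}{4}+\frac{s}{2}+\frac{\beta}{2}\bigr)\,(1-\sin s), & \text{case (ii)},\\
\sin\frac{\beta}{2}\,\cos\bigl(\frac{s}{2}+\frac{\beta}{4}\bigr)\,\bigl(2\cos\frac{\beta}{2}-\cos s\bigr), & \text{case (iii)}.
\end{cases}
\]
The first expression is positive for $s<\frac{\pi}{2}-\beta$. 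The second is positive on $(0,\frac{\pi}{2})$ precisely because $\beta\le\frac{2\pi}{3}$ gives $2\cos\frac{\beta}{2}\ge 1$.

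For (ii) there is one more point. The hypothesis $\beta+t<\frac{\pi}{2}$ depends on $\alpha$, so it is not preserved when you pass to $\rho=\cos\frac{\beta}{2}$. Split at $s=\frac{\pi}{2}-\beta$:
\begin{itemize}
\item for $s\ge\frac{\pi}{2}-\beta$, the claim follows from $t<\frac{\pi}{2}-\beta$ alone;
\item for $s<\frac{\pi}{2}-\beta$, use $t(s,\rho)\le t(s,\cos\frac{\beta}{2})<u$.
\end{itemize}

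\textbf{Comparison with the paper.} Completed this way, your argument is a purely trigonometric proof that differs from the paper's. The paper proves (i) and (ii) synthetically. For (i) it compares the angle bisector at $P$ with the point $R=PO\cap A_{1}A_{2}$. For (ii), $\vartheta_B<\pi/2$ puts $P$ outside the circle on diameter $A_{1}A_{2}$, which gives $|RP|>|RA_{2}|$ in triangle $PRA_{2}$. For (iii), the paper shows that $2\vartheta_B-\vartheta_A-\pi$ is increasing in $\kappa=\tan\frac{\beta}{2}$ at fixed $|OH|/|OV|$. At $\kappa=\sqrt{3}$ this quantity is decreasing in $\tan s$ and vanishes at $P=V$. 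So the paper reduces to the borderline $\beta=2\pi/3$, whereas your route reduces to $\alpha=\pi$.
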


\begin{figure}[htp] \centering
\begin{tikzpicture}[scale = 3] 
\pgfmathsetmacro\ALPHA{72.04}; \pgfmathsetmacro\BETA{46.53}; \pgfmathsetmacro\radius{1.00}; 
\pgfmathsetmacro\aaa{\radius*sin(abs(\BETA - \ALPHA))/sin(\BETA)}; 
\pgfmathsetmacro\ka{tan(\BETA)}; 
\pgfmathsetmacro\xA{\radius*cos(\ALPHA)}; \pgfmathsetmacro\yA{\radius*sin(\ALPHA)}; 
\draw[blue, thick] (\xA, - \yA) node[left = 2pt] {\footnotesize ${A}_{1}$} arc ( - \ALPHA : \ALPHA : \radius) node[left = 2pt] {\footnotesize ${A}_{2}$} -- ( - \aaa, 0) node[left] {\footnotesize ${V}$} -- cycle; 
\draw[dotted] (\xA, - \yA) -- (0, 0) node[left = 2pt, above] {\footnotesize ${O}$} -- (\xA, \yA); 
\pgfmathsetmacro\LenA{abs(\aaa)*sin(\ALPHA)/sin(\ALPHA + \BETA)}; 
\pgfmathsetmacro\LenB{\LenA*0.63}; 
\pgfmathsetmacro\xPP{\LenA*cos(\BETA) - \aaa}; \pgfmathsetmacro\yPP{ - \LenA*sin(\BETA)}; 
\pgfmathsetmacro\xP{\LenB*cos(\BETA) - \aaa}; \pgfmathsetmacro\yP{ - \LenB*sin(\BETA)}; 
\pgfmathsetmacro\xR{\xA}; \pgfmathsetmacro\yR{\xR*\yP/\xP}; 
\draw[dotted] (0, 0) -- (\xR, \yR) node [right] {\footnotesize ${R}$}; 
\draw[] (\xA, \yA) -- (\xP, \yP) node [left = 3pt, below] {\footnotesize ${P}$} -- (0, 0); 
\draw[dotted] (\xPP, \yPP) node [left = 3pt, below] {\footnotesize $\bar{P}$} -- (0, 0); 
\draw[dashed] (\xA, \yA) -- (\xA, - \yA); 
\draw[dashed] ( - \aaa, 0) -- (\xA, 0) node [right] {\footnotesize ${H}$} -- (\xP, \yP); 
\draw[] (\xP, \yP) -- (\xP, - \yP) node[above] {\footnotesize ${Q}$}; 
\draw[dashed] (\xP, \yP) -- (\xA, \yP) node [right] {\footnotesize ${D}$}; 
\end{tikzpicture} 
\caption{The relation of the angles. }
\label{Fig02Angles}
\end{figure}
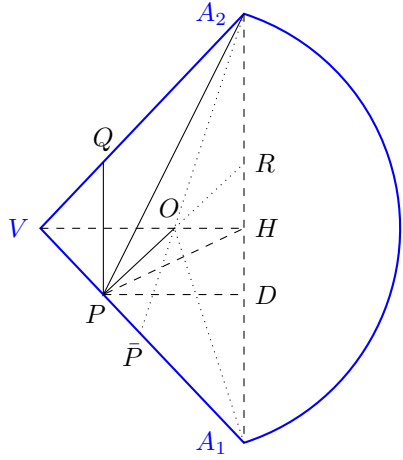

\begin{proof}
Part \ref{Yaoit23a}. 
Let ${R}$ be the intersection of the lines ${P}{O}$ and ${A}_{2}{A}_{1}$; see \autoref{Fig02Angles}. 
It is clear that $|{A}_{2}{P}|/|{A}_{1}{P}| > 1 > |{A}_{2}{R}|/|{A}_{1}{R}|$, and hence $\angle {A}_{2}{P}{R} < \angle {A}_{1}{P}{R}$. This gives the proof of part (i). 

Part \ref{Yaoit23b}. 
Let ${Q}$ be the point on the line segment ${V}{A}_{2}$ such that the straight line ${Q}{P}$ is perpendicular to the line ${V}{H}$, i.e., ${Q}{P} \perp {V}{H}$. 
Suppose that $\vartheta_{B} < \pi/2$, i.e., $\angle {A}_{2}{P}{A}_{1} < \pi/2$. Then $|{H}{P}| > |{H}{A}_{2}|$, and thus $|{R}{P}| > |{R}{A}_{2}|$. It follows that $\angle {Q}{P}{A}_{2} = \angle {R}{A}_{2}{P} > \angle {R}{P}{A}_{2}$, which implies $2\vartheta_{B} - \vartheta_{A} < (\pi + \beta)/2$. 

Part \ref{Yaoit23c}. 
Let ${D}$ be the point on the line segment ${A}_{2}{A}_{1}$ such that the straight line ${D}{P}$ is perpendicular to the line ${A}_{2}{A}_{1}$, i.e., ${D}{P} \perp {A}_{2}{A}_{1}$. 
Let $\epsilon = |{O}{H}| / |{O}{V}| \in [0, \infty)$, set $\kappa = \tan(\beta/2) > 0$ and ${t} = \tan(\vartheta_{A} - \beta/2) > 0$. Then, 
\begin{equation*}
(|{P}{D}| - |{O}{H}|)(\kappa + {t}) = |{V}{O}|\kappa, \quad
|{A}_{2}{D}| = 2(|{O}{H}| + |{O}{V}|)\kappa - |{P}{D}|\kappa, 
\end{equation*}
which gives 
\begin{gather*}
|{V}{O}| = \frac{ \kappa + {t} }{ \kappa + (\kappa + {t})\epsilon } |{P}{D}|, 
\\ 
\frac{ |{A}_{2}{D}| }{ |{P}{D}| }
 = 2(1 + \epsilon )\kappa \frac{ \kappa + {t} }{ \kappa + \epsilon \kappa + \epsilon {t} } - \kappa
 = \frac{ ( \kappa + \epsilon \kappa + 2{t} + \epsilon {t} )\kappa }{ \kappa + \epsilon \kappa + \epsilon {t} }. 
\end{gather*}
Now, by setting $2\angle {A}_{2}{P}{A}_{1} - \angle {O}{P}{A}_{1} - \pi = {f}({t}, \kappa)$, we have
\begin{equation*}
{f}({t}, \kappa) = 2\arctan \frac{ ( \kappa + \epsilon \kappa + 2{t} + \epsilon {t} )\kappa }{ \kappa + \epsilon \kappa + \epsilon {t} } - \arctan {t} + \arctan \kappa - \pi, 
\end{equation*}
and
\begin{equation*}
{f}(0, \kappa) = 3\arctan \kappa - \pi, \quad {f}( \tan \frac{\alpha}{2}, \kappa ) < 0, \quad {f}(\infty, \kappa) < 0. 
\end{equation*}
Note that the function $\kappa \in (0, \infty) \mapsto {f}({t}, \kappa)$ is strictly increasing. Moreover, 
\begin{equation*}
\partial_{ {t} } {f}({t}, \kappa) = \frac{ - {a}_{0} - {a}_{1}{t} - {a}_{2}{t}^{2} }{ ( {t}^{2} + 1 ) \left[ ( \kappa + \epsilon \kappa + 2{t} + \epsilon {t} )^{2} \kappa^{2} + ( \kappa + \epsilon \kappa + \epsilon {t} )^{2} \right] }
\end{equation*}
with ${a}_{2} = (1 + \kappa^{2}) \epsilon^{2} \geq 0$, ${a}_{1} = 2(1 + \epsilon)(\epsilon + (2 + \epsilon)\kappa^{2})\kappa > 0$, and 
\begin{equation*}
{a}_{0} = (1 + \epsilon)(\epsilon + (1 + \epsilon)\kappa^{2} - 3)\kappa^{2}. 
\end{equation*}
Observe that the function $\partial_{ {t} } {f}({t}, \kappa) < 0$ when ${a}_{0} \geq 0$ (e.g., $\kappa \geq \sqrt{3}$). In this case, the function ${t} \in [0, \infty) \mapsto {f}({t}, \kappa)$ is strictly decreasing and thus has exactly one zero. Therefore, when $\kappa \leq \sqrt{3}$ and ${t} > 0$, we have 
\begin{equation*}
{f}({t}, \kappa) \leq {f}({t}, \sqrt{3}) < {f}(0, \sqrt{3}) = 0. 
\end{equation*}
This completes the proof of part \ref{Yaoit23c}. 

Part \ref{Yaoit23d}. 
This is immediate, since $\vartheta_{B} < (\pi + \beta)/2$ and $\vartheta_{A} \geq \beta$. 
\end{proof}


\section{The start of the method of moving planes} \label{Sec03MMP}

In this section, we introduce the moving lines ${T}_{\lambda, \vartheta}$ and the moving domains ${D}_{\lambda, \vartheta, \vartheta_{1}}$. With these definitions, and for general $\beta$, the method of moving planes (\textbf{MMP}) can be carried out up to a maximal position $\lambda \geq \lambda_{\sharp}$, where $\lambda_{\sharp}$ is defined in \eqref{Yao0341}. 

\subsection{The moving lines and moving domains}

We now define the families ${T}_{\lambda, \vartheta}$ and ${D}_{\lambda, \vartheta, \vartheta_{1}}$, following the approach of~\cite{CLY21, YCG21}. For notational simplicity, we translate the vertex of the sector to the origin, so ${V} = (0, 0)$. The center of the reference ball is then ${O} = ( - {a}, 0 )$, and the mixed boundary points are given by ${P}_{\pm} = (\cos(\alpha/2) - {a}, \pm \sin(\alpha/2))$, which we also denote by ${A}_{1} = {P}_{ - }$ and ${A}_{2} = {P}_{ + }$. Here the constant ${a}$ (depending on the angles $\alpha$ and $\beta$) is given in \eqref{Yao0103b}. The domain and corresponding boundary are given by
\begin{equation}
\begin{aligned}
\mathcal{C} & = \mathcal{C}_{\alpha, \beta} = \big\{ ({x}_{1}, {x}_{2}) \in \R^{2}: \; {x}_{1} > |{x}_{2}| \cot \tfrac{\beta}{2} \big\}, \\
{B} & = \big\{ ({x}_{1}, {x}_{2}) \in \R^{2}: \; ({x}_{1} + {a})^{2} + {x}_{2}^{2} < 1 \big\}, 
\\
\Sigma & = \Sigma_{\alpha, \beta} = \big\{ ({x}_{1}, {x}_{2}) \in \R^{2}: \; |{x}_{2}| \cot \tfrac{\beta}{2} < {x}_{1} < \textstyle\sqrt{1 - {x}_{2}^{2}} - {a} \big\}. 
\end{aligned}
\end{equation}
Moreover, $\Gamma_{D} = \{({a} + \cos\theta, \sin\theta) \in \R^{2}: |\theta| \leq \alpha/2\}$ and $\Gamma_{N} = \partial\Omega \setminus \Gamma_{D}$. We define the lower and upper Neumann boundaries as follows:
\begin{equation*}
\Gamma_{N}^{ - } = \{({x}_{1}, {x}_{2}) \in \Gamma_{N}: \; {x}_{2} < 0 \} \quad \text{and} \quad \Gamma_{N}^{ + } = \{({x}_{1}, {x}_{2}) \in \Gamma_{N}: \; {x}_{2} > 0 \}. 
\end{equation*}

Set
\begin{equation} \label{Yao0303}
{P}_{\lambda} = ({x}_{1\lambda}, {x}_{2\lambda}) \quad \text{with} \quad {x}_{1\lambda} = \lambda\cos\frac{\beta}{2}, \quad {x}_{2\lambda} = - \lambda\sin\frac{\beta}{2}. 
\end{equation}
We consider the moving line ${T}_{\lambda, \vartheta}$ passing through ${P}_{\lambda}$ and forming with the lower Neumann side $\Gamma_{N}^{ - }$ an angle $\vartheta \in (0, (\pi + \beta)/2]$, that is, 
\begin{equation*}
{T}_{\lambda, \vartheta} = \{ {x} \in \R^{2}: \; ( {x} - {P}_{\lambda}) \cdot \mathbf{e}_{\vartheta - \beta/2 - \pi/2} = 0\}
\end{equation*}
where $\mathbf{e}_{\theta} = (\cos\theta, \sin\theta)$ is the unit vector. Equivalently, 
\begin{equation} \label{Yao0304a}
{T}_{\lambda, \vartheta} = \{{x} \in \R^{2}: \; 
({x}_{1} - {x}_{1\lambda})\sin(\vartheta - \tfrac{\beta}{2}) - ({x}_{2} - {x}_{2\lambda})\cos(\vartheta - \tfrac{\beta}{2}) = 0\}. 
\end{equation}
See \autoref{Fig03MMP90} for $\vartheta = \pi/2$, and \autoref{Fig03Lines} for general $\vartheta$. As usual, we denote by $\Sigma_{\lambda, \vartheta}$ the right open cap cut out from $\Sigma$ by ${T}_{\lambda, \vartheta}$. For a point ${x} \in \R^{2}$, we write ${x}^{\lambda, \vartheta}$ for its reflection with respect to ${T}_{\lambda, \vartheta}$. For a set $\Omega \subset \R^{2}$, we denote by $\Omega^{\lambda, \vartheta}$ its reflection with respect to ${T}_{\lambda, \vartheta}$. Since the reflection of the cap $\Sigma_{\lambda, \vartheta}$ may not be contained in $\Sigma$, it is natural to define the family of moving domains ${D}_{\lambda, \vartheta}$ by
\begin{equation}
\begin{aligned}
{D}_{\lambda, \vartheta} & = 
\{ {x} \in \Sigma: \; {x}^{\lambda, \vartheta} \in \Sigma \; \; \text{and} \; \; {x} \in \Sigma^{\lambda, \vartheta} \}
\\ & = 
\{ {x} \in \Sigma: \; {x}^{\lambda, \vartheta} \in \Sigma \; \; \text{and} \; \; ( {x} - {P}_{\lambda}) \cdot \mathbf{e}_{\vartheta - \beta/2 - \pi/2} > 0 \}
\end{aligned}
\end{equation}
for $\lambda \geq 0$ and $0 < \vartheta < \pi$; see \autoref{Fig03MMP90} for the case $\vartheta = \pi/2$. 
Our aim is to show that
\begin{equation} \label{Yao0305b}
{w}^{\lambda, \vartheta}({x}) = {u}({x}) - {u}^{\lambda, \vartheta}({x}) < 0 
\quad \text{for} \quad {x} \in {D}_{\lambda, \vartheta}, 
\end{equation}
where ${u}^{\lambda, \vartheta}({x}) = {u}({x}^{\lambda, \vartheta})$ denotes the reflection of ${u}$ with respect to ${T}_{\lambda, \vartheta}$. 

\begin{figure}[htp] \centering
\begin{tikzpicture}[scale = 1.25] 
\pgfmathsetmacro\ALPHA{48.04}; \pgfmathsetmacro\BETA{29.29}; \pgfmathsetmacro\radius{2.00}; 
\pgfmathsetmacro\aaa{\radius*sin(abs(\BETA - \ALPHA))/sin(\BETA)}; 
\pgfmathsetmacro\ka{tan(\BETA)}; 
\pgfmathsetmacro\xA{\radius*cos(\ALPHA)}; \pgfmathsetmacro\yA{\radius*sin(\ALPHA)}; 
\pgfmathsetmacro\LenNeu{\radius*sin(\ALPHA)/sin(\BETA)}; 
\pgfmathsetmacro\THETA{90}; \pgfmathsetmacro\LAM{0.6831*\LenNeu}; 
\pgfmathsetmacro\THEbb{\THETA - \BETA}; 
\pgfmathsetmacro\xP{ - \aaa + \LAM*cos(\BETA)}; \pgfmathsetmacro\yP{ - \LAM*sin(\BETA)}; 
\pgfmathsetmacro\kb{tan(\THETA - \BETA)}; \pgfmathsetmacro\bb{\yP - \xP*\kb}; 
\pgfmathsetmacro\xVv{ - \aaa*cos(2*\THEbb) - \bb*sin(2*\THEbb)}; 
\pgfmathsetmacro\yVv{ - \aaa*sin(2*\THEbb) + 2*\bb*(cos(\THEbb))^2}; 
\pgfmathsetmacro\Coefa{1}; 
\pgfmathsetmacro\Coefb{(\yP*cos(\THEbb) - \xP*sin(\THEbb))*sin(\THEbb)}; 
\pgfmathsetmacro\Coefc{(\yP*cos(\THEbb) - \xP*sin(\THEbb))^2 - \radius^2*(cos(\THEbb))^2}; 
\pgfmathsetmacro\xQ{( - \Coefb + sqrt((\Coefb)^2 - \Coefa*\Coefc))/\Coefa}; 
\pgfmathsetmacro\yQ{\yP + (\xQ - \xP)*tan(\THEbb)}; 
\pgfmathsetmacro\yS{\xA}; 
\pgfmathsetmacro\xS{ - \yA}; 
\pgfmathsetmacro\xSS{2*\xP - \xS}; 
\pgfmathsetmacro\ySS{2*\yP - \yS}; 
\fill[fill = yellow, draw = black, very thin] 
(\xQ, \yQ) arc ({2*\THEbb - asin(\yQ/\radius)} : {2*\THEbb + \ALPHA} : \radius) -- (\xP, \yP) -- cycle; 
\fill[fill = green, fill opacity = 1, draw = black, very thin] 
(\xQ, \yQ) arc ({asin(\yQ/\radius)} : { - \ALPHA} : \radius) -- (\xP, \yP) -- cycle; 
\draw[] ({\xA}, { - \yA}) node[left] {\footnotesize ${P}_{ - }$} arc ( - \ALPHA : \ALPHA : \radius) node[left] {\footnotesize ${P}_{ + }$} -- ( - \aaa, 0) node[left] {\footnotesize ${V}$} -- cycle; 
\draw[thin, dotted] ({\xA}, { - \yA}) -- (0, 0) node[left] {\footnotesize ${O}$} -- ({\xA}, {\yA}); 
\fill[red] ({ - \aaa + \LAM*cos(\BETA)}, { - \LAM*sin(\BETA)}) circle (0.008) node[below] {\footnotesize ${P}_{\lambda}$}; 
\draw[red, thick] (\xP, \yP) -- ({\xP + 1.2*(\xQ - \xP)}, {\yP + 1.2*(\yQ - \yP)}) node[right] {\footnotesize ${T}_{\lambda, \vartheta}$}; 
\pgfmathsetmacro\radB{0.11*\radius}; 
\draw[very thin] ({\xP + \radB*cos(\THETA - \BETA)}, {\yP + \radB*sin(\THETA - \BETA)}) arc ({\THETA - \BETA} : { - \BETA} : \radB); 
\draw ({\xP + \radB*cos(\THETA/2 - \BETA)}, {\yP + \radB*sin(\THETA/2 - \BETA)}) node [right] {\footnotesize $\vartheta = \frac{\pi}{2}$}; 
\end{tikzpicture}
\quad
\begin{tikzpicture}[scale = 1.25] 
\pgfmathsetmacro\ALPHA{48.04}; \pgfmathsetmacro\BETA{29.29}; \pgfmathsetmacro\radius{2.00}; 
\pgfmathsetmacro\aaa{\radius*sin(abs(\BETA - \ALPHA))/sin(\BETA)}; 
\pgfmathsetmacro\ka{tan(\BETA)}; 
\pgfmathsetmacro\xA{\radius*cos(\ALPHA)}; \pgfmathsetmacro\yA{\radius*sin(\ALPHA)}; 
\pgfmathsetmacro\LenNeu{\radius*sin(\ALPHA)/sin(\BETA)}; 
\pgfmathsetmacro\THETA{90}; \pgfmathsetmacro\LAM{0.6031*\LenNeu}; 
\pgfmathsetmacro\THEbb{\THETA - \BETA}; 
\pgfmathsetmacro\xP{ - \aaa + \LAM*cos(\BETA)}; \pgfmathsetmacro\yP{ - \LAM*sin(\BETA)}; 
\pgfmathsetmacro\kb{tan(\THETA - \BETA)}; \pgfmathsetmacro\bb{\yP - \xP*\kb}; 
\pgfmathsetmacro\xVv{ - \aaa*cos(2*\THEbb) - \bb*sin(2*\THEbb)}; 
\pgfmathsetmacro\yVv{ - \aaa*sin(2*\THEbb) + 2*\bb*(cos(\THEbb))^2}; 
\pgfmathsetmacro\Coefa{1}; 
\pgfmathsetmacro\Coefb{(\yP*cos(\THEbb) - \xP*sin(\THEbb))*sin(\THEbb)}; 
\pgfmathsetmacro\Coefc{(\yP*cos(\THEbb) - \xP*sin(\THEbb))^2 - \radius^2*(cos(\THEbb))^2}; 
\pgfmathsetmacro\xQ{( - \Coefb + sqrt((\Coefb)^2 - \Coefa*\Coefc))/\Coefa}; 
\pgfmathsetmacro\yQ{\yP + (\xQ - \xP)*tan(\THEbb)}; 
\pgfmathsetmacro\THETAd{2*\THEbb - \BETA}; 
\pgfmathsetmacro\Coefa{1}; 
\pgfmathsetmacro\Coefb{(\yVv*cos(\THETAd) - \xVv*sin(\THETAd))*cos(\THETAd)}; 
\pgfmathsetmacro\Coefc{(\yVv*cos(\THETAd) - \xVv*sin(\THETAd))^2 - \radius^2*(sin(\THETAd))^2}; 
\pgfmathsetmacro\yRa{(\Coefb + sqrt((\Coefb)^2 - \Coefa*\Coefc))/\Coefa}; 
\pgfmathsetmacro\xRa{\xVv + (\yRa - \yVv)*cot(\THETAd)}; 
\pgfmathsetmacro\yRb{(\Coefb - sqrt((\Coefb)^2 - \Coefa*\Coefc))/\Coefa}; 
\pgfmathsetmacro\xRb{\xVv + (\yRb - \yVv)*cot(\THETAd)}; 
\pgfmathsetmacro\xRRa{ ((1 - \kb^2)*\xRa + 2*\kb*\yRa - 2*\bb*\kb)/(1 + \kb^2) }; 
\pgfmathsetmacro\yRRa{ (2*\kb*\xRa - (1 - \kb^2)*\yRa + 2*\bb )/(1 + \kb^2) }; 
\pgfmathsetmacro\xRRb{ ((1 - \kb^2)*\xRb + 2*\kb*\yRb - 2*\bb*\kb)/(1 + \kb^2) }; 
\pgfmathsetmacro\yRRb{ (2*\kb*\xRb - (1 - \kb^2)*\yRb + 2*\bb )/(1 + \kb^2) }; 
\pgfmathsetmacro\yS{\xA}; 
\pgfmathsetmacro\xS{ - \yA}; 
\pgfmathsetmacro\xSS{2*\xP - \xS}; 
\pgfmathsetmacro\ySS{2*\yP - \yS}; 
\fill[fill = yellow, draw = black, very thin] 
(\xQ, \yQ) arc ({2*\THEbb - asin(\yQ/\radius)} : {2*\THEbb - asin(\yRa/\radius)} : \radius) -- (\xRRb, \yRRb) arc ({2*\THEbb - asin(\yRb/\radius)} : {2*\THEbb + \ALPHA} : \radius) -- (\xP, \yP) -- cycle; 
\fill[fill = green, fill opacity = 1, draw = black, very thin] 
(\xQ, \yQ) arc ({asin(\yQ/\radius)} : {asin(\yRa/\radius)} : \radius) -- (\xRb, \yRb) arc ({asin(\yRb/\radius)} : { - \ALPHA} : \radius) -- (\xP, \yP) -- cycle; 
\draw[] ({\xA}, { - \yA}) node[left] {\footnotesize ${P}_{ - }$} arc ( - \ALPHA : \ALPHA : \radius) node[left] {\footnotesize ${P}_{ + }$} -- ( - \aaa, 0) node[left] {\footnotesize ${V}$} -- cycle; 
\draw[thin, dotted] ({\xA}, {\yA}) -- (0, 0) node[left] {\footnotesize ${O}$} -- ({\xA}, { - \yA}); 
\fill[red] ({ - \aaa + \LAM*cos(\BETA)}, { - \LAM*sin(\BETA)}) circle (0.008) node[below] {\footnotesize ${P}_{\lambda}$}; 
\draw[red, thick] (\xP, \yP) -- ({\xP + 1.2*(\xQ - \xP)}, {\yP + 1.2*(\yQ - \yP)}) node[right] {\footnotesize ${T}_{\lambda, \vartheta}$}; 
\pgfmathsetmacro\radB{0.11*\radius}; 
\draw[very thin] ({\xP + \radB*cos(\THETA - \BETA)}, {\yP + \radB*sin(\THETA - \BETA)}) arc ({\THETA - \BETA} : { - \BETA} : \radB); 
\draw ({\xP + \radB*cos(\THETA/2 - \BETA)}, {\yP + \radB*sin(\THETA/2 - \BETA)}) node [right] {\footnotesize $\vartheta = \frac{\pi}{2}$}; 
\pgfmathsetmacro\lenC{1.5*\radius}; 
\draw[very thin] ({\xRa - 0.2*(\xRb - \xRa)}, {\yRa - 0.2*(\yRb - \yRa)}) -- ({\xRa + 1.3*(\xRb - \xRa)}, {\yRa + 1.3*(\yRb - \yRa)}) node[right] {\footnotesize ${T}_{\check{\lambda}, \check{\vartheta}} = \hat{T}_{\hat{\lambda}, \hat{\vartheta}}$}; 
\end{tikzpicture} 
\caption{The moving lines {\color{red}${T}_{\lambda, \pi/2}$} and the moving domains {\color{green}${D}_{\lambda, \pi/2}$}. }
\label{Fig03MMP90}
\end{figure}
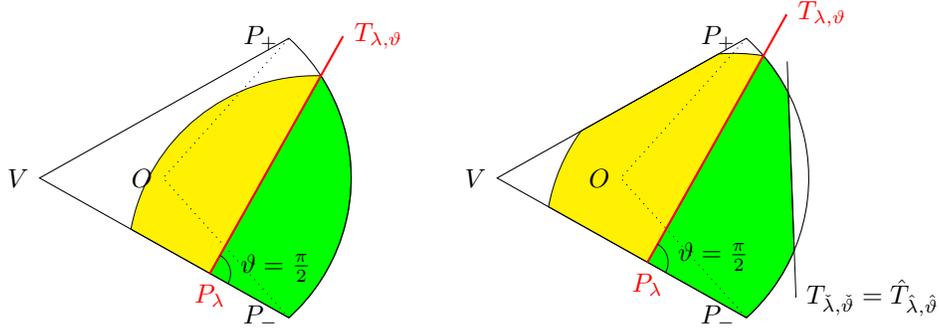

Due to the fact that ${w}^{\lambda, \vartheta}$ may not satisfy certain a priori boundary conditions on $\partial {D}_{\lambda, \vartheta}$, we will instead work with a smaller domain than ${D}_{\lambda, \vartheta}$. To achieve this, we introduce the polar coordinate $\sigma_{\lambda}({x}) \in [0, 2\pi)$ defined by 
\begin{equation} \label{Yao0306}
{x}_{1} - {x}_{1\lambda} = {r}\cos\big(\sigma_{\lambda}({x}) - \frac{\beta}{2}\big), \quad 
{x}_{2} - {x}_{2\lambda} = {r}\sin\big(\sigma_{\lambda}({x}) - \frac{\beta}{2}\big), 
\end{equation}
where ${r} = \sqrt{({x}_{1} - {x}_{1\lambda})^{2} + ({x}_{2} - {x}_{2\lambda})^{2}}$. For $\lambda \in \R^{ + }$, $0 \leq \vartheta_{1} < \vartheta = (\vartheta_{1} + \vartheta_{3})/2 < \vartheta_{3} \leq \pi$, we define the moving domains 
\begin{equation*}
{D}_{\lambda, \vartheta, \vartheta_1} = \left\{ {x} \in \Sigma \cap \Sigma^{\lambda, \vartheta}: \; \vartheta_{1} < \sigma_{\lambda}({x}) < \vartheta \right\}
\end{equation*}
and attempt to prove
\begin{equation} \label{Yao0305c}
{w}^{\lambda, \vartheta}({x}) = {u}({x}) - {u}^{\lambda, \vartheta}({x}) < 0 \quad \text{for} \quad {x} \in {D}_{\lambda, \vartheta, \vartheta_1}
\end{equation}
for suitable choices of $\vartheta$ and $\vartheta_{1}$; see \autoref{Fig03MMP90} and \autoref{Fig03Lines}. 

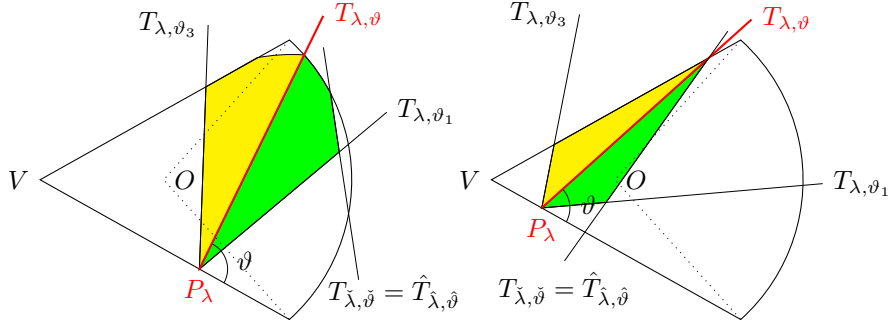
\begin{figure}[htp] \centering
\begin{tikzpicture}[scale = 1.25] 
\pgfmathsetmacro\ALPHA{48.04}; \pgfmathsetmacro\BETA{29.29}; \pgfmathsetmacro\radius{2.00}; 
\pgfmathsetmacro\aaa{\radius*sin(abs(\BETA - \ALPHA))/sin(\BETA)}; 
\pgfmathsetmacro\ka{tan(\BETA)}; 
\pgfmathsetmacro\xA{\radius*cos(\ALPHA)}; \pgfmathsetmacro\yA{\radius*sin(\ALPHA)}; 
\pgfmathsetmacro\THETA{93.24}; \pgfmathsetmacro\THEbb{\THETA - \BETA}; 
\pgfmathsetmacro\THETAa{69.24}; \pgfmathsetmacro\THETAc{2*\THETA - \THETAa}; 
\pgfmathsetmacro\THEaa{\THETAa - \BETA}; \pgfmathsetmacro\THEcc{\THETAc - \BETA}; 
\pgfmathsetmacro\LAM{0.9712*\radius}; \pgfmathsetmacro\Move{0}; 
\pgfmathsetmacro\xP{ - \aaa + \LAM*cos(\BETA)}; \pgfmathsetmacro\yP{0 - \LAM*sin(\BETA)}; 
\pgfmathsetmacro\kb{tan(\THETA - \BETA)}; \pgfmathsetmacro\bb{\yP - \xP*\kb}; 
\pgfmathsetmacro\xVv{ - \aaa*cos(2*\THEbb) - \bb*sin(2*\THEbb)}; 
\pgfmathsetmacro\yVv{ - \aaa*sin(2*\THEbb) + 2*\bb*(cos(\THEbb))^2}; 
\pgfmathsetmacro\Coefa{1}; 
\pgfmathsetmacro\Coefb{(\yP*cos(\THEbb) - \xP*sin(\THEbb))*sin(\THEbb)}; 
\pgfmathsetmacro\Coefc{(\yP*cos(\THEbb) - \xP*sin(\THEbb))^2 - \radius^2*(cos(\THEbb))^2}; 
\pgfmathsetmacro\xQ{( - \Coefb + sqrt((\Coefb)^2 - \Coefa*\Coefc))/\Coefa}; 
\pgfmathsetmacro\yQ{\yP + (\xQ - \xP)*tan(\THEbb)}; 
\pgfmathsetmacro\THETAd{2*\THEbb - \BETA}; 
\pgfmathsetmacro\Coefa{1}; 
\pgfmathsetmacro\Coefb{(\yVv*cos(\THETAd) - \xVv*sin(\THETAd))*sin(\THETAd)}; 
\pgfmathsetmacro\Coefc{(\yVv*cos(\THETAd) - \xVv*sin(\THETAd))^2 - \radius^2*(cos(\THETAd))^2}; 
\pgfmathsetmacro\xR{( - \Coefb - sqrt((\Coefb)^2 - \Coefa*\Coefc))/\Coefa}; 
\pgfmathsetmacro\yR{\yVv + (\xR - \xVv)*tan(\THETAd)}; 
\pgfmathsetmacro\xSS{( - tan(\BETA)*\aaa + (\yP - \xP*tan(\THETAc - \BETA)))/(tan(\BETA) - tan(\THETAc - \BETA))}; 
\pgfmathsetmacro\ySS{(tan(\BETA)*(\xSS + \aaa)}; 
\pgfmathsetmacro\xS{\xSS*cos(2*\THEbb) + \ySS*sin(2*\THEbb) - \bb*sin(2*\THEbb)}; 
\pgfmathsetmacro\yS{\xSS*sin(2*\THEbb) - \ySS*cos(2*\THEbb) + 2*\bb*(cos(\THEbb))^2}; 
\fill[fill = green, draw = black, very thin] 
(\Move + \xQ, \yQ) arc (asin(\yQ/\radius) : asin(\yR/\radius) : \radius) -- (\Move + \xS, \yS) -- (\Move + \xP, \yP) -- cycle; 
\fill[fill = yellow, draw = black, very thin] (\Move + \xQ, \yQ) arc ({2*\THEbb - asin(\yQ/\radius)} : {2*\THEbb - asin(\yR/\radius)} : \radius)
 -- (\Move + \xSS, \ySS) -- (\Move + \xP, \yP) -- cycle; 
\draw[] ({\Move + \xA}, {0 - \yA}) arc ( - \ALPHA : \ALPHA : \radius) -- (\Move - \aaa, 0) node[left] {\footnotesize ${V}$} -- cycle; 
\draw[thin, dotted] ({\Move + \xA}, {0 + \yA}) -- (\Move + 0, 0) node[right] {\footnotesize ${O}$} -- ({\Move + \xA}, {0 - \yA}); 
\fill[red] ({\Move - \aaa + \LAM*cos(\BETA)}, {0 - \LAM*sin(\BETA)}) circle (0.008) node[below] {\footnotesize ${P}_{\lambda}$}; 
\pgfmathsetmacro\radB{0.15*\radius}; \pgfmathsetmacro\lenC{1.5*\radius}; 
\draw[very thin] ({\Move + \xP + \radB*cos(\THETA - \BETA)}, {0 + \yP + \radB*sin(\THETA - \BETA)}) arc ({\THETA - \BETA} : { - \BETA} : \radB); 
\draw ({\Move + \xP + \radB*cos(\THETA/2 - \BETA)}, {0 + \yP + \radB*sin(\THETA/2 - \BETA)}) node [right] {\footnotesize $\vartheta$}; 
\draw[very thin] (\Move + \xP, \yP) -- ++ ({\THETAa - \BETA} : {1.3*\radius}) node[right] {\footnotesize ${T}_{\lambda, \vartheta_{1}}$}; 
\draw[very thin] (\Move + \xP, \yP) -- ++ ({\THETAc - \BETA} : {1.3*\radius}) node[left] {\footnotesize ${T}_{\lambda, \vartheta_{3}}$}; 
\draw[red, thick] (\Move + \xP, \yP) -- ++ ({\THETA - \BETA} : {1.5*\radius}) node[right] {\footnotesize ${T}_{\lambda, \vartheta}$}; 
\draw[very thin] ({\xR - 0.8*(\xS - \xR)}, {\yR - 0.8*(\yS - \yR)}) -- ({\xR + 3.2*(\xS - \xR)}, {\yR + 3.2*(\yS - \yR)}) node[below = 1ex, right = - 3ex] {\footnotesize ${T}_{\check{\lambda}, \check{\vartheta}} = \hat{T}_{\hat{\lambda}, \hat{\vartheta}}$}; 
\pgfmathsetmacro\THETA{71.24}; \pgfmathsetmacro\THEbb{\THETA - \BETA}; 
\pgfmathsetmacro\THETAa{34.24}; \pgfmathsetmacro\THETAc{2*\THETA - \THETAa}; 
\pgfmathsetmacro\THEaa{\THETAa - \BETA}; \pgfmathsetmacro\THEcc{\THETAc - \BETA}; 
\pgfmathsetmacro\LAM{0.3067*\radius}; \pgfmathsetmacro\Move{2.4*\radius}; 
\pgfmathsetmacro\xP{ - \aaa + \LAM*cos(\BETA)}; \pgfmathsetmacro\yP{0 - \LAM*sin(\BETA)}; 
\pgfmathsetmacro\kb{tan(\THETA - \BETA)}; \pgfmathsetmacro\bb{\yP - \xP*\kb}; 
\pgfmathsetmacro\xVv{ - \aaa*cos(2*\THEbb) - \bb*sin(2*\THEbb)}; 
\pgfmathsetmacro\yVv{ - \aaa*sin(2*\THEbb) + 2*\bb*(cos(\THEbb))^2}; 
\pgfmathsetmacro\xQ{( - tan(\BETA)*\aaa + (\yP - \xP*tan(\THETA - \BETA)))/(tan(\BETA) - tan(\THETA - \BETA))}; 
\pgfmathsetmacro\yQ{(tan(\BETA)*(\xQ + \aaa)}; 
\pgfmathsetmacro\xSS{( - tan(\BETA)*\aaa + (\yP - \xP*tan(\THETAc - \BETA)))/(tan(\BETA) - tan(\THETAc - \BETA))}; 
\pgfmathsetmacro\ySS{(tan(\BETA)*(\xSS + \aaa)}; 
\pgfmathsetmacro\xS{\xSS*cos(2*\THEbb) + \ySS*sin(2*\THEbb) - \bb*sin(2*\THEbb)}; 
\pgfmathsetmacro\yS{\xSS*sin(2*\THEbb) - \ySS*cos(2*\THEbb) + 2*\bb*(cos(\THEbb))^2}; 
\fill[fill = green, draw = black, very thin] (\Move + \xQ, \yQ) -- (\Move + \xS, \yS) -- (\Move + \xP, \yP) -- cycle; 
\fill[fill = yellow, draw = black, very thin] (\Move + \xQ, \yQ) -- (\Move + \xSS, \ySS) -- (\Move + \xP, \yP) -- cycle; 
\draw[] ({\Move + \xA}, { - \yA}) arc ( - \ALPHA : \ALPHA : \radius) -- (\Move - \aaa, 0) node[left] {\footnotesize ${V}$} -- cycle; 
\draw[thin, dotted] ({\Move + \xA}, {0 + \yA}) -- (\Move + 0, 0) node[right] {\footnotesize ${O}$} -- ({\Move + \xA}, {0 - \yA}); 
\fill[red] ({\Move - \aaa + \LAM*cos(\BETA)}, {0 - \LAM*sin(\BETA)}) circle (0.008) node[below] {\footnotesize ${P}_{\lambda}$}; 
\pgfmathsetmacro\radB{0.15*\radius}; \pgfmathsetmacro\lenC{1.5*\radius}; 
\draw[very thin] ({\Move + \xP + \radB*cos(\THETA - \BETA)}, {0 + \yP + \radB*sin(\THETA - \BETA)}) arc ({\THETA - \BETA} : { - \BETA} : \radB); 
\draw ({\Move + \xP + \radB*cos(\THETA/2 - \BETA)}, {0 + \yP + \radB*sin(\THETA/2 - \BETA)}) node [right] {\footnotesize $\vartheta$}; 
\draw[very thin] (\Move + \xP, \yP) -- ++ ({\THETAa - \BETA} : \lenC) node[right] {\footnotesize ${T}_{\lambda, \vartheta_{1}}$}; 
\draw[very thin] (\Move + \xP, \yP) -- ++ ({\THETAc - \BETA} : \lenC*0.7) node[left] {\footnotesize ${T}_{\lambda, \vartheta_{3}}$}; 
\draw[red, thick] (\Move + \xP, \yP) -- ++ ({\THETA - \BETA} : \lenC) node[right] {\footnotesize ${T}_{\lambda, \vartheta}$}; 
\draw[very thin] (\Move + \xVv, \yVv) node[below] {\footnotesize ${T}_{\check{\lambda}, \check{\vartheta}} = \hat{T}_{\hat{\lambda}, \hat{\vartheta}}$} -- ++ ({2*\THETA - 3*\BETA} : \lenC); 
\end{tikzpicture} 
\caption{The moving lines {\color{red}${T}_{\lambda, \vartheta}$} and the moving domains {\color{green}$D_{\lambda, \vartheta, \vartheta_{1}}$}. }
\label{Fig03Lines}
\end{figure}

It is clear that ${D}_{\lambda, \vartheta, \vartheta_1}$ is nonempty if and only if ${T}_{\lambda, \vartheta} \cap \Sigma \neq \emptyset$, that is, $0 < \lambda < \lambda_{M}(\vartheta)$, where
\begin{equation*}
\begin{aligned}
\lambda_{M}(\vartheta) & = \sup\{\lambda > 0: \; {T}_{\lambda, \vartheta} \cap \Sigma \neq \emptyset\}, \quad \vartheta \in ( 0, \, \tfrac{\pi + \beta}{2} ], \\
\lambda_{\mathrm{max}} & = \sup\{ \lambda_{M}(\vartheta): \; \vartheta \in ( 0, \tfrac{\pi + \beta}{2} ] \} = (1 - {a})\sec\tfrac{\beta}{2}. 
\end{aligned}
\end{equation*}
The boundary $\partial {D}_{\lambda, \vartheta, \vartheta_{1}}$ consists of three parts: 
\begin{enumerate}[label = {\rm(\arabic*)}]
\item
$\Gamma_{\lambda, \vartheta, \vartheta_{1}}^{0} = {T}_{\lambda, \vartheta} \cap \partial {D}_{\lambda, \vartheta, \vartheta_{1}}$, which is always nonempty for $0 < \lambda < \lambda_{M}(\vartheta)$. 
\item
$\Gamma_{\lambda, \vartheta, \vartheta_{1}}^{1} = \big(\partial {D}_{\lambda, \vartheta, \vartheta_{1}} \setminus {T}_{\lambda, \vartheta}\big) \cap \big(\Gamma_{D} \cup \Gamma_{D}^{\lambda, \vartheta}\big)$, representing the spherical boundary. 
\item
$\Gamma_{\lambda, \vartheta, \vartheta_{1}}^{2} = \partial {D}_{\lambda, \vartheta, \vartheta_{1}} \setminus \big(\Gamma_{\lambda, \vartheta, \vartheta_{1}}^{0} \cup \Gamma_{\lambda, \vartheta, \vartheta_{1}}^{1}\big)$, the flat boundary, which consists of two pieces: 
$\Gamma^{2A}_{\lambda, \vartheta, \vartheta_{1}} = \Gamma_{\lambda, \vartheta, \vartheta_{1}}^{2} \cap {T}_{\lambda, \vartheta_{1}}$ and
$\Gamma^{2B}_{\lambda, \vartheta, \vartheta_{1}} = \Gamma_{\lambda, \vartheta, \vartheta_{1}}^{2} \cap {T}_{\check{\lambda}, \check{\vartheta}}$. 
\end{enumerate}
See \autoref{Fig03Lines}. Here, ${T}_{\check{\lambda}, \check{\vartheta}} = \hat{T}_{\hat{\lambda}, \hat{\vartheta}}$ denotes the line containing the reflection of the upper Neumann boundary $\Gamma_{N}^{ + }$ with respect to ${T}_{\lambda, \vartheta}$, where $\check{\vartheta} = 2\vartheta - \beta$, $\hat{\vartheta} = \pi - 2\vartheta + 2\beta$, and
\begin{equation} \label{Yao0308}
\hat{\lambda} = \hat{\lambda}(\lambda, \vartheta) = \frac{\lambda\sin\vartheta}{\sin(\vartheta - \beta)}, \quad
\check{\lambda} = \check{\lambda}(\lambda, \vartheta) = \lambda + \frac{\lambda\sin\beta}{\sin(2\vartheta - \beta)}. 
\end{equation}
Note that $\hat\lambda > \check{\lambda}$ if and only if $\hat{\vartheta} < \check{\vartheta}$, i.e., $\vartheta > (\pi + 3\beta)/4$. Observe that $\Gamma_{\lambda, \vartheta, \vartheta_{1}}^{2A} \cap \Gamma_{\lambda, \vartheta, \vartheta_{1}}^{2B}$ contains at most one point, and both $\Gamma_{\lambda, \vartheta, \vartheta_{1}}^{2A} \setminus \Gamma_{\lambda, \vartheta, \vartheta_{1}}^{2B}$ and $\Gamma_{\lambda, \vartheta, \vartheta_{1}}^{2B} \setminus \Gamma_{\lambda, \vartheta, \vartheta_{1}}^{2A}$ are open line segments. Clearly, $\Gamma_{\lambda, \vartheta, \vartheta_{1}}^{1} \cup \Gamma_{\lambda, \vartheta, \vartheta_{1}}^{2B}$ is always nonempty for $0 < \lambda < \lambda_{M}(\vartheta)$. 

In order to guarantee
\begin{equation} \label{Yao0309}
{w}^{\lambda, \vartheta} \leq 0 \quad \text{on } \Gamma_{\lambda, \vartheta, \vartheta_{1}}^{1}, 
\end{equation}
it suffices to require that the spherical boundary $\Gamma_{\lambda, \vartheta, \vartheta_{1}}^{1}$ is contained within the sphere $\partial {B}$. In other words, the upper mixed boundary point ${P}_{ + }$ and the center ${O}$ must not lie on opposite sides of ${T}_{\lambda, \vartheta}$ for small $\lambda$. 
Thus, for $\lambda > 0$, we define two important angles $\vartheta_{A}(\lambda)$ and $\vartheta_{B}(\lambda)$, and a critical value $\lambda_{C}$ such that: 
\begin{enumerate}[label = {\rm(\arabic*)}]
\item
The center ${O}$ belongs to ${T}_{\lambda, \vartheta}$ if and only if $\vartheta = \vartheta_{A}(\lambda)$; 
\item
The upper mixed boundary point ${P}_{ + }$ belongs to ${T}_{\lambda, \vartheta}$ if and only if $\vartheta = \vartheta_{B}(\lambda)$; 
\item
${T}_{\lambda, \vartheta}$ passes through both $O$ and ${P}_{ + }$ if and only if $\vartheta = (\beta + \alpha)/2$ and $\lambda = \lambda_{C}$. 
\end{enumerate}
Here, $\vartheta_{A}(\lambda)$ and $\vartheta_{B}(\lambda)$ depend on $\lambda$ and are strictly increasing for $\lambda \in (0, \infty)$, and 
\begin{align}
\label{Yao0310a}
\vartheta_{A}(\lambda) & = 
\operatorname{arccot}\Big(\tfrac{|a|\cos(\beta/2) - \lambda}{|a|\sin(\beta/2)}\Big), \\
\label{Yao0310b}
\vartheta_{B}(\lambda) & = 
\operatorname{arccot}\Big(\tfrac{{l}_{N}\cos(\beta) - \lambda}{{l}_{N}\sin\beta}\Big), \\
\lambda_{C} & = \tfrac{|a|\sin(\alpha/2)}{\sin((\beta + \alpha)/2)}, 
\end{align}
where ${l}_{N}$ is the length of the lower Neumann boundary $\Gamma_{N}^{ - }$, that is, 
\begin{equation*}
{l}_{N} = \sin\frac{\alpha}{2}\csc\frac{\beta}{2}. 
\end{equation*}
Thus, we always assume $\vartheta \in {J}_{\lambda}$, where the set ${J}_{\lambda}$ is defined by 
\begin{equation} \label{Yao0312}
{J}_{\lambda} = 
\begin{cases}
(0, \, \frac{\pi + \beta}{2}] & \text{if } \lambda \in [\lambda_{C}, \infty), \\
(0, \, \vartheta_{A}(\lambda)] \cup [\vartheta_{B}(\lambda), \, \frac{\pi + \beta}{2}] & \text{if } \lambda \in (0, \lambda_{C}). 
\end{cases}
\end{equation}
For convenience, we also set
\begin{equation} \label{Yao0314}
\begin{aligned}
\vartheta^{A}(\lambda) & = \min\big\{\vartheta_{A}(\lambda), \, \vartheta_{B}(\lambda), \, \tfrac{\pi + \beta}{2}\big\}, 
\\
\vartheta^{B}(\lambda) & = \min\big\{\vartheta_{B}(\lambda), \, \tfrac{\pi + \beta}{2}\big\}, 
\end{aligned}
\end{equation}
so that
\begin{equation*}
{J}_{\lambda} = (0, \, \vartheta^{A}(\lambda)] \cup [\vartheta^{B}(\lambda), \, \frac{\pi + \beta}{2}] \quad \text{for } \lambda \in (0, \infty). 
\end{equation*}
Clearly, $\bigcap_{\lambda > 0} {J}_{\lambda} = (0, \beta/2] \cup [(\alpha + \beta)/2, (\pi + \beta)/2]$. Hence, \eqref{Yao0309} is fulfilled if $\vartheta \in {J}_{\lambda}$. 
Moreover, ${w}^{\lambda, \vartheta}$ satisfies the linear boundary value problem 
\begin{align*}
\Delta {w}^{\lambda, \vartheta} + {c}^{\lambda, \vartheta}({x}){w}^{\lambda, \vartheta} = 0
& \quad \text{in } {D}_{\lambda, \vartheta, \vartheta_{1}}, 
\\
{w}^{\lambda, \vartheta} = 0 & \quad \text{on } \Gamma_{\lambda, \vartheta, \vartheta_{1}}^{0}, 
\\
{w}^{\lambda, \vartheta} < 0 & \quad \text{on } \Gamma_{\lambda, \vartheta, \vartheta_{1}}^{1} \text{ for } \vartheta \neq \vartheta_{A} \text{ or } \lambda \geq \lambda_{C}, 
\\
{w}^{\lambda, \vartheta} = 0 & \quad \text{ on } \Gamma_{\lambda, \vartheta, \vartheta_{1}}^{1} \text{ for } \vartheta = \vartheta_{A}, \; \lambda < \lambda_{C}. 
\end{align*}
Here the coefficient
\begin{equation*}
{c}^{\lambda, \vartheta}({x}) = 
\begin{cases}
\dfrac{{f}({u}^{\lambda, \vartheta}({x})) - {f}({u}({x}))}
{{u}^{\lambda, \vartheta}({x}) - {u}({x})}, 
& {w}^{\lambda, \vartheta}({x}) \neq 0, 
\\
0, 
& {w}^{\lambda, \vartheta}({x}) = 0, 
\end{cases}
\end{equation*}
is uniformly bounded in $\lambda, \vartheta$, say, $|{c}^{\lambda, \vartheta}| < {c}_{0}$, where ${c}_{0} > 0$ is the Lipschitz constant of ${f}$ on $[0, \sup_{\Sigma} {u}]$. 

Similarly to ${T}_{\lambda, \vartheta}$, we denote by $\hat{T}_{\lambda, \vartheta}$ the line
\begin{equation*}
\hat{T}_{\lambda, \vartheta} = \left\{ {x} \in \R^2: \; 
({x}_{1} - {x}_{1\lambda})\sin(\vartheta - \tfrac{\beta}{2}) - ({x}_{2} + {x}_{2\lambda})\cos(\vartheta - \tfrac{\beta}{2}) = 0 \right\}. 
\end{equation*}
The aim is to prove both \eqref{Yao0305b}, \eqref{Yao0305c} (for suitable $\vartheta_{1}$), and
\begin{subequations} \label{Yao0315}
\begin{align}
\label{Yao0315a}
{u}_{{x}_{1}}\sin(\vartheta - \tfrac{\beta}{2}) - {u}_{{x}_{2}}\cos(\vartheta - \tfrac{\beta}{2}) < 0 & \quad \text{on } {T}_{\lambda, \vartheta} \cap \Sigma, \\
\label{Yao0315b}
{u}_{{x}_{1}}\sin(\vartheta - \tfrac{\beta}{2}) + {u}_{{x}_{2}}\cos(\vartheta - \tfrac{\beta}{2}) < 0 & \quad \text{on } \hat{T}_{\lambda, \vartheta} \cap \Sigma, 
\end{align}
\end{subequations}
for $\lambda > 0$ and $\vartheta \in {J}_{\lambda}$. 
The proof of \eqref{Yao0315b} is analogous to that of \eqref{Yao0315a}; the proof of \eqref{Yao0315a} relies on \eqref{Yao0315b}, which is why both are included here. The sign of these directional derivatives in the interior of $\Sigma$ follows directly from \eqref{Yao0305c} and the Hopf boundary lemma. We note that the strict monotonicity property stated in \eqref{Yao0109a} on the Neumann boundary $\Gamma_{N}$ presents a challenging but crucial aspect for the proof; a detailed argument can be found in \autoref{lma307} below. Building on this, we aim to establish the monotonicity of the solution across both $\Sigma$ and $\Gamma_{N}$ as follows: 
\begin{subequations} \label{Yao0316}
\begin{align}
\label{Yao0316a}
{u}_{{x}_{1}}\sin(\vartheta - \tfrac{\beta}{2}) - {u}_{{x}_{2}}\cos(\vartheta - \tfrac{\beta}{2}) < 0 & \quad \text{on } {T}_{\lambda, \vartheta} \cap (\Sigma \cup \Gamma_{N}^{ - }), \\
\label{Yao0316b}
{u}_{{x}_{1}}\sin(\vartheta - \tfrac{\beta}{2}) + {u}_{{x}_{2}}\cos(\vartheta - \tfrac{\beta}{2}) < 0 & \quad \text{on } \hat{T}_{\lambda, \vartheta} \cap (\Sigma \cup \Gamma_{N}^{ + }). 
\end{align}
\end{subequations}

In what follows, we shall omit the subscript $\vartheta_{1}$ when $\vartheta_{1} = \max\{2\vartheta - \pi, 0\}$ or when there is no confusion. Thus, $\Gamma_{\lambda, \vartheta, \vartheta_{1}}^{0}$, $\Gamma_{\lambda, \vartheta, \vartheta_{1}}^{1}$, $\Gamma_{\lambda, \vartheta, \vartheta_{1}}^{2}$, $\Gamma_{\lambda, \vartheta, \vartheta_{1}}^{2A}$, and $\Gamma_{\lambda, \vartheta, \vartheta_{1}}^{2B}$ will be abbreviated as $\Gamma_{\lambda, \vartheta}^{0}$, $\Gamma_{\lambda, \vartheta}^{1}$, $\Gamma_{\lambda, \vartheta}^{2}$, $\Gamma_{\lambda, \vartheta}^{2A}$, and $\Gamma_{\lambda, \vartheta}^{2B}$, respectively. Likewise, the functions $\vartheta^{A}(\lambda)$, $\vartheta^{B}(\lambda)$, $\vartheta_{A}(\lambda)$, and $\vartheta_{B}(\lambda)$ will be shortened to $\vartheta^{A}$, $\vartheta^{B}$, $\vartheta_{A}$, and $\vartheta_{B}$ whenever the dependence on \(\lambda\) is clear from context. 


\subsection{The moving plane method with a priori boundary condition}

\begin{lma} \label{lma301}
Let $\{ \mu_{{t}} \}$, $\{ \phi_{1, {t}} \}$, $\{ \phi_{2, {t}} \}$, $\{ \phi_{3, {t}} \}$, ${t} \in [0, \infty)$, be continuous families of real parameters such that: 
\begin{enumerate}[label = {\rm(A\arabic*)}]
\item \label{Yaoit32a}
$\mu_{{t}} > 0$, $0 \leq \phi_{{i}, {t}} \leq \pi$ for ${i} = 1, 2, 3$, and $\phi_{3, {t}} - \phi_{2, {t}} = \phi_{2, {t}} - \phi_{1, {t}} > 0$. 
\item \label{Yaoit32b}
${T}_{\mu_{{t}}, \phi_{2, {t}}} \cap \Sigma \neq \emptyset$ for ${t} \in [0, 1)$.
\item \label{Yaoit32c} 
one of the following holds: 
\begin{enumerate}[label = {\rm(A3\alph*)}]
\item \label{Yaoit32c1}
${T}_{\mu_{{t}}, \phi_{2, {t}}} \cap \Sigma = \emptyset$ for all ${t} \geq 1$, and $\operatorname{diam}({D}_{\mu_{{t}}, \phi_{2, {t}}, \phi_{1, {t}}}) \to 0$ as ${t} \nearrow 1$. 
\item \label{Yaoit32c2}
${T}_{\mu_{{t}}, \phi_{2, {t}}} \cap \Sigma \neq \emptyset$ and ${w}^{\mu_{{t}}, \phi_{2, {t}}} < 0$ in ${D}_{\mu_{{t}}, \phi_{2, {t}}, \phi_{1, {t}}}$ for ${t} = 1$. 
\end{enumerate}
\item \label{Yaoit32d}
${w}^{\lambda, \vartheta}$ satisfies, for $\vartheta = \phi_{2, {t}}$, $\vartheta_{1} = \phi_{1, {t}}$, $\lambda = \mu_{{t}}$, and all ${t} \in [0, 1)$, 
\begin{subequations} \label{Yao0322}
\begin{align}
&\Delta {w}^{\lambda, \vartheta} + {c}^{\lambda, \vartheta}({x}) {w}^{\lambda, \vartheta} = 0 \quad \text{in } {D}_{\lambda, \vartheta, \vartheta_{1}}, \\
&{w}^{\lambda, \vartheta} = 0 \quad \text{on } \Gamma_{\lambda, \vartheta, \vartheta_{1}}^{0}, \\
\label{Yao0322c}
&{w}^{\lambda, \vartheta} \leq 0 \quad \text{on } \Gamma_{\lambda, \vartheta, \vartheta_{1}}^{1}, \\
\label{Yao0322d}
&\nabla {w}^{\lambda, \vartheta} \cdot \nu \leq 0 \quad \text{on } \Gamma_{\lambda, \vartheta, \vartheta_{1}}^{2A}, \\
\label{Yao0322e}
&\nabla {w}^{\lambda, \vartheta} \cdot \nu \leq 0 \quad \text{on } \Gamma_{\lambda, \vartheta, \vartheta_{1}}^{2B}, \\
&\text{one of \eqref{Yao0322c} and \eqref{Yao0322e} holds strictly at some point}. 
\end{align}
\end{subequations}
\end{enumerate}
Then, for every ${t} \in [0, 1)$, one has
\begin{equation}\label{Yao0323}
\begin{gathered}
{w}^{\mu_{{t}}, \phi_{2, {t}}} < 0 \quad \text{in } {D}_{\mu_{{t}}, \phi_{2, {t}}, \phi_{1, {t}}}, 
\\ {u}_{{x}_{1}}\sin(\phi_{2, {t}} - \tfrac{\beta}{2}) - {u}_{{x}_{2}}\cos(\phi_{2, {t}} - \tfrac{\beta}{2}) < 0 \quad \text{on } {T}_{\mu_{{t}}, \phi_{2, {t}}} \cap \Sigma. 
\end{gathered}
\end{equation}
\end{lma}

\begin{proof}
We shall apply the maximum principle for the mixed problem to derive the negativity of ${w}^{\mu_{{t}}, \phi_{2, {t}}}$ in ${D}_{\mu_{{t}}, \phi_{2, {t}}, \phi_{1, {t}}}$. 

\textbf{Step 1}: 
The initiation of the moving plane method.

By the definition of ${D}_{\lambda, \vartheta, \vartheta_{1}}$, we have $\nu \cdot \mathbf{e}_{\vartheta - \beta/2 - \pi/2} \geq 0$ on $\Gamma_{\lambda, \vartheta, \vartheta_{1}}^{2}$. Let $\eta$ be the small constant in \autoref{lma201MP}. Suppose the case \ref{Yaoit32c1} occurs. By assumptions \ref{Yaoit32a}, \ref{Yaoit32b} and \ref{Yaoit32c1}, there exists $\delta_{0} > 0$ such that for every ${t} \in (1 - \delta_{0}, 1)$, 
\begin{equation*} 
{D}_{\mu_{t}, \phi_{2, {t}}, \phi_{1, {t}}} \subset \{{x}: 0 < ({{x} - {P}_{\mu_{t}}}) \cdot \mathbf{e}_{\phi_{2, {t}} - \beta/2 - \pi/2} < \eta\}.
\end{equation*}
Hence by the maximum principle in \autoref{lma201MP}, one deduces the negativity of ${w}^{\mu_{t}, \phi_{2, {t}}}$ in ${D}_{\mu_{t}, \phi_{2, {t}}, \phi_{1, {t}}}$. It follows that \eqref{Yao0323} holds for all ${t} \in (1 - \delta_{0}, 1)$. Set 
\begin{equation*}
\bar{t} = \inf\{{t} ' \in [0, 1]: \; \text{\eqref{Yao0323} holds for every ${t} \in [{t} ', 1]$}\}. 
\end{equation*}
It follows that $\bar{t}$ is well-defined, with $\bar{t} < 1$ under \ref{Yaoit32c1} and $\bar{t} \leq 1$ under \ref{Yaoit32c2}. 

To obtain the desired result, we proceed by contradiction and suppose that $\bar{t} > 0$. 

\textbf{Step 2}: 
We conclude that \eqref{Yao0323} holds for ${t} = \bar{t}$. 

If $\bar{t} = 1$ the conclusion is trivial, so assume $\bar{t} < 1$. By continuity, 
\begin{equation*}
{w}^{\mu_{t}, \phi_{2, {t}}}({x}) \leq 0 \text{ for } {x} \in {D}_{\mu_{t}, \phi_{2, {t}}, \phi_{1, {t}}}
\end{equation*}
at ${t} = \bar{t}$. Since the boundary condition of ${w}^{\mu_{t}, \phi_{2, {t}}}$ holds strictly at some point on $\partial {D}_{\mu_{t}, \phi_{2, {t}}, \phi_{1, {t}}}$, the strong maximum principle and Hopf lemma imply 
\begin{equation} \label{Yao0326a}
{w}^{\mu_{\bar{t}}, \phi_{2, \bar{t}}}({x}) < 0 \text{ for } {x} \in {D}_{\mu_{\bar{t}}, \phi_{2, \bar{t}}, \phi_{1, \bar{t}}}
\end{equation}
and
\begin{equation*}
{u}_{{x}_{1}}\sin(\phi_{2, \bar{t}} - \tfrac{\beta}{2}) - {u}_{{x}_{2}}\cos(\phi_{2, \bar{t}} - \tfrac{\beta}{2}) < 0 \text{ on } {T}_{\mu_{\bar{t}}, \phi_{2, \bar{t}}} \cap \Sigma. 
\end{equation*}

\textbf{Step 3}: 
We claim that \eqref{Yao0323} holds whenever $\bar{t} - {t} > 0$ is sufficiently small. 

Observe that ${D}_{\mu_{{t}}, \phi_{2, {t}}, \phi_{1, {t}}}$ lies in an infinite sector $\mathcal{A}^{t}$ of angle $\pi + \beta - \phi_{3, {t}} \geq \beta$, and its Neumann boundary $\Gamma_{\mu_{{t}}, \phi_{2, {t}}, \phi_{1, {t}}}^{2A}$ and $\Gamma_{\mu_{{t}}, \phi_{2, {t}}, \phi_{1, {t}}}^{2B}$ lie on $\partial \mathcal{A}^{t}$. Let $\eta$ be the small constant in \autoref{lma205MP}. 
Fix a compact set ${K}$ of ${D}_{\mu_{\bar{t}}, \phi_{2, \bar{t}}, \phi_{1, \bar{t}}}$ so that 
\begin{equation*}
|{D}_{\mu_{\bar{t}}, \phi_{2, \bar{t}}, \phi_{1, \bar{t}}} \setminus {K}| < \eta \beta/2. 
\end{equation*}
By \eqref{Yao0326a} and compactness, ${w}^{\mu_{\bar{t}}, \phi_{2, \bar{t}}} < 0$ in the compact set ${K}$. Hence by continuity there is a small constant $\epsilon_{0} > 0$ such that for all ${t} \in [\bar{t} - \epsilon_{0}, \bar{t}]$, 
\begin{equation*}
|{D}_{\mu_{t}, \phi_{2, {t}}, \phi_{1, {t}}} \setminus {K}| < \eta \beta, \quad {w}^{\mu_{t}, \phi_{2, {t}}} < 0 \text{ in } {K}. 
\end{equation*}

In the remain domain $\mathcal{D}_{t} = {D}_{\mu_{t}, \phi_{2, {t}}, \phi_{1, {t}}} \setminus {K}$, we have
\begin{equation*}
\begin{cases}
\Delta {w}^{\mu_{t}, \phi_{2, {t}}} + {c}^{\mu_{t}, \phi_{2, {t}}}{w}^{\mu_{t}, \phi_{2, {t}}} = 0 & \text{in } \mathcal{D}_{t}, \\
\partial_{\nu}{w}^{\mu_{t}, \phi_{2, {t}}} \leq 0 & \text{on } 
\Gamma_{\mu_{t}, \phi_{2, {t}}, \phi_{1, {t}}}^{2} \subset \partial \mathcal{D}_{t}, \\
{w}^{\mu_{t}, \phi_{2, {t}}} \leq, \not\equiv0 & \text{on } 
\partial \mathcal{D}_{t} \setminus \Gamma_{\mu_{t}, \phi_{2, {t}}, \phi_{1, {t}}}^{2}. 
\end{cases}
\end{equation*}
Applying \autoref{lma205MP} to ${w}^{\mu_{t}, \phi_{2, {t}}}$ in $\mathcal{D}_{t}$ we infer that ${w}^{\mu_{t}, \phi_{2, {t}}} < 0$ in $\mathcal{D}_{t}$. Hence, \eqref{Yao0323} holds for ${t} \in [\bar{t} - \epsilon_{0}, \bar{t}]$. This finishes this step. 

The above three steps contradict the minimality of $\bar{t} > 0$, so $\bar{t} = 0$. Therefore \eqref{Yao0323} holds for all ${t} \in [0, 1)$. 
\end{proof}

To simplify terminology, we say that ${w}^{\lambda, \vartheta, \vartheta_{1}}$ satisfies \eqref{Yao0322} whenever the conditions in \eqref{Yao0322} hold. We say that ${w}^{\lambda, \vartheta, \vartheta_{1}}$ is \emph{negative} if ${w}^{\lambda, \vartheta} < 0$ in ${D}_{\lambda, \vartheta, \vartheta_{1}}$. 


\subsection{The case for $\lambda \geq \lambda_{C}$}

For $\lambda > 0$ and $\vartheta \in \bigl(\beta/2, (\pi + \beta)/2\bigr]$, the functions $\hat{\lambda}$ and $\check{\lambda}$ are defined in \eqref{Yao0308}. We now define $\lambda_{*} = \check{\lambda}$ when $\vartheta \in (\beta/2, (\pi + 3\beta)/4]$, and $\lambda_{*} = \hat{\lambda}$ when $\vartheta \in [(\pi + 3\beta)/4, (\pi + \beta)/2]$. That is
\begin{equation*}
\lambda_{*} = \lambda_{*}(\lambda, \vartheta) = \lambda/\zeta(\vartheta)
\end{equation*}
where
\begin{equation*}
\zeta(\vartheta) = 
\begin{cases}
\frac{\sin(2\vartheta - \beta)}{\sin\beta + \sin(2\vartheta - \beta)} & \text{ for } \vartheta \in [\frac{\beta}{2}, \frac{\pi + 3\beta}{4}], \\
\frac{\sin(\vartheta - \beta)}{\sin\vartheta} & \text{ for } \vartheta \in [\frac{\pi + 3\beta}{4}, \frac{\pi + \beta}{2}], 
\end{cases}
\end{equation*}
see the function $\zeta(\vartheta)$ in \autoref{Fig04Zeta}. Note that for $\beta < \vartheta \leq (\pi + \beta)/2$, we have $\lambda_{*}(\lambda, \vartheta) = \min\{\hat{\lambda}(\lambda, \vartheta), \check{\lambda}(\lambda, \vartheta)\}$. We denote 
\begin{equation} \label{Yao0332zeta}
\beta_{\flat} = \max\Big\{ \frac{\pi + 2\beta}{3}, \, \frac{\pi}{2} \Big\}, \quad \zeta_{\flat} = \max\Big\{\zeta(\vartheta): \; \vartheta \in \big[\frac{\beta}{2}, \, \beta_{\flat}\big]\Big\}
\end{equation}
and
\begin{equation*}
\lambda_{\flat} = \{\lambda' > 0: \; (0, \beta_{\flat}] \subset {J}_{\lambda} \text{ for all } \lambda \geq \lambda'\}. 
\end{equation*}
It is clear that $\lambda_{\flat} = \lambda_{C}$ if $\beta_{\flat} \geq (\alpha + \beta)/2$, and $\lambda_{\flat} < \lambda_{C}$ if $\beta_{\flat} < (\alpha + \beta)/2$. 

\begin{figure}[htp] \centering
\begin{tikzpicture}[scale = 2] 
\pgfmathsetmacro\BETA{53.27*2}; 
\pgfmathsetmacro\THETAa{(180 + \BETA)/2}; \pgfmathsetmacro\ya{sin(\THETAa - \BETA)/sin(\THETAa)}); 
\pgfmathsetmacro\THETAb{(180 + 3*\BETA)/4}; \pgfmathsetmacro\yb{sin(\THETAb - \BETA)/sin(\THETAb)}); 
\pgfmathsetmacro\THETAc{(180 + 2*\BETA)/4}; \pgfmathsetmacro\yc{sin(2*\THETAc - \BETA)/(sin(2*\THETAc - \BETA) + sin(\BETA))}; 
\pgfmathsetmacro\THETAd{\BETA/2}; \pgfmathsetmacro\yd{sin(2*\THETAd - \BETA)/(sin(2*\THETAd - \BETA) + sin(\BETA))}; 
\draw[very thin, red] plot [domain = \THETAb : \THETAa] ({\x/50}, {sin(\x - \BETA)/sin(\x)}); 
\draw[very thin, red] plot [domain = \THETAd : \THETAb] ({\x/50}, {sin(2*\x - \BETA)/(sin(2*\x - \BETA) + sin(\BETA))}); 
\fill(\THETAa/50, \ya) circle (0.5pt); \fill(\THETAb/50, \yb) circle (0.5pt); \fill(\THETAc/50, \yc) circle (0.5pt); 
\fill(\THETAd/50, \yd) circle (0.5pt) node [below] {\scriptsize $\frac{\beta}{2}$}; 
\draw[dashed] (\THETAa/50, \ya) -- (\THETAa/50, {0}) node [below] {\scriptsize $\frac{\pi + \beta}{2}$}; 
\draw[dashed] (\THETAb/50, \yb) -- (\THETAb/50, {0}) node [below] {\scriptsize $\frac{\pi + 3\beta}{4}$}; 
\draw[dashed] (\THETAc/50, \yc) -- (\THETAc/50, {0}) node [below] {\scriptsize $\frac{\pi + 2\beta}{4}$}; 
\draw[ -> ] (\THETAd/2/50, 0) -- (180/50, 0) node [right] {\scriptsize $\vartheta$}; 
\draw[ -> ] (\THETAd/2/50, 0) -- (\THETAd/2/50, 1.2) node [right] {\scriptsize $\zeta(\vartheta)$}; 
\end{tikzpicture} 
\caption{The function of $\zeta(\vartheta)$. }
\label{Fig04Zeta}
\end{figure}
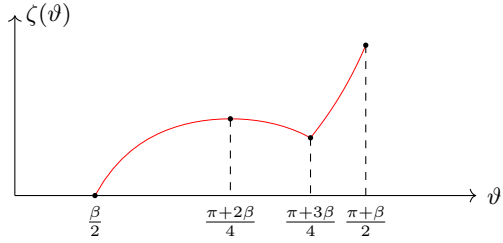

\begin{lma} \label{lma302}
Let $0 < \beta < \alpha \leq \pi$. Suppose there exists $\Lambda > 0$ such that \eqref{Yao0315} holds for all $\vartheta \in (0, (\pi + \beta)/2]$ and $\lambda \geq \Lambda$. Then \eqref{Yao0315} also holds for $\vartheta = \pi/2$ and
\begin{equation} \label{Yao0334}
\lambda \geq \tilde{\Lambda} = \max\left\{ \zeta_{\flat} \Lambda, \, \lambda_{\flat} \right\}. 
\end{equation}
\end{lma}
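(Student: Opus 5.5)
The plan is to fix $\vartheta=\pi/2$ and $\lambda_{0}\geq\tilde\Lambda$, and to run \autoref{lma301} along the one-parameter family of parallel lines ${T}_{\mu_{t},\pi/2}$, with $\mu_{t}$ increasing continuously from $\mu_{0}=\lambda_{0}$ to $\mu_{1}=\lambda_{M}(\pi/2)$. I take $\phi_{2,t}=\pi/2$, $\phi_{1,t}=\max\{2\cdot\frac{\pi}{2}-\pi,0\}=0$ and $\phi_{3,t}=\pi$, so that \ref{Yaoit32a} holds. Conditions \ref{Yaoit32b} and \ref{Yaoit32c1} hold by the definition of $\lambda_{M}$: the cap ${D}_{\mu_t,\pi/2}$ shrinks to a point near ${P}_{-}$ or on $\Gamma_D$ as ${T}_{\mu_t,\pi/2}$ leaves $\Sigma$. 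The whole task is therefore to verify the a priori boundary conditions \eqref{Yao0322} for every $\lambda\geq\lambda_{0}$. \autoref{lma301} then yields ${w}^{\lambda,\pi/2}<0$ and \eqref{Yao0315a} on ${T}_{\lambda,\pi/2}\cap\Sigma$. Inequality \eqref{Yao0315b} follows by the identical argument applied to $\tilde u(x_1,x_2)=u(x_1,-x_2)$, which exchanges ${T}$ and $\hat{T}$ and the roles of $\Gamma_N^{\pm}$.

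I check the four pieces of $\partial{D}_{\lambda,\pi/2}$ in turn.

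\begin{enumerate}[label=(\alph*)]
\item On $\Gamma^{0}$ we have ${w}=0$ trivially.
\item On $\Gamma^{1}$: since $\lambda\geq\lambda_{0}\geq\lambda_{\flat}$ and $\pi/2\leq\beta_{\flat}$, the definition of $\lambda_{\flat}$ gives $\pi/2\in{J}_{\lambda}$. Hence \eqref{Yao0309} holds, with strict inequality except in the degenerate case $\vartheta=\vartheta_{A}$, $\lambda<\lambda_{C}$.
\item On $\Gamma^{2A}$: with $\vartheta_{1}=0$ this piece lies on $\Gamma_{N}^{-}$. Because ${T}_{\lambda,\pi/2}$ is perpendicular to $\Gamma_{N}^{-}$, the reflection maps the line of $\Gamma_N^-$ to itself. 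Thus both $u$ and $u^{\lambda,\pi/2}$ have zero conormal derivative there, and $\partial_\nu{w}=0$.
\item On $\Gamma^{2B}$, the genuinely nontrivial piece: it lies on the reflection of $\Gamma_{N}^{+}$, i.e. on ${T}_{\check\lambda,\check\vartheta}=\hat{T}_{\hat\lambda,\hat\vartheta}$, and $\partial_\nu u^{\lambda,\pi/2}=0$ there. So $\partial_\nu{w}=\partial_\nu u$, which up to a positive factor is exactly the directional derivative appearing in \eqref{Yao0315a} for the line ${T}_{\check\lambda,\check\vartheta}$, or in \eqref{Yao0315b} for $\hat{T}_{\hat\lambda,\hat\vartheta}$.
\end{enumerate}

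To invoke the hypothesis on this line I split according to whether $\pi/2\leq(\pi+3\beta)/4$ (i.e. $\beta\geq\pi/3$) or not.

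In the first case I use ${T}_{\check\lambda,\check\vartheta}$ with $\check\vartheta=\pi-\beta\in(0,(\pi+\beta)/2]$ and $\check\lambda=\lambda/\zeta(\pi/2)$.

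In the second case I use $\hat{T}_{\hat\lambda,\hat\vartheta}$ with $\hat\vartheta=2\beta\in(0,(\pi+\beta)/2]$ and $\hat\lambda=\lambda/\zeta(\pi/2)$.

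In either case $\lambda_{*}=\lambda/\zeta(\pi/2)\geq\lambda_{0}/\zeta_{\flat}\geq\Lambda$, using $\lambda_0\geq\zeta_\flat\Lambda$ and $\zeta(\pi/2)\leq\zeta_{\flat}$ (as $\pi/2\in[\beta/2,\beta_\flat]$). So the hypothesis of \autoref{lma302} gives the strict sign of $\partial_\nu u$ on the points of $\Gamma^{2B}$ lying in $\Sigma$, and $\partial_\nu{w}<0$ there. Since $\Gamma^{2B}$ is relatively open in that line and nonempty whenever $\Gamma^{1}$ is empty, the strictness requirement in \eqref{Yao0322} is met in all cases.

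The step I expect to be the main obstacle is the sign bookkeeping in (d). One has to confirm that the outward normal of ${D}_{\lambda,\pi/2}$ along $\Gamma^{2B}$ points in the same direction as the vector $(\sin(\check\vartheta-\beta/2),-\cos(\check\vartheta-\beta/2))$, or its hatted analogue. Equivalently, ${D}_{\lambda,\pi/2}$ must lie on the side of ${T}_{\check\lambda,\check\vartheta}$ (resp. $\hat{T}_{\hat\lambda,\hat\vartheta}$) where the moving-line convention places the complement of the cap. One also has to check that $\Gamma^{2B}\subset\Sigma$ rather than touching $\Gamma_N$, where only the non-strict version \eqref{Yao0315} is available.

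I would verify both points by an explicit computation with the parametrization \eqref{Yao0304a}, separately on the two branches $\vartheta\lessgtr(\pi+3\beta)/4$ of $\zeta$. The remaining ingredients, namely the maximum principles \autoref{lma201MP} and \autoref{lma205MP}, are already packaged inside \autoref{lma301}.
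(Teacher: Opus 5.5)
Your proposal is correct and is essentially the paper's own proof. As there, you fix $\vartheta=\pi/2$, $\vartheta_{1}=0$, $\vartheta_{3}=\pi$; you get homogeneous Neumann data on $\Gamma^{2A}$ because ${T}_{\lambda,\pi/2}\perp\Gamma_{N}^{-}$, and \eqref{Yao0309} from $\pi/2\in{J}_{\lambda}$ via $\lambda\geq\lambda_{\flat}$. You then get the strict condition on $\Gamma^{2B}\subset{T}_{\check\lambda,\check\vartheta}=\hat{T}_{\hat\lambda,\hat\vartheta}$ from the hypothesis at $\lambda_{*}=\lambda/\zeta(\pi/2)\geq\Lambda$ (using $\check\vartheta=\pi-\beta$ or $\hat\vartheta=2\beta$), and apply \autoref{lma301}.

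The sign check you flagged does go through. ${D}_{\lambda,\pi/2}\subset\Sigma^{\lambda,\pi/2}$ lies on the side of the reflected line of $\Gamma_{N}^{+}$ that contains ${V}$, so its outward normal there is exactly the cap direction in \eqref{Yao0315a} (resp.\ \eqref{Yao0315b}). The only loose end is the degenerate case $\vartheta_{A}(\lambda)=\pi/2<\vartheta_{B}(\lambda)$, which can occur only at $\lambda=\lambda_{\flat}$. There ${w}\equiv0$ on $\Gamma^{1}$, so strictness must come from $\Gamma^{2B}$. This piece is nonempty in that case, because the reflected cap contains points of ${B}$ near ${P}_{+}$ that lie outside $\mathcal{C}$.
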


\begin{proof}
Fix $\vartheta = \pi/2$, $\vartheta_{1} = 0$, and $\vartheta_{3} = \pi$. Observe that both $\Gamma_{\lambda, \vartheta, \vartheta_{1}}^{2A}$ and its reflection with respect to ${T}_{\lambda, \vartheta}$ are contained within the lower Neumann boundary $\Gamma_{N}^{ - }$; see \autoref{Fig03MMP90}. Owing to the Neumann boundary condition satisfied by ${u}$ on $\Gamma_{N}^{ - }$, we see that ${w}^{\lambda, \vartheta}$ satisfies the homogeneous Neumann boundary condition on $\Gamma_{\lambda, \vartheta, \vartheta_{1}}^{2A}$. 

To verify the Neumann-type boundary condition on $\Gamma_{\lambda, \vartheta, \vartheta_{1}}^{2B}$, note that
\begin{equation*}
\Gamma_{\lambda, \vartheta, \vartheta_{1}}^{2B} \subset {T}_{\hat{\lambda}, \, \hat{\vartheta}} = {T}_{\check{\lambda}, \, \check{\vartheta}}, 
\end{equation*}
where
\begin{equation*}
\text{either } \check{\vartheta} = \pi - \beta \in (0, \, (\pi + \beta)/2] \quad \text{or} \quad \hat{\vartheta} = 2\beta \in (0, \, (\pi + \beta)/2]. 
\end{equation*}
Under \eqref{Yao0334}, we have
\begin{equation*}
\lambda_{*} = \lambda/\zeta(\pi/2) \geq \Lambda \quad \text{and} \quad \pi/2 \in {J}_{\lambda}. 
\end{equation*}
Therefore, ${w}^{\lambda, \vartheta, \vartheta_{1}}$ fulfills the strict boundary condition \eqref{Yao0322d} on $\Gamma_{\lambda, \vartheta, \vartheta_{1}}^{2B}$, and thus satisfies \eqref{Yao0322} for all $\lambda \geq \tilde{\Lambda}$. Applying \autoref{lma301} to ${w}^{\lambda, \vartheta, \vartheta_{1}}$ completes the proof. 
\end{proof}

\begin{lma} \label{lma303}
Let $0 < \beta < \alpha \leq \pi$. Suppose there exists $\Lambda > 0$ such that \eqref{Yao0315} holds for all $\vartheta \in (0, (\pi + \beta)/2]$ and $\lambda \geq \Lambda$. 
Then \eqref{Yao0315} also holds for all $\vartheta \in (0, \beta_{\flat}]$ and $\lambda \geq \tilde{\Lambda} = \max\{\zeta_{\flat} \Lambda, \lambda_{\flat}\}$. 
\end{lma}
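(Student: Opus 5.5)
The plan is to imitate the proof of \autoref{lma302}, now for an arbitrary fixed $\vartheta \in (0, \beta_{\flat}]$ instead of $\vartheta = \pi/2$. For each such $\vartheta$ and each $\lambda \geq \tilde{\Lambda}$ we want to apply \autoref{lma301}. The family is the one-parameter family $\mu_t$ increasing from $\lambda$ to $\lambda_M(\vartheta)$, with $\phi_{2,t} = \vartheta$ fixed. The auxiliary angle is $\phi_{1,t} = \vartheta_1 = \max\{2\vartheta - \pi, 0\}$, so that $\phi_{3,t} = \min\{2\vartheta, \pi\} \le \pi$. Then assumptions \ref{Yaoit32a}--\ref{Yaoit32c} are automatic, with \ref{Yaoit32c1} holding since the cap degenerates as $\mu_t \nearrow \lambda_M(\vartheta)$. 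The inequality \eqref{Yao0315b} on $\hat{T}_{\lambda,\vartheta}$ follows by the same argument after reflecting in $x_2 = 0$: the roles of $\Gamma_N^\pm$ are interchanged and the hypothesis is symmetric. So it suffices to check the boundary conditions \eqref{Yao0322} and obtain \eqref{Yao0315a}.

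\textbf{Spherical part and $\Gamma^{2A}$.} By definition of $\lambda_\flat$, $(0, \beta_\flat] \subset J_\lambda$ for all $\lambda \ge \lambda_\flat$. Hence $\vartheta \in J_{\mu_t}$ along the whole family, and \eqref{Yao0322c} holds, strictly on $\Gamma^1$ since $\vartheta \neq \vartheta_A$ or $\lambda \ge \lambda_C$. This gives the required strict point. On $\Gamma^{2A} \subset T_{\lambda, \vartheta_1}$ there are two cases.

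If $\vartheta \le \pi/2$, then $\vartheta_1 = 0$. Then $\Gamma^{2A}$ lies on $\Gamma_N^-$, and its reflection across $T_{\lambda,\vartheta}$ lies on the line $T_{\lambda, 2\vartheta}$ through $P_\lambda$.

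If $\vartheta \in (\pi/2, \beta_\flat]$, then $\Gamma^{2A}$ is the reflection of $T_{\lambda, \pi}$, i.e. of $\Gamma_N^-$ beyond $P_\lambda$.

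In either case one pair of points, say on $\Gamma_N^-$, contributes $\partial_\nu u = 0$. For the reflected side, $\partial_\nu w$ is a directional derivative of $u$ along the reflected normal. The key observation is that this direction is the normal to a moving line $T_{\lambda', \vartheta'}$ with $\vartheta' \in (0, (\pi+\beta)/2]$. If $\lambda' \ge \Lambda$, the hypothesis \eqref{Yao0315}, extended to the Neumann boundary by continuity (giving $\le 0$), yields $\partial_\nu w \le 0$.

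\textbf{$\Gamma^{2B}$.} This is the reflection of $\Gamma_N^+$ across $T_{\lambda,\vartheta}$. It lies on $T_{\check\lambda,\check\vartheta} = \hat T_{\hat\lambda,\hat\vartheta}$ with $\check\vartheta = 2\vartheta - \beta$ and $\hat\vartheta = \pi - 2\vartheta + 2\beta$. On the $\Gamma_N^+$ side $u$ satisfies the Neumann condition. Hence $\partial_\nu w^{\lambda,\vartheta}$ equals, up to sign, the directional derivative of $u$ normal to that line. This is exactly the quantity in \eqref{Yao0315a} at $(\check\lambda, \check\vartheta)$ when $\vartheta \le (\pi+3\beta)/4$, and in \eqref{Yao0315b} at $(\hat\lambda, \hat\vartheta)$ otherwise. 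One must check that the corresponding angle lies in $(0, (\pi+\beta)/2]$. For $\vartheta \le (\pi+3\beta)/4$ this means $\check\vartheta \le (\pi+\beta)/2$. For $\vartheta \ge (\pi+3\beta)/4$ it means $\hat\vartheta > 0$, which holds since $\vartheta \le \beta_\flat < (\pi + 2\beta)/2$. The parameter used is $\lambda_* = \lambda/\zeta(\vartheta) \ge \lambda/\zeta_\flat \ge \Lambda$. Thus the hypothesis gives $\partial_\nu w \le 0$ on $\Gamma^{2B}$, and \autoref{lma301} yields \eqref{Yao0315a}.

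\textbf{Main obstacle.} The hardest point is the small-angle regime $\vartheta \le \beta/2$, together with the reflected part of $\Gamma^{2A}$. For $\vartheta \le \beta/2$, $\zeta$ is only defined on $[\beta/2, \cdot]$ and $\check\vartheta \le 0$. Here I expect either $\Gamma^{2B}$ to be empty, since the reflected upper side misses $\Sigma \cap \Sigma^{\lambda,\vartheta}$, or the domain to sit in the region where the line has no upper Neumann reflection. I would verify this directly from the geometry: for $\vartheta \le \beta/2$ the reflected line of $\Gamma_N^+$ points away from $D_{\lambda,\vartheta}$.

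For the reflected part of $\Gamma^{2A}$, I would try to show it is contained in the zone where the moving-domain definition via $\Sigma \cap \Sigma^{\lambda,\vartheta}$ cuts it off. Alternatively, its parameter $\lambda' = \lambda$ along a line of angle $2\vartheta$ or $2\vartheta - \pi$ is again $\ge \Lambda$ after the same $\zeta_\flat$ scaling. This gives the angle range $\beta_\flat = \max\{(\pi+2\beta)/3, \pi/2\}$, which is exactly what keeps all auxiliary angles admissible.
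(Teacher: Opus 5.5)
Your handling of $\Gamma^{1}$ and $\Gamma^{2B}$ is fine. The gain $\lambda_*=\lambda/\zeta(\vartheta)\ge\lambda/\zeta_\flat\ge\Lambda$ is exactly how the hypothesis enters. However, the argument has a genuine gap on $\Gamma^{2A}$, which is where you sensed trouble.

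With your choice $\vartheta_1=\max\{2\vartheta-\pi,0\}$, one of the two rays $T_{\lambda,\vartheta_1}$, $T_{\lambda,\vartheta_3}$ is $\Gamma_N^-$. The other is $T_{\lambda,2\vartheta}$ if $\vartheta<\pi/2$, or $T_{\lambda,2\vartheta-\pi}$ if $\vartheta>\pi/2$. So $\partial_\nu w^{\lambda,\vartheta}\le 0$ on $\Gamma^{2A}$ requires the sign in \eqref{Yao0315a} on a line through the same point $P_\lambda$, i.e.\ with parameter exactly $\lambda$. Inside \autoref{lma301} you need it at every $\mu_t\ge\lambda$. The hypothesis gives this only for $\lambda\ge\Lambda$. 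But the whole content of the lemma is the range $\lambda\in[\tilde\Lambda,\Lambda)$, where $\tilde\Lambda$ may equal $\zeta_\flat\Lambda<\Lambda$.

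Neither of your proposed fixes works:
\begin{itemize}
\item There is no $\zeta_\flat$-scaling for lines through $P_\lambda$. That gain exists only for $\Gamma^{2B}$, whose line meets the Neumann sides at a different point.
\item $\Gamma^{2A}$ is not cut off by the definition of $D_{\lambda,\vartheta,\vartheta_1}$. For $\vartheta<\pi/2$ it is a nontrivial segment of $\Gamma_N^-$ adjacent to $P_\lambda$, whose reflection lies in $\Sigma$.
\end{itemize}
In addition, for $\vartheta\in((\pi+\beta)/4,\pi/2)$ your $\vartheta_3=2\vartheta$ exceeds $(\pi+\beta)/2$. That ray is then outside the range covered by \eqref{Yao0315} even when $\lambda\ge\Lambda$. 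By contrast, the case $\vartheta\le\beta/2$ you worried about is harmless: there $\vartheta_3\le\beta$ and $\Gamma^{2B}=\emptyset$.

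The missing idea is a bootstrap in the angle at fixed $\lambda\ge\tilde\Lambda$, which is how the paper proceeds.
\begin{itemize}
\item Start from $\vartheta=\pi/2$ (\autoref{lma302}) with $\vartheta_1=0$, $\vartheta_3=\pi$. Both $\Gamma^{2A}$ and its reflection lie on $\Gamma_N^-$, so $w^{\lambda,\pi/2}$ has homogeneous Neumann data there. Only $\Gamma^{2B}$ uses the hypothesis.
\item Then bisect dyadically. For $\vartheta=(2j+1)\pi/2^{k+1}<\pi/2$ take $\vartheta_1=j\pi/2^{k}$ and $\vartheta_3=(j+1)\pi/2^{k}$. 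For $\vartheta>\pi/2$ take $\vartheta_1=2\vartheta-\pi$ and $\vartheta_3=\pi$. In both cases $\vartheta_1,\vartheta_3$ are angles already treated for all $\lambda\ge\tilde\Lambda$, or equal $0$ or $\pi$. So \eqref{Yao0322d} follows from information at the same $\lambda$, and \autoref{lma301} applies.
\item Density of the dyadic angles gives the non-strict inequality on all of $(0,\beta_\flat]$.
\item A final application of \autoref{lma301} gives the strict inequality \eqref{Yao0315a}. Take $\vartheta_1=2\vartheta-\pi$ for $\vartheta\ge\pi/2$, and $\vartheta_1=\max\{2\vartheta-\beta_\flat,0\}$ for $\vartheta<\pi/2$, so that $\vartheta_3\in[0,\beta_\flat]\cup\{\pi\}$. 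The proof of \eqref{Yao0315b} is analogous.
\end{itemize}
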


\begin{proof}
For $\vartheta \in (0, \beta_{\flat}]$ and $\lambda \geq \tilde{\Lambda}$, we have: 
\begin{enumerate}
\item[(1)]
If $\vartheta \in (0, \beta/2]$, then $\vartheta_{3} \leq \beta$ and $\Gamma_{\lambda, \vartheta, \vartheta_{1}}^{2B} = \emptyset$. 
\item[(2)]
If $\vartheta \in (\beta/2, \beta_{\flat}]$, then $\Gamma_{\lambda, \vartheta, \vartheta_{1}}^{2B} \subset {T}_{\hat{\lambda}, \hat{\vartheta}} = {T}_{\check{\lambda}, \check{\vartheta}}$, where either $\check{\vartheta} \in (0, (\pi + \beta)/2]$ or $\hat{\vartheta} \in (0, (\pi + \beta)/2]$, and $\lambda_{*} = \lambda/\zeta(\vartheta) \geq \lambda/\zeta_{\flat}$. 
\end{enumerate}
By assumption, ${w}^{\lambda, \vartheta}$ satisfies the strict boundary condition on $\Gamma_{\lambda, \vartheta}^{2B}$. 

\textbf{Step 1}. 
We claim that \eqref{Yao0315a} holds for every $\lambda \geq \tilde{\Lambda}$ and $\vartheta \in {J}_{{k}}$, where ${k} \in \mathbb{N}^{ + }$ and $\mathbb{N}^{ + }$ denotes the set of all positive integers, 
\begin{equation*}
{J}_{{k}} = \Big\{ \frac{{j}\pi}{2^{{k}}} \in (0, \beta_{\flat}]: \; {j} \in \mathbb{N}^{ + } \Big\}, \quad {k} \in \mathbb{N}^{ + }. 
\end{equation*}
We proceed by induction on ${k}$. By \autoref{lma302}, the assertion holds for $\vartheta \in {J}_{{1}} = \{\pi/2\}$. Assume the claim holds for ${k} = {k}_{0}$ for some ${k}_{0} \in \mathbb{N}^{ + }$. Now let $\vartheta = (2{j} + 1)\pi/2^{{k}_{0} + 1} \in {J}_{{k}_{0} + 1} \setminus {J}_{{k}_{0}}$. Define
\begin{equation*}
\begin{aligned}
& \vartheta_{1} = \frac{{j}\pi}{2^{{k}_{0}}}, \quad \vartheta_{3} = \frac{({j} + 1)\pi}{2^{{k}_{0}}} && \text{if } \vartheta < \pi/2, \\
& \vartheta_{1} = 2\vartheta - \pi, \quad \vartheta_{3} = \pi && \text{if } \vartheta > \pi/2. 
\end{aligned}
\end{equation*}
Then $\vartheta_{1}, \vartheta_{3} \in {J}_{{k}_{0}} \cup \{0\}$, and ${w}^{\lambda, \vartheta}$ satisfies the strict boundary condition on $\Gamma_{\lambda, \vartheta, \vartheta_{1}}^{2A}$. Hence, ${w}^{\lambda, \vartheta}$ satisfies \eqref{Yao0322} for $\lambda \geq \tilde{\Lambda}$. 
By the argument in \autoref{lma301}, \eqref{Yao0305c} and \eqref{Yao0315a} hold for $\lambda \geq \tilde{\Lambda}$; thus, the claim holds for ${k} = {k}_{0} + 1$. 
By induction, the statement holds for all ${k} \in \mathbb{N}^{ + }$. 

\textbf{Step 2}. 
Since ${J}_{\infty}: = \bigcup_{{m} = 1}^{\infty} {J}_{{m}}$ is dense in $(0, \beta_{\flat}]$, we deduce from Step 1 that
\begin{equation} \label{Yao0336}
{u}_{{x}_{1}} \sin(\vartheta - \tfrac{\beta}{2}) - {u}_{{x}_{2}} \cos(\vartheta - \tfrac{\beta}{2}) \leq 0
\quad \text{on } {T}_{\lambda, \vartheta} \cap \Sigma
\end{equation}
holds for all $\vartheta \in (0, \beta_{\flat}]$ and $\lambda \geq \tilde{\Lambda}$. For any $\vartheta \in (0, \beta_{\flat}]$, one can choose $\vartheta_{1}, \vartheta_{3} \in [0, \beta_{\flat}] \cup \{\pi\}$ such that $\vartheta - \vartheta_{1} = \vartheta_{3} - \vartheta > 0$, for example, 
\begin{equation*}
\vartheta_{1} = 
\begin{cases}
2\vartheta - \pi & \text{if } \vartheta \geq \pi/2, \\
\max\{2\vartheta - \beta_{\flat}, 0\} & \text{if } \vartheta < \pi/2. 
\end{cases}
\end{equation*}
Then ${w}^{\lambda, \vartheta}$ satisfies \eqref{Yao0322}. Applying \autoref{lma301}, we obtain
\begin{equation} \label{Yao0337}
{w}^{\lambda, \vartheta} < 0 \quad \text{in } {D}_{\lambda, \vartheta} \quad \text{for all } \vartheta \in [\pi/2, \beta_{\flat}], \; \lambda \geq \tilde{\Lambda}, 
\end{equation}
and
\begin{equation*}
{w}^{\lambda, \vartheta} < 0 \quad \text{in } {D}_{\lambda, \vartheta, \vartheta_{1}} \quad \text{with } \vartheta_{1} = \max\{2\vartheta - \beta_{\flat}, 0\}
\end{equation*}
for $\vartheta \in (0, \pi/2)$ and $\lambda \geq \tilde{\Lambda}$. 
Hence, \eqref{Yao0315a} holds for all $\vartheta \in (0, \beta_{\flat}]$ and $\lambda \geq \tilde{\Lambda}$. 
The proof for \eqref{Yao0315b} is similar. This completes the proof. 
\end{proof}

\begin{lma} \label{lma304}
Let $0 < \beta < \alpha \leq \pi$. Suppose that there exists some $\Lambda > 0$ such that \eqref{Yao0315} holds for $\vartheta \in (0, (\pi + \beta)/2]$ and $\lambda \geq \Lambda$. 
Then \eqref{Yao0315} holds for $\vartheta \in {J}_{\lambda}$ and $\lambda \geq \tilde{\Lambda} = \max\{\zeta_{\flat} \Lambda, \lambda_{\flat}\}$. 
\end{lma}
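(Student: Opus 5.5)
The plan is to reduce the statement to \autoref{lma303}, which already gives \eqref{Yao0315} for $\vartheta\in(0,\beta_{\flat}]$ and $\lambda\ge\tilde\Lambda$. By the definition of $\lambda_{\flat}$, for $\lambda\ge\tilde\Lambda\ge\lambda_{\flat}$ we have $(0,\beta_{\flat}]\subset{J}_{\lambda}$. So it remains to treat $\vartheta\in{J}_{\lambda}\cap(\beta_{\flat},(\pi+\beta)/2]$. Since $\beta_{\flat}\ge\pi/2$, every such $\vartheta$ exceeds $\pi/2$. For these angles I would take the default choice $\vartheta_{1}=2\vartheta-\pi$ and $\vartheta_{3}=\pi$, and verify \eqref{Yao0322} for ${w}^{\lambda,\vartheta}$ on ${D}_{\lambda,\vartheta,\vartheta_{1}}$. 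I would then apply \autoref{lma301} to the family $\mu_{t}$ with $\phi_{2,t}\equiv\vartheta$ fixed, with $\mu_t$ increasing from the given $\lambda$ up to $\lambda_{M}(\vartheta)$. In that case \ref{Yaoit32c1} holds, since the moving domain shrinks to a point as $\lambda\nearrow\lambda_{M}(\vartheta)$.

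The boundary conditions are checked piece by piece.
\begin{enumerate}[label = {\rm(\arabic*)}]
\item On $\Gamma^{0}$ we have ${w}=0$ trivially.
\item On $\Gamma^{1}$, \eqref{Yao0322c} holds because $\vartheta\in{J}_{\mu_t}$ for every $\mu_t\ge\lambda$. Here I use that $\vartheta_{A},\vartheta_{B}$ are increasing in $\lambda$, so once $\vartheta$ lies in ${J}_{\lambda}$ it stays in ${J}_{\mu}$ for $\mu\ge\lambda$; this needs a short check of the two cases in \eqref{Yao0312}.
\item On $\Gamma^{2A}\subset{T}_{\lambda,2\vartheta-\pi}$, the sign of $\partial_\nu{w}$ is a combination of the directional derivative of ${u}$ across ${T}_{\lambda,2\vartheta-\pi}$ at the point and at its reflection, exactly as in Step~1 of \autoref{lma303}. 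Since $0<2\vartheta-\pi\le\beta\le\beta_{\flat}$, \autoref{lma303} at the same $\lambda$ gives this sign.
\item On $\Gamma^{2B}$, which lies on ${T}_{\check\lambda,\check\vartheta}=\hat{T}_{\hat\lambda,\hat\vartheta}$, I use the hatted description. From $\vartheta>\beta_{\flat}\ge(\pi+2\beta)/3$ we get
\begin{equation*}
\hat\vartheta=\pi-2\vartheta+2\beta<\frac{\pi+2\beta}{3}\le\beta_{\flat}.
\end{equation*}
Moreover $\hat\vartheta\ge\beta>0$ since $\vartheta\le(\pi+\beta)/2$. For $\vartheta>\beta$ we have $\hat\lambda=\lambda\sin\vartheta/\sin(\vartheta-\beta)\ge\lambda\ge\tilde\Lambda$. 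Hence \eqref{Yao0315b} at $(\hat\lambda,\hat\vartheta)$, supplied by \autoref{lma303}, yields the strict inequality \eqref{Yao0322e}.
\end{enumerate}
With all of \eqref{Yao0322} verified uniformly along the family, \autoref{lma301} gives ${w}^{\lambda,\vartheta}<0$ in ${D}_{\lambda,\vartheta,\vartheta_{1}}$ and \eqref{Yao0315a} on ${T}_{\lambda,\vartheta}\cap\Sigma$. The proof of \eqref{Yao0315b} is the mirror image, with ${T}$ and $\hat{T}$ interchanged and the roles of $\Gamma_{N}^{-}$ and $\Gamma_{N}^{+}$ swapped.

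The main obstacle I expect is bookkeeping on the admissible set. First, one must confirm that when $\lambda<\lambda_{C}$ the relevant $\vartheta$ lies in $[\vartheta^{B}(\lambda),(\pi+\beta)/2]$ rather than in the gap $(\vartheta^{A},\vartheta^{B})$, so that \eqref{Yao0309} applies along the whole $\mu_t$-family. Second, one must check that the two auxiliary angles $2\vartheta-\pi$ and $\hat\vartheta$ both land in $(0,\beta_{\flat}]$ with parameters $\ge\tilde\Lambda$, so that \autoref{lma303} applies to both of them. If the monotonicity of ${J}_{\mu}$ in $\mu$ fails near $\vartheta_{B}$, I would instead run \autoref{lma301} along a path in $(\lambda,\vartheta)$ that stays on the curve $\vartheta\ge\vartheta_{B}(\mu)$.
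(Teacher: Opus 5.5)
Your proposal has a genuine gap in item (2). The implication ``$\vartheta\in J_\lambda\Rightarrow\vartheta\in J_\mu$ for all $\mu\ge\lambda$'' holds on the lower branch $(0,\vartheta^{A}(\lambda)]$. It is false on the upper branch $[\vartheta^{B}(\lambda),(\pi+\beta)/2]$: because $\vartheta_B$ is increasing, the left endpoint of that branch moves past $\vartheta$.

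Take $\tilde\Lambda\le\lambda<\lambda_C$ (this does occur, e.g.\ in \autoref{prop305} when $\beta_\flat<(\alpha+\beta)/2$, so that $\lambda_\flat<\lambda_C$) and $\vartheta\in[\vartheta_B(\lambda),(\alpha+\beta)/2)$. For $\mu$ slightly above the value where $\vartheta_B(\mu)=\vartheta$, one has $\vartheta_A(\mu)<\vartheta<\vartheta_B(\mu)$, so $\vartheta\notin J_\mu$. At such $(\mu,\vartheta)$ the center $O$ lies on the cap side of $T_{\mu,\vartheta}$ and $P_+$ lies on the other side. Hence the reflection of the part of $\Gamma_D$ beyond the line lies inside $B$. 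Near $T_{\mu,\vartheta}\cap\Gamma_D$ it lies in $\Sigma$, within the angular range $(2\vartheta-\pi,\vartheta)$, and bounds $D_{\mu,\vartheta,2\vartheta-\pi}$. There $w^{\mu,\vartheta}=u>0$, so \eqref{Yao0322c} fails. In fact the conclusion of \autoref{lma301} is itself false at these parameters, since \eqref{Yao0315a} fails near $T_{\mu,\vartheta}\cap\Gamma_D$ by \autoref{lma206GNN}. So your fixed-$\vartheta$ path $\mu_t\nearrow\lambda_M(\vartheta)$ with \ref{Yaoit32c1} cannot be used for these angles. Your fallback, ``a path staying on $\vartheta\ge\vartheta_B(\mu)$'', is not yet an argument: you do not say where that path ends or why negativity is known there.

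The missing idea is to anchor at $\vartheta=(\pi+\beta)/2$ and then vary $\vartheta$ rather than $\lambda$. This angle lies in every $J_\mu$ and satisfies $2\vartheta-\pi=\hat\vartheta=\beta$. So your $\lambda$-path argument does work there and gives \eqref{Yao0338}, namely $w^{\lambda,(\pi+\beta)/2}<0$ for all $\lambda\ge\tilde\Lambda$.

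Now fix $\lambda\ge\tilde\Lambda$. The set $J_\lambda\cap[\beta_\flat,(\pi+\beta)/2]$ is at most two closed intervals. One contains $\beta_\flat$, where \eqref{Yao0337} gives negativity; the other contains $(\pi+\beta)/2$, where \eqref{Yao0338} does. Run \autoref{lma301} inside each interval with $\phi_{2,t}$ moving toward the anchor, $\phi_{1,t}=2\phi_{2,t}-\pi$, $\phi_{3,t}=\pi$, and \ref{Yaoit32c2} at the anchor. This keeps $\vartheta\in J_\lambda$ throughout, and your checks (3) and (4) on $\Gamma^{2A}$ and $\Gamma^{2B}$ carry over unchanged. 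This is exactly the paper's proof.

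Your fixed-$\vartheta$ route is valid on the lower interval and for $\vartheta\ge(\alpha+\beta)/2$. Only the remaining part of the upper interval needs this angular continuation.
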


\begin{proof}
Let $\vartheta = (\pi + \beta)/2$. Then $\hat{\vartheta} = \pi + 2\beta - 2\vartheta = \beta$ and $2\vartheta - \pi = \beta$. 
Thanks to \autoref{lma303}, we know ${w}^{\lambda, (\pi + \beta)/2, \beta}$ satisfies \eqref{Yao0322} for $\lambda \geq \tilde{\Lambda}$. Thus, 
\begin{equation} \label{Yao0338}
{w}^{\lambda, (\pi + \beta)/2} < 0 \text{ in } {D}_{\lambda, (\pi + \beta)/2} \text{ for } \lambda \geq \tilde{\Lambda}. 
\end{equation}
Let us fix $\lambda \geq \tilde{\Lambda}$. For $\vartheta \in (\beta_{\flat}, (\pi + \beta)/2]$ and $\vartheta_{3} = \pi$, we have
\begin{equation*}
\vartheta_{1} = 2\vartheta - \vartheta_{3} \in (0, \beta] \subset (0, \beta_{\flat}]
\end{equation*}
and
\begin{equation*}
\hat{\vartheta} = \pi + 2\beta - 2\vartheta \in [\beta, \beta_{\flat}]
\end{equation*}
where the assumption $\beta_{\flat} \geq (\pi + 2\beta)/3$ in \eqref{Yao0332zeta} is used. 
Then ${w}^{\lambda, \vartheta, 2\vartheta - \pi}$ satisfies \eqref{Yao0322} for $\vartheta \in [\beta_{\flat}, (\pi + \beta)/2] \cap {J}_{\lambda}$. By the definition of ${J}_{\lambda}$ in \eqref{Yao0312}, we know that $[\beta_{\flat}, (\pi + \beta)/2] \cap {J}_{\lambda}$ is a union of (at most) two closed intervals, and $\beta_{\flat}$ is in one interval while $(\pi + \beta)/2$ is in the other interval. 
Combining this with \eqref{Yao0337} and \eqref{Yao0338}, and applying \autoref{lma301}, we deduce the negativity of ${w}^{\lambda, \vartheta, 2\vartheta - \pi}$ and thus \eqref{Yao0315a} holds for $\vartheta \in [\beta_{\flat}, (\pi + \beta)/2] \cap {J}_{\lambda}$. 
\end{proof}

\begin{prop} \label{prop305}
Let $0 < \beta < \alpha \leq \pi$. Then \eqref{Yao0315} holds for every $\vartheta \in {J}_{\lambda}$ and $\lambda \geq \max\{\zeta_{\flat}\lambda_{C}, \lambda_{\flat}\}$. 
\end{prop}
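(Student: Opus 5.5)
The plan is to apply \autoref{lma304} with $\Lambda = \lambda_{C}$. Its only hypothesis is that \eqref{Yao0315} holds for \emph{all} $\vartheta \in (0, (\pi + \beta)/2]$ and all $\lambda \geq \lambda_{C}$. So the task is to prove this base statement directly, by an independent moving plane argument in the range $\lambda \geq \lambda_{C}$. Once it is available, \autoref{lma304} gives \eqref{Yao0315} for $\vartheta \in {J}_{\lambda}$ and $\lambda \geq \max\{\zeta_{\flat}\lambda_{C}, \lambda_{\flat}\}$, which is exactly the claim.

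To establish the base statement I will run \autoref{lma301} along suitable continuous families. The point of restricting to $\lambda \geq \lambda_{C}$ is that ${J}_{\lambda} = (0, (\pi + \beta)/2]$ there. In other words, for such $\lambda$ the upper mixed point ${P}_{+}$ and the center ${O}$ never lie on opposite sides of ${T}_{\lambda, \vartheta}$, so the spherical part of $\partial {D}_{\lambda, \vartheta, \vartheta_{1}}$ lies on $\partial {B}$ and \eqref{Yao0322c} holds strictly. The difficulty is the Neumann-type condition on $\Gamma^{2B}$, which lies on ${T}_{\check{\lambda}, \check{\vartheta}} = \hat{T}_{\hat{\lambda}, \hat{\vartheta}}$ with $\lambda_{*} = \lambda/\zeta(\vartheta) \geq \lambda$. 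I therefore intend a downward continuity argument in $\lambda$. Let $\bar{\lambda}$ be the infimum of those $\lambda' \geq \lambda_{C}$ such that \eqref{Yao0315} holds for every $\vartheta$ and every $\lambda \geq \lambda'$.

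The steps are as follows.
\begin{enumerate}
\item \textbf{Start near $\lambda_{\max}$.} For $\lambda$ close to $\lambda_{\max}$, every ${D}_{\lambda, \vartheta, \vartheta_{1}}$ is a thin cap lying in a narrow band. So \autoref{lma201MP} applies, and alternative \ref{Yaoit32c1} holds uniformly in $\vartheta$, giving $\bar{\lambda} < \lambda_{\max}$.
\item \textbf{Supply the boundary condition on $\Gamma^{2B}$.} At a level $\lambda$ just below $\bar{\lambda}$, the reflected line has parameter $\lambda_{*} = \lambda/\zeta(\vartheta) > \bar{\lambda}$ whenever $\zeta(\vartheta) < 1$ strictly. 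In that case the sign of the relevant directional derivative on that line is already known, and this gives \eqref{Yao0322e}.
\item \textbf{Handle $\Gamma^{2A}$.} I will reuse the dyadic induction in $\vartheta$ from \autoref{lma303} and \autoref{lma304}, taking $\vartheta_{1} = \max\{2\vartheta - \pi, 0\}$. This yields the interior negativity, and Hopf's lemma then gives \eqref{Yao0315a}. Statement \eqref{Yao0315b} follows symmetrically.
\item \textbf{Close the argument.} Closedness and openness of the admissible set of $\lambda$ follow exactly as in Steps 2 and 3 of \autoref{lma301}, using the small-volume principle \autoref{lma205MP}. Hence $\bar{\lambda} = \lambda_{C}$.
\end{enumerate}

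The main obstacle is step 2 at the endpoints where $\zeta(\vartheta) \to 1$, namely $\vartheta$ near $\beta/2$ (where $\Gamma^{2B}$ is empty anyway) and $\vartheta = (\pi + \beta)/2$. At the latter endpoint $\hat{\lambda} = \lambda$, so the needed information has not been gained in advance. There I would instead exploit the structure visible in the proof of \autoref{lma304}: at $\vartheta = (\pi + \beta)/2$ one has $\hat{\vartheta} = \beta = 2\vartheta - \pi$. So the boundary data come from smaller angles at the same $\lambda$, which are treated first in the induction. I would therefore order the argument so that, at fixed $\lambda$, all $\vartheta \leq \beta_{\flat}$ are handled before the large angles, mimicking \autoref{lma303}–\autoref{lma304}. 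A uniform choice of $\zeta_{\flat} < 1$ then makes the downward step in $\lambda$ uniform.
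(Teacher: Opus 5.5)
Your proposal is correct and is essentially the paper's argument. Once you patch Step 2 by ordering the angles as in \autoref{lma303}--\autoref{lma304} with the uniform factor $\zeta_{\flat}<1$, your infimum argument is a continuous rephrasing of the paper's iteration. The paper starts from the vacuous case $\Lambda=\lambda_{\mathrm{max}}$ and uses \autoref{lma302}--\autoref{lma304}, together with ${J}_{\lambda}=(0,(\pi+\beta)/2]$ for $\lambda\geq\lambda_{C}$, to pass from $\Lambda$ to $\max\{\zeta_{\flat}\Lambda,\lambda_{C}\}$; it reaches $\lambda_{C}$ in finitely many steps and then applies the lemmas once more with $\Lambda=\lambda_{C}$. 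Your separate narrow-domain start and small-volume openness steps are then redundant, since they already live inside \autoref{lma301}. One small slip: $\zeta(\vartheta)\to 0$, not $1$, as $\vartheta\to\beta/2$, so $\vartheta=(\pi+\beta)/2$ is the only angle where $\zeta=1$.
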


\begin{proof}
By \autoref{lma302}, \autoref{lma303}, \autoref{lma304} and mathematical induction, one can show that \eqref{Yao0315} holds for $\vartheta \in (0, (\pi + \beta)/2]$ and $\lambda \geq \max\{\zeta_{\flat}^{{k}} \lambda_{\mathrm{max}}, \lambda_{C}\}$ for every ${k} \in \mathbb{N}^{ + }$. Hence, \eqref{Yao0315} holds for $\vartheta \in (0, (\pi + \beta)/2]$ and $\lambda \geq \lambda_{C}$. 
Again by \autoref{lma302}, \autoref{lma303}, \autoref{lma304}, we conclude that \eqref{Yao0315} holds for $\vartheta \in {J}_{\lambda}$ and $\lambda \geq \max\{\zeta_{\flat} \lambda_{C}, \lambda_{\flat}\}$. 
\end{proof}

\begin{rmk}
Let $0 < \beta < \alpha \leq \pi$. Then $\max\{\zeta_{\flat} \lambda_{C}, \lambda_{\flat}\} < {l}_{N}$. 
\end{rmk}

\begin{proof}
In the case $\alpha < \pi$, we have $\lambda_{\flat} \leq \lambda_{C} < {l}_{N}$, while in the case $\alpha = \pi$, we have $\beta_{\flat} < (\alpha + \beta)/2$ and $\lambda_{\flat} < \lambda_{C} = {l}_{N}$. Hence $\lambda_{\flat} < {l}_{N}$ and $\max\{\zeta_{\flat} \lambda_{C}, \lambda_{\flat}\} < {l}_{N}$ always hold.
\end{proof}


\subsection{The case $\vartheta_{A}(\lambda) \geq \beta$}

We begin with the strict monotonicity along the Neumann boundary, which plays a key role in the moving plane method. In \cite[Theorem~2.4]{BP89} (see also \cite[Lemma~4]{YCG21} and~\cite{CLY21}), the non-vanishing of the tangential derivative is obtained by first proving ${w}^{\lambda,\pi/2}<0$ and then applying Serrin's boundary point lemma.  However, this approach is not applicable in general, and we will instead use the local analysis developed by Hartman and Wintner~\cite{HW53}.

\begin{lma} \label{lma307}
Let $0 < \beta < \alpha \leq \pi$ and let $\Lambda \in (0, {l}_{N})$. Suppose there exists an angle $\vartheta_{C} \in (0, \pi)$ with $\vartheta_{C} \geq \vartheta_{B}$ and
\begin{equation} \label{Yao0344}
\vartheta_{C} > \pi/2
\end{equation}
such that
\eqref{Yao0315} holds for every $\vartheta \in (0, \vartheta^{A}(\lambda)] \cup [\vartheta_{B}(\lambda), \vartheta_{C}]$ and $\lambda = \Lambda$, where $\vartheta^{A}(\lambda)$ is given in \eqref{Yao0314}. Then we have the strict monotonicity properties along the Neumann boundary, namely, 
\begin{subequations} \label{Yao0345}
\begin{align}
\label{Yao0345a}
&{u}_{{x}_{1}}\cos\frac{\beta}{2} - {u}_{{x}_{2}}\sin\frac{\beta}{2} < 0 \text{ on } {T}_{\lambda, \pi/2} \cap \Gamma_{N}^{ - }, 
\\ \label{Yao0345b}
&{u}_{{x}_{1}}\cos\frac{\beta}{2} + {u}_{{x}_{2}}\sin\frac{\beta}{2} < 0 \text{ on } \hat{T}_{\lambda, \pi/2} \cap \Gamma_{N}^{ + }
\end{align}
\end{subequations}
hold for $\lambda = \Lambda$. 
\end{lma}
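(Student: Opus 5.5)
The plan is to look at the single point $P_{\Lambda}\in\Gamma_{N}^{-}$ (the case of \eqref{Yao0345b} is symmetric, using $\hat{T}_{\Lambda,\vartheta}$ and \eqref{Yao0315b}). Set $\mathbf{t}=(\cos\frac{\beta}{2},-\sin\frac{\beta}{2})$, the tangent to $\Gamma_{N}^{-}$. By the Neumann condition, $\nabla u(P_{\Lambda})=g\,\mathbf{t}$ for some $g\in\R$, and \eqref{Yao0345a} is exactly $g<0$. The normal to ${T}_{\Lambda,\vartheta}$ is $\mathbf{e}_{\vartheta-\beta/2-\pi/2}$, and $\mathbf{e}_{\vartheta-\beta/2-\pi/2}\cdot\mathbf{t}=\sin\vartheta>0$. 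Letting $x\to P_{\Lambda}$ along ${T}_{\Lambda,\vartheta}\cap\Sigma$ in \eqref{Yao0315a} gives $g\sin\vartheta\le 0$, hence $g\le 0$. It remains to rule out $g=0$, i.e.\ $\nabla u(P_{\Lambda})=0$, and this is where the Hartman--Wintner local analysis enters.

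Assume $\nabla u(P_{\Lambda})=0$ and consider the angular derivative centred at $P_{\Lambda}$,
\[
v(x)=(x-P_{\Lambda})^{\perp}\cdot\nabla u(x)=\partial_{\sigma}u ,
\]
with $\sigma=\sigma_{\Lambda}(x)$ the polar angle of \eqref{Yao0306}. Since $\Lambda\in(0,l_N)$, $P_{\Lambda}$ is an interior point of the flat segment $\Gamma_{N}^{-}$, and $u$ is $C^{2}$ up to $\Gamma_{N}^{-}$ there. Rotations about $P_{\Lambda}$ commute with $\Delta$, so $v$ solves $\Delta v+c(x)v=0$ with $c\in L^{\infty}$ (difference quotients of the locally Lipschitz $f$, as in ${c}^{\lambda,\vartheta}$). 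Moreover $v=r\,\partial_{\nu}u=0$ on $\Gamma_{N}^{-}$, which is a Dirichlet condition. Since $\nabla u(P_{\Lambda})=0$, $v$ vanishes at $P_{\Lambda}$ to order at least $2$. On the ray $\{\sigma=\vartheta\}\subset{T}_{\Lambda,\vartheta}$, $v$ equals $-r$ times the left side of \eqref{Yao0315a}, up to a fixed sign convention. Hence $v$ has one strict sign, say $v>0$, on every ray $\sigma=\vartheta$ with $\vartheta\in\mathcal{S}:=(0,\vartheta^{A}]\cup[\vartheta_{B},\vartheta_{C}]$.

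Next I will apply \autoref{prop207}(iii) (or reflect $v$ oddly across the line of $\Gamma_{N}^{-}$ and use (i)--(ii)). Since $v\not\equiv0$ (it is nonzero on those rays), it vanishes to some finite exact order $l\ge2$ and
\[
v=C_{0}r^{l}\sin(l\sigma)+O(r^{l+1}),\qquad C_{0}\ne0 .
\]
Fixing a ray $\sigma\in\mathcal{S}$ and letting $r\to0$ gives $C_{0}\sin(l\sigma)\ge0$ for all $\sigma\in\mathcal{S}$. Small $\sigma$ forces $C_{0}>0$, so $\sin(l\sigma)\ge0$ on $\mathcal{S}$, which means $\mathcal{S}$ must avoid every open arc $((2k-1)\pi/l,2k\pi/l)$.

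The case analysis, which I expect to be the main obstacle, is as follows.
\begin{enumerate}[label=\rm(\alph*)]
\item If $\vartheta^{A}=\vartheta_{B}$, which covers $\lambda\ge\lambda_C$ and also $\vartheta_{B}\le\vartheta_{A}$, then $\mathcal{S}=(0,\vartheta_{C}]$. This interval contains $(\pi/l,2\pi/l)\cap(0,\vartheta_{C})\ne\emptyset$, because $\pi/l\le\pi/2<\vartheta_{C}$ by \eqref{Yao0344}. That is a contradiction.
\item Otherwise $\vartheta^{A}=\vartheta_{A}<\vartheta_{B}$. Then $(0,\vartheta_{A}]$ forces $\vartheta_{A}\le\pi/l$, and the ray $\vartheta_{B}$ must lie beyond the negative arc, i.e.\ $\vartheta_{B}\ge2\pi/l$. (It cannot lie inside the first arc: there $\vartheta_{B}\le\pi/l<\vartheta_C$, so $[\vartheta_B,\vartheta_C]$ would meet the negative arc.) This gives $\vartheta_{B}\ge2\vartheta_{A}$, contradicting \autoref{lma208}\ref{Yaoit23a}.
\end{enumerate}
The delicate points to verify are: applying \autoref{lma208}\ref{Yaoit23a} in the configuration with $P_{\Lambda}$ on $V\bar P$; handling the boundary cases $\vartheta^{A}=(\pi+\beta)/2$ and the endpoint ray $\vartheta_{A}$, where only non-strict sign information may be available (one uses that the rays $\sigma\in\mathcal{S}$ are nonempty nearby); and checking that $(\pi+\beta)/2<\pi$ keeps all rays inside $\Sigma$ near $P_{\Lambda}$. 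Either case yields a contradiction, so $g<0$, which is \eqref{Yao0345a}.
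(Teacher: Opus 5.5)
Your proposal is correct and follows essentially the same route as the paper: the rotation function about $P_{\Lambda}$, the expansion $C_{0}r^{l}\sin(l\sigma)$ from \autoref{prop207}, positivity on the rays with $\sigma\in(0,\vartheta^{A}]\cup[\vartheta_{B},\vartheta_{C}]$, \autoref{lma208}\ref{Yaoit23a} to prevent $\vartheta_{B}$ from jumping past the first negative arc, and $\vartheta_{C}>\pi/2$ to force $l=1$. The differences are cosmetic. You first get $g\le 0$ by continuity and then argue by contradiction from $\nabla u(P_{\Lambda})=0$ (order $l\ge 2$), whereas the paper shows $l=1$ directly and reads the sign off $C_{0}>0$; also, $\vartheta_{B}(\Lambda)<(\pi+\beta)/2$ for $\Lambda<l_{N}$, so your two cases are already exhaustive.
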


\begin{proof}
Set $\lambda = \Lambda$ and define
\begin{equation} \label{Yao0347aRotation}
\begin{gathered}
{x}_{1\lambda} = \lambda\cos\frac{\beta}{2}, \quad {x}_{2\lambda} = - \lambda\sin\frac{\beta}{2}, \\
{v}_{\lambda}({x}) = ({x}_{1} - {x}_{1\lambda}){u}_{{x}_{2}}({x}) - ({x}_{2} - {x}_{2\lambda}){u}_{{x}_{1}}({x}). 
\end{gathered}
\end{equation}
The function ${v}_{\lambda}$ can be interpreted as the rotation function of ${u}$ about the point ${x}_{\lambda} = ({x}_{1\lambda}, {x}_{2\lambda})$. Observe that ${v}_{\lambda}$ vanishes on $\Gamma_{N}^{ - }$ and satisfies the linearized equation
\begin{equation} \label{Yao0347bRotation}
\mathcal{L}[{v}_{\lambda}] = \Delta {v}_{\lambda} + {f}'({u}) {v}_{\lambda} = 0 \quad \text{in } \Sigma. 
\end{equation}
By the assumptions, we have
\begin{equation} \label{Yao0348}
{v}_{\lambda}({r}, \vartheta) > 0 \quad \text{for } \vartheta \in (0, \vartheta^{A}(\lambda)] \cup [\vartheta_{B}(\lambda), \vartheta_{C}]
\end{equation}
where $({r}, \vartheta)$ are the polar coordinates centered at ${x}_{\lambda}$ with the polar axis aligned along $\Gamma_{N}^{ - }$, denoted as ${x}_{\lambda}{P}_{ + }$. According to \autoref{prop207}, there exists a positive integer ${l}$ such that
\begin{equation*}
{v}_{\lambda}({r}, \vartheta) = {C}_{0} {r}^{{l}} \sin({l}\vartheta) + O({r}^{{l} + 1}), 
\end{equation*}
for some ${C}_{0} \in \mathbb{R}^{ + }$, where $O({r}^{{l} + 1})/{r}^{{l} + 1}$ remains bounded as ${r} \to 0$. Using this and \eqref{Yao0348}, it follows that
\begin{equation*}
\sin({l}\vartheta) > 0 \quad \text{for } \vartheta \in (0, \vartheta^{A}(\lambda)), \quad
\sin({l}\vartheta) \geq 0 \quad \text{for } \vartheta \in [\vartheta_{B}(\lambda), \vartheta_{C}]. 
\end{equation*}
Consequently, ${l}\vartheta^{A}(\lambda) \leq \pi$. By \autoref{lma208}\ref{Yaoit23a}, we also have $\vartheta_{B}(\lambda) < 2\vartheta^{A}(\lambda)$, which implies ${l}\vartheta_{B}(\lambda) < 2{l}\vartheta^{A}(\lambda) \leq 2\pi$. Since $\sin({l}\vartheta_{B}(\lambda)) \geq 0$, we deduce ${l}\vartheta_{B}(\lambda) \leq \pi$. Therefore, $\sin({l}\vartheta) \geq 0$ for all $\vartheta \in (0, \vartheta_{C}]$, and in particular ${l}\vartheta_{C} \leq \pi$. Thanks to \eqref{Yao0344}, 
\begin{equation*}
{l} \leq \frac{\pi}{\vartheta_{C}} < 2. 
\end{equation*}
Since ${l}$ is a positive integer, it must be that ${l} = 1$. Accordingly, the outward normal derivative of ${v}_{\lambda}$ at $\bar{x}$ does not vanish and is negative, that is, 
\begin{equation*}
\partial_{\nu} {v}_{\lambda} ({x}_{1\Lambda}, {x}_{2\Lambda}) < 0. 
\end{equation*}
Therefore, \eqref{Yao0345a} (and similarly, \eqref{Yao0345b}) holds for $\lambda = \Lambda$. 
\end{proof}

Before proving \eqref{Yao0315a} for $\lambda < \lambda_{C}$, we illustrate that if \eqref{Yao0315a} holds for $\vartheta = \vartheta_{B}(\lambda)$, then it holds for all $\vartheta \leq \vartheta^{A}(\lambda)$, where $\vartheta^{A}$ and $\vartheta_{B}$ are given in \eqref{Yao0314}.

\begin{lma} \label{lma308}
Let $0 < \beta < \alpha \leq \pi$ and let $\Lambda > 0$. Suppose that \eqref{Yao0315a} holds for every $\vartheta = \vartheta_{B}(\lambda)$ and $\lambda \geq \Lambda$. Then \eqref{Yao0315a} is valid for $\vartheta \in (0, \vartheta^{A}(\lambda)]$ and $\lambda \geq \Lambda$, and
\begin{equation} \label{Yao0351}
{w}^{\lambda, \vartheta_{A}(\lambda)} < 0 \text{ in } {D}_{\lambda, \vartheta_{A}(\lambda), 2\vartheta_{A}(\lambda) - \vartheta_{B}(\lambda)}
\end{equation}
is valid for $\lambda \in [\Lambda, \lambda_{C})$ provided $\Lambda < \lambda_{C}$. 
\end{lma}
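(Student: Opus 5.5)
The plan is to fix $\lambda \ge \Lambda$ and run \autoref{lma301} on a family of moving lines all passing through ${P}_{\lambda}$, with the direction rotating downward from $\vartheta^{A}(\lambda)$ toward $0$. The guiding observation is that when $\vartheta \le \vartheta^{A}(\lambda)$, both the center ${O}$ and the upper point ${P}_{+}$ lie on the same side of ${T}_{\lambda,\vartheta}$ as ${V}$, or on the line itself. Hence the spherical part $\Gamma^{1}$ is contained in $\partial {B}$, and \eqref{Yao0322c} holds. This inequality is strict unless $\vartheta = \vartheta_{A}$ and $\lambda < \lambda_{C}$.

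The angular window $(\vartheta_{1},\vartheta_{3})$ will be taken symmetric about $\vartheta$ with $\vartheta_{3} = \vartheta_{B}(\lambda)$, so that $\vartheta_{1} = 2\vartheta - \vartheta_{B}(\lambda)$. By \autoref{lma208}\ref{Yaoit23a} we have $\vartheta_{B} < 2\vartheta_{A}$, so $\vartheta_{1} > 0$ at $\vartheta = \vartheta_{A}$. When $\vartheta \le \vartheta_{B}/2$ I would instead take $\vartheta_{1} = 0$ and $\vartheta_{3} = 2\vartheta \le \vartheta_{B}$, in which case $\Gamma^{2A}$ lies on $\Gamma_{N}^{-}$.

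Next I check the Neumann-type conditions. On $\Gamma^{2A} \subset {T}_{\lambda,\vartheta_{1}}$ the reflection across ${T}_{\lambda,\vartheta}$ maps ${T}_{\lambda,\vartheta_{1}}$ to ${T}_{\lambda,\vartheta_{3}} = {T}_{\lambda,\vartheta_{B}}$. The outward normal derivative of ${w}^{\lambda,\vartheta}$ there equals a directional derivative of ${u}$ on ${T}_{\lambda,\vartheta_{1}}$ minus the reflected derivative on ${T}_{\lambda,\vartheta_{B}}$. I would write $\partial_{\nu} {w} = -(\text{derivative on } {T}_{\lambda,\vartheta_{1}}) - (\text{derivative on } {T}_{\lambda,\vartheta_{B}})$, each term being the left-hand side of \eqref{Yao0315a}. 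The second term is negative by hypothesis. The first is $\le 0$ provided \eqref{Yao0315a} is already known for the smaller angle $\vartheta_{1}$, or is zero by the Neumann condition when $\vartheta_{1} = 0$.

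This suggests an inductive or continuity argument in $\vartheta$, starting from small angles. For $\vartheta \le \beta/2$ the set $\Gamma^{2B}$ is empty, and the domain reduces to a thin region near $\Gamma_{N}^{-}$. In this regime one can run \autoref{lma301} in $\lambda$ (moving $\lambda$ up to $\lambda_{M}$ gives case \ref{Yaoit32c1}), using only Neumann data on $\Gamma_{N}^{-}$. If $\Gamma^{2B}$ is nonempty for larger $\vartheta$, it lies on the reflection of $\Gamma_{N}^{+}$. I would argue that, for $\vartheta \le \vartheta^{A}$, this reflected segment falls outside the window $\sigma_{\lambda} < \vartheta$, because $\vartheta_{3} \le \vartheta_{B}$ means ${P}_{+}$ is not reflected inside. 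The intended conclusion is $\Gamma^{2B} = \emptyset$.

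Then \autoref{lma301} applies with the family $t \mapsto (\mu_{t}, \phi_{2,t}) = (\lambda_{t}, \vartheta)$, with $\lambda_{t}$ increasing to $\lambda_{M}(\vartheta)$. This gives \eqref{Yao0315a} for all $\lambda \ge \Lambda$ and $\vartheta \le \vartheta^{A}(\lambda)$. To pass from angles of the form $\vartheta_{B}/2^{k}$ to all angles I would use a dyadic-plus-density argument, as in \autoref{lma303} Step 2.

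Finally, \eqref{Yao0351} is the case $\vartheta = \vartheta_{A}$ with $\lambda < \lambda_{C}$. Here $\Gamma^{1}$ carries ${w} = 0$ and is no longer strict, so the required strictness must come from $\Gamma^{2A}$. It holds there strictly because \eqref{Yao0315a} on ${T}_{\lambda,\vartheta_{B}}$ is strict.

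The main obstacle is the geometric verification that $\Gamma^{2B}$ is empty, or else harmless, throughout $\vartheta \le \vartheta^{A}$, together with the ordering of the induction so that \eqref{Yao0315a} at $\vartheta_{1}$ is available before it is needed. I would first attempt the latter by taking $\lambda$ large, where $\vartheta_{1}$ is small, and then decreasing $\lambda$.
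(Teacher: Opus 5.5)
The step that fails is the base of your induction: the window $\vartheta_1=0$, $\vartheta_3=2\vartheta$ for $\vartheta<\vartheta_B(\lambda)/2$.

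On $\Gamma^{2A}\subset\Gamma_N^-$ the difference $w^{\lambda,\vartheta}$ does not inherit a Neumann condition from $u$. The term $\partial_\nu u$ vanishes there, but the reflected term is the derivative of $u$ normal to $T_{\lambda,2\vartheta}$ at the mirror points, and these lie in the interior of $\Sigma$. Thus $\partial_\nu w^{\lambda,\vartheta}\le 0$ on $\Gamma^{2A}$ is exactly the non-strict form of \eqref{Yao0315a} on $T_{\lambda,2\vartheta}$, not on $T_{\lambda,\vartheta_B}$ as you wrote. Your ``second term'' is the hypothesis only when $2\vartheta=\vartheta_B$.

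For the same reason, the claim that for $\vartheta\le\beta/2$ one uses ``only Neumann data on $\Gamma_N^-$'' is wrong. With $\vartheta_1=0$, $w$ satisfies a homogeneous Neumann condition on $\Gamma^{2A}$ only when the mirror image of $\Gamma^{2A}$ again lies on $\Gamma_N$, i.e.\ $\vartheta=\pi/2$, which is the situation of \autoref{lma302}.

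Now take $\lambda<\lambda_C$, so that $\vartheta^A=\vartheta_A<\vartheta_B$. For $\vartheta\in(\vartheta_A/2,\vartheta_B/2)$ the angle $2\vartheta$ lies in $(\vartheta_A,\vartheta_B)$. There \eqref{Yao0315a} is neither assumed nor part of the conclusion; on the full segment $T_{\lambda,2\vartheta}\cap\Sigma$ it is in fact false near $\Gamma_D$ (see the discussion before \autoref{lma606}). So nothing supplies the boundary inequality you need.

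Your fallback does not close the gap either. The angles you can reach legitimately, with windows $(0,\vartheta_B/2^{k-1})$, are the $\vartheta_B/2^{k}$. These accumulate only at $0$, so no density argument fills $(0,\vartheta_B/2]$.

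Conversely, the ``main obstacle'' you flag is harmless. Whenever $\vartheta_3\le\vartheta_B$, the mirror image of $\Gamma_N^+$ occupies angles $\le 2\vartheta-\vartheta_B\le\vartheta_1$, so $\Gamma^{2B}=\emptyset$ automatically.

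The missing idea is the paper's nested dyadic windows.
\begin{itemize}
\item Start with the window $(0,\vartheta_B(\lambda))$ centred at $\vartheta_B(\lambda)/2$. It needs only the Neumann condition of $u$ at angle $0$ and the hypothesis at $\vartheta_B$.
\item For $\vartheta=\frac{2j-1}{2^{m+1}}\vartheta_B(\lambda)$ with $1\le j\le 2^{m-1}$, use the window $\bigl(\frac{j-1}{2^{m}}\vartheta_B(\lambda),\frac{j}{2^{m}}\vartheta_B(\lambda)\bigr)$. Both endpoints lie in $[0,\vartheta_B/2]$ and were obtained at the previous level.
\item Push each such family through \autoref{lma301} in $\lambda$, with the angles proportional to $\vartheta_B(\lambda)$ rather than fixed.
\item The dyadic angles are dense in $(0,\vartheta_B/2]$. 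One further application of \autoref{lma301} turns the non-strict limit into the strict inequality.
\end{itemize}

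Only after this base is in place does your window $(2\vartheta-\vartheta_B,\vartheta_B)$ work. It climbs through $I_{\lambda,m}=(0,(1-2^{-m})\vartheta_B]\cap(0,\vartheta^A]$, so that $2\vartheta-\vartheta_B$ is always already covered. That part of your plan agrees with the paper. So does your observation that the strictness needed for \eqref{Yao0351} comes from $\Gamma^{2A}$, through the strict hypothesis on $T_{\lambda,\vartheta_B}$. The window $(0,2\vartheta)$ becomes usable only at the very end, for $\vartheta\le\vartheta^A/2$.
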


\begin{proof}
By \autoref{lma208}\ref{Yaoit23a}, we note that $\frac{1}{2}\vartheta_{B}(\lambda) < \vartheta^{A}(\lambda)$ always holds. 

\textbf{Step 1}. 
\eqref{Yao0315a} holds for $\vartheta \in (0, \vartheta_{B}(\lambda)/2]$ and $\lambda \geq \Lambda$. Indeed, we first claim that for every ${m} \in \mathbb{N}^{ + }$ and $1 \leq {j} \leq 2^{{m} - 1}$, \eqref{Yao0315a} holds for $\vartheta = {j}2^{ - {m}}\vartheta_{B}(\lambda)$ and $\lambda \geq \Lambda$. To show the claim, we take
\begin{equation*}
\vartheta_{1}(\lambda) = 0, \quad
\vartheta_{2}(\lambda) = \tfrac{1}{2}\vartheta_{B}(\lambda), \quad
\vartheta_{3}(\lambda) = \vartheta_{B}(\lambda). 
\end{equation*}
For $\lambda \geq \Lambda$, we see that
$\Gamma_{\lambda, \vartheta_{2}(\lambda)}^{2B}$ is empty, \eqref{Yao0315a} holds for $\vartheta = \vartheta_{3}(\lambda)$, and then ${w}^{\lambda, \vartheta_{2}(\lambda)}$ satisfies \eqref{Yao0322}. From \autoref{lma301}, we can prove
\begin{equation} \label{Yao0352}
{w}^{\lambda, \frac{1}{2}\vartheta_{B}(\lambda)} < 0 \text{ in } {D}_{\lambda, \frac{1}{2}\vartheta_{B}(\lambda)}
\end{equation}
and thus \eqref{Yao0315a} holds for $\vartheta = \vartheta_{B}(\lambda)/2$ and $\lambda \geq \Lambda$. This proves the claim for ${m} = 1$. 

Suppose the claim holds for some ${m}_{0} \geq 1$. Now take $1 \leq {j} \leq 2^{{m}_{0} - 1}$ (then $2{j} - 1 \leq 2^{{m}_{0}}$), and
\begin{equation*}
\vartheta_{2}(\lambda) = \frac{2{j} - 1}{2^{{m}_{0} + 1}}\vartheta_{B}(\lambda), \quad
\vartheta_{1}(\lambda) = \frac{{j} - 1}{2^{{m}_{0}}}\vartheta_{B}(\lambda), \quad
\vartheta_{3}(\lambda) = \frac{{j}}{2^{{m}_{0}}}\vartheta_{B}(\lambda). 
\end{equation*}
Thus, ${w}^{\lambda, \vartheta_{2}(\lambda), \vartheta_{1}(\lambda)}$ satisfies \eqref{Yao0322}. From \autoref{lma301}, \eqref{Yao0315a} holds for $\vartheta = \frac{2{j} - 1}{2^{{m}_{0} + 1}}\vartheta_{B}(\lambda)$ and $\lambda \geq \Lambda$. Therefore, the claim holds for ${m} = {m}_{0} + 1$. By induction, the claim holds for all ${m} \geq 1$. 

Since the set $\{{j}2^{ - {m}}: 1 \leq {j} \leq 2^{{m} - 1}, \, {m} \in \mathbb{N}^{ + }\}$ is dense in $[0, 1/2]$, we conclude that \eqref{Yao0336} holds for $\vartheta \in (0, \vartheta_{B}(\lambda)/2]$. For any ${q}_{2} \in (0, 1/2)$, set ${q}_{1} = \max\{0, 2{q}_{2} - 1/2\}$, ${q}_{3} = \min\{1/2, 2{q}_{2}\}$. Then we can prove the negativity of ${w}^{\lambda, {q}_{2}\vartheta_{B}(\lambda)}$ in ${D}_{\lambda, {q}_{2}\vartheta_{B}(\lambda), {q}_{1}\vartheta_{B}(\lambda)}$, and thus \eqref{Yao0315a} holds for $\vartheta = {q}_{2}\vartheta_{B}(\lambda)$ and $\lambda \geq \Lambda$. Therefore, step 1 is proved. 

\textbf{Step 2}. 
\eqref{Yao0315a} holds for $\lambda \geq \Lambda$ and $\vartheta \in (0, \vartheta^{A}(\lambda)]$. Indeed, fix $\lambda$ and define ${I}_{\lambda, {m}} = (0, (1 - 2^{ - {m}})\vartheta_{B}(\lambda)] \cap (0, \vartheta^{A}(\lambda)]$. Then $(0, \vartheta^{A}(\lambda)] = \cup_{{m} = 1}^{\infty}{I}_{\lambda, {m}}$. By step 1, \eqref{Yao0315a} holds for $\vartheta \in {I}_{\lambda, 1}$. Suppose that \eqref{Yao0315a} holds for every $\vartheta \in {I}_{\lambda, \bar{m}}$ for some $\bar{m} \geq 1$. Let
\begin{equation} \label{Yao0353}
\vartheta \in {I}_{\lambda, \bar{m} + 1} \quad \text{and }\quad \vartheta \geq \vartheta_{B}(\lambda)/2. 
\end{equation}
Then $2\vartheta - \vartheta_{B}(\lambda) \in {I}_{\lambda, \bar{m}} \cup \{0\}$. Thus, ${w}^{\lambda, \vartheta, 2\vartheta - \vartheta_{B}(\lambda)}$ satisfies \eqref{Yao0322}. Combining this with \eqref{Yao0352}, and applying \autoref{lma301}, we conclude that ${w}^{\lambda, \vartheta} < 0$ in ${D}_{\lambda, \vartheta, 2\vartheta - \vartheta_{B}(\lambda)}$ and \eqref{Yao0315a} hold. Therefore, \eqref{Yao0315a} holds for every $\vartheta \in {I}_{\lambda, \bar{m} + 1}$. By mathematical induction, \eqref{Yao0315a} holds for every $\vartheta \in \cup_{{m} = 1}^{\infty}{I}_{\lambda, {m}}$. Moreover, for $\lambda \geq \Lambda$, 
\begin{equation*}
{w}^{\lambda, \vartheta} < 0 \text{ in } {D}_{\lambda, \vartheta, 2\vartheta - \vartheta_{B}(\lambda)} \text{ for } \vartheta \in [\vartheta_{B}(\lambda)/2, \vartheta^{A}(\lambda)].
\end{equation*} 
Furthermore, for $\vartheta \in (0, \vartheta^{A}(\lambda)/2]$ and $\lambda \geq \Lambda$, ${w}^{\lambda, \vartheta}$ satisfies \eqref{Yao0322} in ${D}_{\lambda, \vartheta}$. Hence, ${w}^{\lambda, \vartheta} < 0$ in ${D}_{\lambda, \vartheta}$. 
\end{proof}

Let us define two important constants $\lambda_{\sharp}$ and ${l}_{\perp}$ as follows: 
\begin{equation} \label{Yao0341}
\begin{aligned}
\lambda_{\sharp} & = \{\lambda > 0: \; \beta \in {J}_{\lambda}\} = \sin\tfrac{\alpha - \beta}{2}\csc\beta, \\
{l}_{\perp} & = \inf\{\lambda \in \mathbb{R}^{ + }: \; \vartheta_{B}(\lambda) > \pi/2\} = \max\{{l}_{N}\cos\beta, 0\}. 
\end{aligned}
\end{equation}

\begin{thm} \label{thm309}
Let $0 < \beta < \alpha \leq \pi$. Then the inequality \eqref{Yao0316}: 
\begin{align*}
{u}_{{x}_{1}}\sin(\vartheta - \tfrac{\beta}{2}) - {u}_{{x}_{2}}\cos(\vartheta - \tfrac{\beta}{2}) & < 0 \quad \text{on } {T}_{\lambda, \vartheta} \cap (\Sigma \cup \Gamma_{N}^{ - }), \\
{u}_{{x}_{1}}\sin(\vartheta - \tfrac{\beta}{2}) + {u}_{{x}_{2}}\cos(\vartheta - \tfrac{\beta}{2}) & < 0 \quad \text{on } \hat{T}_{\lambda, \vartheta} \cap (\Sigma \cup \Gamma_{N}^{ + })
\end{align*}
holds for every $\vartheta \in {J}_{\lambda}$ and $\lambda \geq \lambda_{\sharp}$, where $\lambda_{\sharp}$ is given by \eqref{Yao0341}. 
\end{thm}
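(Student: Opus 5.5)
We would prove \autoref{thm309} by a continuity argument in $\lambda$, running downward from the starting range furnished by \autoref{prop305} to $\lambda_{\sharp}$. For each $\lambda$ the argument has two layers: the interior inequalities \eqref{Yao0315}, and their upgrade to the Neumann sides \eqref{Yao0316}.

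\textbf{The downward continuation.} Set
\begin{equation*}
\lambda_{0} = \inf\{\Lambda \geq \lambda_{\sharp}: \; \eqref{Yao0315} \text{ holds for all } \vartheta \in {J}_{\lambda}, \; \lambda \geq \Lambda\}.
\end{equation*}
By \autoref{prop305}, $\lambda_{0} \leq \max\{\zeta_{\flat}\lambda_{C}, \lambda_{\flat}\} < {l}_{N}$. Suppose $\lambda_{0} > \lambda_{\sharp}$; we aim for a contradiction.

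For $\lambda$ in a left neighbourhood of $\lambda_{0}$, we treat the two components of ${J}_{\lambda}$ separately.
\begin{itemize}
\item On $[\vartheta^{B}(\lambda), (\pi+\beta)/2]$, take $\vartheta_{3} = \pi$ and $\vartheta_{1} = 2\vartheta - \pi$. The reflected upper Neumann side lies on ${T}_{\check\lambda,\check\vartheta} = \hat{T}_{\hat\lambda,\hat\vartheta}$ with $\lambda_{*} = \lambda/\zeta(\vartheta) > \lambda$. Taking $\lambda$ close enough to $\lambda_{0}$ makes $\lambda_{*}$ exceed $\lambda_{0}$, so the strict boundary condition \eqref{Yao0322e} on $\Gamma^{2B}$ holds by the known range.
\item On $\Gamma^{2A}$, the line ${T}_{\lambda,2\vartheta-\pi}$ has a smaller angle, which is handled together with the first component.
\item On $(0, \vartheta^{A}(\lambda)]$, we use \autoref{lma308}: once \eqref{Yao0315a} is known at $\vartheta_{B}(\lambda)$, it follows on all of $(0, \vartheta^{A}(\lambda)]$, together with \eqref{Yao0351}.
\end{itemize}
We then run \autoref{lma301} along a path $t \mapsto (\mu_{t}, \phi_{2,t}, \phi_{1,t})$. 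The path first decreases $\lambda$ slightly at fixed $\vartheta$ near $(\pi+\beta)/2$, where $\hat\vartheta = \beta$ and ${w}^{\lambda,(\pi+\beta)/2,\beta}$ satisfies \eqref{Yao0322}. It then sweeps $\vartheta$ downward through the dyadic-angle scheme of \autoref{lma303}, and \autoref{lma301} supplies the strict negativity. The small-volume principle \autoref{lma205MP} (uniform in the sector amplitude, which is at least $\beta$) gives openness. This contradicts the minimality of $\lambda_{0}$, so \eqref{Yao0315} holds for all $\vartheta \in {J}_{\lambda}$ and $\lambda \geq \lambda_{\sharp}$. The condition $\lambda \geq \lambda_{\sharp}$, i.e. $\beta \in {J}_{\lambda}$, is used exactly here: it guarantees that $\vartheta_{1} = 2\vartheta-\pi \in (0,\beta] \subset {J}_{\lambda}$ and $\hat\vartheta \in {J}_{\lambda_{*}}$, so that the a priori boundary conditions on $\Gamma^{2A}$ and $\Gamma^{2B}$ are available.

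\textbf{Upgrade to \eqref{Yao0316} on the Neumann sides.} The interior part is \eqref{Yao0315}. On $\Gamma_{N}^{-}$ we need the directional derivative along ${T}_{\lambda,\vartheta}$ to be strict at the foot point ${P}_{\lambda}$. Writing the derivative in the direction $\mathbf{e}_{\vartheta-\beta/2-\pi/2}$ and using ${u}_{\nu} = 0$ there, this reduces to strict negativity of the tangential derivative, i.e. \eqref{Yao0345a}.

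We obtain \eqref{Yao0345a} from \autoref{lma307} at each $\Lambda = \lambda \in [\lambda_{\sharp}, {l}_{N})$. The lemma needs some $\vartheta_{C} > \pi/2$ with $\vartheta_{C} \geq \vartheta_{B}$ such that \eqref{Yao0315} holds on $(0,\vartheta^{A}] \cup [\vartheta_{B}, \vartheta_{C}]$, and we take $\vartheta_{C} = (\pi+\beta)/2 > \pi/2$. The case $\vartheta_{B}(\lambda) \geq (\pi+\beta)/2$ is excluded, since then $\vartheta^{A}$ already covers the range; note $\vartheta_{B} < 2\vartheta^{A}$ by \autoref{lma208}. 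For $\lambda \geq {l}_{N}$ the sets ${T}_{\lambda,\vartheta} \cap \Gamma_{N}^{-}$ are empty, and for $\lambda$ at which ${P}_{\lambda}$ is the vertex the claim is vacuous. The statement on $\hat{T}_{\lambda,\vartheta} \cap \Gamma_{N}^{+}$ follows from \eqref{Yao0345b} in the same way.

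\textbf{Main obstacle.} The delicate step is the continuation for $\vartheta$ near $\vartheta_{B}(\lambda)$ when $\lambda < \lambda_{C}$. There ${J}_{\lambda}$ is disconnected, and the boundary condition on $\Gamma^{2B}$ requires the result at angle $\hat\vartheta$ and height $\lambda_{*}$. One must check that $\hat\vartheta$ lands in ${J}_{\lambda_{*}}$ rather than in the gap $(\vartheta_{A}, \vartheta_{B})$. We would try to verify this using the monotonicity of $\vartheta_{A}$ and $\vartheta_{B}$ together with $\lambda_{*} > \lambda$, and, if it fails, shrink $\vartheta_{1}$ so as to avoid the bad angles.
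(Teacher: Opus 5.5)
Your outer frame (infimum $\lambda_0$, contradiction, \autoref{lma301}, \autoref{lma308}) is fine. So is your upgrade to the Neumann sides via \autoref{lma307}. The gap is in the downward-continuation step, which is the heart of the theorem: for $\lambda<\lambda_0$, the a priori boundary conditions in \eqref{Yao0322} live at the \emph{same} height $\lambda$, not above $\lambda_0$.

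\textbf{The $\Gamma^{2A}$ condition is circular.} With $\vartheta_3=\pi$ you need \eqref{Yao0315a} at $(\lambda,2\vartheta-\pi)$. That angle lies in the lower component, and there your only tool is \autoref{lma308}. But \autoref{lma308} needs \eqref{Yao0315a} at $\vartheta_B(\lambda)$, whose own $\Gamma^{2A}$ condition sits on $T_{\lambda,2\vartheta_B(\lambda)-\pi}$. Saying this is ``handled together with the first component'' does not break the circle. Moreover, for $\lambda<l_\perp$ (which does occur above $\lambda_\sharp$ when $\beta$ is small), $2\vartheta_B(\lambda)-\pi<0$, so your choice $\vartheta_1=2\vartheta-\pi$ is not even admissible at $\vartheta_B(\lambda)$.

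\textbf{The $\Gamma^{2B}$ claim fails uniformly.} You claim $\lambda_*=\lambda/\zeta(\vartheta)$ exceeds $\lambda_0$ once $\lambda$ is close to $\lambda_0$. But $\zeta((\pi+\beta)/2)=1$, so at $\vartheta=(\pi+\beta)/2$ one has $\hat\lambda=\lambda$ and $\hat\vartheta=\beta$, and the required condition is \eqref{Yao0315b} at $(\lambda,\beta)$. Hence your path (lower $\lambda$ at $\vartheta=(\pi+\beta)/2$, then sweep $\vartheta$) violates hypothesis \ref{Yaoit32d} of \autoref{lma301} as soon as $\mu_t<\lambda_0$. You also leave open whether $\hat\vartheta\in J_{\lambda_*}$ near $\vartheta_B$, and that is a genuine part of the proof.

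\textbf{The missing idea: \autoref{lma307} drives the continuation, not just the final upgrade.}
\begin{enumerate}[label=(\arabic*)]
\item Prove the strict inequalities at $\lambda=\lambda_0$ itself, for $\vartheta\in\mathrm{Int}(J_{\lambda_0})\cup\{(\pi+\beta)/2\}$. Pass to the limit, apply the strong maximum principle to the rotation function $v_\lambda$, then apply \autoref{lma301} at $(\pi+\beta)/2$ with $\vartheta_1=\beta\in\mathrm{Int}(J_{\lambda_0})$.
\item Apply \autoref{lma307} at $\lambda_0$. Non-vanishing tangential derivatives on $\Gamma_N$, together with \autoref{lma206GNN} near $\Gamma_D$, give strict monotonicity on the \emph{closed} segments $T_{\lambda_0,\vartheta}\cap\overline{\Sigma}$. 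This holds for $\vartheta$ in compact parts of $\mathrm{Int}(J_{\lambda_0})$, and even slightly beyond $(\pi+\beta)/2$; there $\lambda_0<l_N$ gives $u_{x_1}<0$ on the whole closed vertical segment.
\item By continuity, \eqref{Yao0315} persists for $\lambda\in[\lambda_0-\varepsilon,\lambda_0]$ on angle ranges kept a uniform distance $\delta_2$ from $\vartheta_A(\lambda)$ and $\vartheta_B(\lambda)$. This margin comes from \autoref{lma208}\ref{Yaoit23b} and \ref{Yaoit23d}. Part \ref{Yaoit23d} ($\vartheta_A\ge\beta$) is where $\lambda\ge\lambda_\sharp$ is really used.
\item With these same-height conditions, apply \autoref{lma301} at $\vartheta_B(\lambda)$. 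Take $\vartheta_{1,\lambda}\le\vartheta_A(\lambda)-3\delta_2$, and $\vartheta_{3,\lambda}=\pi$ or $\min\{2\vartheta_B(\lambda),(\pi+\beta)/2+\delta_1\}$. Then \autoref{lma308} covers $(0,\vartheta^A(\lambda)]$.
\item The range $[\vartheta_B(\lambda)+\delta_3,(\pi+\beta)/2]$ is not swept at all. It is handled by the maximum principle for $v_\lambda$, using $u_{x_1}<0$ as a positive supersolution.
\item The $\Gamma^{2B}$ condition just above $\vartheta_B$ rests on $\hat\lambda(\lambda,\vartheta_B(\lambda))=l_N$. If $\alpha<\pi$, then $l_N>\lambda_C$ and $J_{\hat\lambda}$ is the full interval. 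If $\alpha=\pi$, you need the case analysis around $l_*$ from \autoref{lma501}.
\end{enumerate}
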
 

\begin{proof}
Let
\begin{equation} \label{Yao0355}
\bar{\Lambda} = \inf\big\{\lambda' > 0: \; \eqref{Yao0315} \text{ holds for all } \vartheta \in {J}_{\lambda} \text{ and } \lambda \geq \lambda' \big\}. 
\end{equation}
From \autoref{prop305}, we see that $\bar{\Lambda}$ is well-defined and $\bar{\Lambda} \leq \lambda_{C}$, $\bar{\Lambda} < {l}_{N}$. We argue by contradiction, and suppose that $\bar{\Lambda} > \lambda_{\sharp}$. The proof will be divided into several steps. 

\textbf{Step 1}. 
\eqref{Yao0315} holds for $\vartheta \in \mathrm{Int}({J}_{\lambda}) \cup \{(\pi + \beta)/2\}$ and $\lambda = \bar{\Lambda}$. 

By continuity, we see that
\begin{align*}
{u}_{{x}_{1}}\sin(\vartheta - \tfrac{\beta}{2}) - {u}_{{x}_{2}}\cos(\vartheta - \tfrac{\beta}{2}) &\leq 0 \quad \text{on } {T}_{\lambda, \vartheta} \cap \Sigma, \\
{u}_{{x}_{1}}\sin(\vartheta - \tfrac{\beta}{2}) + {u}_{{x}_{2}}\cos(\vartheta - \tfrac{\beta}{2}) &\leq 0 \quad \text{on } \hat{T}_{\lambda, \vartheta} \cap \Sigma
\end{align*}
holds for $\vartheta \in {J}_{\lambda}$ and $\lambda = \bar{\Lambda}$. Then the rotation function ${v}_{\lambda}$, given in \eqref{Yao0347aRotation}, is nonnegative and satisfies the linear equation \eqref{Yao0347bRotation}. 
Using the strong maximum principle, along with the strict monotonicity of ${u}$ near the Dirichlet boundary (\autoref{lma206GNN}) and on the Neumann boundary (\autoref{lma307}), we deduce the positivity of ${v}_{\lambda}$. In particular, \eqref{Yao0315a} holds for $\lambda = \bar{\Lambda}$ and $\vartheta \in \mathrm{Int}({J}_{\lambda})$. 

Note that the assumption $\lambda \geq \bar{\Lambda} > \lambda_{\sharp}$ guarantees that $\beta \in \mathrm{Int}({J}_{\lambda})$. Hence \eqref{Yao0315} holds for $\vartheta = \beta$, and ${w}^{\lambda, (\pi + \beta)/2}$ satisfies \eqref{Yao0322} in ${D}_{\lambda, (\pi + \beta)/2, \beta}$. By \autoref{lma301}, 
\begin{equation*}
{w}^{\lambda, (\pi + \beta)/2} < 0 \quad \text{in } {D}_{\lambda, (\pi + \beta)/2}. 
\end{equation*}
Therefore, \eqref{Yao0315a} holds for $\vartheta = (\pi + \beta)/2$ and $\lambda = \bar{\Lambda}$. 

We remark that \eqref{Yao0315} holds for $\vartheta = \vartheta_{A}(\lambda), \vartheta_{B}(\lambda)$ and $\lambda = \bar{\Lambda}$, but the proof is contained in the following steps. 

\textbf{Step 2}. 
The tangential derivatives of ${u}$ along the Neumann boundary $\Gamma_{N}$ do not vanish, i.e., \eqref{Yao0345} holds for $\lambda \geq \bar{\Lambda}$. This follows from \autoref{lma307}. 

We are now ready to prove \eqref{Yao0315} provided that $\bar{\Lambda} - \lambda > 0$ is sufficiently small. 

\textbf{Step 3}. 
\eqref{Yao0315} holds for $\vartheta = \vartheta_{B}(\lambda)$ and $0 \leq \bar{\Lambda} - \lambda \ll 1$. 

The key is to check the boundary condition on $\Gamma^{2A}$ when $\vartheta$ is close to $\vartheta_{B}$. Note that ${u}$ has strict monotonicity in the interior (see step 1) and on the Neumann boundary $\Gamma_{N}$ (see step 2). Together with the fact that $\bar{\Lambda} < {l}_{N}$, we see ${u}_{{x}_{1}} < 0$ on ${T}_{\bar{\Lambda}, (\pi + \beta)/2} \cap \overline{\Sigma}$. By continuity, there exist small constants $\delta_{1} \in (0, (\pi - \beta)/2)$ and $\varepsilon_{1} \in (0, \bar{\Lambda} - \lambda_{\sharp}]$ such that \eqref{Yao0315} holds for $(\lambda, \vartheta) \in \mathcal{S}_{1}$, where
\begin{equation*}
\mathcal{S}_{1} = [\bar{\Lambda} - \varepsilon_{1}, \bar{\Lambda}] \times [(\pi + \beta)/2 - \delta_{1}, (\pi + \beta)/2 + \delta_{1}]. 
\end{equation*}
From \ref{Yaoit23b} and \ref{Yaoit23d} of \autoref{lma208}, we have
\begin{equation*}
2\vartheta_{B}(\lambda) - \tfrac{\pi + \beta}{2} \leq \vartheta_{A}(\lambda) \text{ for } \lambda \in (0, {l}_{\perp}], 
\end{equation*}
and
\begin{equation*}
\delta_{2} = \min \Big\{ \min_{\lambda \in [\lambda_{\sharp}, {l}_{N}]} \frac{\pi + \vartheta_{A}(\lambda) - 2\vartheta_{B}(\lambda)}{3}, \frac{\delta_{1}}{3}, \frac{\beta}{6}, \vartheta_{B}(\lambda_{\sharp}) - \beta \Big\} > 0. 
\end{equation*}
Let $\vartheta^{B}$ be as in \eqref{Yao0314}. Set $\vartheta_{2, \lambda} = \vartheta^{B}(\lambda)$, 
\begin{equation*}
\vartheta_{1, \lambda} = 2\vartheta_{2, \lambda} - \vartheta_{3, \lambda}, \quad
\vartheta_{3, \lambda} = \begin{cases}
\pi & \text{if } \lambda \geq {l}_{\perp}, \\
\min\{2\vartheta_{2, \lambda}, (\pi + \beta)/2 + \delta_{1}\} & \text{if } \lambda < {l}_{\perp}. 
\end{cases}
\end{equation*}
Then for $\lambda \in [\lambda_{\sharp}, {l}_{N}]$, we have
\begin{equation} \label{Yao0359}
\vartheta_{1, \lambda} \in [0, \vartheta_{A}(\lambda) - 3\delta_{2}], \quad
\vartheta_{3, \lambda} \in [\vartheta_{B}(\lambda) + \delta_{2}, (\pi + \beta)/2 + \delta_{1}] \cup \{\pi\}. 
\end{equation}
Since ${u}$ is strictly monotone in the interior (see step 1), on the Neumann boundary (see step 2), and near the Dirichlet boundary (see \autoref{lma206GNN}), by continuity we deduce that
\eqref{Yao0315} holds for $\lambda$ and $\vartheta$ such that
\begin{equation*}
\vartheta \in [\delta_{2}, \vartheta_{A}(\lambda) - \delta_{2}] \cup [\vartheta_{B}(\lambda) + \delta_{2}, (\pi + \beta)/2 + \delta_{1}], \quad \lambda \in [\bar{\Lambda} - \varepsilon_{2}, {l}_{N}], 
\end{equation*}
where $\varepsilon_{2} \in (0, \varepsilon_{1}]$ is a small positive constant such that (for further use)
\begin{equation} \label{Yao0360a}
\check{\lambda}(\lambda, \vartheta_{B}(\lambda)) > \bar{\Lambda} \text{ for } \lambda \geq \bar{\Lambda} - \varepsilon_{2}. 
\end{equation}

As in the proof of \autoref{lma308}, we know \eqref{Yao0315} holds for $(\lambda, \vartheta) \in [\bar{\Lambda} - \varepsilon_{2}, \infty) \times (0, \delta_{2}]$. Thus, \eqref{Yao0315} holds for $(\lambda, \vartheta) \in \mathcal{S}_{2}$, where
\begin{equation*}
\mathcal{S}_{2} = \left\{
\begin{aligned}
(\lambda, \vartheta): \; 
& \vartheta \in (0, \vartheta_{A}(\lambda) - \delta_{2}] \cup [\vartheta_{B}(\lambda) + \delta_{2}, (\pi + \beta)/2 + \delta_{1}] \\
& \text{and } \lambda \in [\bar{\Lambda} - \varepsilon_{2}, {l}_{N}]
\end{aligned}\right\}. 
\end{equation*}
Combining this with \eqref{Yao0359} and \eqref{Yao0360a}, we conclude that for every $\lambda \geq \bar{\Lambda} - \varepsilon_{2}$, the function ${w}^{\lambda, \vartheta_{2, \lambda}}$ satisfies \eqref{Yao0322} on ${D}_{\lambda, \vartheta_{2, \lambda}, \vartheta_{1, \lambda}}$. By \autoref{lma301}, 
\begin{equation} \label{Yao0361}
{w}^{\lambda, \vartheta_{2, \lambda}} < 0 \quad \text{in } {D}_{\lambda, \vartheta_{2, \lambda}, \vartheta_{1, \lambda}} \text{ for } \lambda \geq \bar{\Lambda} - \varepsilon_{2}. 
\end{equation}
Therefore, \eqref{Yao0315a} (and similarly \eqref{Yao0315b}) holds for $\vartheta = \vartheta_{B}(\lambda)$ and $\lambda \in [\bar{\Lambda} - \varepsilon_{2}, {l}_{N}]$. 

\textbf{Step 4}. 
\eqref{Yao0315} is valid for $\vartheta \in (0, \vartheta_{A}(\lambda)]$ and $\lambda \in [\bar{\Lambda} - \varepsilon_{2}, {l}_{N}]$. This is done in \autoref{lma308}. 

\textbf{Step 5}. 
\eqref{Yao0315} holds when $\vartheta \in [\vartheta_{B}(\lambda), (\pi+\beta)/2]$ and $\bar{\Lambda} - \lambda$ is positive and sufficiently small. In fact, we distinguish two cases. 

Step 5.1. The case $\lambda_{C} < {l}_{N}$, i.e., $\alpha < \pi$. 
Since $\hat{\lambda}(\lambda, \vartheta_{B}(\lambda)) = {l}_{N} > \lambda_{C}$ and $\check{\lambda}(\lambda, \vartheta) > \lambda$, there exist $\delta_{3} \in (0, \delta_{2})$ and $\varepsilon_{3} \in (0, \varepsilon_{2})$ such that
\begin{equation} \label{Yao0360b}
\begin{gathered}
\hat{\lambda}(\lambda, \vartheta_{B}(\lambda) + \delta_{3}) \geq \lambda_{C} \quad \text{for all } \lambda \in [\bar{\Lambda} - \varepsilon_{2}, {l}_{N}], 
\\
\check{\lambda}(\lambda, \vartheta_{B}(\lambda) + {t}) > \bar{\Lambda} \quad \text{for all } \lambda \in [\bar{\Lambda} - \varepsilon_{3}, {l}_{N}], \; {t} \in [0, \delta_{3}].
\end{gathered}
\end{equation}
Recall that $2\vartheta_{B}(\lambda) - \beta \geq \vartheta_{B}(\lambda) + \delta_{2}$ and that~\eqref{Yao0315} holds for $(\lambda, \vartheta) \in \mathcal{S}_{2}$. Using~\eqref{Yao0359} and~\eqref{Yao0360b}, we infer that for every ${t} \in [0, \delta_{3}]$ and $\lambda \geq \bar{\Lambda} - \varepsilon_{3}$, the function ${w}^{\lambda, \vartheta_{2, \lambda} + {t}, \vartheta_{1, \lambda} + 2t}$ satisfies the boundary condition in~\eqref{Yao0322} on both $\Gamma_{\lambda, \vartheta_{2, \lambda} + {t}, \vartheta_{1, \lambda} + 2{t}}^{2A}$ and $\Gamma_{\lambda, \vartheta_{2, \lambda} + {t}, \vartheta_{1, \lambda} + 2{t}}^{2B}$. By~\eqref{Yao0361}, we may apply \autoref{lma301} to deduce that for ${t} \in [0, \delta_{3}]$,
\begin{equation*}
{w}^{\lambda, \vartheta_{2, \lambda} + {t}} < 0 \quad \text{in}~{D}_{\lambda, \vartheta_{2, \lambda} + {t}, \vartheta_{1, \lambda} + 2{t}} \quad \text{for}~\lambda \geq \bar{\Lambda} - \varepsilon_{3}.
\end{equation*}
Therefore, \eqref{Yao0315a} holds for $\vartheta \in [\vartheta_{B}(\lambda), \vartheta_{B}(\lambda) + \delta_{3}]$ and $\lambda \in [\bar{\Lambda} - \varepsilon_{3}, {l}_{N}]$. For $\lambda \in [\bar{\Lambda} - \varepsilon_{3}, \bar{\Lambda}]$, set
\[
\mathcal{D} = \left\{ {x} \in \Sigma :\; \vartheta_{B}(\lambda) + \delta_{3} < \sigma_{\lambda}({x}) < (\pi + \beta)/2 \right\}.
\]
Then the function ${v}_{\lambda}$, given in~\eqref{Yao0347aRotation}, belongs to $C^{1}(\overline{\mathcal{D}})$ and satisfies the linear equation \eqref{Yao0347bRotation} and ${v}_{\lambda} > 0$ on $\partial \mathcal{D}$. Recall that
\begin{equation*}
\Delta {u}_{{x}_{1}} + {f}'({u}) {u}_{{x}_{1}} = 0 \quad \text{and} \quad {u}_{{x}_{1}} < 0 \quad \text{in}~\overline{\mathcal{D}}.
\end{equation*}
Applying the maximum principle from~\cite{BNV94} to ${v}_{\lambda}$, we deduce the positivity of ${v}_{\lambda}$ in $\mathcal{D}$. Therefore, \eqref{Yao0315a} holds for $\vartheta \in [\vartheta_{B}(\lambda) + \delta_{3}, (\pi+\beta)/2]$ and $\lambda \in [\bar{\Lambda} - \varepsilon_{3}, \bar{\Lambda}]$. In summary, \eqref{Yao0315a} holds for $\vartheta \in [\vartheta_{B}(\lambda), (\pi+\beta)/2]$ and $\lambda \geq \bar{\Lambda} - \varepsilon_{3}$.

Step 5.2. The case $\lambda_{C} = {l}_{N}$, i.e., $\alpha = \pi$. 
Let ${l}_{*} \in (0, {l}_{N})$ and $\vartheta_{\lambda}$ be as given in \autoref{lma501} below. There are three possibilities:
\begin{equation}
\bar{\Lambda} \in ({l}_{*}, {l}_{N}), \quad \bar{\Lambda} \in (0, {l}_{*}), \quad \bar{\Lambda} = {l}_{*}.
\end{equation}

{Case} $\bar{\Lambda} \in ({l}_{*}, {l}_{N})$. 
In this setting, $\check{\lambda}(\bar{\Lambda}, \vartheta_{B}(\bar{\Lambda})) > {l}_{N}$. Hence, 
\[
\hat{\lambda}(\bar{\Lambda}, \vartheta_{B}(\bar{\Lambda})) = {l}_{N} > \bar{\Lambda}, \quad
0 < \hat{\vartheta}(\vartheta_{B}(\bar{\Lambda})) < (\pi + \beta)/2 = \vartheta_{A}(\hat{\lambda}(\bar{\Lambda}, \vartheta_{B}(\bar{\Lambda}))).
\]
It follows by continuity that for $0 < \vartheta - \vartheta_{B}(\lambda) \ll 1$ and $0 < \bar{\Lambda} - \lambda \ll 1$, 
\begin{equation*}
\bar{\Lambda} < \hat{\lambda}(\lambda, \vartheta) \leq {l}_{N}, \quad
0 < \hat{\vartheta}(\vartheta) < \vartheta_{A}(\hat{\lambda}(\lambda, \vartheta)).
\end{equation*}

{Case} $\bar{\Lambda} \in (0, {l}_{*})$. 
Here, $\check{\lambda}(\bar{\Lambda}, \vartheta_{B}(\bar{\Lambda})) < {l}_{N}$ and $\check{\vartheta}(\vartheta_{B}(\bar{\Lambda})) = \vartheta_{B}(\check{\lambda}(\bar{\Lambda}, \vartheta_{B}(\bar{\Lambda}))) < (\pi+\beta)/2$. It follows by continuity that for $0 < \vartheta - \vartheta_{B}(\lambda) \ll 1$ and $0 < \bar{\Lambda} - \lambda \ll 1$, 
\begin{equation*}
\bar{\Lambda} < \check{\lambda}(\lambda, \vartheta) \leq {l}_{N}, \quad
\vartheta_{B}(\check{\lambda}(\lambda, \vartheta)) \leq \check{\vartheta}(\vartheta) < (\pi+\beta)/2.
\end{equation*}

{Case} $\bar{\Lambda} = {l}_{*}$. 
By~\eqref{Yao0505} and continuity, for $0 < \bar{\Lambda} - \lambda \ll 1$, and $\vartheta \in [\vartheta_{B}(\lambda), \vartheta_{\lambda}]$.
\begin{gather*}
\vartheta_{B}(\lambda) < \vartheta_{\lambda} < \vartheta_{B}(\lambda) + \delta_{2}, 
\\
\bar{\Lambda} < \hat{\lambda}(\lambda, \vartheta) \leq {l}_{N}, \quad
\bar{\Lambda} < \check{\lambda}(\lambda, \vartheta) \leq {l}_{N}. 
\end{gather*}

Thus, there exist a constant $\varepsilon_{4} > 0$ and a function $\psi_{\lambda}$ (depending on $\lambda$) such that 
(i) $\vartheta_{B}(\lambda) < \psi_{\lambda} < \vartheta_{B}(\lambda) + \delta_{2}$ for $\lambda \in [\bar{\Lambda} - \varepsilon_{4}, \bar{\Lambda})$; 
(ii) ${w}^{\lambda, \vartheta}$ satisfies the boundary condition on $\Gamma_{\lambda, \vartheta}^{2B}$ for
\begin{equation} \label{Yao0363}
\vartheta \in [\vartheta_{B}(\lambda), \psi_{\lambda}], \quad \lambda \in [\bar{\Lambda} - \varepsilon_{4}, \bar{\Lambda}].
\end{equation}
From~\eqref{Yao0359}, ${w}^{\lambda, \vartheta}$ also satisfies the boundary condition on $\Gamma_{\lambda, \vartheta, 2\vartheta - \vartheta_{3, \lambda}}^{2A}$. Therefore,
\begin{equation*}
{w}^{\lambda, \vartheta, 2\vartheta - \vartheta_{3, \lambda}} < 0 \quad \text{in}~{D}_{\lambda, \vartheta, 2\vartheta - \vartheta_{3, \lambda}}
\end{equation*}
and \eqref{Yao0315a} holds whenever~\eqref{Yao0363} is satisfied.
Finally, by applying the maximum principle to the angular derivative function ${v}_{\lambda}$, we deduce that \eqref{Yao0315a} holds for $\vartheta \in [\psi_{\lambda}, \pi/2 + \beta/2]$ and $\lambda \in [\bar{\Lambda} - \varepsilon_{4}, \bar{\Lambda}]$.

In both cases $\alpha < \pi$ and $\alpha = \pi$, we have shown that \eqref{Yao0315a} holds for $\vartheta \in [\vartheta_{B}(\lambda), (\pi+\beta)/2]$ and $0 \leq \bar{\Lambda} - \lambda \ll 1$. Similarly, \eqref{Yao0315b} holds for $\vartheta \in [\vartheta_{B}(\lambda), (\pi+\beta)/2]$ and $0 \leq \bar{\Lambda} - \lambda \ll 1$.

Combining all the steps above, we infer that \eqref{Yao0315} holds for $\vartheta \in {J}_{\lambda}$ and $0 \leq \bar{\Lambda} - \lambda \ll 1$. This contradicts the minimality of $\bar{\Lambda}$, and hence $\bar{\Lambda} \leq \lambda_{\sharp}$. Therefore, \eqref{Yao0315} holds for $\vartheta \in {J}_{\lambda}$ and $\lambda \geq \lambda_{\sharp}$. This completes the proof of the theorem.
\end{proof}


\section{The proof for $\pi/2 \leq \beta \leq 2\pi/3$} \label{Sec04Large}

The assumption that $\lambda \geq \lambda_{\sharp}$ in \autoref{thm309} is used in two situations: (1) $2\vartheta_{B} - \pi < \vartheta_{A}$; (2) the proof of \eqref{Yao0315} with $\vartheta = (\pi + \beta)/2$. However, we do not check the negativity of ${u}_{{x}_{1}}$ and only show \eqref{Yao0315} for $\vartheta \in {J}_{\lambda} \cap (0, \vartheta_{B}]$. 
Note that ${J}_{\lambda} \cap (0, \vartheta_{B}] = (0, \vartheta_{A}] \cup \{\vartheta_{B}(\lambda)\}$ for $\lambda < \lambda_{C}$. 

\begin{lma} \label{lma401}
Let $0 < \beta < \alpha \leq \pi$. Then
\begin{equation} \label{Yao0402}
{w}^{\lambda, \vartheta^{B}(\lambda)} < 0 \text{ in } {D}_{\lambda, \vartheta^{B}(\lambda)}
\end{equation}
and \eqref{Yao0316} hold for every $\vartheta \in {J}_{\lambda} \cap (0, \vartheta_{B}(\lambda)]$ and $\lambda \in \mathbb{R}^{ + } \cap [\lambda_{\star\star}, \infty)$, where
\begin{equation} \label{Yao0403}
\lambda_{\star\star} = \inf\{\lambda' > 0: \; \pi/2 \leq \vartheta_{B}(\lambda) < (\pi + \vartheta_{A}(\lambda))/2 \text{ for every } \lambda > \lambda'\}. 
\end{equation}
\end{lma}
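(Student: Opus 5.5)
We would argue by continuity in $\lambda$, following the scheme of \autoref{thm309} but restricted to the angles $\vartheta\in{J}_{\lambda}\cap(0,\vartheta_{B}(\lambda)]$. We never need $\vartheta=(\pi+\beta)/2$ or the sign of ${u}_{{x}_{1}}$. Set
\begin{equation*}
\bar{\Lambda}=\inf\{\lambda'>0:\ \eqref{Yao0402}\text{ and }\eqref{Yao0316}\text{ hold for all }\vartheta\in{J}_{\lambda}\cap(0,\vartheta_{B}(\lambda)],\ \lambda\geq\lambda'\}.
\end{equation*}
By \autoref{thm309} (for $\lambda\geq\lambda_{\sharp}$) or \autoref{prop305} (for large $\lambda$), $\bar{\Lambda}$ is well defined. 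We suppose $\bar{\Lambda}>\lambda_{\star\star}$ and look for a contradiction.

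\textbf{Step 1 (closing up at $\bar\Lambda$).} By continuity the non-strict inequalities hold at $\lambda=\bar{\Lambda}$. Then ${v}_{\bar\Lambda}\geq0$ on the relevant rays, and the strong maximum principle together with \autoref{lma206GNN} upgrades this to strict inequality in the interior. For the Neumann boundary we apply \autoref{lma307} with $\vartheta_{C}=\vartheta^{B}(\bar\Lambda)$. The hypothesis \eqref{Yao0344} is exactly the condition $\vartheta_{B}\geq\pi/2$ built into $\lambda_{\star\star}$; in the borderline case $\vartheta_B=\pi/2$ we expect the parity argument in \autoref{lma307} to still force ${l}=1$, since $\vartheta_A<\vartheta_B<2\vartheta_A$ rules out ${l}=2$. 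This gives \eqref{Yao0345}, and hence the strict form of \eqref{Yao0316} at $\bar\Lambda$.

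\textbf{Step 2 (the line $\vartheta=\vartheta^{B}(\lambda)$ for $\lambda$ slightly below $\bar\Lambda$).} We repeat Step 3 of \autoref{thm309}. Choose $\vartheta_{2,\lambda}=\vartheta^{B}(\lambda)$, $\vartheta_{3,\lambda}=\pi$, and $\vartheta_{1,\lambda}=2\vartheta^{B}(\lambda)-\pi$. The defining condition of $\lambda_{\star\star}$, namely $\vartheta_{B}<(\pi+\vartheta_{A})/2$, says precisely that $\vartheta_{1,\lambda}<\vartheta_{A}(\lambda)$, with a uniform gap $3\delta_{2}$ on a neighbourhood of $\bar\Lambda$. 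The condition $\vartheta_B\geq\pi/2$ gives $\vartheta_{1,\lambda}\geq0$. We then check the two flat parts of the boundary separately:
\begin{enumerate}[label=(\alph*)]
\item On $\Gamma^{2A}$, the sign of $\partial_\nu {w}$ comes from \eqref{Yao0316a} on ${T}_{\lambda,\vartheta_{1,\lambda}}$. By Step 1 and continuity this inequality holds uniformly for $\vartheta\in(0,\vartheta_{A}(\lambda)-\delta_2]$ and $\lambda$ near $\bar\Lambda$; the small angles $\vartheta\le\delta_2$ are handled via \autoref{lma308}.
\item On $\Gamma^{2B}$, which lies on ${T}_{\check\lambda,\check\vartheta}$ with $\check\vartheta=\vartheta_{B}(\check\lambda)$, the check is the analogue of \eqref{Yao0360a}: since $\check\lambda(\lambda,\vartheta_B(\lambda))>\bar\Lambda$, the inequality \eqref{Yao0316b} is already known there.
\end{enumerate}
\autoref{lma301}, run along the family $\lambda\searrow$ starting from the known case at $\bar\Lambda$ (option \ref{Yaoit32c2}), then yields \eqref{Yao0402} together with \eqref{Yao0316} at $\vartheta=\vartheta^{B}(\lambda)$.

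\textbf{Step 3 (the angles $(0,\vartheta^{A}(\lambda)]$).} With $\vartheta_B(\lambda)$ in hand, \autoref{lma308} propagates the inequalities to all $\vartheta\in(0,\vartheta^{A}(\lambda)]$ for $\lambda\in[\bar\Lambda-\varepsilon,\bar\Lambda]$. For $\lambda\geq\lambda_{C}$ the set ${J}_\lambda\cap(0,\vartheta_B]$ is the whole interval $(0,\vartheta_{B}]$, and there we combine \autoref{lma308} with the rotation-function maximum principle from Step 5 of \autoref{thm309}. The Neumann-boundary part of \eqref{Yao0316} follows again from \autoref{lma307}. Together these steps establish the claims on $[\bar\Lambda-\varepsilon,\infty)$, which contradicts the minimality of $\bar\Lambda$.

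\textbf{Main obstacle.} The delicate point is the boundary condition on $\Gamma^{2A}$ in Step 2. This is where both inequalities defining $\lambda_{\star\star}$ are used, and we need a uniform margin that stays positive as $\lambda\to\bar\Lambda$. A secondary difficulty is checking that $\check\lambda>\bar\Lambda$ persists in Step 2 without the bound $\lambda\geq\lambda_\sharp$. For this we would rely only on the formula \eqref{Yao0308}, which gives $\check\lambda>\lambda$ whenever $2\vartheta-\beta<\pi$.
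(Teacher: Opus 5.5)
Your proposal follows the paper's proof essentially step for step. It uses the same contradiction on $\bar\Lambda$ and the same choice $\vartheta_{1,\lambda}=2\vartheta^{B}(\lambda)-\pi$, $\vartheta_{3,\lambda}=\pi$ in \autoref{lma301}, which is where both inequalities defining $\lambda_{\star\star}$ enter. It then uses \autoref{lma308} for the angles in $(0,\vartheta^{A}]$, \autoref{lma307} for the Neumann boundary, and Step 3 of \autoref{thm309} to push slightly below $\bar\Lambda$.

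One claim in your Step 1 is false: the borderline case $\vartheta_C=\vartheta_B=\pi/2$.

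\begin{itemize}
\item The parity argument of \autoref{lma307} does not exclude ${l}=2$ there. Both ${l}\vartheta^{A}\le\pi$ and $\sin({l}\vartheta_B)=\sin\pi=0\ge 0$ are compatible with ${l}=2$, and $\vartheta_B<2\vartheta^{A}$ adds nothing.
\item Inside the contradiction argument this case never arises. $\vartheta_B$ is strictly increasing, and $\vartheta_B({l}_{\perp})=\pi/2$ whenever ${l}_{\perp}>0$. Hence $\lambda_{\star\star}\ge{l}_{\perp}$ and $\vartheta_B(\lambda)>\pi/2$ for every $\lambda>\lambda_{\star\star}$, so \eqref{Yao0344} holds strictly at $\bar\Lambda$.
\item It does arise at the endpoint $\lambda=\lambda_{\star\star}={l}_{\perp}>0$, i.e.\ for $\beta<\pi/2$. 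That endpoint is included in the statement and is used later in \autoref{lma502} and \autoref{Sec05Mid}. If $\bar\Lambda=\lambda_{\star\star}$, your closing step at that value relies on exactly this invalid parity claim.
\end{itemize}

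You can repair the endpoint with the classical Berestycki--Pacella argument instead.

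\begin{itemize}
\item By continuity and the strong maximum principle, ${w}^{{l}_{\perp},\pi/2}<0$ in ${D}_{{l}_{\perp},\pi/2}$, so \eqref{Yao0402} still holds there.
\item Since ${T}_{{l}_{\perp},\pi/2}$ is perpendicular to $\Gamma_{N}^{-}$, the even reflection of ${u}$ across $\Gamma_{N}^{-}$ commutes with reflection in ${T}_{{l}_{\perp},\pi/2}$. It turns ${w}^{{l}_{\perp},\pi/2}$ into a solution of a linear equation near ${P}_{{l}_{\perp}}$ that vanishes on that line and is negative on one side.
\item Hopf's lemma at ${P}_{{l}_{\perp}}$, together with $\nabla {w}=2\nabla {u}$ in the normal direction on the line, gives \eqref{Yao0345a}. \eqref{Yao0345b} follows in the same way.
\end{itemize}
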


\begin{proof}
Set
\begin{equation}
\bar{\Lambda} = \inf\{\lambda' > 0: \; \eqref{Yao0315} \text{ holds for } \vartheta \in {J}_{\lambda} \cap (0, \vartheta_{B}(\lambda)], \lambda \geq \lambda' \}. 
\end{equation}
From \autoref{thm309}, $\bar{\Lambda}$ is well-defined and $0 \leq \bar{\Lambda} \leq \lambda_{\sharp}$. We argue indirectly and suppose that $\bar{\Lambda} > \lambda_{\star\star}$. Note that the definition of $\lambda_{\star\star}$ guarantees that
\begin{equation*}
0 \leq 2\vartheta_{B}(\lambda) - \pi < \vartheta_{A}(\lambda) \text{ for all } \lambda \geq \lambda_{\star\star}. 
\end{equation*}
Based on this and the definition of $\bar{\Lambda}$, we deduce by \autoref{lma301} that
\begin{equation*}
{w}^{\lambda, \vartheta^{B}(\lambda)} < 0 \text{ in } {D}_{\lambda, \vartheta^{B}(\lambda)}
\end{equation*}
and \eqref{Yao0315a} (and similarly \eqref{Yao0315b}) hold for $\vartheta = \vartheta^{B}(\lambda)$, $\lambda \geq \bar{\Lambda}$. Thanks to \autoref{lma308}, we derive that \eqref{Yao0315} holds for $0 < \vartheta \leq \vartheta^{A}(\lambda)$ and $\lambda \geq \bar{\Lambda}$. Applying \autoref{lma307}, we obtain the strict monotonicity along the Neumann boundary, i.e., \eqref{Yao0345} holds for $\lambda \geq \bar{\Lambda}$. By the same arguments as in step 3 of \autoref{thm309}, we can show \eqref{Yao0402} and \eqref{Yao0315} hold for $\vartheta = \vartheta_{B}(\lambda)$ and $\lambda \geq \bar{\Lambda} - \varepsilon$ for some $\varepsilon > 0$. Finally, from \autoref{lma308}, we conclude that \eqref{Yao0315} holds for $0 < \vartheta \leq \vartheta^{A}(\lambda)$ and $\lambda \geq \bar{\Lambda} - \varepsilon$. This contradicts the definition of $\bar{\Lambda}$. Thus, $\bar{\Lambda} = \lambda_{\star\star}$, and hence \eqref{Yao0402} and \eqref{Yao0316} hold for every $\vartheta \in {J}_{\lambda} \cap (0, \vartheta_{B}(\lambda)]$ and $\lambda > \lambda_{\star\star}$. Moreover, \eqref{Yao0402} and \eqref{Yao0316} hold for every $\vartheta \in {J}_{\lambda} \cap (0, \vartheta_{B}(\lambda)]$ and $\lambda = \lambda_{\star\star}$ if $\lambda_{\star\star} > 0$. 
\end{proof}

\begin{rmk}
Based on \autoref{lma208}\ref{Yaoit23c} and its proof, the constant $\lambda_{\star\star}$ in \eqref{Yao0403} has the following properties: 
\begin{enumerate}[label = {\rm(\arabic*)}]
\item
$\lambda_{\star\star}$ can be characterized by
\begin{equation*}
\lambda_{\star\star} = \inf\{\lambda > 0: \; \pi/2 \leq \vartheta_{B}(\lambda) < (\pi + \vartheta_{A}(\lambda))/2\}; 
\end{equation*}
\item
$\lambda_{\star\star} = {l}_{\perp}$ if $\beta \leq 2\pi/3$, and $\lambda_{\star\star} = 0$ if $\pi/2 \leq \beta \leq 2\pi/3$. 
\end{enumerate}
\end{rmk}

\begin{thm} \label{thm403}
Let $0 < \beta < \alpha \leq \pi$ and
\begin{equation} \label{Yao0407}
\pi/2 \leq \beta \leq 2\pi/3. 
\end{equation}
Then the solution ${u}$ of \eqref{Yao0105} satisfies \eqref{Yao0316} for every $\vartheta \in {J}_{\lambda} \cap (0, \vartheta_{B}(\lambda)]$ and $\lambda > 0$. Furthermore, ${u}$ has the even symmetry and monotonicity properties as stated in \ref{Yaoit11a}, \ref{Yaoit11b}, and \ref{Yaoit11c} of \autoref{thm101main}. 
\end{thm}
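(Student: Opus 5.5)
The first assertion follows from \autoref{lma401} together with the Remark after it. Under \eqref{Yao0407}, part \ref{Yaoit23c} of \autoref{lma208} gives $2\vartheta_{B}(\lambda) - \vartheta_{A}(\lambda) < \pi$ for every $\lambda > 0$. Moreover, $\beta \geq \pi/2$ gives ${l}_{N}\cos\beta \leq 0$, hence ${l}_{\perp} = 0$. Since $\vartheta_{B}$ is increasing and $\vartheta_{B}(0^{+}) = \beta \geq \pi/2$, we get $\pi/2 \leq \vartheta_{B}(\lambda)$ for all $\lambda>0$. Both conditions in \eqref{Yao0403} therefore hold for every $\lambda > 0$, so $\lambda_{\star\star} = 0$. \autoref{lma401} then yields \eqref{Yao0402} and \eqref{Yao0316} for every $\vartheta \in {J}_{\lambda} \cap (0, \vartheta_{B}(\lambda)]$ and every $\lambda > 0$. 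I would verify these angle facts carefully from \eqref{Yao0310a}--\eqref{Yao0310b} and the Remark, which is the only computational input here.

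For the symmetry, I would let $\lambda \to 0^{+}$. Since $\vartheta_{A}(\lambda) \to \beta/2$ and $\vartheta_{B}(\lambda) \to \beta$, the admissible angles include every $\vartheta \in (0, \vartheta_{A}(\lambda)]$. Taking $\vartheta = \vartheta_{A}(\lambda) \to \beta/2$, the line ${T}_{\lambda,\vartheta_{A}(\lambda)}$ passes through $O$ and $P_{\lambda} \to V$, so it converges to the symmetry axis $\{x_{2} = 0\}$. The reflected domain ${D}_{\lambda,\vartheta}$, together with the sets obtained through the cone ${D}_{\lambda,\vartheta,\vartheta_{1}}$, exhausts the lower half $\Sigma \cap \{x_{2}<0\}$. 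The reason is that as $\lambda \to 0$ the restriction $\sigma_{\lambda} < \vartheta$ covers the whole lower half, and the reflection across the axis maps $\Sigma$ onto itself.

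Passing to the limit in ${w}^{\lambda,\vartheta} < 0$ gives $u(x_{1},x_{2}) \leq u(x_{1},-x_{2})$ for $x_{2}<0$. The same argument with the hatted family $\hat{T}$ (the mirror construction based on $\Gamma_{N}^{+}$) gives the reverse inequality. Hence $u$ is even in $x_{2}$, which is \ref{Yaoit11a}.

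The real point in this step is justifying convergence of the domains and of the conclusions near $V$. If direct passage is awkward, I would alternatively use the directional inequality \eqref{Yao0316a} with $\vartheta = \vartheta_{A}(\lambda)$. In the limit it gives $u_{x_{2}} \geq 0$ along the family of lines through $O$ in the lower half, and by the hatted version $u_{x_{2}} \leq 0$ in the upper half. Combined with the reflection inequality, this yields evenness.

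Monotonicity then follows. The limit of \eqref{Yao0316a} for $\vartheta \in (0,\vartheta_{A}(\lambda)]$ gives a nonstrict sign, which becomes strict by the strong maximum principle applied to $u_{x_{2}}$. Indeed, $u_{x_{2}}$ solves the linearized equation in the half-domain, vanishes on $x_{2}=0$ by symmetry, and is nonzero near $\Gamma_{D}$ by \autoref{lma206GNN}. This gives \ref{Yaoit11b}. For \ref{Yaoit11c} I would invoke the equivalence of \ref{Yaoit11a}--\ref{Yaoit11c} recalled from \cite{CLY21} in the Remark after \autoref{thm101main}. As a direct alternative, $u_{x_{1}}$ is a positive combination of the two derivatives in \eqref{Yao0316} at $\vartheta = \pi/2$, or of $u_{x_{2}}$ terms of known sign.

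I expect the main obstacle to be the limiting argument $\lambda \to 0$: confirming that the moving domains indeed exhaust the half-domain and that strictness survives near the vertex $V$.
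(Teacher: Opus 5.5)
Your first part is the paper's Part~1: \autoref{lma208}\ref{Yaoit23c}, together with $\vartheta_B(\lambda)>\beta\ge\pi/2$, gives $\lambda_{\star\star}=0$, and then \autoref{lma401} applies. For \ref{Yaoit11c} you rely on \cite{CLY21}, which is also what the paper's Part~3 does. The symmetry step is where you genuinely differ. The paper gives two arguments:
\begin{itemize}
\item Method~1 runs a second moving-plane sweep with the horizontal lines ${T}_{\lambda,\beta/2}$. For $\lambda<\lambda_\sharp$ it checks ${w}^{\lambda,\beta/2}<0$ on $\Gamma^{2A}_{\lambda,\beta/2}$ through the chain ${u}(\bar x)<{u}(\bar z)<{u}(\bar y)$.
\item Method~2 reads off ${u}_{x_2}=0$ on the axis and invokes the Cauchy-uniqueness theorem of \cite{FV13}.
\end{itemize}
You instead pass directly to the limit $\lambda\to 0$ in the reflection inequality across the lines ${T}_{\lambda,\vartheta_A(\lambda)}$ through ${O}$. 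This is valid and shorter, since it needs neither the extra sweep nor unique continuation. It does yield only the limiting inequality, whereas Method~1 gives ${w}^{\lambda,\beta/2}<0$ for every $\lambda>0$.

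You do need to name the input, because it is not in what you listed. Negativity of ${w}^{\lambda,\vartheta_A(\lambda)}$ is not contained in \eqref{Yao0316} or \eqref{Yao0402}. Angular monotonicity on $(0,\vartheta^A]$ cannot produce it either, since reflected points have angle $2\vartheta_A-\sigma>\vartheta_A$. The correct source is \eqref{Yao0351} from \autoref{lma308}. It is available for all $\lambda\in(0,\lambda_C)$ because \autoref{lma401} supplies \eqref{Yao0315a} at $\vartheta_B(\lambda)$ for every $\lambda>0$.

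\eqref{Yao0351} holds only on ${D}_{\lambda,\vartheta_A,2\vartheta_A-\vartheta_B}$, not on all of ${D}_{\lambda,\vartheta_A}$, but that is enough. Fix $x\in\Sigma^-$. As $\lambda\to 0$:
\begin{itemize}
\item $\sigma_\lambda(x)\to\sigma_0(x)\in(0,\beta/2)$;
\item $2\vartheta_A-\vartheta_B\to 0$ and $\vartheta_A\to\beta/2$;
\item $x^{\lambda,\vartheta_A(\lambda)}\to(x_1,-x_2)\in\Sigma$.
\end{itemize}
So $x$ eventually lies in the domain, and continuity gives ${u}(x)\le {u}(x_1,-x_2)$. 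The reverse inequality follows most cleanly by applying the same result to the solution ${u}(x_1,-x_2)$. No strictness near ${V}$ is needed, so the obstacle you anticipated does not arise. Your fallback (signs along lines through ${O}$) gives evenness only ``combined with the reflection inequality'', so it is not an independent route.

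Two smaller points:
\begin{itemize}
\item For \ref{Yaoit11b} no limit is needed. Since $\beta/2<\vartheta_A(\lambda)$ for every $\lambda>0$, \eqref{Yao0316a} at $\vartheta=\beta/2$ already gives ${u}_{x_2}>0$ strictly on every horizontal line meeting $\Sigma^-$.
\item Your ``direct alternative'' for \ref{Yaoit11c} fails. For small $\lambda$ one has $\vartheta_A(\lambda)<\pi/2<\vartheta_B(\lambda)$, so $\pi/2\notin {J}_{\lambda}\cap(0,\vartheta_B(\lambda)]$. Moreover, \eqref{Yao0316a} and \eqref{Yao0316b} hold on different lines, so adding them controls ${u}_{x_1}$ only at their intersections. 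Keep the citation route.
\end{itemize}
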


\begin{proof}
We divide the proof into three parts. 

\textbf{Part 1}. 
\eqref{Yao0316} holds for every $\vartheta \in {J}_{\lambda} \cap (0, \vartheta_{B}(\lambda)]$ and $\lambda > 0$. In fact, by the assumption \eqref{Yao0407} and \autoref{lma208}\ref{Yaoit23c}, 
\begin{equation*}
0 < 2\vartheta_{B}(\lambda) - \pi < \vartheta_{A}(\lambda) \text{ for all } \lambda > 0. 
\end{equation*}
Hence $\lambda_{\star\star} = 0$. Then this part follows from \autoref{lma401}. 

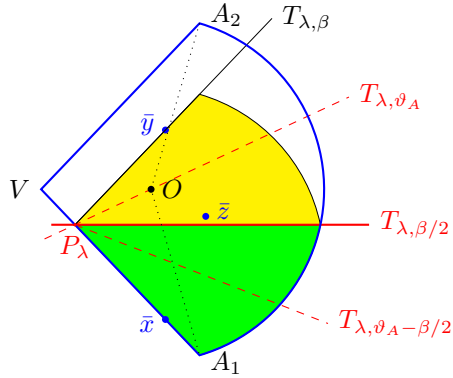
\begin{figure}[htp] \centering
\begin{tikzpicture}[scale = 2.3] 
\pgfmathsetmacro\ALPHA{73.73}; \pgfmathsetmacro\BETAs{46.35}; \pgfmathsetmacro\radius{1.00}; 
\pgfmathsetmacro\aaa{\radius*sin(abs(\BETAs - \ALPHA))/sin(\BETAs)}; 
\pgfmathsetmacro\xA{\aaa + \radius*cos(\ALPHA)}; \pgfmathsetmacro\yA{\radius*sin(\ALPHA)}; 
\pgfmathsetmacro\LenNeu{\radius*sin(\ALPHA)/sin(\BETAs)}; 
\pgfmathsetmacro\LAM{0.6161*\radius*sin(\ALPHA - \BETAs)/sin(2*\BETAs)}; 
\pgfmathsetmacro\xP{\LAM*cos( - \BETAs)}; \pgfmathsetmacro\yP{\LAM*sin( - \BETAs)}; 
\fill[fill = green, draw = black, very thin] 
(\xA, - \yA) arc ({ - \ALPHA} : {asin(\yP/\radius)} : \radius) -- (\xP, \yP) -- cycle; 
\fill[fill = yellow, draw = black, very thin] 
(\xA, 2*\yP + \yA) arc ({\ALPHA} : { - asin(\yP/\radius)} : \radius) -- (\xP, \yP) -- cycle; 
\pgfmathsetmacro\JiaoA{atan(\aaa*sin(\BETAs)/(\aaa*cos(\BETAs) - \LAM))}; 
\pgfmathsetmacro\Anglea{0 - \BETAs}; \pgfmathsetmacro\Angleb{\JiaoA - \BETAs - \BETAs}; 
\pgfmathsetmacro\Anglec{2*\JiaoA - 2*\BETAs - \BETAs}; \pgfmathsetmacro\Angled{\JiaoA - \BETAs}; 
\pgfmathsetmacro\Anglee{2*\BETAs - \BETAs}; 
\pgfmathsetmacro\lenx{(\LenNeu - \LAM)*0.7262}; 
\fill[blue] ({\xP + \lenx*cos(\Anglea)}, {\yP + \lenx*sin(\Anglea)}) circle (0.02) node[below = 2pt, left] { \footnotesize $\bar{x}$ }; 
\fill[blue] ({\xP + \lenx*cos(\Anglec)}, {\yP + \lenx*sin(\Anglec)}) circle (0.02) node[above = 2pt, right] { \footnotesize $\bar{z}$ }; 
\fill[blue] ({\xP + \lenx*cos(\Anglee)}, {\yP + \lenx*sin(\Anglee)}) circle (0.02) node[above = 2pt, left] { \footnotesize $\bar{y}$ }; 
\pgfmathsetmacro\ssa{0.4}; \pgfmathsetmacro\ssb{3.6}; 
\draw[red, dashed] ({ - \ssa*(\aaa - \xP) + \xP}, { - \ssa*(0 - \yP) + \yP}) -- ({\ssb*(\aaa - \xP) + \xP}, {\ssb*(0 - \yP) + \yP}) node[right] {\footnotesize ${T}_{\lambda, \vartheta_{A}}$}; 
\draw[red, thick] ({\xP - \xP*0.7}, {\yP}) -- ({\xP + \radius*1.7}, {\yP}) node[right] {\footnotesize ${T}_{\lambda, \beta/2}$}; 
\draw[red, dashed] ({\xP}, {\yP}) -- ({\xP + 1.57*\radius*cos(\Angleb)}, {\yP + 1.57*\radius*sin(\Angleb)}) node[right] {\footnotesize ${T}_{\lambda, \vartheta_{A} - \beta/2}$}; 
\draw[black, very thin] ({\xP}, {\yP}) -- ({\xP + 1.65*\radius*cos(\BETAs)}, {\yP + 1.65*\radius*sin(\BETAs)}) node[right] {\footnotesize ${T}_{\lambda, \beta}$}; 
\draw[blue, thick] ({\LenNeu*cos(\BETAs)}, {0 - \LenNeu*sin(\BETAs)}) arc ({ - \ALPHA} : {\ALPHA} : \radius) -- (0, 0) -- cycle; 
\draw[dotted] (\xA, \yA) -- (\aaa, 0) -- (\xA, - \yA); 
\fill (\aaa, 0) circle (0.02) node[right] { \footnotesize ${O}$ }; 
\fill[red] (\xP, \yP) circle (0.01) node[below] { \footnotesize ${P}_{\lambda}$ }; 
\node[left] at (0, 0) {\footnotesize ${V}$ }; 
\node[above = 3pt, right] at (\BETAs : \LenNeu) {\footnotesize ${A}_{2}$ }; 
\node[below = 3pt, right] at ( - \BETAs : \LenNeu) {\footnotesize ${A}_{1}$ }; 
\end{tikzpicture} 
\caption{$u({\color{cyan}\bar{x}}) < u({\color{cyan}\bar{y}}) < u({\color{cyan}\bar{z}})$ and ${w}^{\lambda, \beta/2} < 0$ in {\color{green}${D}_{\lambda, \beta/2, 0}$}. }
\label{Fig05Sym}
\end{figure}

\textbf{Part 2}. 
The even symmetry property of ${u}$. Indeed, we present two approaches to demonstrate the even symmetry of ${u}$. 

\emph{Method 1 (moving plane method)}. We will show
\begin{equation} \label{Yao0408}
{w}^{\lambda, \beta/2} < 0 \text{ in } \overline{{D}_{\lambda, \beta/2}} \setminus {T}_{\lambda, \beta/2}
\end{equation}
for every $\lambda > 0$. 
Indeed, note that $\Gamma_{\lambda, \beta/2}^{2B}$ is always empty. For $\lambda \geq \lambda_{\sharp}$, we have $(0, \beta] \subset {J}_{\lambda}$, so ${w}^{\lambda, \beta/2}$ satisfies \eqref{Yao0322}, and \eqref{Yao0408} holds. For $\lambda < \lambda_{\sharp}$, we will show 
\begin{equation} \label{Yao0409}
{w}^{\lambda, \beta/2} < 0 \text{ on } \Gamma_{\lambda, \beta/2}^{1} \cup \Gamma_{\lambda, \beta/2}^{2A}
\end{equation}
by the approach in \cite[Lemma 13]{YCG21}. 
Let us fix any $\lambda \in (0, \lambda_{\sharp})$ and any point $\bar{x} \in \Gamma_{\lambda, \beta/2}^{2A}$. Let $\bar{y} = \bar{x}^{\lambda, \beta/2}$ denote the reflection of $\bar{x}$ (with respect to ${T}_{\lambda, \beta/2}$), and let $\bar{z} = \bar{y}^{\lambda, \vartheta_{A}(\lambda)}$ be the reflection of $\bar{y}$ (with respect to ${T}_{\lambda, \vartheta_{A}(\lambda)}$); see \autoref{Fig05Sym}. Then $\bar{x} \in {T}_{\lambda, 0}$, $\bar{z} \in {T}_{\lambda, 2\vartheta_{A}(\lambda) - \beta}$ and $\bar{y} \in {T}_{\lambda, \beta} \cap \Sigma$. By \autoref{lma308}, 
\begin{gather*}
{w}^{\lambda, \vartheta_{A}(\lambda)} < 0 \text{ in } {D}_{\lambda, \vartheta_{A}(\lambda), 2\vartheta_{A}(\lambda) - \vartheta_{B}(\lambda)}, \\
{w}^{\lambda, \vartheta_{A}(\lambda) - \beta/2} < 0 \text{ in } \overline{{D}_{\lambda, \vartheta_{A}(\lambda) - \beta/2}} \setminus {T}_{\lambda, \vartheta_{A}(\lambda) - \beta/2}. 
\end{gather*}
It follows that
\begin{equation*}
{u}(\bar{x}) < {u}(\bar{z}) \quad \text{and }\quad {u}(\bar{z}) < {u}(\bar{y}), 
\end{equation*}
hence ${w}^{\lambda, \beta/2} < 0$ on $\Gamma_{\lambda, \beta/2}^{2A}$. Therefore, \eqref{Yao0409} holds for all $\lambda > 0$. Following the standard argument of the moving plane method (or as in the proof of \autoref{lma301}), we conclude that \eqref{Yao0408} holds for every $\lambda > 0$. By continuity, 
\begin{equation*}
{u}({x}_{1}, {x}_{2}) \leq {u}({x}_{1}, - {x}_{2}) \text{ in } \Sigma^{ - } = \{{x} \in \Sigma: \; {x}_{2} < 0\}. 
\end{equation*}
Similarly, moving planes along the upper Neumann boundary yields ${u}({x}_{1}, - {x}_{2}) \leq {u}({x}_{1}, {x}_{2})$ in $\Sigma^{ - }$, leading to the even symmetry of ${u}$ with respect to ${x}_{2}$, i.e., ${u}({x}) = {u}({x}_{1}, - {x}_{2})$ for ${x} \in \Sigma$. 

\emph{Method 2 (uniqueness of overdetermined problem)}. The proof is derived from step 2 in the proof of \cite[Theorem 2.9]{CLY21}. By taking $\vartheta = \beta/2$ in \eqref{Yao0315}, we get
\begin{equation*}
\begin{aligned}
{u}_{{x}_{2}} < 0 \text{ in } \Sigma^{ - } = \{{x} \in \Sigma: \; {x}_{2} < 0\}, \\
{u}_{{x}_{2}} > 0 \text{ in } \Sigma^{ + } = \{{x} \in \Sigma: \; {x}_{2} > 0\}. 
\end{aligned}
\end{equation*}
By continuity, 
\begin{equation} \label{Yao0411}
{u}_{{x}_{2}}({x}) = 0 \text{ for } {x} \in \Sigma \text{ with } {x}_{2} = 0. 
\end{equation}
Set ${v}({x}) = {u}({x}_{1}, - {x}_{2})$. 
Then ${u}$ and ${v}$ satisfy the same equation on the half domain $\Sigma^{ - } = \Sigma \cap \{{x}_{2} < 0\}$, and the same Dirichlet and Neumann data on $\{{x}_{2} = 0\}$: 
\begin{equation}
{v} = {u}, \quad {v}_{{x}_{2}} = {u}_{{x}_{2}} = 0 \quad \text{on} \quad \Sigma \cap \{{x}_{2} = 0\}. 
\end{equation}
Applying \cite[Theorem 1]{FV13}, we deduce that ${v} - {u}$ vanishes in $\Sigma^{ - }$, and hence the symmetry follows. 

\textbf{Part 3}. 
The monotonicity property of ${u}$ in the variable ${x}_{1}$. By the even symmetry in ${x}_{2}$, we can regard ${u}$ as a solution in the half domain $\Sigma^{ - }$ with Neumann boundary condition \eqref{Yao0411} on the symmetry axis $\{{x}_{2} = 0\}$. Since the domain is $\Sigma^{ - }$, the restriction $\lambda \geq \lambda_{\sharp}$ in \autoref{thm309} can be dropped, and by the same argument as in \autoref{thm309} one can show ${u}_{{x}_{1}} < 0$ in $\Sigma^{ - }$. For details, see \cite[Theorem 1.2 or Section 3.1]{CLY21}. 
\end{proof}


\section{The proof for $\beta \leq \pi/3$} \label{Sec05Small}

In~\cite{CLY21}, the key steps are to show that
\begin{equation} \label{Yao0502a}
{w}^{\lambda, \pi/2} < 0 \quad \text{in } {D}_{\lambda, \pi/2}
\end{equation}
and
\begin{equation} \label{Yao0502b}
{u}_{{x}_{1}}\cos\frac{\beta}{2} - {u}_{{x}_{2}}\sin\frac{\beta}{2} < 0 \quad \text{on } {T}_{\lambda, \pi/2} \cap \Sigma
\end{equation}
hold for all $\lambda > 0$. However, in the general case, \eqref{Yao0502b} (and thus \eqref{Yao0502a}) may not hold; in fact, a necessary condition for \eqref{Yao0502b} is $\pi/2 \in {J}_{\lambda}$. If $\alpha + \beta > \pi$ and $\vartheta_{B}(\lambda) > \pi/2$ and $\vartheta_{A}(\lambda) < \pi/2$, then ${J}_{\lambda}$ does not contain $\pi/2$, and \eqref{Yao0502b} fails. 

In this section, we focus on the case $\beta \leq \pi/3$ and show that
\begin{equation*}
{w}^{\lambda, \pi/2} < 0 \quad \text{in } {D}_{\lambda, \pi/2}
\end{equation*}
holds for $\lambda \leq {l}_{N}/2$. Based on a crucial observation, we will establish a continuous family linking ${w}^{{l}_{*}, (\pi + 3\beta)/4} < 0$ and ${w}^{{l}_{N}/2, \pi/2} < 0$, where ${l}_{*}$ is given in \autoref{lma501}. 

We begin this section by solving the equation $\check{\lambda}(\lambda, \vartheta) = {l}_{N}$. 

\begin{lma} \label{lma501}
Let ${l}_{*} \in (0, {l}_{N})$ be the constant such that
\begin{equation*}
\check{\lambda}({l}_{*}, \vartheta_{B}({l}_{*})) = {l}_{N}. 
\end{equation*}
For $0 < {\lambda} < {l}_{N}/(1 + \sin\beta)$, denote by $\vartheta_{\lambda} \in (\pi/4 + \beta/2, \pi/2 + \beta/2)$ the unique constant such that
\begin{equation} \label{Yao0504}
\check{\lambda}(\lambda, \vartheta_{\lambda}) = {l}_{N}. 
\end{equation}
Then $\vartheta_{\lambda}$ is strictly decreasing and continuous, $\hat{\lambda}(\lambda, \vartheta_{\lambda})$ is strictly increasing, $\vartheta_{{l}_{*}} = \vartheta_{B}({l}_{*}) = (\pi + 3\beta)/4$, and
\begin{equation} \label{Yao0505}
\vartheta_{\lambda} > \frac{\pi + 3\beta}{4} > \vartheta_{B}(\lambda) \text{ and } \check{\lambda}(\lambda, \vartheta) < {l}_{N}  \text{ for } \vartheta \in (\vartheta_{B}(\lambda), \vartheta_{\lambda}), \lambda \in (0, {l}_{*}). 
\end{equation}
\end{lma}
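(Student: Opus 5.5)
The plan is to treat the statement as an elementary fact about the explicit function
$$\check{\lambda}(\lambda,\vartheta)=\lambda\Big(1+\frac{\sin\beta}{\sin(2\vartheta-\beta)}\Big)$$
from \eqref{Yao0308}, combined with one geometric reading of the identity $\check{\lambda}=l_N$.

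\textbf{Existence and uniqueness of $\vartheta_\lambda$.} For fixed $\lambda$, let $\vartheta$ run over $(\pi/4+\beta/2,\pi/2+\beta/2)$. Then $2\vartheta-\beta$ runs over $(\pi/2,\pi)$, so $\sin(2\vartheta-\beta)$ decreases strictly and continuously from $1$ to $0$. Hence $\vartheta\mapsto\check{\lambda}(\lambda,\vartheta)$ increases strictly and continuously from $\lambda(1+\sin\beta)$ to $+\infty$. Equation \eqref{Yao0504} therefore has a unique root $\vartheta_\lambda$ exactly when $\lambda(1+\sin\beta)<l_N$.

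Equivalently, the root is given explicitly by $\sin(2\vartheta_\lambda-\beta)=\lambda\sin\beta/(l_N-\lambda)$. This formula shows that $\vartheta_\lambda$ is continuous in $\lambda$. It is also strictly decreasing: the right-hand side increases in $\lambda$, and on this branch $\sin(2\vartheta-\beta)$ is decreasing in $\vartheta$.

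\textbf{Monotonicity of $\hat{\lambda}(\lambda,\vartheta_\lambda)$.} Write $\hat{\lambda}=\lambda\sin\vartheta/\sin(\vartheta-\beta)$. A direct computation gives
$$\partial_\vartheta\Big(\frac{\sin\vartheta}{\sin(\vartheta-\beta)}\Big)=-\frac{\sin\beta}{\sin^2(\vartheta-\beta)}<0$$
wherever $\vartheta>\beta$. In the regime of this section ($\beta\le\pi/3<\pi/2$) we have $\vartheta_\lambda>\pi/4+\beta/2>\beta$, so the derivative formula applies. As $\lambda$ increases, the factor $\lambda$ increases and $\vartheta_\lambda$ decreases, so both effects make $\hat{\lambda}(\lambda,\vartheta_\lambda)$ strictly increasing.

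\textbf{Identification of $l_*$.} This is the main step, and I would argue geometrically. The line $T_{\lambda,\vartheta_B(\lambda)}$ passes through $A_2=P_+$. So the reflection of the line $VA_2$ (which contains $\Gamma_N^+$) across it is a line through $A_2$. By construction of $\check{\lambda}$, this reflected line meets the line of $\Gamma_N^-$ at distance $\check{\lambda}(\lambda,\vartheta_B(\lambda))$ from $V$.

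Thus $\check{\lambda}=l_N$ means the reflected line is $A_2A_1$. That happens exactly when $T_{\lambda,\vartheta_B}$ bisects $\angle VA_2A_1=(\pi-\beta)/2$, i.e. $\angle VA_2P_\lambda=(\pi-\beta)/4$. In triangle $VP_\lambda A_2$, the exterior angle at $P_\lambda$ is
$$\vartheta_B(\lambda)=\beta+\angle VA_2P_\lambda .$$
So the condition becomes $\vartheta_B(\lambda)=(\pi+3\beta)/4$.

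Now $\vartheta_B$ is continuous and strictly increasing, tending to $\beta$ as $\lambda\to0$ and to $\pi$ as $\lambda\to l_N$. Hence there is a unique $l_*\in(0,l_N)$ with $\vartheta_B(l_*)=(\pi+3\beta)/4$. Since $(\pi+3\beta)/4$ lies in the $\vartheta$-interval above, the characterization of $\vartheta_\lambda$ gives both $l_*(1+\sin\beta)<l_N$ and $\vartheta_{l_*}=\vartheta_B(l_*)$.

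\textbf{Proof of \eqref{Yao0505}.} Let $\lambda<l_*$.
\begin{enumerate}
\item Because $\vartheta_B$ is increasing and $\vartheta_\lambda$ is decreasing, we get $\vartheta_B(\lambda)<(\pi+3\beta)/4=\vartheta_{l_*}<\vartheta_\lambda$.
\item On $(\beta/2,\pi/2+\beta/2)$ the function $\vartheta\mapsto\check{\lambda}(\lambda,\vartheta)$ is unimodal: it decreases up to $\pi/4+\beta/2$ and increases afterwards. So on $(\vartheta_B(\lambda),\vartheta_\lambda)$ it stays strictly below the larger of its two endpoint values.
\item At the right endpoint the value is $l_N$.
\item At the left endpoint, $\angle VA_2P_\lambda<(\pi-\beta)/4$. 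So the reflected line through $A_2$ makes an angle smaller than $\angle VA_2A_1$ with $A_2V$, and it hits the open segment $VA_1$. This gives $\check{\lambda}(\lambda,\vartheta_B(\lambda))<l_N$.
\end{enumerate}

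The only delicate point is this last geometric comparison, including checking that the reflected line really lands on the correct side of $A_1$. If needed, it can be verified by the explicit formula for $\vartheta_B$ in \eqref{Yao0310b}.
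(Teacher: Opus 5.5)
Your proof is correct, and its skeleton matches the paper's. In both, $\vartheta\mapsto\check{\lambda}(\lambda,\vartheta)$ decreases and then increases with turning point $(\pi+2\beta)/4$. The angle $\vartheta_\lambda$ is the root on the increasing branch, and \eqref{Yao0505} comes from comparing the two endpoint values.

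The real difference is how you identify ${l}_{*}$.
\begin{itemize}
\item The paper gets ${l}_{*}$ from the assertion that $\lambda\mapsto\check{\lambda}(\lambda,\vartheta_B(\lambda))$ is an increasing bijection of $(0,{l}_N)$ onto $(0,\infty)$. It then states $\vartheta_B({l}_{*})=(\pi+3\beta)/4$ without argument.
\item Your angle-bisector observation actually proves that identity. The reflected line through ${A}_2$ equals ${A}_2{A}_1$ iff ${T}_{\lambda,\vartheta_B(\lambda)}$ is the internal bisector of $\angle {V}{A}_2{A}_1$. Combined with $\vartheta_B=\beta+\angle {V}{A}_2{P}_\lambda$, this gives $\vartheta_B({l}_{*})=(\pi+3\beta)/4$.
\item The same picture yields $0<\check{\lambda}(\lambda,\vartheta_B(\lambda))<{l}_N$ for $\lambda<{l}_{*}$. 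The paper extracts this from its monotonicity claim instead.
\end{itemize}

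You also prove the monotonicity of $\hat{\lambda}(\lambda,\vartheta_\lambda)$, which the paper only states. Your requirement $\vartheta_\lambda>\beta$ is genuinely needed. For $\beta>\pi/2$, the root $\vartheta_\lambda$ passes through $\beta$ before $\lambda$ reaches ${l}_N/(1+\sin\beta)$, and $\hat{\lambda}$ blows up there. On $(0,{l}_{*}]$, the range used in part (c) of Lemma~\ref{lma601}, one has $\vartheta_\lambda\ge(\pi+3\beta)/4>\beta$ for every $\beta<\pi$, so that use is safe.

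There is one slip. As $\lambda\to{l}_N$, $\vartheta_B(\lambda)$ tends to $(\pi+\beta)/2$, not $\pi$. The limiting line is ${A}_1{A}_2$, which makes the exterior angle $(\pi+\beta)/2$ at ${A}_1$; $\pi$ is only the limit as $\lambda\to\infty$. This does not affect your argument, because $(\pi+3\beta)/4\in(\beta,(\pi+\beta)/2)$. Hence ${l}_{*}\in(0,{l}_N)$ still exists and is unique.
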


\begin{proof}
Since the function $\lambda \mapsto \check{\lambda}(\lambda, \vartheta_{B}(\lambda))$ is strictly increasing, continuous, and bijective from $(0, {l}_{N})$ onto $(0, \infty)$, there exists a unique ${l}_{*} \in (0, {l}_{N})$ such that $\check{\lambda}({l}_{*}, \vartheta_{B}({l}_{*})) = {l}_{N}$. Moreover, $\vartheta_{B}({l}_{*}) = (\pi + 3\beta)/4$ and
\begin{equation*}
\check{\lambda}(\lambda, \vartheta_{B}(\lambda)) > {l}_{N}, \quad \lambda \in ({l}_{*}, {l}_{N}) \quad \text{and} \quad
\check{\lambda}(\lambda, \vartheta_{B}(\lambda)) < {l}_{N}, \quad \lambda \in (0, {l}_{*}). 
\end{equation*}
It is easy to see that
\begin{equation*}
\begin{aligned}
\check{\lambda}(\lambda, \vartheta) & \text{ is strictly decreasing in } (\tfrac{\beta}{2}, \tfrac{\pi + 2\beta}{4}], \\
\check{\lambda}(\lambda, \vartheta) & \text{ is strictly increasing in } [\tfrac{\pi + 2\beta}{4}, \tfrac{\pi + \beta}{2}), 
\end{aligned}
\end{equation*}
and $\check{\lambda}(\lambda, \pi/2 + \beta/2) = \infty$, 
\begin{equation*}
\check{\lambda}(\lambda, (\pi + 2\beta)/4) < {l}_{N} \quad \text{if } 0 < \lambda < \frac{{l}_{N}}{1 + \sin\beta}. 
\end{equation*}
Thus, there exists a strictly decreasing and continuous map $\lambda \mapsto \vartheta_{\lambda}$ from $(0, {l}_{N}/(1 + \sin\beta))$ onto $(\pi/4 + \beta/2, \pi/2 + \beta/2)$ such that $\check{\lambda}(\lambda, \vartheta_{\lambda}) = {l}_{N}$. It is obvious that $\vartheta_{{l}_{*}} = (\pi + 3\beta)/4 = \vartheta_{B}({l}_{*})$, 
\begin{equation*}
\vartheta_{\lambda} > \vartheta_{{l}_{*}} = \vartheta_{B}({l}_{*}) > \vartheta_{B}(\lambda) \quad \text{for } \lambda \in (0, {l}_{*}). 
\end{equation*}
Since $\vartheta \mapsto \check{\lambda}(\lambda, \vartheta)$ is first decreasing then increasing, and
\begin{equation*}
\check{\lambda}(\lambda, \vartheta_{\lambda}) = {l}_{N}, \quad \check{\lambda}(\lambda, \vartheta_{B}(\lambda)) < {l}_{N} \quad \text{for } \lambda \in (0, {l}_{*}), 
\end{equation*}
we deduce that
\begin{equation*}
\check{\lambda}(\lambda, \vartheta) < {l}_{N} \quad \text{for } \vartheta \in (\vartheta_{B}(\lambda), \vartheta_{\lambda}), \; \lambda \in (0, {l}_{*}). 
\end{equation*}
Therefore, the proof is complete. 
\end{proof}

\begin{lma} \label{lma502}
Let $0 < \beta \leq \pi/3$. Then 
\begin{subequations} \label{Yao0508a}
\begin{gather} \label{Yao0508a1}
{w}^{\lambda, \vartheta_{B}(\lambda)} < 0 \text{ in } {D}_{\lambda, \vartheta_{B}(\lambda)} \quad \text{for } \lambda \in [{l}_{*}, {l}_{\perp}], 
\\ \label{Yao0508a2}
{w}^{\lambda, \vartheta_{\lambda}} < 0 \text{ in } {D}_{\lambda, \vartheta_{\lambda}} \quad \text{for } \lambda \in [{l}_{N}/2, {l}_{*}] 
\end{gather}
\end{subequations}
In particular, 
\begin{equation} \label{Yao0508b}
{w}^{{l}_{N}/2, \pi/2} < 0 \quad \text{in } {D}_{{l}_{N}/2, \pi/2}. 
\end{equation} 
\end{lma}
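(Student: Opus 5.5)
The plan is a continuity argument along a one-parameter family of moving lines, using \autoref{lma301} twice. First I note what is already known for $\beta\le\pi/3$. Since $\beta\le\pi/3<\pi/2$, we have $\beta_\flat\ge(\pi+2\beta)/3$, and \autoref{thm309} gives \eqref{Yao0316} for $\vartheta\in J_\lambda$, $\lambda\ge\lambda_\sharp$. By \autoref{lma401} with $\lambda_{\star\star}=l_\perp$ (see the remark after it, using \autoref{lma208}\ref{Yaoit23c}), we also have \eqref{Yao0402} and \eqref{Yao0316} for $\vartheta\in J_\lambda\cap(0,\vartheta_B(\lambda)]$ and $\lambda\ge l_\perp$. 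Finally, \autoref{lma308} gives the sectors $(0,\vartheta^A(\lambda)]$ whenever the line $\vartheta_B(\lambda)$ is known.

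For \eqref{Yao0508a1}, I would run \autoref{lma301} with $\mu_t$ decreasing from $l_\perp$ to $l_*$ and $\phi_{2,t}=\vartheta_B(\mu_t)$. I take $\phi_{3,t}=\min\{2\vartheta_B,(\pi+\beta)/2\}$ and $\phi_{1,t}=2\phi_{2,t}-\phi_{3,t}$, and condition (A3b) is supplied at $\lambda=l_\perp$ by \autoref{lma401}. The boundary conditions are checked as follows.
\begin{enumerate}[label=(\alph*)]
\item On $\Gamma^{2B}$: for $\lambda\ge l_*$ we have $\check\lambda(\lambda,\vartheta_B(\lambda))\ge l_N$ by \autoref{lma501}. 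So the reflected upper Neumann side lies on $T_{\hat\lambda,\hat\vartheta}$ with $\hat\lambda(\lambda,\vartheta_B)=l_N$, where the needed monotonicity is just \eqref{Yao0109a}-type information at the corner. Alternatively, $\Gamma^{2B}$ sits on the Neumann side $\Gamma_N^+$ itself, where $w$ has zero flux because $u$ is Neumann there.
\item On $\Gamma^{2A}$: the line $T_{\lambda,\phi_1}$ has $\phi_1\le\vartheta^A$, and there \eqref{Yao0315a} is available from \autoref{lma308}. This is run inductively, exactly as in step 3 of \autoref{thm309}, since $\phi_1<\vartheta_A$ by \autoref{lma208}\ref{Yaoit23b}.
\item Nonvanishing of the tangential derivative on $\Gamma_N$ comes from \autoref{lma307}, with $\vartheta_C=\vartheta_B>\pi/2$ for $\lambda>l_\perp$.
\end{enumerate}
The delicate point is $\lambda<l_\perp$, where $\vartheta_B\le\pi/2$ and \autoref{lma307} needs $\vartheta_C>\pi/2$. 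There I would take $\vartheta_C$ slightly larger using continuity and the case below.

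For \eqref{Yao0508a2}, I would continue the family from $(\lambda,\vartheta)=(l_*,\vartheta_B(l_*))=(l_*,(\pi+3\beta)/4)$ along the curve $\lambda\mapsto(\lambda,\vartheta_\lambda)$ of \autoref{lma501}, for $\lambda$ decreasing to $l_N/2$. The choice $\check\lambda(\lambda,\vartheta_\lambda)=l_N$ is exactly what makes the reflection of $\Gamma_N^+$ through $T_{\lambda,\vartheta_\lambda}$ pass through $P_-$. Hence $\Gamma^{2B}$ is controlled: the relevant line is $T_{l_N,\check\vartheta}$, on which $w$ is compared with a Neumann side, giving $\partial_\nu w\le0$. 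On $\Gamma^{2A}$ I take $\phi_1=2\vartheta_\lambda-\pi$ with $\phi_3=\pi$, so that $\Gamma^{2A}\subset\Gamma_N^-$ and the reflection also lands in $\Gamma_N^-$, which forces homogeneous Neumann data. This requires $\vartheta_\lambda$ to lie in $J_\lambda$ (i.e.\ $\vartheta_\lambda\ge\vartheta_B(\lambda)$, true by \eqref{Yao0505}), and requires $w\le0$ on $\Gamma^1$, which holds since $\vartheta_\lambda>\vartheta_B$.

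At $\lambda=l_N/2$ one computes $\check\lambda(l_N/2,\pi/2)=\tfrac{l_N}{2}(1+1)=l_N$, so $\vartheta_{l_N/2}=\pi/2$ and \eqref{Yao0508b} follows.

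The main obstacle I expect is verifying that the hypotheses of \autoref{lma301}, especially that $\phi_1\ge0$ and that $\Gamma^{2A}$ stays on $\Gamma_N^-$, hold uniformly along the family. This must be checked on $[l_N/2,l_*]$, where $2\vartheta_\lambda-\pi\ge0$ needs $\vartheta_\lambda\ge\pi/2$. That inequality holds because $\vartheta_\lambda$ is decreasing and equals $\pi/2$ at $l_N/2$. This is where $\beta\le\pi/3$ enters: it gives $(\pi+3\beta)/4\le\pi/2$ so that $l_*\ge l_N/2$, and it also secures $\vartheta_B(l_*)$ against $l_\perp$ for the first leg.
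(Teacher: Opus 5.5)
There is a genuine gap, and it sits at the boundary piece $\Gamma^{2A}$.

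\textbf{The inequality behind \eqref{Yao0508a2} is reversed.} The map $\lambda\mapsto\vartheta_\lambda$ is decreasing with $\vartheta_{l_N/2}=\pi/2$. Hence on $[l_N/2,l_*]$ you have $\vartheta_\lambda\le\pi/2$; indeed $\vartheta_{l_*}=(\pi+3\beta)/4\le\pi/2$, which is exactly how you obtained $l_*\ge l_N/2$. So $\phi_1=2\vartheta_\lambda-\pi<0$ for $\lambda\in(l_N/2,l_*]$, and this violates (A1) of \autoref{lma301}. Even when $\vartheta>\pi/2$, your description would be wrong: $\Gamma^{2A}$ would lie on the interior line $T_{\lambda,2\vartheta-\pi}$, not on $\Gamma_N^-$. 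Both $\Gamma^{2A}$ and its mirror image lie on $\Gamma_N^-$ only when $\vartheta=\pi/2$.

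\textbf{Your argument for \eqref{Yao0508a1} proves too little.} The choice $\phi_1=2\vartheta_B-(\pi+\beta)/2>0$ gives negativity only on $D_{\lambda,\vartheta_B,2\vartheta_B-(\pi+\beta)/2}$. That is already \eqref{Yao0511b}, obtained in the proof of \autoref{thm309}. The lemma asserts negativity on $D_{\lambda,\vartheta_B(\lambda)}$, i.e.\ with $\vartheta_1=\max\{2\vartheta_B-\pi,0\}=0$. It is this full domain that must be carried continuously from $D_{l_\perp,\pi/2}$ (given by \autoref{lma401}) down to $D_{l_N/2,\pi/2}$.

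\textbf{Why the full domain is hard, and the idea you are missing.} With $\vartheta_1=0$ and $\vartheta\le\pi/2$, $\Gamma^{2A}_{\lambda,\vartheta}\subset\Gamma_N^-$, but its mirror image lies on $T_{\lambda,2\vartheta}$, with $2\vartheta\in[(\pi+3\beta)/2,\pi]$ along your family. The Neumann-type condition \eqref{Yao0322d} there would require monotonicity across this line. For $\vartheta$ near $\pi/2$, the line meets $\Gamma_N^+$ at distance $\hat\lambda(\lambda,2\vartheta)\to0$ from $V$, a regime below $\lambda_\sharp$ that \autoref{thm309} does not cover. So \eqref{Yao0322d} is not available.

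The paper replaces it by a strict Dirichlet-type inequality. For all $\vartheta\in[\vartheta_B(\lambda),\pi/2]$ and $\lambda\in[\lambda_\sharp,l_\perp]$ it shows $w^{\lambda,\vartheta}<0$ on $\Gamma^{2A}_{\lambda,\vartheta}$, as follows.
\begin{enumerate}
\item Write $x=(r,0)$ and $x^{\lambda,\vartheta}=(r,2\vartheta)$ in polar coordinates about $P_\lambda$.
\item Chain $u(r,0)<\cdots<u(r,2\vartheta)$ through reflections across $T_{\lambda,(\pi+\beta)/2}$ and, if needed, $T_{\lambda,\vartheta_A}$.
\item Justify each link by \eqref{Yao0511a}--\eqref{Yao0511c} and the angular monotonicity \eqref{Yao0511d}, in four cases, following \cite[Lemma 13]{YCG21}.
\end{enumerate}
With $w<0$ on $\Gamma^1\cup\Gamma^{2A}$, the continuity argument of \autoref{lma301} runs along $\phi_{2,\lambda}$ ($\vartheta_B(\lambda)$, then $\vartheta_\lambda$) from $l_\perp$ down to $l_N/2$. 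The Neumann-type condition on $\Gamma^{2B}$ comes from \autoref{thm309}: the reflected upper Neumann side lies on a line through $P_+$ (respectively $P_-$) whose parameters are in its range.

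\textbf{Two smaller corrections.} $\Gamma^{2B}$ does not lie on a Neumann side, so your item (a) and the phrase ``compared with a Neumann side'' are wrong. Your worry about \autoref{lma307} for $\lambda<l_\perp$ is unfounded: \autoref{thm309} already gives \eqref{Yao0315} up to $\vartheta_C=(\pi+\beta)/2$ for every $\lambda\ge\lambda_\sharp$, and $\lambda_\sharp\le l_N/2$.
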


\begin{proof}
By the definition of $\lambda_{\sharp}$ and $\vartheta_{\lambda}$ in \eqref{Yao0341} and \eqref{Yao0504}, we have $\lambda_{\sharp} \leq {l}_{N}/2$ and $\vartheta_{{l}_{N}/2} = \pi/2$. Note that the assumption $0 < \beta \leq \pi/3$ implies ${l}_{*} \geq {l}_{N}/2$ and $\vartheta_{{l}_{*}} = (\pi + 3\beta)/4 \in (0, \pi/2]$. 

\textbf{Step 1}. 
\eqref{Yao0508a} holds on $\Gamma^{2A}$. In fact, we will verify that 
\begin{equation} \label{Yao0513}
{w}^{\lambda, \vartheta}({x}) = {u}({x}) - {u}({x}^{\lambda, \vartheta}) < 0 \text{ for } {x} \in \Gamma_{\lambda, \vartheta}^{2A}.
\end{equation}
holds for any fixed $\vartheta \in [\vartheta_{B}(\lambda), \pi/2]$ and $\lambda \in [\lambda_{\sharp}, {l}_{\perp}]$. We follow the approach in \cite[Lemma 13]{YCG21} and prove that
\begin{equation*}
{u}({x}) < {u}({z}_{1}) < {u}({z}_{2}) < \cdots < {u}({z}_{k}) < {u}({x}^{\lambda, \vartheta}). 
\end{equation*}
for several points ${z}_{1}, {z}_{2}, \ldots, {z}_{k}$. 
By the proof of \autoref{thm309}, we have: 
\begin{subequations}
\begin{align}
\label{Yao0511a}
{w}^{\lambda, \vartheta_{A}(\lambda)} < 0 & \quad \text{in } {D}_{\lambda, \vartheta_{A}(\lambda), 2\vartheta_{A}(\lambda) - \vartheta_{B}(\lambda)} \quad \text{for } \lambda_{\sharp} \leq \lambda < \lambda_{C}, \\
\label{Yao0511b}
{w}^{\lambda, \vartheta_{B}(\lambda)} < 0 & \quad \text{in } \overline{{D}_{\lambda, \vartheta_{B}(\lambda), 2\vartheta_{B}(\lambda) - (\pi + \beta)/2}} \setminus {T}_{\lambda, \vartheta_{B}(\lambda)} \quad \text{for } \lambda_{\sharp} \leq \lambda < {l}_{\perp}, \\
\label{Yao0511c}
{w}^{\lambda, (\pi + \beta)/2} < 0 & \quad \text{in } \overline{{D}_{\lambda, (\pi + \beta)/2}} \setminus {T}_{\lambda, (\pi + \beta)/2} \quad \text{for } \lambda_{\sharp} \leq \lambda, 
\end{align}
\end{subequations}
and monotonicity: 
\begin{subequations} \label{Yao0511d}
\begin{align}
\label{Yao0511d1}
\vartheta \in {S}_{r}^{1} \mapsto {u}({r}, \vartheta) & \text{ is strictly increasing}, \\
\label{Yao0511d2}
\vartheta \in {S}_{r}^{2} \mapsto {u}({r}, \vartheta) & \text{ is strictly increasing}, 
\end{align}
\end{subequations}
where ${r} = |{x} - {P}_{\lambda}|$ and $\vartheta = \sigma_{\lambda}({x})$ is the polar coordinates as in \eqref{Yao0306} and ${S}_{r}^{1} = \{\vartheta \in [0, \vartheta^{A}]: ({r}, \vartheta) \in \overline{\Sigma}\}$ and ${S}_{r}^{2} = \{\vartheta \in [\vartheta_{B}, (\pi + \beta)/2]: ({r}, \vartheta) \in \overline{\Sigma}\}$. Note that $\Sigma \cap \{{x}: |{x} - {P}_{\lambda}| = {r}, \sigma_{\lambda}({x}) \in (0, \pi/2 + \beta)\}$ is connected for any $\lambda > 0$. 

We focus on the case $\vartheta_{A} < \vartheta_{B}$ (the case $\vartheta_{A} \geq \vartheta_{B}$ is similar). 
Let ${x}$ be an arbitrary fixed point on $\Gamma_{\lambda, \vartheta}^{2A}$. Since $\vartheta \leq \pi/2$, we know $\Gamma_{\lambda, \vartheta}^{2A} \subset \Gamma_{N}^{ - }$. Then ${x}$ and its reflection ${x}^{\lambda, \vartheta}$ are denoted by $({r}, \psi_{1})$ and $({r}, \psi_{2})$ where $\psi_{1} = 0$ and $\psi_{2} = 2\vartheta$. Let $\gamma = (\pi + \beta)/2$, and denote $\psi_{2}' = 2\gamma - \psi_{2}$. There are four cases: 

Case 1: $\psi_{2}' = 2\gamma - 2\vartheta \in (0, \vartheta_{A}]$. We observe that the point $({r}, \psi_{2}')$ belongs to $\Sigma$ since $\psi_{2} \geq \gamma$ and $|{P}_{\lambda}{P}_{ - }| < |{P}_{\lambda}{P}_{ + }|$. By \eqref{Yao0511c} and \eqref{Yao0511d1},
\begin{equation*}
{u}({r}, \psi_{2}) > {u}({r}, \psi_{2}') > {u}({r}, \psi_{1}). 
\end{equation*}

Case 2: $\psi_{2}' \in (\vartheta_{A}, \vartheta_{B}]$. Then $\psi_{2}'' = 2\vartheta_{A} - \psi_{2}' \geq 2\vartheta_{A} - \vartheta_{B} > 0$ by \autoref{lma208}\ref{Yaoit23a}. By \eqref{Yao0511c}, \eqref{Yao0511a} and \eqref{Yao0511d1}
\begin{equation*}
{u}({r}, \psi_{2}) > {u}({r}, \psi_{2}') > {u}({r}, \psi_{2}'') > {u}({r}, \psi_{1}). 
\end{equation*}

Case 3: $\psi_{2}' \in (\vartheta_{B}, \gamma)$. Then by \eqref{Yao0511c}, \eqref{Yao0511d2}, \eqref{Yao0511a} and \eqref{Yao0511d1}
\begin{equation*}
{u}({r}, \psi_{2}) > {u}({r}, \psi_{2}') > {u}({r}, \vartheta_{B}) > {u}({r}, 2\vartheta_{A} - \vartheta_{B}) > {u}({r}, \psi_{1}). 
\end{equation*}

Case 4: $\psi_{2} \in [\vartheta_{B}, \gamma]$. Then by \eqref{Yao0511d2}, \eqref{Yao0511a} and \eqref{Yao0511d1}
\begin{equation*}
{u}({r}, \psi_{2}) \geq {u}({r}, \vartheta_{B}) > {u}({r}, 2\vartheta_{A} - \vartheta_{B}) > {u}({r}, \psi_{1}). 
\end{equation*}

In all cases, we obtain ${u}({r}, 0) < {u}({r}, 2\vartheta)$, so \eqref{Yao0513} holds. Step 1 is finished. 

\textbf{Step 2}. 
We claim that \eqref{Yao0508a} holds. Set 
\begin{equation*}
\phi_{2,\lambda} = \vartheta_{B}(\lambda) \text{ for } \lambda \in [{l}_{*}, {l}_{\perp}] \text{ and }
\phi_{2,\lambda} = \vartheta_{\lambda} \text{ for } \lambda \in [{l}_{N}/2, {l}_{*}]
\end{equation*}
From \autoref{thm309}, ${w}^{\lambda, \phi_{2,\lambda}}$ satisfies the Neumann boundary on $\Gamma_{\lambda, \phi_{2,\lambda}}^{2B}$ for $\lambda \in [{l}_{N}/2, {l}_{\perp}]$. From Step 1, ${w}^{\lambda, \phi_{2,\lambda}}$ satisfies
\begin{equation*}
\begin{cases}
\Delta {w}^{\lambda, \phi_{2,\lambda}} + {c}^{\lambda, \phi_{2,\lambda}}{w}^{\lambda, \phi_{2,\lambda}} = 0 & \text{in } {D}_{\lambda, \phi_{2,\lambda}}, \\
{w}^{\lambda, \phi_{2,\lambda}} = 0 & \text{on } \Gamma_{\lambda, \phi_{2,\lambda}}^{0}, \\
{w}^{\lambda, \phi_{2,\lambda}} < 0 & \text{on } \Gamma_{\lambda, \phi_{2,\lambda}}^{1} \cup \Gamma_{\lambda, \phi_{2,\lambda}}^{2A}, \\
\nabla {w}^{\lambda, \phi_{2,\lambda}} \cdot \nu < 0 & \text{on } \Gamma_{\lambda, \phi_{2,\lambda}}^{2B}.
\end{cases}
\end{equation*}
In \eqref{Yao0322d}, ${w}^{\lambda, \vartheta}$ satisfies a Neumann-type boundary condition on $\Gamma^{2A}$, while here ${w}^{\lambda, \phi_{2,\lambda}}$ satisfies a Dirichlet-type boundary condition on $\Gamma^{2A}$. From \autoref{lma401}, ${w}^{\lambda, \pi/2}$ is negative in ${D}_{\lambda, \pi/2}$ for $\lambda = {l}_{\perp}$. By the same arguments as in \autoref{lma301}, one can show the negativity of ${w}^{\lambda, \phi_{2,\lambda}}$ for $\lambda \in [{l}_{N}/2, {l}_{\perp}]$. 
This completes the proof. 
\end{proof}

\begin{thm} 
Let $0 < \beta < \alpha \leq \pi$ and
\begin{equation} \label{Yao0515}
0 < \beta \leq \pi/3. 
\end{equation}
Then the solution ${u}$ of \eqref{Yao0105} satisfies
\eqref{Yao0316} for every $\vartheta \in {J}_{\lambda}$ and $\lambda > 0$. 
Furthermore, ${u}$ has the even symmetry and monotonicity properties as stated in \ref{Yaoit11a}, \ref{Yaoit11b}, and \ref{Yaoit11c} of \autoref{thm101main}. 
\end{thm}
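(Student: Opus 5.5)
The plan is to follow the scheme of \autoref{thm309}, now run downward in $\lambda$ all the way to $0$, with \autoref{lma502} bridging the intermediate range where ${J}_{\lambda}$ fails to contain $\pi/2$. Set
\begin{equation*}
\bar{\Lambda} = \inf\{\lambda' > 0: \; \eqref{Yao0315} \text{ holds for all } \vartheta \in {J}_{\lambda} \text{ and } \lambda \geq \lambda'\}.
\end{equation*}
By \autoref{thm309}, $\bar{\Lambda} \leq \lambda_{\sharp} \leq {l}_{N}/2$. I then argue by contradiction, assuming $\bar{\Lambda} > 0$.

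\textbf{Step 1: starting point at $\lambda = {l}_{N}/2$.} \autoref{lma502} gives ${w}^{{l}_{N}/2, \pi/2} < 0$ in ${D}_{{l}_{N}/2, \pi/2}$. Since $\beta \leq \pi/3$, for $\lambda \leq {l}_{N}/2$ we have $\vartheta_{B}(\lambda) \leq \pi/2$, so $[\vartheta_{B}(\lambda), (\pi+\beta)/2] \ni \pi/2$ and $\pi/2 \in {J}_{\lambda}$.

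\textbf{Step 2: push $\vartheta = \pi/2$ down to every $\lambda \in (0, {l}_{N}/2]$.} I apply \autoref{lma301} with $\phi_{2,t} = \pi/2$, $\phi_{1,t} = 0$, $\phi_{3,t} = \pi$, and $\mu_t$ decreasing from ${l}_{N}/2$.

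The boundary conditions are checked as follows.
\begin{itemize}
\item On $\Gamma^{2A}$, which lies in $\Gamma_{N}^{-}$ together with its reflection, ${w}$ has zero normal derivative.
\item On $\Gamma^{1}$, the condition holds because $\pi/2 \in {J}_{\lambda}$.
\item On $\Gamma^{2B}$, which lies on ${T}_{\check{\lambda}, \pi-\beta} = {T}_{\hat{\lambda}, 2\beta}$, we need \eqref{Yao0316} at the larger parameter. Here $\check{\lambda}(\lambda, \pi/2) = \lambda(1 + \sin\beta)/\sin\beta > \lambda$. The relevant angle ($\check{\vartheta} = \pi - \beta$ or $\hat{\vartheta} = 2\beta \leq (\pi+\beta)/2$) lies in the admissible set, and because $\lambda_* = \lambda/\zeta(\pi/2)$ is strictly larger than $\lambda$, the required monotonicity is available from the inductive hypothesis.
\end{itemize}
So in a bootstrap argument I show that if \eqref{Yao0315} holds for $\lambda \geq \bar{\Lambda}$, then ${w}^{\lambda, \pi/2}$ satisfies \eqref{Yao0322} for $\lambda$ slightly below $\bar{\Lambda}$. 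For this I need $\zeta(\pi/2)$-scaled parameters that stay above $\bar{\Lambda}$ and angles $\check{\vartheta}$ that stay in ${J}_{\check{\lambda}}$. The latter is where $\beta \leq \pi/3$ enters, through $2\beta \leq \vartheta_{A}$-type inequalities from \autoref{lma208}.

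\textbf{Step 3: propagate to all $\vartheta \in {J}_{\lambda}$ and close the contradiction.} With $\pi/2$ in hand near $\bar{\Lambda}$, I rerun the dyadic scheme of \autoref{lma303} and \autoref{lma304} at fixed $\lambda$. This covers the upper interval $[\vartheta_{B}, (\pi+\beta)/2]$, using $\vartheta_{1} = 2\vartheta - \pi$. \autoref{lma308} then gives the interval $(0, \vartheta^{A}]$. Strict inequalities on $\Gamma_{N}$ come from \autoref{lma307}, with $\vartheta_{C} = (\pi+\beta)/2 > \pi/2$. Continuity and the small-volume maximum principle \autoref{lma205MP}, as in Steps 3--5 of \autoref{thm309}, then extend everything to $\lambda \in [\bar{\Lambda} - \varepsilon, \bar{\Lambda}]$, contradicting the definition of $\bar{\Lambda}$. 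Hence \eqref{Yao0316} holds for all $\lambda > 0$ and $\vartheta \in {J}_{\lambda}$.

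\textbf{Step 4: symmetry and monotonicity.} Symmetry follows exactly as in Part 2 of \autoref{thm403}: either move ${T}_{\lambda, \beta/2}$ using $\vartheta = \beta/2 \in {J}_{\lambda}$, or take $\vartheta = \beta/2$ to get $x_2 u_{x_2} < 0$ and apply the Cauchy-uniqueness argument of \cite{FV13}. Monotonicity in $x_1$ follows as in Part 3 of \autoref{thm403}. This gives \ref{Yaoit11a}--\ref{Yaoit11c}.

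The main obstacle is Step 2's verification on $\Gamma^{2B}$ uniformly as $\lambda \to 0$. One must check that the reflected Neumann line always corresponds to a pair $(\lambda_*, \vartheta_*)$ with $\vartheta_* \in {J}_{\lambda_*}$ and $\lambda_* > \lambda$. If the direct choice $\vartheta = \pi/2$ fails for some $\lambda$, I would instead use a continuous family $\vartheta = \vartheta_{\lambda}$ with $\check{\lambda} = {l}_{N}$-type normalization, in the spirit of \autoref{lma501} and \autoref{lma502}.
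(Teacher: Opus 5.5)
Your frame (the $\bar{\Lambda}$-contradiction of \autoref{thm309}, seeded by ${w}^{{l}_{N}/2, \pi/2} < 0$ from \autoref{lma502}) is the paper's. However, you never repair the one step of \autoref{thm309} that actually breaks below $\lambda_{\sharp}$. Apart from $2\vartheta_{B} - \pi < \vartheta_{A}$ (automatic here by \autoref{lma208}\ref{Yaoit23c}), that proof uses $\lambda \geq \lambda_{\sharp}$ only for the case $\vartheta = (\pi + \beta)/2$, i.e.\ for ${u}_{{x}_{1}} < 0$ on ${T}_{\bar{\Lambda}, (\pi + \beta)/2} \cap \overline{\Sigma}$. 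That case is obtained from ${w}^{\lambda, (\pi + \beta)/2}$ on ${D}_{\lambda, (\pi + \beta)/2, \beta}$ and needs $\beta \in {J}_{\lambda}$. Steps 3--5 of that proof all rest on it: the box $\mathcal{S}_{1}$, the choice $\vartheta_{3, \lambda} = \min\{2\vartheta_{B}, (\pi + \beta)/2 + \delta_{1}\}$, and the use of $-{u}_{{x}_{1}} > 0$ to get the maximum principle for ${v}_{\lambda}$ in Step 5.

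Your Step 3 tries to produce the upper interval by rerunning \autoref{lma303}--\autoref{lma304} with $\vartheta_{1} = 2\vartheta - \pi$, and this fails in three ways. First, for $\lambda < \lambda_{\sharp}$ and $\vartheta \in ((\pi + \vartheta_{A}(\lambda))/2, (\pi + \beta)/2]$ one has $2\vartheta - \pi \in (\vartheta_{A}(\lambda), \beta] \subset (\vartheta_{A}(\lambda), \vartheta_{B}(\lambda))$. That is the gap of ${J}_{\lambda}$, so there is no sign on $\Gamma^{2A}$. Second, for $\vartheta \in [\vartheta_{B}(\lambda), \pi/2)$ the choice $\vartheta_{1} = 2\vartheta - \pi < 0$ is not admissible at all. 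Third, the dyadic induction of \autoref{lma303} needs every dyadic angle in $(0, \beta_{\flat}]$ to lie in ${J}_{\lambda}$. So your Step 3 does not go through, and the appeal to ``Steps 3--5 of \autoref{thm309}'' presupposes exactly the missing input.

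The missing idea is what the $\pi/2$-family is for. Run your Step 2 over the whole range $\lambda \in [\bar{\Lambda}\cos\beta, {l}_{N}/2]$, not just slightly below $\bar{\Lambda}$. There $\Gamma^{2B}_{\lambda, \pi/2}$ lies on $\hat{T}_{\hat{\lambda}, 2\beta}$ with $\hat{\lambda} = \lambda\sec\beta \in [\bar{\Lambda}, {l}_{N}]$; also $\check{\lambda}(\lambda, \pi/2) = 2\lambda$, not $\lambda(1 + \sin\beta)/\sin\beta$. The membership $2\beta \in {J}_{\hat{\lambda}}$ does not come from a ``$2\beta \leq \vartheta_{A}$'' inequality, which is false for small $\hat{\lambda}$ since $\vartheta_{A} \to \beta/2$. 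It holds because $\vartheta_{B}(\mu) \leq 2\beta$ iff $\mu \leq {l}_{N}/(2\cos\beta)$, i.e.\ iff $\lambda \leq {l}_{N}/2$, while $2\beta \leq (\pi + \beta)/2$ iff $\beta \leq \pi/3$. Note that $\check{\vartheta} = \pi - \beta$ exceeds $(\pi + \beta)/2$ when $\beta < \pi/3$, so the $\hat{}$-parametrization is the one to use.

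\autoref{lma301} then gives \eqref{Yao0315a} and \eqref{Yao0315b} at $\vartheta = \pi/2$ for all these $\lambda$. Every point of ${T}_{\bar{\Lambda}, (\pi + \beta)/2} \cap \Sigma$ lies on some ${T}_{\lambda_{1}, \pi/2}$ and some $\hat{T}_{\lambda_{2}, \pi/2}$ with $\lambda_{1}, \lambda_{2} \in [\bar{\Lambda}\cos\beta, \bar{\Lambda}]$. Adding the two inequalities gives $2\cos(\beta/2)\, {u}_{{x}_{1}} < 0$ there. This is precisely the input for Step 1 of \autoref{thm309} at $\bar{\Lambda}$, after which its remaining steps go through unchanged. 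In short, the obstacle you flagged (control on $\Gamma^{2B}$ as $\lambda \to 0$) is a one-line computation. The real obstacle is the ${x}_{1}$-monotonicity on the vertical line through ${P}_{\bar{\Lambda}}$.
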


\begin{proof}
The proof is the same as \autoref{thm309}, except for the proof of \eqref{Yao0315} with $\vartheta = (\pi + \beta)/2$. In fact, the assumption that $\lambda \geq \lambda_{\sharp}$ in \autoref{thm309} is used in two situations: 
(1) $2\vartheta_{B} - \pi < \vartheta_{A}$; (2) the proof of \eqref{Yao0315} with $\vartheta = (\pi + \beta)/2$. 
However, in the situation $0 < \beta \leq \pi/3$, we know that $2\vartheta_{B}(\lambda) - \pi < \vartheta_{A}(\lambda)$ is valid for all $\lambda > 0$; see \autoref{lma208}\ref{Yaoit23c}. It remains to check the negativity of ${u}_{{x}_{1}}$. 

Let $\bar{\Lambda}$ be as in \eqref{Yao0355}, i.e., 
\begin{equation*}
\bar{\Lambda} = \inf\big\{\lambda' > 0: \; \eqref{Yao0315} \text{ holds for all } \vartheta \in {J}_{\lambda} \text{ and } \lambda \geq \lambda' \big\}. 
\end{equation*}
To show $\bar{\Lambda} = 0$, we observe that all steps in \autoref{thm309} are valid except the monotonicity in ${x}_{1}$. 
To complete the proof, we only need to check that
\begin{equation} \label{Yao0516}
{u}_{{x}_{1}} < 0 \quad \text{in } {T}_{\lambda, (\pi + \beta)/2} \cap \Sigma
\end{equation}
for $\lambda = \bar{\Lambda}$. In fact, \autoref{thm309} implies that $\bar{\Lambda} \leq \lambda_{\sharp}$ and thus $\bar{\Lambda} \leq {l}_{N}/2$. For $\lambda \in [\bar{\Lambda}\cos\beta, {l}_{N}/2]$, 
\begin{equation*}
\hat{\lambda}(\lambda, \pi/2) = \lambda \sec\beta \in [\bar{\Lambda}, {l}_{N}], \quad
\check{\lambda}(\lambda, \pi/2) = 2\lambda \in [\bar{\Lambda}, {l}_{N}], 
\end{equation*}
where the condition \eqref{Yao0515} is used. Hence, ${w}^{\lambda, \pi/2}$ satisfies \eqref{Yao0322}. Combining this with \eqref{Yao0508b} in \autoref{lma502}, and applying \autoref{lma301}, we conclude that
\begin{equation*}
{w}^{\lambda, \pi/2} < 0 \quad \text{in } {D}_{\lambda, \pi/2}
\end{equation*}
and \eqref{Yao0315a} (and similarly \eqref{Yao0315b}) hold for $\vartheta = \pi/2$, $\bar{\Lambda}\cos\beta \leq \lambda \leq {l}_{N}/2$. By adding \eqref{Yao0315a} and \eqref{Yao0315b}, we see that ${u}_{{x}_{1}} < 0$ holds for all points in the closure of the intersection of $\Sigma_{\bar{\Lambda}\cos\beta, \pi/2} \setminus \Sigma_{{l}_{N}/2, \pi/2}$ and $\Sigma_{\bar{\Lambda}, \beta + \pi/2} \setminus \Sigma_{{l}_{N}/(2\cos\beta), \beta + \pi/2}$. 
In particular, \eqref{Yao0516} is true for $\lambda \in [\bar{\Lambda}, {l}_{N}/2]$. 

Thus, step 1 of \autoref{thm309} is established. All other steps in the proof of \autoref{thm309} also remain valid. Therefore, $\bar{\Lambda} = 0$ and \eqref{Yao0315} holds for every $\vartheta \in {J}_{\lambda}$ and $\lambda > 0$. 

The symmetry property of ${u}$ follows from part 2 of \autoref{thm403}. 
\end{proof}


\section{The proof for $\pi/3 < \beta < \pi/2$} \label{Sec05Mid}

As in the proof for the case $\beta \in [\pi/2, 2\pi/3]$, the monotonicity properties \eqref{Yao0315} with $\vartheta = \vartheta_{B}(\lambda)$ can be obtained when $\lambda \geq {l}_{\perp}$. However, this approach does not work when $\lambda < {l}_{\perp}$. To overcome this difficulty, a new approach is needed. The main observations are as follows: 
\begin{itemize}
\item Establish the negativity of ${w}^{\lambda, \vartheta_{B}(\lambda)}$ in a domain larger than ${D}_{\lambda, \vartheta_{B}(\lambda)}$; 
\item Establish the monotonicity \eqref{Yao0315} on a subset of ${T}_{\lambda, \vartheta} \cap \Sigma$ for $\vartheta \not \in {J}_{\lambda}$. 
\end{itemize}

\subsection{The case $\lambda \geq {l}_{\perp}$}

At the beginning of this section, we improve \autoref{lma401} and obtain \eqref{Yao0316} for $\lambda \geq {l}_{\perp}$ and $\vartheta \in {J}_{\lambda} \cap (0, \bar{\omega}_{\lambda}]$ with a larger constant $\bar{\omega}_{\lambda}$ defined by
\begin{equation} \label{Yao0602}
\bar{\omega}_{\lambda} = 
\begin{cases}
\vartheta^{B}(\lambda) & \text{if } \lambda > {l}_{*}, \\
\min\left\{ \frac{\pi + \vartheta_{A}(\lambda)}{2}, \vartheta_{\lambda} \right\} & \text{if } \lambda \leq {l}_{*}, 
\end{cases}
\end{equation}
where $\vartheta_{\lambda}$ is defined in \autoref{lma501}. 

\begin{lma} \label{lma601}
Let $\vartheta_{\lambda}$ be given in \autoref{lma501} and let $\bar{\omega}_{\lambda}$ be as in \eqref{Yao0602}. Then we have: 
\begin{enumerate}[label = {\rm(\alph*)}]
\item
The map $\lambda \in (0, \infty) \mapsto \bar{\omega}_{\lambda}$ is continuous, $\bar{\omega}_{\lambda}$ is increasing-decreasing over $(0, {l}_{*}]$, and
\begin{gather*}
\bar{\omega}_{\lambda} > \vartheta_{B}(\lambda), \quad \lambda \in (0, {l}_{*}) \text{ when } \beta \leq \frac{2\pi}{3}, \\
\bar{\omega}_{\lambda} > \frac{\pi + 3\beta}{4}, \quad \lambda \in (0, {l}_{*}) \text{ when } \beta \leq \frac{\pi}{2}. 
\end{gather*}

\item \label{Yaoit62b}
$\vartheta_{B}(\lambda) + \vartheta_{\lambda} > (\pi + 3\beta)/2$ for $\lambda \in (0, {l}_{*})$. 

\item
The map $\lambda \in (0, {l}_{*}] \mapsto \hat{\lambda}(\lambda, \bar{\omega}_{\lambda})$ is strictly increasing whenever $\beta \leq \pi/2$. 
\end{enumerate}
\end{lma}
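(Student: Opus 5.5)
This lemma is pure plane geometry and trigonometry in the explicit formulas for $\vartheta_{A}(\lambda)$, $\vartheta_{B}(\lambda)$, $\check{\lambda}$, $\hat{\lambda}$ and $\vartheta_{\lambda}$. The plan is to first write everything in closed form. From \eqref{Yao0310a}--\eqref{Yao0310b}, $\cot\vartheta_{B}(\lambda) = ({l}_{N}\cos\beta - \lambda)/({l}_{N}\sin\beta)$ and $\cot\vartheta_{A}(\lambda) = (|a|\cos(\beta/2) - \lambda)/(|a|\sin(\beta/2))$; both are continuous and strictly increasing. By \eqref{Yao0308}, the equation $\check{\lambda}(\lambda,\vartheta_{\lambda}) = {l}_{N}$ reads
\begin{equation*}
\sin(2\vartheta_{\lambda} - \beta) = \frac{\lambda \sin\beta}{{l}_{N} - \lambda},
\end{equation*}
with $2\vartheta_{\lambda} - \beta \in (\pi/2, \pi)$. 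This gives $\vartheta_{\lambda}$ explicitly, and it is continuous and strictly decreasing by \autoref{lma501}.

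\textbf{Part (b).} Write $\phi = \vartheta_{B}(\lambda)$ and $\psi = \vartheta_{\lambda}$. The condition $\phi + \psi > (\pi + 3\beta)/2$ is equivalent to $2\psi - \beta > \pi + 2\beta - 2\phi$. Both sides lie in $[\pi/2, \pi)$ when $\phi \geq \beta$; the right side is in that range because $\phi < (\pi + 3\beta)/4$ for $\lambda < {l}_{*}$. On this range $\sin$ is decreasing, so it suffices to show
\begin{equation*}
\frac{\lambda\sin\beta}{{l}_{N} - \lambda} < \sin(2\phi - 2\beta).
\end{equation*}
Using the triangle $V P_{\lambda} A_{2}$ (law of sines, with $|VA_{2}| = {l}_{N}$ and the angle at $P_{\lambda}$ equal to $\pi - \phi$ measured appropriately), one gets
\begin{equation*}
\lambda = \frac{{l}_{N}\sin(\phi - \beta)}{\sin\phi}, \qquad {l}_{N} - \lambda = \frac{{l}_{N}\sin\beta}{\sin\phi}.
\end{equation*}
Hence the left side equals $\sin(\phi - \beta)$. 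The inequality becomes $\sin(\phi - \beta) < \sin(2\phi - 2\beta)$, i.e.\ $\cos(\phi - \beta) > 1/2$, i.e.\ $\phi - \beta < \pi/3$. This follows from $\phi < (\pi + 3\beta)/4$ together with $\beta \leq 2\pi/3$ (or just $\beta < \pi$ suitably). If $\phi < \beta$ (small $\lambda$), then $2\phi - 2\beta < 0$ and the argument must be handled separately. That case is immediate, since then the right side of the original inequality exceeds $\pi$ only if $\phi < \beta/2$, and $\vartheta_{B} > \beta/2$ always holds. I would double-check these sub-cases carefully.

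\textbf{Part (a).} Continuity of $\bar{\omega}_{\lambda}$ holds on each branch. At $\lambda = {l}_{*}$, both $\vartheta_{{l}_{*}}$ and $\vartheta^{B}({l}_{*})$ equal $(\pi + 3\beta)/4$ by \autoref{lma501}. Moreover $(\pi + \vartheta_{A}({l}_{*}))/2 \geq (\pi + 3\beta)/4$, since by \autoref{lma208}\ref{Yaoit23a} $\vartheta_{A} > \vartheta_{B}/2$, which makes the minimum match. The increasing--decreasing shape holds because $(\pi + \vartheta_{A})/2$ is increasing, $\vartheta_{\lambda}$ is decreasing, and the two curves cross at most once.

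The bound $\bar{\omega}_{\lambda} > \vartheta_{B}(\lambda)$ uses two facts: $\vartheta_{\lambda} > \vartheta_{B}$ from \eqref{Yao0505}, and $(\pi + \vartheta_{A})/2 > \vartheta_{B}$, which is \autoref{lma208}\ref{Yaoit23c} when $\beta \leq 2\pi/3$. The bound $\bar{\omega}_{\lambda} > (\pi + 3\beta)/4$ uses $\vartheta_{\lambda} > (\pi + 3\beta)/4$ from \eqref{Yao0505} together with $(\pi + \vartheta_{A})/2 > (\pi + 3\beta)/4$. The latter is equivalent to $\vartheta_{A} > (3\beta - \pi)/2$, which is trivial when $\beta \leq \pi/2$ since the right side is then at most $\beta/4 < \vartheta_{A}$ (as $\vartheta_{A} > \beta/2$).

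\textbf{Part (c).} On the branch where $\bar{\omega}_{\lambda} = \vartheta_{\lambda}$, \autoref{lma501} already gives that $\hat{\lambda}(\lambda, \vartheta_{\lambda})$ is increasing. On the branch where $\bar{\omega}_{\lambda} = (\pi + \vartheta_{A})/2$, write
\begin{equation*}
\hat{\lambda} = \frac{\lambda\sin\bar{\omega}}{\sin(\bar{\omega} - \beta)},
\end{equation*}
where the factor $\sin\bar{\omega}/\sin(\bar{\omega} - \beta)$ is increasing in $\bar{\omega}$ when $\bar{\omega} \in (\beta, \pi)$. Here $\bar{\omega} > \pi/2$, and the derivative of $\cot(\bar{\omega} - \beta) - \cot\bar{\omega}$ has sign $\sin^{2}\bar{\omega} - \sin^{2}(\bar{\omega} - \beta)$. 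This is where $\beta \leq \pi/2$ enters. Since $\bar{\omega}$ increases while $\lambda$ also increases, $\hat{\lambda}$ is increasing, and monotonicity across the junction of the two branches follows by continuity.

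The main obstacle I expect is Part (c) on the $(\pi + \vartheta_{A})/2$ branch, namely verifying the sign of $\sin^{2}\bar{\omega} - \sin^{2}(\bar{\omega} - \beta)$. If that sign fails, I would instead differentiate $\hat{\lambda}$ directly using the explicit form of $\vartheta_{A}(\lambda)$.
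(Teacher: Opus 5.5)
\textbf{Part (c) has a genuine gap.} The factor $\sin\omega/\sin(\omega-\beta)$ is strictly \emph{decreasing} on $(\beta,\pi)$, not increasing.
\begin{itemize}
\item Its logarithmic derivative is $\cot\omega-\cot(\omega-\beta)<0$.
\item Geometrically, rotating ${T}_{\lambda,\vartheta}$ about ${P}_{\lambda}$ pushes its intersection with the upper side toward ${V}$, and $\hat{\lambda}\to\infty$ as $\vartheta\downarrow\beta$.
\item Your sign bookkeeping is also off: $\frac{d}{d\omega}[\cot(\omega-\beta)-\cot\omega]=\csc^{2}\omega-\csc^{2}(\omega-\beta)$. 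That is a second derivative of the logarithm anyway, so it does not decide monotonicity.
\end{itemize}
Hence on the branch $\bar{\omega}_{\lambda}=(\pi+\vartheta_{A}(\lambda))/2$ the two effects compete: $\lambda$ grows, but $\sin\bar{\omega}_{\lambda}/\sin(\bar{\omega}_{\lambda}-\beta)$ shrinks. So ``both increase'' proves nothing.

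What is missing is the explicit computation the paper does.
\begin{itemize}
\item From the triangle ${V}{P}_{\lambda}{O}$ one has $\lambda=|a|\sin(\vartheta_{A}-\beta/2)/\sin\vartheta_{A}$.
\item So on this branch $\hat{\lambda}(\lambda,\bar{\omega}_{\lambda})=|a|\,\hbar(\vartheta_{A}(\lambda))$, where $\hbar(\vartheta)=\sin(\vartheta-\beta/2)/(\sin\beta+\sin(\vartheta-\beta))$.
\item The derivative $\hbar'$ has the sign of $\sin\beta\cos(\vartheta-\beta/2)-\sin(\beta/2)$. For $\vartheta\in(\beta/2,\beta)$ this exceeds $\sin(\beta/2)\cos\beta\geq0$, and this is exactly where $\beta\leq\pi/2$ enters.
\item You also need that this branch is active only when $\vartheta_{A}<\beta$, which holds because $(\pi+\vartheta_{A})/2\leq\vartheta_{\lambda}<(\pi+\beta)/2$.
\end{itemize}
Your fallback (``differentiate directly'') points this way, but it is not carried out, and the primary argument fails.

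\textbf{Part (b) rests on a false identity.} The law of sines in ${V}{P}_{\lambda}{A}_{2}$ gives $|{P}_{\lambda}{A}_{2}|={l}_{N}\sin\beta/\sin\phi$. But ${l}_{N}-\lambda=|{P}_{\lambda}{A}_{1}|=2{l}_{N}\sin(\beta/2)\cos(\phi-\beta/2)/\sin\phi$. With the correct value, $\lambda\sin\beta/({l}_{N}-\lambda)=\sin(\phi-\beta)\cos(\beta/2)/\cos(\phi-\beta/2)$. Dividing by $\sin(\phi-\beta)>0$, multiplying by $\cos(\phi-\beta/2)>0$, and using product-to-sum, the needed inequality becomes $\cos(2\phi-3\beta/2)>0$, i.e.\ $\phi<(\pi+3\beta)/4$. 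That is exactly \eqref{Yao0505}.

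So your strategy works once repaired. It is the mirror image of the paper's computation, which writes $\cot\vartheta_{B}$ in terms of $\vartheta_{\lambda}$ and ends with $\cos(2\vartheta_{\lambda}-3\beta/2)<0$. The condition $\phi-\beta<\pi/3$ you reached is not the relevant one. The case $\phi\leq\beta$ never occurs: your own formula $\lambda={l}_{N}\sin(\phi-\beta)/\sin\phi>0$ forces $\phi>\beta$. Had it occurred, the inequality would fail rather than hold trivially.

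\textbf{Part (a): continuity at ${l}_{*}$.} Matching the branches at ${l}_{*}$ requires $\vartheta_{A}({l}_{*})\geq(3\beta-\pi)/2$. The bound $\vartheta_{A}>\vartheta_{B}/2$ from \autoref{lma208}\ref{Yaoit23a} only gives $\vartheta_{A}({l}_{*})>(\pi+3\beta)/8$, which suffices only for $\beta\leq5\pi/9$. For the stated generality you need a sharper bound, such as the paper's $\vartheta_{A}({l}_{*})\geq\beta/2+\operatorname{arccot}(2\cos(\beta/2))$. For $\beta\leq\pi/2$, the simple bound $\vartheta_{A}>\beta/2$ already suffices. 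Your treatment of the two strict inequalities in (a) is correct.
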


\begin{proof}
From \autoref{lma501}, $\vartheta_{\lambda}$ is decreasing for $0 < \lambda < {l}_{N}/(1 + \sin\beta)$. Together with the fact that $\vartheta_{A}(\lambda)$ is increasing in $\lambda$, we deduce that $\pi + \vartheta_{A}(\lambda) - 2\vartheta_{\lambda}$ is strictly increasing for $0 < {\lambda} < {l}_{N}/(1 + \sin\beta)$. 
Since $\lim_{\lambda \to 0} \vartheta_{A}(\lambda) = \beta/2$ and $\lim_{\lambda \to 0} \vartheta_{\lambda} = (\pi + \beta)/2$, we have
\begin{equation*}
\pi + \vartheta_{A}(\lambda) - 2\vartheta_{\lambda} < 0 \text{ for } 0 < \lambda \ll 1. 
\end{equation*}
Note that $\vartheta_{B}({l}_{*}) = \vartheta_{{l}_{*}} = (\pi + 3\beta)/4$ and $\vartheta_{A}({l}_{*}) \geq \beta/2 + \operatorname{arccot}(2\cos(\beta/2))$, and
\begin{equation*}
\pi + \vartheta_{A}(\lambda) - 2\vartheta_{\lambda} \geq (\pi - \beta) - \arctan(2\sin\tfrac{\pi - \beta}{2}) > 0 \text{ at } \lambda = {l}_{*}. 
\end{equation*}
It follows that $\pi + \vartheta_{A}(\lambda) - 2\vartheta_{\lambda} = 0$ has a unique zero in $(0, {l}_{N}/(1 + \sin\beta)]$, and this unique zero belongs to $(0, {l}_{*})$. Moreover, $\bar{\omega}_{\lambda}$ depends continuously on $\lambda \in \mathbb{R}^{ + }$, $\bar{\omega}_{\lambda}$ is increasing then decreasing on $(0, {l}_{*}]$, and
\begin{equation*}
\bar{\omega}_{\lambda} > \frac{\pi + 3\beta}{4}, \quad \lambda \in (0, {l}_{*}) \text{ when } \beta \leq \frac{\pi}{2}. 
\end{equation*}
By \autoref{lma208}\ref{Yaoit23c}, 
\begin{equation*}
\bar{\omega}_{\lambda} > \vartheta_{B}(\lambda), \quad \lambda \in (0, {l}_{*}] \text{ when } \beta \leq \frac{2\pi}{3}. 
\end{equation*}

Now we estimate the lower bound of the sum $\vartheta_{B}(\lambda) + \vartheta_{\lambda}$. According to the definitions, we can write
\begin{align*}
\cot(\vartheta_{B}(\lambda)) = \frac{\cos\beta - \frac{\sin(2\vartheta_{\lambda} - \beta)}{\sin(2\vartheta_{\lambda} - 2\beta)} \frac{\sin(\vartheta_{\lambda} - \beta)}{\sin\vartheta_{\lambda}}}{\sin\beta} = 
\cot\beta - \frac{\sin(2\vartheta_{\lambda} - \beta)} {2\sin\beta \sin\vartheta_{\lambda} \cos(\vartheta_{\lambda} - \beta)}. 
\end{align*}
Thus, 
\begin{align*}
&\left( \cot(\vartheta_{B}(\lambda)) - \cot\left( \tfrac{\pi + 3\beta}{2} - \vartheta_{\lambda} \right) \right) \sin\beta \sin\vartheta_{\lambda} \cos(\vartheta_{\lambda} - \beta)\cos(\vartheta_{\lambda} - \tfrac{3\beta}{2})\\
 = &
\cos(\vartheta_{\lambda} - \tfrac{\beta}{2})\sin\vartheta_{\lambda}\cos(\vartheta_{\lambda} - \beta)
 - \tfrac{1}{2} \sin(2\vartheta_{\lambda} - \beta)\cos(\vartheta_{\lambda} - \tfrac{3\beta}{2}) \\
 = &
\cos(\vartheta_{\lambda} - \tfrac{\beta}{2})\left( \sin\vartheta_{\lambda}\cos(\vartheta_{\lambda} - \beta)
 - \sin(\vartheta_{\lambda} - \tfrac{\beta}{2})\cos(\vartheta_{\lambda} - \tfrac{3\beta}{2}) \right) \\
 = &
\cos(\vartheta_{\lambda} - \tfrac{\beta}{2}) \left( \tfrac{1}{2}\sin(2\vartheta_{\lambda} - \beta) - \tfrac{1}{2}\sin(2\vartheta_{\lambda} - 2\beta) \right) \\
 = &
\cos(\vartheta_{\lambda} - \tfrac{\beta}{2})\cos(2\vartheta_{\lambda} - \tfrac{3\beta}{2})\sin\tfrac{\beta}{2}. 
\end{align*}
Recalling that $(\pi + 3\beta)/4 < \vartheta_{\lambda} < (\pi + \beta)/2$ for $\lambda \in (0, {l}_{*})$, we get $\cot(\vartheta_{B}(\lambda)) < \cot((\pi + 3\beta)/2 - \vartheta_{\lambda})$, and hence
\begin{equation*}
\vartheta_{\lambda} + \vartheta_{B}(\lambda) > \frac{\pi + 3\beta}{2} \text{ for } \lambda \in (0, {l}_{*}). 
\end{equation*}

Lastly, we turn to the monotonicity of $\hat{\lambda}(\lambda, \bar{\omega}_{\lambda})$. One can compute that $\hat{\lambda}(\lambda, (\pi + \vartheta_{A}(\lambda))/2) = |{V}{O}| \cdot \hbar(\vartheta_{A}(\lambda))$ where
\begin{equation*}
\hbar(\vartheta) = \frac{\sin(\vartheta - \frac{\beta}{2})}{\sin\vartheta} \cdot \frac{\sin\frac{\pi - \vartheta}{2}}{ \sin\frac{\pi + \vartheta - 2\beta}{2}} = \frac{\sin(\vartheta - \frac{\beta}{2})}{\sin\beta + \sin(\vartheta - \beta)}. 
\end{equation*}
Note that
\begin{equation*}
\frac{\partial }{\partial \vartheta} \hbar(\vartheta) = \frac{\cos(\vartheta - \frac{\beta}{2}) \sin\beta - \sin\frac{\beta}{2}}{(\sin\beta + \sin(\vartheta - \beta))^{2}} > 0 \text{ for } \tfrac{\beta}{2} < \vartheta < \beta,
\end{equation*}
where $\beta \leq \pi/2$ is used here. It immediately follows that the function $\lambda \mapsto \hat{\lambda}(\lambda, (\pi + \vartheta_{A}(\lambda))/2)$ is strictly increasing when $\vartheta_{A}(\lambda) \leq \beta$, i.e., $\lambda \in (0, \lambda_{\sharp}]$. It is clear that $\vartheta_{\lambda} < (\pi + \beta)/2 \leq (\pi + \vartheta_{A}(\lambda))/2$ and $\bar{\omega}_{\lambda} = \vartheta_{\lambda}$ for $\lambda \in [\lambda_{\sharp}, {l}_{*}]$. Combining these properties and the fact that $\hat{\lambda}(\lambda, \vartheta_{\lambda})$ is strictly increasing for $\lambda \in (0, {l}_{*}]$, we conclude that
\begin{equation*}
\hat{\lambda}(\lambda, \bar{\omega}_{\lambda}) = \max \left\{ \hat{\lambda}\Big(\lambda, \frac{\pi + \vartheta_{A}(\lambda)}{2}\Big), \; \hat{\lambda}(\lambda, \vartheta_{\lambda}) \right\}
\end{equation*}
is strictly increasing for $\lambda \in (0, {l}_{*}]$ when $\beta \leq \pi/2$. 
\end{proof}

\begin{defn}
Define $\imath(\lambda)$ such that
\begin{equation*}
\imath(\lambda) = \hat{\lambda}(\lambda, \bar{\omega}_{\lambda}), \quad \lambda \in (0, \infty)
\end{equation*}
and denote by $\jmath$ the inverse function of $\imath: (0, {l}_{*}] \to \mathbb{R}^{ + }$, i.e., $\jmath(\lambda)$ is determined by
\begin{equation*}
\imath(\jmath(\lambda)) = \lambda, \quad \lambda \in (0, {l}_{N}]. 
\end{equation*}
\end{defn}

\begin{defn}
For convenience and further use, define
\begin{equation} \label{Yao0607}
\begin{aligned}
\mathfrak{S}^{\Lambda} & = \mathfrak{S}^{\Lambda}_{1} \cap \mathfrak{S}^{\Lambda}_{2}, \\
\mathfrak{S}^{\Lambda}_{1} & = \{(\lambda, \vartheta): \; \lambda \geq \Lambda, 0 < \vartheta \leq \vartheta^{A}(\lambda)\}, \\
\mathfrak{S}^{\Lambda}_{2} & = \{(\lambda, \vartheta): \; \Lambda \leq \lambda \leq \hat{\lambda}(\lambda, \vartheta), \; \imath(\Lambda) \leq \hat{\lambda}(\lambda, \vartheta) \leq {l}_{N} \}
\end{aligned}
\end{equation}
for every $\Lambda > 0$, where $\vartheta^{A}(\lambda) = \min\{\vartheta_{A}(\lambda), \vartheta_{B}(\lambda), (\pi + \beta)/2\}$ as in \eqref{Yao0314}. 
\end{defn}

\begin{rmks}
By this definition of $\mathfrak{S}^{\Lambda}_{2}$, it follows that $\mathfrak{S}^{\Lambda}_{2}$ is empty for $\Lambda \geq {l}_{N}$, and
\eqref{Yao0315} holds for $(\lambda, \vartheta) \in \mathfrak{S}^{\Lambda}_{2}$ if and only if \eqref{Yao0315} holds for $\lambda$ and $\vartheta$ satisfying
\begin{equation}
\Lambda \leq \min\{\lambda, \hat{\lambda}(\lambda, \vartheta) \} \leq {l}_{N}, \quad \imath(\Lambda) \leq \max\{\lambda, \hat{\lambda}(\lambda, \vartheta) \} \leq {l}_{N}. 
\end{equation}
\end{rmks}

\begin{lma} \label{lma604A}
Let $\Lambda \in (0, {l}_{*})$. 
Suppose that \eqref{Yao0315} holds for $\vartheta \in {J}_{\lambda} \cap (0, \bar{\omega}_{\lambda}]$ and $\lambda \in [\Lambda, \infty)$. 
Then \eqref{Yao0315} holds for $(\lambda, \vartheta) \in \mathfrak{S}^{\Lambda}$, where $\mathfrak{S}^{\Lambda}$ is given in \eqref{Yao0607}. 
\end{lma}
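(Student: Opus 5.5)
The plan is to reduce the statement to a set inclusion: I expect $\mathfrak{S}^{\Lambda}$ to lie inside the parameter region already covered by the hypothesis, namely
\begin{equation*}
\{(\lambda, \vartheta): \; \lambda \geq \Lambda, \; \vartheta \in {J}_{\lambda} \cap (0, \bar{\omega}_{\lambda}]\}.
\end{equation*}
Since $\mathfrak{S}^{\Lambda} = \mathfrak{S}^{\Lambda}_{1} \cap \mathfrak{S}^{\Lambda}_{2} \subset \mathfrak{S}^{\Lambda}_{1}$, I would simply drop the condition $\mathfrak{S}^{\Lambda}_{2}$. It then suffices to show that, for every $\lambda \geq \Lambda$,
\begin{equation*}
(0, \vartheta^{A}(\lambda)] \subset {J}_{\lambda} \cap (0, \bar{\omega}_{\lambda}].
\end{equation*}

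The first inclusion, $(0, \vartheta^{A}(\lambda)] \subset {J}_{\lambda}$, is immediate from the representation ${J}_{\lambda} = (0, \vartheta^{A}(\lambda)] \cup [\vartheta^{B}(\lambda), (\pi+\beta)/2]$ recorded after \eqref{Yao0314}.

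For the second inclusion I would show $\vartheta^{A}(\lambda) \leq \bar{\omega}_{\lambda}$, splitting according to the definition \eqref{Yao0602}.
\begin{enumerate}[label = {\rm(\arabic*)}]
\item If $\lambda > {l}_{*}$, then $\bar{\omega}_{\lambda} = \vartheta^{B}(\lambda) = \min\{\vartheta_{B}(\lambda), (\pi+\beta)/2\}$. This is at least $\vartheta^{A}(\lambda)$, because $\vartheta^{A}(\lambda)$ is a minimum over a list that contains both $\vartheta_{B}(\lambda)$ and $(\pi+\beta)/2$.
\item If $\lambda \leq {l}_{*}$, then $\bar{\omega}_{\lambda} = \min\{(\pi + \vartheta_{A}(\lambda))/2, \vartheta_{\lambda}\}$, and I would bound each entry from below.
\begin{itemize}
\item Since $\vartheta_{A}(\lambda) < \pi$, we have $(\pi + \vartheta_{A}(\lambda))/2 > \vartheta_{A}(\lambda) \geq \vartheta^{A}(\lambda)$.
\item By \eqref{Yao0505} of \autoref{lma501}, $\vartheta_{\lambda} > (\pi+3\beta)/4 > \vartheta_{B}(\lambda) \geq \vartheta^{A}(\lambda)$ for $\lambda \in (0, {l}_{*})$.
\item At the endpoint, $\vartheta_{{l}_{*}} = \vartheta_{B}({l}_{*})$, which again is at least $\vartheta^{A}({l}_{*})$.
\end{itemize}
\end{enumerate}
The case $\lambda \leq {l}_{*}$ is consistent with the first assertion of \autoref{lma601}, which gives $\bar{\omega}_{\lambda} > \vartheta_{B}(\lambda)$ there.

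Combining the two inclusions, for every $(\lambda, \vartheta) \in \mathfrak{S}^{\Lambda}$ we have $\lambda \geq \Lambda$ and $\vartheta \in {J}_{\lambda} \cap (0, \bar{\omega}_{\lambda}]$. Hence \eqref{Yao0315a} and \eqref{Yao0315b} hold there directly by hypothesis.

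The only point where I expect care is needed is the endpoint comparison of $\vartheta_{\lambda}$ with $\vartheta_{B}(\lambda)$. \autoref{lma501} defines $\vartheta_{\lambda}$ only for $\lambda < {l}_{N}/(1+\sin\beta)$, so I must check that this range contains $(0, {l}_{*}]$. This follows from $\check{\lambda}({l}_{*}, \vartheta_{B}({l}_{*})) = {l}_{N}$ together with the bound $\check{\lambda}(\lambda, (\pi+2\beta)/4) = \lambda(1+\sin\beta)$, which is the minimum of $\vartheta \mapsto \check{\lambda}(\lambda, \vartheta)$.

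If a stronger reading were intended, in which the conclusion is wanted on the reflected lines ${T}_{\hat{\lambda}, \hat{\vartheta}}$ (this is what the constraint $\imath(\Lambda) \leq \hat{\lambda} \leq {l}_{N}$ suggests), the inclusion above would not suffice. In that case I would instead run \autoref{lma301} along the family $\vartheta \mapsto (\lambda, \vartheta)$, using the hypothesis on $\Gamma^{2A}$ and $\Gamma^{2B}$ exactly as in Step 3 of \autoref{thm309}. The main obstacle would then be verifying \eqref{Yao0322e} on $\Gamma^{2B}$, through the monotonicity of $\imath$ from \autoref{lma601}(c).
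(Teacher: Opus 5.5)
Your inclusion argument is correct for \eqref{Yao0607} exactly as typeset. You have $\mathfrak{S}^{\Lambda}\subset\mathfrak{S}^{\Lambda}_{1}$, and your check that $\vartheta^{A}(\lambda)\le\bar{\omega}_{\lambda}$ is right, so the conclusion sits inside the hypothesis. But this shows that the ``$\cap$'' in \eqref{Yao0607} must be read as ``$\cup$''; it does not show the lemma is empty. The paper's proof dismisses $\vartheta\le\vartheta^{A}(\lambda)$ in one line and then works on $\{\vartheta>\bar{\omega}_{\lambda}\}\cap\mathfrak{S}^{\Lambda}_{2}$, which is empty under your reading. Moreover, \autoref{lma604B} and the final theorem invoke the lemma precisely to obtain \eqref{Yao0316} on $\mathfrak{S}^{\Lambda}_{2}$. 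For instance, the lemma is meant to give $u_{x_1}<0$ on $T_{\lambda,(\pi+\beta)/2}\cap\Sigma$ for $\lambda\in[\imath(\Lambda),l_N)$. That is never covered by the hypothesis, because $\bar{\omega}_{\lambda}<(\pi+\beta)/2$ there. So the substantive part of the lemma is not proved.

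Your fallback would not close this gap. In the relevant range, $\bar{\omega}_{\lambda}$ is exactly the largest angle at which the reflection argument still has its boundary data. The bound $\vartheta\le(\pi+\vartheta_A)/2$ keeps $\vartheta_1=2\vartheta-\pi\le\vartheta_A$ on $\Gamma^{2A}$, and $\vartheta\le\vartheta_\lambda$ keeps $\check{\lambda}\le l_N$ on $\Gamma^{2B}$. Beyond $\bar{\omega}_{\lambda}$ the hypothesis no longer supplies \eqref{Yao0322}, so \autoref{lma301} cannot be run in $\vartheta$.

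The missing idea is to drop reflections and move the pivot to the upper Neumann side.
\begin{itemize}
\item Rewrite the target as \eqref{Yao0315b} on the pencil $\hat{T}_{\mu,\vartheta}$, with $\vartheta\in[(\pi+\beta)/2,\pi+\beta-\bar{\omega}_{\jmath(\mu)}]$ and $\mu\in[\imath(\Lambda),l_N]$. These are lines through $\bar{x}^{\mu}=(\mu\cos\frac{\beta}{2},\mu\sin\frac{\beta}{2})\in\Gamma_N^{+}$.
\item Study the rotation function $v^{\mu}=(x_1-\bar{x}^{\mu}_1)u_{x_2}-(x_2-\bar{x}^{\mu}_2)u_{x_1}$, which solves $\Delta v+f'(u)v=0$. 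Work on the triangle $\Omega^{\mu}$ bounded by $\hat{T}_{\mu,\vartheta_B(\mu)}$ (through $A_1$), $T_{\jmath(\mu),\bar{\omega}_{\jmath(\mu)}}$, and $\Gamma_N^{-}$.
\item The hypothesis gives $v^{\mu}\le 0$, $v^{\mu}\not\equiv 0$ on $\partial\Omega^{\mu}$. This is where \autoref{lma601}(c) enters: $\jmath$ is increasing, so $\jmath(\mu)\ge\Lambda$.
\item At $\mu=l_N$ the pivot is $A_2$, and every line of the pencil is some $T_{\lambda',\vartheta_B(\lambda')}$ with $\lambda'\ge\Lambda$. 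Hence $v^{l_N}<0$ in $\Omega^{l_N}$ directly.
\item Then lower $\mu$ down to $\imath(\Lambda)$. Use the small-volume maximum principle of \cite{BN91} outside a compact set, together with the strong maximum principle.
\end{itemize}
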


\begin{proof}
The case $\vartheta \leq \vartheta^{A}(\lambda)$ is proved in \autoref{lma308}. 
It suffices to show that \eqref{Yao0315a} holds when $\vartheta > \bar{\omega}_{\lambda}$ and $(\lambda, \vartheta) \in \mathfrak{S}^{\Lambda}_{2}$, 
which is equivalent to \eqref{Yao0315b} holding for $\vartheta \in [(\pi + \beta)/2, \pi + \beta - \bar{\omega}_{\jmath(\lambda)}]$ and $\lambda \in [\imath(\Lambda), {l}_{N})$. 

To this end, denote by ${\Omega}^{\lambda}$ the triangle domain enclosed by $\hat{T}_{\lambda, \vartheta_{B}(\lambda)}$, $\hat{T}_{\lambda, \pi + \beta - \bar{\omega}_{\jmath(\lambda)}}$ (which is the line ${T}_{\jmath(\lambda), \bar{\omega}_{\jmath(\lambda)}}$), and $\Gamma_{N}^{ - }$ for $\lambda \leq {l}_{N}$. 
Set $\bar{{x}}^{\lambda} = (\bar{{x}}_{1}^{\lambda}, \bar{{x}}_{2}^{\lambda})$ with $\bar{{x}}_{1}^{\lambda} = \lambda \cos(\beta/2)$ and $\bar{{x}}_{2}^{\lambda} = \lambda \sin(\beta/2)$, and define
\begin{equation*}
{v}^{\lambda}({x}) = ({x}_{1} - \bar{{x}}_{1}^{\lambda}){u}_{{x}_{2}}({x}) - ({x}_{2} - \bar{{x}}_{2}^{\lambda}) {u}_{{x}_{1}}({x}). 
\end{equation*}
It is clear that the angular derivative function ${v}^{\lambda}$ satisfies the linear equation
\begin{equation*}
\mathcal{L}[{v}^{\lambda}] = [\Delta + f'({u})]{v}^{\lambda} = 0 \quad \text{in } \Sigma. 
\end{equation*}
By the assumption of this lemma, there holds
\begin{equation*}
{v}^{{l}_{N}} < 0 \quad \text{in } {\Omega}^{{l}_{N}}
\end{equation*}
and
\begin{equation} \label{Yao0610b}
{v}^{\lambda} \leq, \not\equiv 0 \quad \text{on } \partial {\Omega}^{\lambda}
\end{equation}
for $\imath(\Lambda) \leq \lambda \leq {l}_{N}$. 

Let $\eta$ be the positive constant such that the maximum principle holds for the operator $\mathcal{L}$ in domains of volume less than $\eta$ (e.g., Proposition 1.1 of~\cite{BN91}). Choose any fixed compact set ${K} \subset {\Omega}^{{l}_{N}}$ so that the measure $|{\Omega}^{{l}_{N}} \setminus {K}| < \eta/2$. Since ${v}^{{l}_{N}} < 0$ in ${K}$, there exists a small positive constant $\epsilon_{1}$ such that ${K} \subset {\Omega}^{\lambda}$ and
\begin{equation*}
|{\Omega}^{\lambda} \setminus {K}| < \eta, \quad {v}^{\lambda} < 0 \quad \text{in } {K}
\end{equation*}
for $\lambda \in [{l}_{N} - \epsilon_{1}, {l}_{N})$. 
With this and \eqref{Yao0610b}, we have
\begin{equation*}
\begin{cases}
\mathcal{L}[{v}^{\lambda}] = 0 & \text{in } {\Omega}^{\lambda} \setminus {K}, \\
{v}^{\lambda} \leq, \not\equiv 0 & \text{on } \partial ({\Omega}^{\lambda} \setminus {K}). 
\end{cases}
\end{equation*}
It then follows from the maximum principle in \cite[Proposition 1.1]{BN91} that ${v}^{\lambda} < 0$ in ${\Omega}^{\lambda} \setminus {K}$ and hence 
\begin{equation} \label{Yao0611}
{v}^{\lambda} < 0 \quad \text{in } {\Omega}^{\lambda} \text{ for } \lambda \in [{l}_{N} - \epsilon_{1}, {l}_{N}).
\end{equation} 

Let $(\bar{\lambda}, {l}_{N})$ be the largest open interval of $\lambda$ such that \eqref{Yao0611} holds. We claim that $\bar{\lambda} \leq \imath(\Lambda)$. Suppose, for contradiction, that $\bar{\lambda} > \imath(\Lambda)$. By continuity, ${v}^{\bar{\lambda}} \leq 0$ in ${\Omega}^{\bar{\lambda}}$. Since ${v}^{\bar{\lambda}} \not\equiv 0$ on $\partial {\Omega}^{\bar{\lambda}}$, the strong maximum principle yields
\begin{equation*}
{v}^{\bar{\lambda}} < 0 \quad \text{in } {\Omega}^{\bar{\lambda}}. 
\end{equation*}
By repeating the above argument, we see that \eqref{Yao0611} holds for $\lambda \in [\bar{\lambda} - \epsilon_{2}, \bar{\lambda})$ for some small $\epsilon_{2} > 0$, contradicting the maximality of $(\bar{\lambda}, {l}_{N})$. Hence, $\bar{\lambda} \leq \imath(\Lambda)$, and so \eqref{Yao0611} holds for $\lambda \in [\imath(\Lambda), {l}_{N}]$. 

This establishes that \eqref{Yao0315a} holds for $(\lambda, \vartheta) \in \mathfrak{S}^{\Lambda}_{2}$. Similarly, one can deduce \eqref{Yao0315b} for $(\lambda, \vartheta) \in \mathfrak{S}^{\Lambda}_{2}$. 
\end{proof}

\begin{lma} \label{lma604B}
Let $0 < \beta < \alpha \leq \pi$ and
\begin{equation*}
\beta \in (\pi/3, \pi/2). 
\end{equation*}
Then \eqref{Yao0316} holds for $\vartheta \in {J}_{\lambda} \cap (0, \bar{\omega}_{\lambda}]$ and $\lambda \geq {l}_{\perp}$. Moreover, \eqref{Yao0316} holds for
$(\lambda, \vartheta) \in \mathfrak{S}^{\Lambda}$ with $\Lambda = {l}_{\perp}$ where $\mathfrak{S}^{\Lambda}$ is defined in \eqref{Yao0607}. 
\end{lma}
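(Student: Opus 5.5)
The plan is to build on \autoref{lma401}. By the remark following it, $\lambda_{\star\star}={l}_{\perp}$ whenever $\beta\le 2\pi/3$. Hence \eqref{Yao0402} and \eqref{Yao0316} already hold for $\vartheta\in{J}_{\lambda}\cap(0,\vartheta_{B}(\lambda)]$ and all $\lambda\ge{l}_{\perp}$.

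\textbf{Trivial range of $\lambda$.} For $\lambda>{l}_{*}$ we have $\bar{\omega}_{\lambda}=\vartheta^{B}(\lambda)$, so the first assertion needs nothing new there. If ${l}_{\perp}\ge{l}_{*}$, the first assertion is therefore already proved. Otherwise it remains to treat $\lambda\in[{l}_{\perp},{l}_{*}]$ and the angles $\vartheta\in(\vartheta_{B}(\lambda),\bar{\omega}_{\lambda}]$, where $\bar{\omega}_{\lambda}=\min\{(\pi+\vartheta_{A}(\lambda))/2,\vartheta_{\lambda}\}$.

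\textbf{Rotating the line past $\vartheta_{B}(\lambda)$.} Fix such a $\lambda$ and apply \autoref{lma301} to the family
\[
\phi_{2}=\vartheta,\qquad \phi_{3}=\pi,\qquad \phi_{1}=2\vartheta-\pi,\qquad \vartheta\in[\vartheta_{B}(\lambda),\bar{\omega}_{\lambda}].
\]
The starting configuration $\vartheta=\vartheta_{B}(\lambda)$ is covered by \eqref{Yao0402}, which provides alternative \ref{Yaoit32c2}, since the parameter can be reversed. The hypotheses \eqref{Yao0322} are checked piece by piece.
\begin{enumerate}[label=(\roman*)]
\item On $\Gamma^{1}$ we have $w\le 0$, because $\vartheta\ge\vartheta_{B}(\lambda)$ and so ${P}_{+}$ and ${O}$ lie on the same side of ${T}_{\lambda,\vartheta}$.
\item On $\Gamma^{2A}$, the bound $\vartheta\le(\pi+\vartheta_{A})/2$ gives $\phi_{1}\le\vartheta_{A}(\lambda)$. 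The required Neumann-type sign then follows from \eqref{Yao0315a} on $(0,\vartheta^{A}(\lambda)]$, supplied by \autoref{lma308}.
\item On $\Gamma^{2B}$, this piece lies on ${T}_{\check{\lambda},\check{\vartheta}}=\hat{T}_{\hat{\lambda},\hat{\vartheta}}$. The bound $\vartheta\le\vartheta_{\lambda}$ together with \eqref{Yao0505} gives $\lambda<\check{\lambda}\le{l}_{N}$.
\end{enumerate}
For (iii) we must show that the pair $(\check{\lambda},\check{\vartheta})$ or $(\hat{\lambda},\hat{\vartheta})$ lies in a region where \eqref{Yao0315} is already known. I would run a continuity argument in $\lambda$, descending from ${l}_{*}$. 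Let $\bar{\Lambda}$ be the infimum of $\lambda'$ such that the claim holds on $[\lambda',\infty)$. Since $\check{\lambda}>\lambda$, the reflected line corresponds to a larger parameter, where the claim holds by the definition of $\bar{\Lambda}$. It remains to check that $\check{\vartheta}=2\vartheta-\beta$ lies in ${J}_{\check{\lambda}}\cap(0,\bar{\omega}_{\check{\lambda}}]$, or else that $\hat{\vartheta}\le\vartheta^{A}(\hat{\lambda})$. For this I would use \autoref{lma601}\ref{Yaoit62b}, namely $\vartheta_{B}+\vartheta_{\lambda}>(\pi+3\beta)/2$, together with $\bar{\omega}_{\lambda}>(\pi+3\beta)/4$. \textbf{This geometric verification is the step I expect to be the main obstacle.} It is where $\beta\in(\pi/3,\pi/2)$ enters. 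The strict inequalities give slack, so that the usual argument from \autoref{thm309} (strong maximum principle at $\bar{\Lambda}$, then small-volume perturbation via \autoref{lma205MP}) lowers $\bar{\Lambda}$ below ${l}_{\perp}$, giving a contradiction.

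\textbf{Boundary monotonicity.} To pass from \eqref{Yao0315} to \eqref{Yao0316}, that is, to include the Neumann points, I would apply \autoref{lma307} with $\vartheta_{C}=\bar{\omega}_{\lambda}$. This is admissible since $\bar{\omega}_{\lambda}>(\pi+3\beta)/4>\pi/2$ by \autoref{lma601}. The rotation function ${v}_{\lambda}$ then has vanishing order $l=1$ at ${P}_{\lambda}$, and the Hopf lemma for ${v}_{\lambda}$ gives strict monotonicity along $\Gamma_{N}^{-}$ (respectively $\Gamma_{N}^{+}$).

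\textbf{Second assertion.} The first part supplies exactly the hypothesis of \autoref{lma604A} with $\Lambda={l}_{\perp}$, provided ${l}_{\perp}<{l}_{*}$; if ${l}_{\perp}\ge{l}_{*}$, one checks that $\mathfrak{S}^{\Lambda}_{2}$ is already covered by the first part. \autoref{lma604A} then yields \eqref{Yao0315} on $\mathfrak{S}^{{l}_{\perp}}$. The extension to the Neumann points on these lines follows as in the previous paragraph: the angular-derivative functions ${v}^{\lambda}$ are strictly negative in ${\Omega}^{\lambda}$ and vanish on the Neumann side, so Hopf's lemma gives the strict inequality there.
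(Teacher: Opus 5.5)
Your skeleton matches the paper's, and several pieces are sound: the base from \autoref{lma401} with $\lambda_{\star\star}={l}_{\perp}$, then \autoref{lma301} with $\vartheta_{1}=2\vartheta-\pi$, $\vartheta_{3}=\pi$, started from \eqref{Yao0402} at each fixed $\lambda$, the checks on $\Gamma^{1}$ and $\Gamma^{2A}$, and the passage to Neumann points. The gap is the $\Gamma^{2B}$ step you flagged, and the two criteria you propose there cannot hold.

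\textbf{Why the criteria fail.} The line $L\supset\Gamma^{2B}_{\lambda,\vartheta}$ is the mirror image of $\Gamma_{N}^{+}$. It therefore passes through ${T}_{\lambda,\vartheta}\cap\Gamma_{N}^{+}$, which lies at distance $\hat{\lambda}(\lambda,\vartheta)<{l}_{N}$ from ${V}$ once $\vartheta>\vartheta_{B}(\lambda)$. By \eqref{Yao0505}, $L$ meets $\Gamma_{N}^{-}$ at distance $\check{\lambda}<{l}_{N}$. So $L$ is always a chord from $\Gamma_{N}^{-}$ to $\Gamma_{N}^{+}$. Write it with angle at most $(\pi+\beta)/2$: as ${T}_{\check{\lambda},\check{\vartheta}}$ if $\vartheta\le(\pi+3\beta)/4$, and as $\hat{T}_{\hat{\lambda},\hat{\vartheta}}$ otherwise. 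Its base parameter is then $\lambda_{*}=\min\{\hat{\lambda},\check{\lambda}\}$, and its angle lies in $(\vartheta_{B}(\lambda_{*}),(\pi+\beta)/2]$.

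Two consequences follow. First, $\hat{\vartheta}\le\vartheta^{A}(\hat{\lambda})$ does not occur, apart from endpoint cases. Second, whenever $\lambda_{*}>{l}_{*}$ the needed angle exceeds $\bar{\omega}_{\lambda_{*}}=\vartheta^{B}(\lambda_{*})$. That angle is then outside your induction hypothesis, which only records \eqref{Yao0316} on ${J}_{\mu}\cap(0,\bar{\omega}_{\mu}]$.

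This already happens at the top of your descent. Near $({l}_{*},(\pi+3\beta)/4)$ the line $L$ is close to ${A}_{1}{A}_{2}$ and $\lambda_{*}\approx{l}_{N}$. There \autoref{thm309} would rescue you, though you do not invoke it, but only while $\lambda_{*}\ge\lambda_{\sharp}$. The window $({l}_{*},\lambda_{\sharp})$ is genuinely reached. For $\alpha=\pi$ one has $\lambda_{\sharp}={l}_{N}/2>{l}_{N}/(1+2\sin\frac{\beta}{2})={l}_{*}$. Meanwhile $\lambda_{*}$ drops to about $2{l}_{\perp}=2{l}_{N}\cos\beta$ near $\lambda={l}_{\perp}$, which is below ${l}_{N}/2$ for $\beta$ near $\pi/2$. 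In that window nothing in your argument gives the sign on $\Gamma^{2B}$, so the continuity argument does not close.

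\textbf{What is missing.} The missing ingredient is \autoref{lma604A}, and it must be interleaved with the reflection argument rather than deferred to the second assertion. It gives monotonicity on the chords in $\mathfrak{S}^{\Lambda}_{2}$ by a different mechanism. Take the angular derivative about the point of $\Gamma_{N}^{+}$ at distance $\mu$. It is nonpositive on the boundary of the triangle $\Omega^{\mu}$ bounded by $\hat{T}_{\mu,\vartheta_{B}(\mu)}$, ${T}_{\jmath(\mu),\bar{\omega}_{\jmath(\mu)}}$ and $\Gamma_{N}^{-}$. Its negativity is then propagated from $\mu={l}_{N}$ downwards by the small-volume maximum principle.

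The paper proceeds in stages:
\begin{enumerate}
\item It first runs your rotation on $[\Lambda_{1},{l}_{*}]$, where $\Lambda_{1}$ is the threshold guaranteeing $\lambda_{*}\ge\lambda_{\sharp}$, so that \autoref{thm309} controls $\Gamma^{2B}$. It then applies \autoref{lma604A} to obtain $\mathfrak{S}^{\Lambda_{1}}_{2}$.
\item It iterates with $\Lambda_{k+1}=\max\{\jmath(\Lambda_{k}),\Lambda_{k}/(1+\sin\beta),\imath(\Lambda_{k})/(1+2\sin(\beta/2)),{l}_{\perp}\}$. For $\lambda\in[\Lambda_{k+1},{l}_{*}]$ the chord $L$ has $\lambda_{*}\ge\Lambda_{k}$ and $\max\{\hat{\lambda},\check{\lambda}\}\ge\imath(\Lambda_{k})$, so it lies in $\mathfrak{S}^{\Lambda_{k}}_{2}$.
\item The sequence reaches ${l}_{\perp}$ after finitely many steps.
\end{enumerate}
Equivalently, your $\bar{\Lambda}$-argument would work only if its hypothesis at level $\Lambda$ also included $\mathfrak{S}^{\Lambda}_{2}$. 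That is what the paper does in the proof of its theorem for $\beta\in(\pi/3,\pi/2)$.
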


\begin{proof}
From \autoref{lma401}, we know that \eqref{Yao0316} holds for $\vartheta \in (0, \vartheta^{A}(\lambda)] \cup \{\vartheta_{B}(\lambda)\}$ and $\lambda \geq {l}_{\perp}$, and
\begin{equation} \label{Yao0614b}
{w}^{\lambda, \vartheta_{B}(\lambda)} < 0 \text{ in } {D}_{\lambda, \vartheta_{B}(\lambda)} \text{ for } \lambda = {l}_{*}
\end{equation}
since ${l}_{*} \in [{l}_{\perp}, \infty)$ and $\beta > \pi/3$. 

\textbf{Step 1}. 
We claim that \eqref{Yao0316} holds for $(\lambda, \vartheta) \in \mathfrak{S}^{\Lambda_{1}}_{2}$ with
\begin{equation} \label{Yao0615a}
\Lambda_{1} = \min\big\{ \lambda \in [{l}_{\perp}, {l}_{*}]: \; 
\lambda_{*}(\lambda, \vartheta) \geq \lambda_{\sharp} \text{ for every } \vartheta \in [\vartheta_{B}(\lambda), \bar{\omega}_{\lambda}]
\big\}, 
\end{equation}
where we remark that $\Lambda_{1} < {l}_{*}$. 
In fact, let $\lambda$ and $\vartheta$ satisfy
\begin{equation*}
\lambda \in [\Lambda_{1}, {l}_{*}] \quad \text{and }\quad \vartheta \in [\vartheta_{B}(\lambda), \bar{\omega}_{\lambda}]. 
\end{equation*}
Then we have
\begin{equation*}
2\vartheta - \pi \in [0, \vartheta_{A}(\lambda)], \quad
\lambda_{\sharp} \leq \hat{\lambda}(\lambda, \vartheta) \leq {l}_{N}, \lambda_{\sharp} \leq \check{\lambda}(\lambda, \vartheta) \leq {l}_{N}. 
\end{equation*}
Following by \autoref{lma401}, ${w}^{\lambda, \vartheta}$ satisfies the boundary condition on $\Gamma_{\lambda, \vartheta}^{2A}$. 
Following by \autoref{thm309}, ${w}^{\lambda, \vartheta}$ satisfies the strict boundary condition on $\Gamma_{\lambda, \vartheta}^{2B}$. Therefore, ${w}^{\lambda, \vartheta}$ satisfies \eqref{Yao0322}. Combining this with \eqref{Yao0614b}, and using \autoref{lma301}, one gets the negativity of ${w}^{\lambda, \vartheta}$ and \eqref{Yao0315a}. Similarly, one obtains \eqref{Yao0315b}. 
Applying \autoref{lma604A}, we conclude that \eqref{Yao0316} holds for $(\lambda, \vartheta) \in \mathfrak{S}^{\Lambda_{1}}_{2}$. 

\textbf{Step 2}. 
We claim that \eqref{Yao0316} holds for $(\lambda, \vartheta) \in \mathfrak{S}^{{l}_{\perp}}_{2}$. 

In fact, let $\Lambda_{1}$ be the constant in \eqref{Yao0615a} and define
\begin{equation*}
\Lambda_{k + 1} = \max \Big\{ \jmath(\Lambda_{k}), \, \frac{\Lambda_{k}}{1 + \sin\beta}, \, \frac{\imath(\Lambda_{k})}{1 + 2\sin(\beta/2)}, \, {l}_{\perp} \Big\}, \quad {k} \in \mathbb{N}. 
\end{equation*}
Let
\begin{equation} \label{Yao0616b}
\lambda \in [\Lambda_{2}, {l}_{*}] \quad \text{and} \quad \vartheta \in [\vartheta_{B}(\lambda), \bar{\omega}_{\lambda}]. 
\end{equation}
By the definition of $\bar{\omega}_{\lambda}$ and $\Lambda_{2}$, we have
\begin{equation*}
\Lambda_{1} \leq \lambda_{*}(\lambda, \vartheta) \leq {l}_{N}, \quad \imath(\Lambda_{1}) \leq \lambda^{*}(\lambda, \vartheta) \leq {l}_{N}, 
\end{equation*}
where $\lambda_{*}$ and $\lambda^{*}$ are given by
\begin{equation}
\begin{aligned}
\lambda_{*}(\lambda, \vartheta) & = \min\{\hat{\lambda}(\lambda, \vartheta), \; \check{\lambda}(\lambda, \vartheta)\}, \\
\lambda^{*}(\lambda, \vartheta) & = \max\{\hat{\lambda}(\lambda, \vartheta), \; \check{\lambda}(\lambda, \vartheta)\}. 
\end{aligned}
\end{equation}
From step 1, ${w}^{\lambda, \vartheta}$ satisfies the strict boundary condition on $\Gamma_{\lambda, \vartheta}^{2B}$. 
Recalling that $\bar{\omega}_{\lambda} \leq (\pi + \vartheta_{A})/2$ and $\Lambda_{2} \geq {l}_{\perp}$, we have
\begin{equation}
0 \leq 2\vartheta - \pi \leq \vartheta_{A}(\lambda). 
\end{equation}
Combining this with \autoref{lma401}, we know ${w}^{\lambda, \vartheta}$ satisfies the boundary condition on $\Gamma_{\lambda, \vartheta}^{2A}$. In a word, ${w}^{\lambda, \vartheta}$ satisfies \eqref{Yao0322}. Combining this with \eqref{Yao0614b}, we can apply \autoref{lma301} to conclude that
\begin{equation*}
{w}^{\lambda, \vartheta} < 0 \text{ in } {D}_{\lambda, \vartheta}
\end{equation*}
and \eqref{Yao0315a} holds for $\lambda$ and $\vartheta$ satisfying \eqref{Yao0616b}. Similarly, \eqref{Yao0315b} holds for $\lambda$ and $\vartheta$ satisfying \eqref{Yao0616b}. Applying \autoref{lma604A}, we conclude that \eqref{Yao0316} holds for $(\lambda, \vartheta) \in \mathfrak{S}^{\Lambda_{2}}_{2}$. 

Repeating this process again and again, one concludes that \eqref{Yao0316} holds for $(\lambda, \vartheta) \in \mathfrak{S}^{\lambda_{k}}_{2}$ for every ${k} \in \mathbb{N}^{ + }$. 
Due to the fact that $\Lambda_{1} < {l}_{*}$ and $\bar{\omega}_{\lambda} > (\pi + 3\beta)/4$, it follows that $\Lambda_{k + 1} \leq \Lambda_{k}$ with equality holds if and only if $\Lambda_{k} = {l}_{\perp}$. Moreover, $\Lambda_{k} = {l}_{\perp}$ for some ${k} \in \mathbb{N}^{ + }$. This leads to the conclusion that \eqref{Yao0316} holds for $(\lambda, \vartheta) \in \mathfrak{S}^{{l}_{\perp}}_{2}$. 
\end{proof}

\subsection{The case $\lambda < {l}_{\perp}$}

We observe that \eqref{Yao0315a} does not hold on the full line segment ${T}_{\lambda, \vartheta} \cap \Sigma$ when
\begin{equation} \label{Yao0621}
\vartheta_{A}(\lambda) < \vartheta < \vartheta_{B}(\lambda) \quad \text{and} \quad 0 < \lambda < \lambda_{C}. 
\end{equation}
In fact, since ${u}$ is always monotone near the Dirichlet boundary (see \autoref{lma206GNN}), we know that \eqref{Yao0315a} fails in a neighborhood of ${T}_{\lambda, \vartheta} \cap \Gamma_{D}$. On the other hand, we will prove strict monotonicity near the Neumann boundary (see \autoref{lma307}), and then, by continuity, we deduce that \eqref{Yao0315a} holds in a neighborhood of ${T}_{\lambda, \vartheta} \cap \Gamma_{N}^{ - }$. In summary, for $\lambda$ and $\vartheta$ satisfying \eqref{Yao0621}, the inequality \eqref{Yao0315a} fails near ${T}_{\lambda, \vartheta}\cap\Gamma_{ {D}}$ but holds near ${T}_{\lambda, \vartheta}\cap\Gamma_{N}^{ - }$. 

The following result illustrates that the monotonicity property \eqref{Yao0315a} holds on a subset of ${T}_{\lambda, \vartheta} \cap \Sigma$, which will be crucial in what follows. 

\begin{lma} \label{lma606}
Let $\Lambda \in (0, \lambda_{C})$. 
Suppose that \eqref{Yao0315a} holds for every $\vartheta \in {J}_{\lambda} \cap (0, \vartheta_{B}(\lambda)]$ and $\lambda \geq \Lambda$. 
Then
\begin{equation} \label{Yao0622}
{u}_{{x}_{1}}\sin(\vartheta - \tfrac{\beta}{2}) - {u}_{{x}_{2}}\cos(\vartheta - \tfrac{\beta}{2}) < 0 \text{ on } {T}_{\lambda, \vartheta} \cap \Sigma_{\Lambda, \vartheta_{A}(\Lambda)}
\end{equation}
holds for $\vartheta \in (\vartheta_{A}(\lambda), \vartheta_{B}(\lambda))$ and $\lambda \in (\Lambda, \lambda_{C})$, where
$\Sigma_{\Lambda, \vartheta_{A}(\Lambda)}$ is the "right cap" cut out by ${T}_{\Lambda, \vartheta_{A}(\Lambda)}$ from $\Sigma$, i.e., $\Sigma_{\Lambda, \vartheta_{A}(\Lambda)}$ is the domain enclosed by ${T}_{\Lambda, \vartheta_{A}(\Lambda)}$, $\Gamma_{N}^{ - }$, and $\Gamma_{D}$; see \autoref{Fig06Portion}. 
\end{lma}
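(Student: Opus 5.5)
The plan is to turn \eqref{Yao0622} into a sign statement for an angular derivative and then apply a maximum principle on a suitable sector. For fixed $\lambda\in(\Lambda,\lambda_C)$, let ${v}_\lambda$ be the rotation function \eqref{Yao0347aRotation} about ${P}_\lambda$. Along the ray $\{\sigma_\lambda=\vartheta\}$ one has ${v}_\lambda = r\,\partial_{\sigma_\lambda}{u}$, and the left-hand side of \eqref{Yao0622} equals $-{v}_\lambda/r$ there. So \eqref{Yao0622} is equivalent to
\begin{equation*}
{v}_\lambda>0 \quad\text{in } \Omega^\lambda:=\{{x}\in\Sigma_{\Lambda,\vartheta_A(\Lambda)}:\ \vartheta_A(\lambda)<\sigma_\lambda({x})<\vartheta_B(\lambda)\},
\end{equation*}
and ${v}_\lambda$ solves the linearized equation \eqref{Yao0347bRotation}. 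Since $\lambda<\lambda_C$, we have $\vartheta^A(\lambda)=\vartheta_A(\lambda)<\vartheta_B(\lambda)$, so both $\vartheta_A(\lambda)$ and $\vartheta_B(\lambda)$ lie in ${J}_\lambda\cap(0,\vartheta_B(\lambda)]$. By hypothesis, ${v}_\lambda>0$ on the two rays bounding $\Omega^\lambda$ inside $\Sigma$.

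\textbf{Step 1 (geometry of $\partial\Omega^\lambda$).} I would first check that $\Omega^\lambda$ does not touch $\Gamma_D$, which is the place where \eqref{Yao0315a} really fails. Both ${T}_{\Lambda,\vartheta_A(\Lambda)}$ and ${T}_{\lambda,\vartheta_A(\lambda)}$ pass through the center ${O}$. Since ${P}_\lambda$ lies farther from ${V}$ along $\Gamma_N^-$ than ${P}_\Lambda$, the far intersection of the line ${P}_\lambda{O}$ with $\partial{B}$ is rotated away from ${P}_-$ compared with that of ${P}_\Lambda{O}$. Hence the arc $\Gamma_D\cap\partial\Sigma_{\Lambda,\vartheta_A(\Lambda)}$ lies in $\{\sigma_\lambda<\vartheta_A(\lambda)\}$. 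The segment $\Gamma_N^-$ is $\{\sigma_\lambda=0\}$, so it is excluded as well. Therefore $\partial\Omega^\lambda$ consists only of the two rays (where ${v}_\lambda>0$) and a piece $S^\lambda\subset{T}_{\Lambda,\vartheta_A(\Lambda)}$.

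\textbf{Step 2 (continuation in $\lambda$).} Rather than controlling ${v}_\lambda$ on $S^\lambda$ directly, I would run a continuity argument in $\lambda$, as in \autoref{lma301} and \autoref{lma604A}. Set $\bar\lambda=\sup\{\lambda'\le\lambda_C:\ {v}_\mu>0 \text{ in }\Omega^\mu \text{ for all } \mu\in(\Lambda,\lambda')\}$. As $\lambda\searrow\Lambda$, the domain $\Omega^\lambda$ collapses onto ${T}_{\Lambda,\vartheta_A(\Lambda)}$ and its measure tends to $0$. On ${T}_{\Lambda,\vartheta_A(\Lambda)}$ itself we know ${v}_\Lambda>0$, with the conclusion extending to $\Gamma_N^-$ by \autoref{lma307}. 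By continuity of $\nabla{u}$ and the small-volume maximum principle \cite[Proposition 1.1]{BN91}, this gives ${v}_\lambda>0$ in $\Omega^\lambda$ for $\lambda$ close to $\Lambda$.

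To pass through $\bar\lambda<\lambda_C$, I would use three facts. First, ${v}_{\bar\lambda}\ge0$ by continuity. Second, the strong maximum principle gives strict positivity, because ${v}_{\bar\lambda}>0$ on the rays. Third, the compact-set/small-remainder argument of \autoref{lma604A} extends positivity to a slightly larger $\lambda$. For all three I need a sign for ${v}_\lambda$ on the moving piece $S^\lambda$.

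\textbf{Main obstacle.} The main obstacle is this boundary piece $S^\lambda$. On it we only know that the normal derivative to ${T}_{\Lambda,\vartheta_A(\Lambda)}$ is negative, which is \eqref{Yao0315a} at $(\Lambda,\vartheta_A(\Lambda))$. The difference ${v}_\lambda-{v}_\Lambda=({P}_\Lambda-{P}_\lambda)^{\perp}\cdot\nabla{u}$ is a derivative in the fixed direction normal to $\Gamma_N^-$, and its sign is not known a priori.

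My first attempt would be to decompose the angular direction at ${y}\in S^\lambda$ into its components normal and tangential to ${T}_{\Lambda,\vartheta_A(\Lambda)}$. For the tangential part, I would use that ${y}$ lies beyond ${O}$ on the line ${P}_\Lambda{O}$. In a neighbourhood of that line I would then apply \eqref{Yao0511a} and the monotonicity \eqref{Yao0511d1} at the intermediate positions $\mu\in(\Lambda,\lambda)$, all of which have rays through ${O}$. This should show that ${u}$ increases in $\sigma_\mu$ across ${T}_{\Lambda,\vartheta_A(\Lambda)}$ at ${y}$ for each such $\mu$, and integrating in $\mu$ would give ${v}_\lambda\ge0$ on $S^\lambda$.

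If this fails, the fallback is to replace $\Omega^\lambda$ by the union of the sectors $\{\vartheta_A(\mu)<\sigma_\mu<\vartheta_B(\mu)\}$ over $\mu\in[\Lambda,\lambda]$, on which the boundary consists only of rays already controlled by the hypothesis. I would then run the maximum principle there, using ${v}_\mu$ for $\mu$ slightly smaller than $\lambda$ as a positive barrier in the spirit of the generalized maximum principle.
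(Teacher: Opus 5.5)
\textbf{There is a genuine gap: the sign of ${v}_\lambda$ on $S^\lambda$ is never obtained, and neither of your remedies supplies it.}

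The reduction itself is correct. For this $\lambda$, \eqref{Yao0622} is equivalent to ${v}_\lambda>0$ on the triangle $\Omega^\lambda$ with vertices ${P}_\lambda$, ${O}$ and ${X}_\lambda={T}_{\lambda,\vartheta_B(\lambda)}\cap{T}_{\Lambda,\vartheta_A(\Lambda)}$, and ${v}_\lambda>0$ on its two sides through ${P}_\lambda$. On $S^\lambda={X}_\lambda{O}$, however, ${v}_\lambda={v}_\Lambda-(\lambda-\Lambda)\,\nabla{u}\cdot(\sin\frac{\beta}{2},\cos\frac{\beta}{2})$, and nothing in the hypotheses controls the last term.

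Your first attempt rests on a wrong picture. $S^\lambda$ lies between ${X}_\lambda$ and ${O}$, not beyond ${O}$. For every $\mu\in(\Lambda,\lambda]$, its points lie in the sector $\{\vartheta_A(\mu)<\sigma_\mu<\vartheta_B(\mu)\}$ of ${P}_\mu$. That is exactly where neither the hypothesis nor \eqref{Yao0511d1} gives a sign. There is also no identity that produces ${v}_\lambda$ by integrating the ${v}_\mu$ in $\mu$.

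The fallback has the same defect. Inside the cap, the union of those sectors lies in the triangle ${P}_\Lambda{P}_\lambda{O}$, and its boundary contains the segment ${P}_\Lambda{O}$. That segment is a ray from ${P}_\Lambda$, not from ${P}_\lambda$, so ${v}_\lambda$ is again uncontrolled there. A barrier ${v}_\mu$ would have to be positive on that union, which is itself unknown. Without intersecting with the cap, the union reaches $\Gamma_D$, where \eqref{Yao0622} genuinely fails. Hence the continuation in your Step 2 cannot even start: the small-volume maximum principle near $\lambda=\Lambda$ already needs a sign of ${v}_\lambda$ on $S^\lambda$.

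\textbf{The missing idea is to move the centre of rotation onto the troublesome line, one angle at a time.}

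Fix $\vartheta\in(\vartheta_A(\lambda),\vartheta_B(\lambda))$ and let $\bar{x}={T}_{\lambda,\vartheta}\cap{T}_{\Lambda,\vartheta_A(\Lambda)}$; it lies on the open segment ${P}_\Lambda{O}$. Let ${Q}={P}_\mu$ be the point where the line through $\bar{x}$ and ${P}_+$ meets $\Gamma_N^-$. Then $\mu\in(\lambda,\lambda_C)$, so this line is ${T}_{\mu,\vartheta_B(\mu)}$.

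Take $v=(x_1-\bar{x}_1)u_{x_2}-(x_2-\bar{x}_2)u_{x_1}$ on the triangle $\mathcal{T}$ with vertices ${P}_\Lambda$, ${Q}$, $\bar{x}$. Every side of $\mathcal{T}$ is now controlled.
\begin{itemize}
\item On the two sides through $\bar{x}$, $v$ equals the distance to $\bar{x}$ times the left-hand side of \eqref{Yao0315a} at $(\Lambda,\vartheta_A(\Lambda))$, respectively at $(\mu,\vartheta_B(\mu))$. So $v<0$ there by hypothesis.
\item On ${P}_\Lambda{Q}\subset\Gamma_N^-$, the Neumann condition gives $v=\operatorname{dist}(\bar{x},\Gamma_N^-)\,(u_{x_1}\cos\frac{\beta}{2}-u_{x_2}\sin\frac{\beta}{2})\le 0$.
\end{itemize}
The sign of the tangential derivative comes from \eqref{Yao0315a} at $(\lambda',\vartheta')$ with $\vartheta'\in(0,\vartheta^A(\lambda')]$, letting points tend to ${P}_{\lambda'}$. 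There the left-hand side equals $\sin\vartheta'$ times that tangential derivative.

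The maximum principle holds in $\mathcal{T}$ because $-\nabla u\cdot\bar\nu$, with $\bar\nu$ normal to ${T}_{\Lambda,\vartheta_A(\Lambda)}$, is a positive solution of the linearized equation on the whole cap. To see this, apply \eqref{Yao0315a} on the parallel lines ${T}_{\lambda',\vartheta_A(\Lambda)}$ for $\lambda'\ge\Lambda$. This is allowed since $\vartheta_A(\Lambda)\le\vartheta^A(\lambda')$.

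Thus $v<0$ in $\mathcal{T}$. The segment ${P}_\lambda\bar{x}={T}_{\lambda,\vartheta}\cap\Sigma_{\Lambda,\vartheta_A(\Lambda)}$ lies in $\mathcal{T}$ and is a ray from $\bar{x}$, so this is exactly \eqref{Yao0622}. No continuation in $\lambda$ is needed.
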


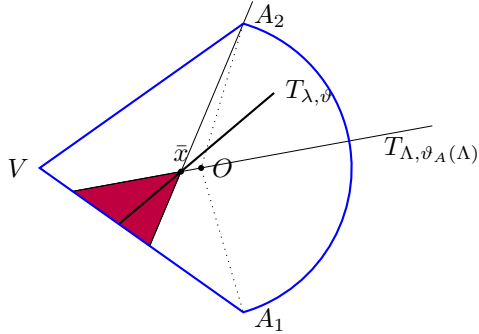
\begin{figure}[htp] \centering
\begin{tikzpicture}[scale = 2] 
\pgfmathsetmacro\ALPHA{73.73}; \pgfmathsetmacro\BETA{35.35}; \pgfmathsetmacro\radius{1.00}; 
\pgfmathsetmacro\aaa{\radius*sin(abs(\BETA - \ALPHA))/sin(\BETA)}; 
\pgfmathsetmacro\LenNeu{\radius*sin(\ALPHA)/sin(\BETA)}; 
\pgfmathsetmacro\xA{\aaa + \radius*cos(\ALPHA)}; \pgfmathsetmacro\yA{\radius*sin(\ALPHA)}; 
\pgfmathsetmacro\Jiaoa{(\ALPHA + \BETA)*0.94}; 
\pgfmathsetmacro\LAMa{\LenNeu*sin(\Jiaoa - 2*\BETA)/sin(\Jiaoa)}; 
\pgfmathsetmacro\LAMb{\LAMa*0.3}; 
\pgfmathsetmacro\Jiaob{atan(\aaa*sin(\BETA)/(\aaa*cos(\BETA) - \LAMb))}; 
\pgfmathsetmacro\Lenc{(\LAMa - \LAMb)*sin(\Jiaob)/sin(\Jiaoa - \Jiaob)}; 
\pgfmathsetmacro\xP{\LAMb*cos( - \BETA)}; \pgfmathsetmacro\yP{\LAMb*sin( - \BETA)}; 
\pgfmathsetmacro\xQ{\LAMa*cos( - \BETA)}; \pgfmathsetmacro\yQ{\LAMa*sin( - \BETA)}; 
\pgfmathsetmacro\xR{\xQ + \Lenc*cos(\Jiaoa - \BETA)}; \pgfmathsetmacro\yR{\yQ + \Lenc*sin(\Jiaoa - \BETA)}; 
\pgfmathsetmacro\xS{\xQ + 0.4*(\xP - \xQ)}; \pgfmathsetmacro\yS{\yQ + 0.4*(\yP - \yQ)}; 
\fill[fill = purple, draw = black, very thin] (\xP, \yP) -- (\xQ, \yQ) -- (\xR, \yR) -- cycle; 
\draw[very thin] (\xP, \yP) -- ({(\aaa - \xP)*2.8 + \xP}, {(0 - \yP)*2.8 + \yP}) node[below] {\footnotesize ${T}_{\Lambda, \vartheta_{A}(\Lambda)}$}; 
\draw[very thin] (\xQ, \yQ) -- ({(\xA - \xQ)*1.1 + \xQ}, {(\yA - \yQ)*1.1 + \yQ}); 
\draw[thick] (\xS, \yS) -- ({(\xR - \xS)*2.5 + \xS}, {(\yR - \yS)*2.5 + \yS}) node[right] {\footnotesize ${T}_{\lambda, \vartheta}$}; 
\fill[] (\xR, \yR) circle (0.02) node[above] { \footnotesize $\bar{x}$ }; 
\draw[blue, thick] (\xA, - \yA) arc ({ - \ALPHA} : {\ALPHA} : \radius) -- (0, 0) -- cycle; 
\draw[dotted] (\xA, \yA) -- (0 + \aaa, 0) -- (\xA, - \yA); 
\fill (0 + \aaa, 0) circle (0.02) node[right] {\footnotesize ${O}$ }; 
\node[left] at (0, 0) {\footnotesize ${V}$ }; 
\node[above = 3pt, right] at (\BETA : \LenNeu) {\footnotesize ${A}_{2}$ }; 
\node[below = 3pt, right] at ( - \BETA : \LenNeu) {\footnotesize ${A}_{1}$ }; 
\end{tikzpicture} 
\caption{Directional derivative on portion of ${T}_{\lambda, \vartheta}\cap\Sigma$ and the triangle domain {\color{purple}$\mathcal{T}$}. } 
\label{Fig06Portion}
\end{figure}

\begin{proof}
Let $\bar{{x}} = (\bar{{x}}_{1}, \bar{{x}}_{2})$ be the intersection point of ${T}_{\Lambda, \vartheta_{A}(\Lambda)}$ and ${T}_{\lambda, \vartheta}$; see \autoref{Fig06Portion}. 
Let $\mathcal{T}$ be the triangle formed by the line ${T}_{\Lambda, \vartheta_{A}(\Lambda)}$, $\Gamma_{N}^{ - }$, and the line passing through $(\bar{{x}}_{1}, \bar{{x}}_{2})$ and the upper mixed point. 
It is clear that the rotation function
\begin{equation*}
{v} = ({x}_{1} - \bar{{x}}_{1}){u}_{{x}_{2}} - ({x}_{2} - \bar{{x}}_{2}){u}_{{x}_{1}}
\end{equation*}
satisfies the linear equation
\begin{equation} \label{Yao0623a}
\Delta {v} + f'({u}){v} = 0. 
\end{equation}
By the assumption, ${v}$ satisfies the Dirichlet boundary condition
\begin{equation} \label{Yao0623b}
{v} \leq 0, \not\equiv 0 \text{ on } \partial \mathcal{T}. 
\end{equation}
Now, let us fix $\bar{\nu}$ as the unit vector orthogonal to ${T}_{\Lambda, \vartheta_{A}(\Lambda)}$, i.e., 
\begin{equation*}
\bar{\nu} = (\sin(\vartheta_{A}(\Lambda) - \tfrac{\beta}{2}), - \cos(\vartheta_{A}(\Lambda) - \tfrac{\beta}{2})). 
\end{equation*}
Note that $\vartheta_{A}(\Lambda) \leq \vartheta_{A}(\lambda)$ for $\lambda \geq \Lambda$. 
By the assumption that \eqref{Yao0315a} holds for every $\vartheta \in {J}_{\lambda} \cap (0, \vartheta_{B}(\lambda)]$ and $\lambda \geq \Lambda$, we deduce that
\begin{equation*}
\mathcal{L}[\nabla {u} \cdot \bar{\nu}] = [\Delta + f'({u})](\nabla {u} \cdot \bar{\nu}) = 0, \quad \nabla {u} \cdot \bar{\nu} < 0
\quad \text{in} \quad \Sigma_{\Lambda, \vartheta_{A}(\Lambda)}. 
\end{equation*}
From~\cite{BNV94}, the operator $\mathcal{L} = \Delta + f'({u})$ satisfies the maximum principle in the subdomain $\mathcal{T}$. Applying the maximum principle in~\cite{BNV94} to \eqref{Yao0623a}-\eqref{Yao0623b}, we deduce the negativity of ${v}$ in $\mathcal{T}$. In particular, \eqref{Yao0622} holds. This completes the proof of the lemma. 
\end{proof}

Let $\Sigma_{*}$ be the mirror image of $\Sigma$ with respect to the lower Neumann boundary $\Gamma_{N}^{ - }$, and let $\widetilde{\Sigma}$ be the interior of the closure of $\Sigma \cup \Sigma_{*}$. 
Similar to ${D}_{\lambda, \vartheta}$, we now set
\begin{equation*}
\widetilde{D}_{\lambda, \vartheta} = 
\Bigg\{
\begin{aligned}
& {x} \in \widetilde{\Sigma}: \; {x}^{\lambda, \vartheta} \in \widetilde{\Sigma} \quad \text{and }\quad
\\
& ({x}_{1} - {x}_{1\lambda})\sin(\vartheta - \tfrac{\beta}{2}) - ({x}_{2} - {x}_{2\lambda})\cos(\vartheta - \tfrac{\beta}{2}) > 0
\end{aligned}
\Bigg\}
\end{equation*}
where ${x}_{1\lambda}$ and ${x}_{2\lambda}$ are given in \eqref{Yao0303}; see \autoref{Fig07DoubleDomains}. 
Note that the reflections of ${T}_{0, \beta}$ and ${T}_{0, - \beta}$ with respect to ${T}_{\lambda, \vartheta}$ are ${T}_{{h}_{\lambda, \vartheta}^{1}, 2\vartheta - \beta}$ and ${T}_{{h}_{\lambda, \vartheta}^{3}, 2\vartheta + \beta - \pi}$, respectively. Here, 
\begin{equation*}
\begin{gathered}
{h}_{\lambda, \vartheta}^{0} = \hat{\lambda}(\lambda, \vartheta), \quad {h}_{\lambda, \vartheta}^{1} = \check{\lambda}(\lambda, \vartheta), \\
{h}_{\lambda, \vartheta}^{2} = \hat{\lambda}(\lambda, \pi - \vartheta), \quad
{h}_{\lambda, \vartheta}^{3} = \check{\lambda}(\lambda, \pi - \vartheta). 
\end{gathered}
\end{equation*}

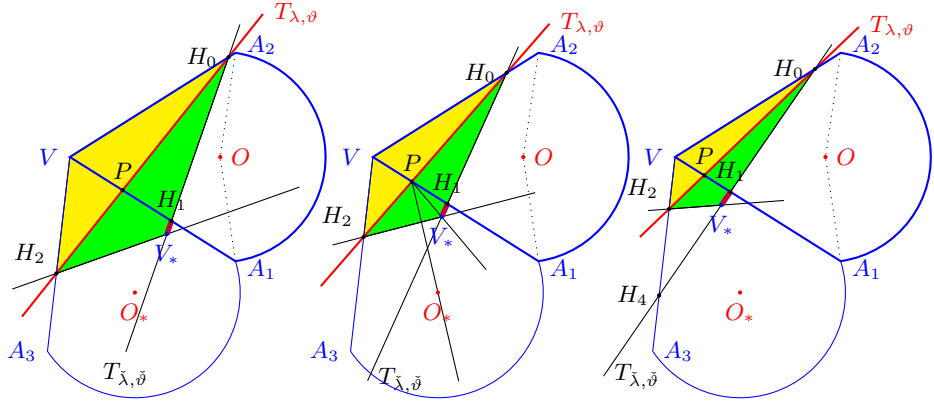
\begin{figure}[htp] \centering
\begin{tikzpicture}[scale = 1.4] 
\pgfmathsetmacro\ALPHA{163.46/2}; \pgfmathsetmacro\BETA{64.38/2}; \pgfmathsetmacro\radius{1.00}; 
\pgfmathsetmacro\aaa{\radius*sin(abs(\BETA - \ALPHA))/sin(\BETA)}; 
\pgfmathsetmacro\xA{\aaa + \radius*cos(\ALPHA)}; \pgfmathsetmacro\yA{\radius*sin(\ALPHA)}; 
\pgfmathsetmacro\LenNeu{\radius*sin(\ALPHA)/sin(\BETA)}; 
\pgfmathsetmacro\THETAB{\BETA*2 + (90 - \BETA*2)*0.72}; \pgfmathsetmacro\THETA{\THETAB + (90 - \THETAB)*0.12}; 
\pgfmathsetmacro\LAM{\LenNeu*sin(\THETAB - 2*\BETA)/sin(\THETAB)}; 
\pgfmathsetmacro\xP{\LAM*cos( - \BETA)}; \pgfmathsetmacro\yP{\LAM*sin( - \BETA)}; 
\pgfmathsetmacro\HHl{\LAM*sin(\THETA)/sin(\THETA - 2*\BETA)}; 
\pgfmathsetmacro\xHl{\HHl*cos(\BETA)}; \pgfmathsetmacro\yHl{\HHl*sin(\BETA)}; 
\pgfmathsetmacro\HHa{\LAM*(1 + sin(2*\BETA)/sin(2*\THETA - 2*\BETA))}; 
\pgfmathsetmacro\xHa{\HHa*cos( - \BETA)}; \pgfmathsetmacro\yHa{\HHa*sin( - \BETA)}; 
\pgfmathsetmacro\HHb{\LAM*sin(\THETA)/sin(\THETA + 2*\BETA)}; 
\pgfmathsetmacro\xHb{\HHb*cos( - 3*\BETA)}; \pgfmathsetmacro\yHb{\HHb*sin( - 3*\BETA)}; 
\pgfmathsetmacro\LENv{2*\LAM*sin(\THETA)}; 
\pgfmathsetmacro\xVv{\LENv*cos(\THETA - \BETA - 90)}; \pgfmathsetmacro\yVv{\LENv*sin(\THETA - \BETA - 90)}; 
\pgfmathsetmacro\xOo{\aaa*cos( - 2*\BETA)}; \pgfmathsetmacro\yOo{\aaa*sin( - 2*\BETA)}; 
\fill[fill = yellow, draw = black, very thin] 
(0, 0) -- (\BETA : \HHl) -- ({ - 3*\BETA} : \HHb) -- cycle; 
\fill[fill = green, draw = black, very thin] 
(\BETA : \HHl) -- ++ ({2*\THETA - 3*\BETA} : { - \HHl}) -- ({ - 3*\BETA} : \HHb) -- cycle; 
\draw[red, thick] ({1.2*(\xHb - \xHl) + \xHl}, {1.2*(\yHb - \yHl) + \yHl}) -- ({1.2*(\xHl - \xHb) + \xHb}, {1.2*(\yHl - \yHb) + \yHb}) node[right] {\scriptsize ${T}_{\lambda, \vartheta}$}; 
\draw[] ({1.2*(\xHl - \xHa) + \xHa}, {1.2*(\yHl - \yHa) + \yHa}) -- ({1.8*(\xHa - \xHl) + \xHl}, {1.8*(\yHa - \yHl) + \yHl}) node[below] {\scriptsize ${T}_{\check{\lambda}, \check{\vartheta}}$}; 
\draw[] ({1.4*(\xHb - \xVv) + \xVv}, {1.4*(\yHb - \yVv) + \yVv}) -- ({2.2*(\xVv - \xHb) + \xHb}, {2.2*(\yVv - \yHb) + \yHb}); 
\draw[blue, thick] (\xA, - \yA) node[below = 3pt, right] {\scriptsize ${A}_{1}$ } arc ({ - \ALPHA} : {\ALPHA} : \radius) node[above = 3pt, right] {\scriptsize ${A}_{2}$ } -- (0, 0) node[left] {\scriptsize ${V}$ } -- cycle; 
\draw[blue] (\xA, - \yA) arc ({\ALPHA - 2*\BETA} : { - \ALPHA - 2*\BETA} : \radius) node[left] {\scriptsize ${A}_{3}$ } -- (0, 0) ; 
\draw[dotted] (\xA, \yA) -- (\aaa, 0) -- (\xA, - \yA); 
\draw[very thick, purple, line width = 2pt] (\xVv, \yVv) -- (\xHa, \yHa); 
\fill (\xP, \yP) circle (0.02) node[above] { \scriptsize ${P}$ }; 
\fill (\xHl, \yHl) circle (0.02) node[left] { \scriptsize ${H}_{0}$ }; 
\fill (\xHa, \yHa) circle (0.02) node[above] { \scriptsize ${H}_{1}$ }; 
\fill (\xHb, \yHb) circle (0.02) node[above left] { \scriptsize ${H}_{2}$ }; 
\fill[red] (\xOo, \yOo) circle (0.02) node[below] { \scriptsize ${O}_{*}$ }; 
\fill[blue] (\xVv, \yVv) circle (0.02) node[below] { \scriptsize ${V}_{*}$ }; 
\fill[red] (\aaa, 0) circle (0.02) node[right] { \scriptsize ${O}$ }; 
\end{tikzpicture}
\hspace{ - 4ex}
\begin{tikzpicture}[scale = 1.4] 
\pgfmathsetmacro\ALPHA{163.46/2}; \pgfmathsetmacro\BETA{64.38/2}; \pgfmathsetmacro\radius{1.00}; 
\pgfmathsetmacro\aaa{\radius*sin(abs(\BETA - \ALPHA))/sin(\BETA)}; 
\pgfmathsetmacro\xA{\aaa + \radius*cos(\ALPHA)}; \pgfmathsetmacro\yA{\radius*sin(\ALPHA)}; 
\pgfmathsetmacro\LenNeu{\radius*sin(\ALPHA)/sin(\BETA)}; 
\pgfmathsetmacro\THETAB{\BETA*2 + (90 - \BETA*2)*0.52}; \pgfmathsetmacro\THETA{\THETAB + (90 - \THETAB)*0.28}; \pgfmathsetmacro\LAM{\LenNeu*sin(\THETAB - 2*\BETA)/sin(\THETAB)}; 
\pgfmathsetmacro\xP{\LAM*cos( - \BETA)}; \pgfmathsetmacro\yP{\LAM*sin( - \BETA)}; 
\pgfmathsetmacro\HHl{\LAM*sin(\THETA)/sin(\THETA - 2*\BETA)}; 
\pgfmathsetmacro\xHl{\HHl*cos(\BETA)}; \pgfmathsetmacro\yHl{\HHl*sin(\BETA)}; 
\pgfmathsetmacro\HHa{\LAM*(1 + sin(2*\BETA)/sin(2*\THETA - 2*\BETA))}; 
\pgfmathsetmacro\xHa{\HHa*cos( - \BETA)}; \pgfmathsetmacro\yHa{\HHa*sin( - \BETA)}; 
\pgfmathsetmacro\HHb{\LAM*sin(\THETA)/sin(\THETA + 2*\BETA)}; 
\pgfmathsetmacro\xHb{\HHb*cos( - 3*\BETA)}; \pgfmathsetmacro\yHb{\HHb*sin( - 3*\BETA)}; 
\pgfmathsetmacro\LENv{2*\LAM*sin(\THETA)}; 
\pgfmathsetmacro\xVv{\LENv*cos(\THETA - \BETA - 90)}; \pgfmathsetmacro\yVv{\LENv*sin(\THETA - \BETA - 90)}; 
\pgfmathsetmacro\xOo{\aaa*cos( - 2*\BETA)}; \pgfmathsetmacro\yOo{\aaa*sin( - 2*\BETA)}; 
\fill[fill = yellow, draw = black, very thin] 
(0, 0) -- (\BETA : \HHl) -- ({ - 3*\BETA} : \HHb) -- cycle; 
\fill[fill = green, draw = black, very thin] 
(\BETA : \HHl) -- ++ ({2*\THETA - 3*\BETA} : { - \HHl}) -- ({ - 3*\BETA} : \HHb) -- cycle; 
\draw[] (\xP, \yP) -- ({1.8*(\xOo - \xP) + \xP}, {1.8*(\yOo - \yP) + \yP}); 
\draw[] (\xP, \yP) -- ({2.5*(\xVv - \xP) + \xP}, {2.5*(\yVv - \yP) + \yP}); 
\draw[red, thick] ({1.3*(\xHb - \xHl) + \xHl}, {1.3*(\yHb - \yHl) + \yHl}) -- ({1.3*(\xHl - \xHb) + \xHb}, {1.3*(\yHl - \yHb) + \yHb}) node[right] {\scriptsize ${T}_{\lambda, \vartheta}$}; 
\draw[] ({1.2*(\xHl - \xHa) + \xHa}, {1.2*(\yHl - \yHa) + \yHa}) -- ({2.35*(\xHa - \xHl) + \xHl}, {2.35*(\yHa - \yHl) + \yHl}) node[right] {\scriptsize ${T}_{\check{\lambda}, \check{\vartheta}}$}; 
\draw[] ({1.4*(\xHb - \xVv) + \xVv}, {1.4*(\yHb - \yVv) + \yVv}) -- ({2.2*(\xVv - \xHb) + \xHb}, {2.2*(\yVv - \yHb) + \yHb}); 
\draw[blue, thick] (\xA, - \yA) node[below = 3pt, right] {\scriptsize ${A}_{1}$ } arc ({ - \ALPHA} : {\ALPHA} : \radius) node[above = 3pt, right] {\scriptsize ${A}_{2}$ } -- (0, 0) node[left] {\scriptsize ${V}$ } -- cycle; 
\draw[blue] (\xA, - \yA) arc ({\ALPHA - 2*\BETA} : { - \ALPHA - 2*\BETA} : \radius) node[left] {\scriptsize ${A}_{3}$ } -- (0, 0) ; 
\draw[dotted] (\xA, \yA) -- (\aaa, 0) -- (\xA, - \yA); 
\draw[very thick, purple, line width = 2pt] (\xVv, \yVv) -- (\xHa, \yHa); 
\fill (\xP, \yP) circle (0.02) node[above] { \scriptsize ${P}$ }; 
\fill (\xHl, \yHl) circle (0.02) node[left] { \scriptsize ${H}_{0}$ }; 
\fill (\xHa, \yHa) circle (0.02) node[above] { \scriptsize ${H}_{1}$ }; 
\fill (\xHb, \yHb) circle (0.02) node[above left] { \scriptsize ${H}_{2}$ }; 
\fill[red] (\xOo, \yOo) circle (0.02) node[below] { \scriptsize ${O}_{*}$ }; 
\fill[blue] (\xVv, \yVv) circle (0.02) node[below] { \scriptsize ${V}_{*}$ }; 
\fill[red] (\aaa, 0) circle (0.02) node[right] { \scriptsize ${O}$ }; 
\end{tikzpicture}
\hspace{ - 4ex}
\begin{tikzpicture}[scale = 1.4] 
\pgfmathsetmacro\ALPHA{163.46/2}; \pgfmathsetmacro\BETA{64.38/2}; \pgfmathsetmacro\radius{1.00}; 
\pgfmathsetmacro\aaa{\radius*sin(abs(\BETA - \ALPHA))/sin(\BETA)}; 
\pgfmathsetmacro\xA{\aaa + \radius*cos(\ALPHA)}; \pgfmathsetmacro\yA{\radius*sin(\ALPHA)}; 
\pgfmathsetmacro\LenNeu{\radius*sin(\ALPHA)/sin(\BETA)}; 
\pgfmathsetmacro\THETAB{\BETA*2 + (90 - \BETA*2)*0.38}; \pgfmathsetmacro\THETA{\THETAB + (90 - \THETAB)*0.12}; 
\pgfmathsetmacro\LAM{\LenNeu*sin(\THETAB - 2*\BETA)/sin(\THETAB)}; 
\pgfmathsetmacro\xP{\LAM*cos( - \BETA)}; \pgfmathsetmacro\yP{\LAM*sin( - \BETA)}; 
\pgfmathsetmacro\HHl{\LAM*sin(\THETA)/sin(\THETA - 2*\BETA)}; 
\pgfmathsetmacro\xHl{\HHl*cos(\BETA)}; \pgfmathsetmacro\yHl{\HHl*sin(\BETA)}; 
\pgfmathsetmacro\HHa{\LAM*(1 + sin(2*\BETA)/sin(2*\THETA - 2*\BETA))}; 
\pgfmathsetmacro\xHa{\HHa*cos( - \BETA)}; \pgfmathsetmacro\yHa{\HHa*sin( - \BETA)}; 
\pgfmathsetmacro\HHb{\LAM*sin(\THETA)/sin(\THETA + 2*\BETA)}; 
\pgfmathsetmacro\xHb{\HHb*cos( - 3*\BETA)}; \pgfmathsetmacro\yHb{\HHb*sin( - 3*\BETA)}; 
\pgfmathsetmacro\LENv{2*\LAM*sin(\THETA)}; 
\pgfmathsetmacro\xVv{\LENv*cos(\THETA - \BETA - 90)}; \pgfmathsetmacro\yVv{\LENv*sin(\THETA - \BETA - 90)}; 
\pgfmathsetmacro\xOo{\aaa*cos( - 2*\BETA)}; \pgfmathsetmacro\yOo{\aaa*sin( - 2*\BETA)}; 
\fill[fill = yellow, draw = black, very thin] 
(0, 0) -- (\BETA : \HHl) -- ({ - 3*\BETA} : \HHb) -- cycle; 
\fill[fill = green, draw = black, very thin] 
(\BETA : \HHl) -- ++ ({2*\THETA - 3*\BETA} : { - \HHl}) -- ({ - 3*\BETA} : \HHb) -- cycle; 
\draw[red, thick] ({1.2*(\xHb - \xHl) + \xHl}, {1.2*(\yHb - \yHl) + \yHl}) -- ({1.3*(\xHl - \xHb) + \xHb}, {1.3*(\yHl - \yHb) + \yHb}) node[right] {\scriptsize ${T}_{\lambda, \vartheta}$}; 
\draw[] ({1.2*(\xHl - \xHa) + \xHa}, {1.2*(\yHl - \yHa) + \yHa}) -- ({2.5*(\xHa - \xHl) + \xHl}, {2.5*(\yHa - \yHl) + \yHl}) node[right] {\scriptsize ${T}_{\check{\lambda}, \check{\vartheta}}$}; 
\draw[] ({1.4*(\xHb - \xVv) + \xVv}, {1.4*(\yHb - \yVv) + \yVv}) -- ({2.2*(\xVv - \xHb) + \xHb}, {2.2*(\yVv - \yHb) + \yHb}); 
\draw[blue, thick] (\xA, - \yA) node[below = 3pt, right] {\scriptsize ${A}_{1}$ } arc ({ - \ALPHA} : {\ALPHA} : \radius) node[above = 3pt, right] {\scriptsize ${A}_{2}$ } -- (0, 0) node[left] {\scriptsize ${V}$ } -- cycle; 
\draw[blue] (\xA, - \yA) arc ({\ALPHA - 2*\BETA} : { - \ALPHA - 2*\BETA} : \radius) node[right] {\scriptsize ${A}_{3}$ } -- (0, 0) ; 
\draw[dotted] (\xA, \yA) -- (\aaa, 0) -- (\xA, - \yA); 
\draw[very thick, purple, line width = 2pt] (\xVv, \yVv) -- (\xHa, \yHa); 
\pgfmathsetmacro\HHd{\LAM*sin(90 + 2*\BETA - \THETA)/sin(90 - \THETA)}; 
\pgfmathsetmacro\xHd{\HHd*cos( - 3*\BETA)}; \pgfmathsetmacro\yHd{\HHd*sin( - 3*\BETA)}; 
\fill (\xHd, \yHd) circle (0.02) node[left] { \scriptsize ${H}_{4}$ }; 
\fill (\xP, \yP) circle (0.02) node[above] { \scriptsize ${P}$ }; 
\fill (\xHl, \yHl) circle (0.02) node[left] { \scriptsize ${H}_{0}$ }; 
\fill (\xHa, \yHa) circle (0.02) node[above] { \scriptsize ${H}_{1}$ }; 
\fill (\xHb, \yHb) circle (0.02) node[above left] { \scriptsize ${H}_{2}$ }; 
\fill[red] (\xOo, \yOo) circle (0.02) node[below] { \scriptsize ${O}_{*}$ }; 
\fill[blue] (\xVv, \yVv) circle (0.02) node[below] { \scriptsize ${V}_{*}$ }; 
\fill[red] (\aaa, 0) circle (0.02) node[right] { \scriptsize ${O}$ }; 
\end{tikzpicture} 
\caption{The moving domains $\tilde{D}_{\lambda, \vartheta}$ for $\lambda \leq {l}_{\perp}$. }
\label{Fig07DoubleDomains}
\end{figure}

Under the condition $\beta \in (\pi/3, \pi/2)$, one has
\begin{equation*}
{l}_{\perp} < {l}_{*} < \frac{{l}_{N}}{2}. 
\end{equation*}
Combining this with \eqref{Yao0505}, we have
\begin{equation*}
{h}_{\lambda, \vartheta}^{i} \leq {l}_{N}, \quad i = 0, 1, 2, 3, 
\end{equation*}
provided $\lambda$ and $\vartheta$ satisfy
\begin{equation*}
\vartheta_{B}(\lambda) \leq \vartheta \leq \vartheta_{\lambda}, \quad \vartheta_{B}(\lambda) \leq \pi - \vartheta \leq \vartheta_{\lambda}, \quad 0 < \lambda \leq {l}_{\perp}, 
\end{equation*}
or equivalently (by using $\vartheta_{\lambda} + \vartheta_{B}(\lambda) > \pi$ from \autoref{lma601}\ref{Yaoit62b}), 
\begin{equation*}
\vartheta_{B}(\lambda) \leq \vartheta \leq \pi - \vartheta_{B}(\lambda), \quad 0 < \lambda \leq {l}_{\perp}. 
\end{equation*}
Note that $\vartheta_{B}(\lambda) \leq \pi/2$ for $\lambda \leq {l}_{\perp}$, and $\vartheta \leq \pi/2$ implies that ${h}_{\lambda, \vartheta}^{1} \leq {h}_{\lambda, \vartheta}^{3}$. Therefore, we will focus on
\begin{equation} \label{Yao0625c}
\vartheta_{B}(\lambda) \leq \vartheta \leq \frac{\pi}{2}, \quad 0 < \lambda \leq {l}_{\perp}. 
\end{equation}

Under \eqref{Yao0625c}, the moving domain $\widetilde{D}_{\lambda, \vartheta}$ is an open triangle enclosed by ${T}_{\lambda, \vartheta}$, ${T}_{{h}_{\lambda, \vartheta}^{1}, 2\vartheta - \beta}$, and ${T}_{{h}_{\lambda, \vartheta}^{3}, 2\vartheta + \beta - \pi}$. The boundary of $\widetilde{D}_{\lambda, \vartheta}$ consists of three parts (see \autoref{Fig07DoubleDomains}): 
\begin{enumerate}[label = {\rm(\arabic*)}]
\item
$\widetilde{\Gamma}_{\lambda, \vartheta}^{0} = \partial \widetilde{D}_{\lambda, \vartheta} \cap {T}_{\lambda, \vartheta}$; 
\item
$\widetilde{\Gamma}_{\lambda, \vartheta}^{1} = \partial \widetilde{D}_{\lambda, \vartheta} \cap {T}_{{h}_{\lambda, \vartheta}^{3}, 2\vartheta + \beta - \pi}$ (this part is contained in $\Sigma_{*}$); 
\item
$\widetilde{\Gamma}_{\lambda, \vartheta}^{2} = \partial \widetilde{D}_{\lambda, \vartheta} \cap {T}_{{h}_{\lambda, \vartheta}^{1}, 2\vartheta - \beta}$, and this boundary has two subparts: $\widetilde{\Gamma}_{\lambda, \vartheta}^{2A} = \widetilde{\Gamma}_{\lambda, \vartheta}^{2} \cap \Sigma$ and $\widetilde{\Gamma}_{\lambda, \vartheta}^{2B} = \widetilde{\Gamma}_{\lambda, \vartheta}^{2} \setminus \Sigma$. 
\end{enumerate}

The even extension of ${u}$ along $\Gamma_{N}^{ - }$ is still denoted by ${u}$, and the difference function ${w}^{\lambda, \vartheta}$ is still defined as in \eqref{Yao0305b}. 
We will show that
\begin{equation} \label{Yao0626a}
{w}^{\lambda, \vartheta} < 0 \text{ in } \widetilde{D}_{\lambda, \vartheta}
\end{equation}
and
\begin{equation} \label{Yao0626b}
{u}_{{x}_{1}}\sin(\vartheta - \tfrac{\beta}{2}) - {u}_{{x}_{2}}\cos(\vartheta - \tfrac{\beta}{2}) < 0 \text{ on } {T}_{\lambda, \vartheta} \cap \widetilde{\Sigma}
\end{equation}
for $\lambda$ and $\vartheta$ satisfying \eqref{Yao0625c}. 

Now we are ready to establish the monotonicity in \eqref{Yao0316} for $\vartheta_{B}(\lambda) \leq \vartheta \leq \bar{\omega}_{\lambda}$, $\lambda > 0$, and hence the symmetry result for $\pi/3 < \beta < \pi/2$. 

\begin{thm} 
Let $0 < \beta < \alpha \leq \pi$ and
\begin{equation} \label{Yao0627}
\beta \in (\pi/3, \pi/2). 
\end{equation}
Then the solution ${u}$ of \eqref{Yao0105} satisfies
\eqref{Yao0316} for $(\lambda, \vartheta) \in \mathfrak{S}^{\Lambda}$ for every $\Lambda > 0$. 
Consequently, ${u}$ possesses the even symmetry and monotonicity properties as stated in \ref{Yaoit11a}, \ref{Yaoit11b}, and \ref{Yaoit11c} of \autoref{thm101main}. 
\end{thm}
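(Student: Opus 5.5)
The plan is a continuity argument in $\lambda$ below ${l}_{\perp}$. We already know from \autoref{lma604B} that \eqref{Yao0316} holds for $\vartheta \in {J}_{\lambda} \cap (0, \bar{\omega}_{\lambda}]$ with $\lambda \geq {l}_{\perp}$, and on $\mathfrak{S}^{{l}_{\perp}}$. Set
\begin{equation*}
\bar{\Lambda} = \inf\big\{\Lambda' > 0: \; \eqref{Yao0316} \text{ holds for } \vartheta \in {J}_{\lambda} \cap (0, \bar{\omega}_{\lambda}] \text{ and } \lambda \geq \Lambda' \big\} \leq {l}_{\perp}.
\end{equation*}
Suppose $\bar{\Lambda} > 0$; we aim for a contradiction. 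Once $\bar{\Lambda} = 0$ is known, \autoref{lma604A} gives \eqref{Yao0316} on $\mathfrak{S}^{\Lambda}$ for every $\Lambda > 0$. The symmetry \ref{Yaoit11a} and \ref{Yaoit11b} then follow exactly as in Part 2 of \autoref{thm403}, since \eqref{Yao0315} holds at $\vartheta = \beta/2$ for all $\lambda>0$. Monotonicity \ref{Yaoit11c} follows as in Part 3 of \autoref{thm403}, working in the half domain.

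At $\lambda = \bar{\Lambda}$ we first argue as in Steps 1--2 of \autoref{thm309}. Continuity gives the non-strict inequalities, and positivity of the rotation function ${v}_{\lambda}$ via the strong maximum principle and \autoref{lma206GNN} makes them strict. \autoref{lma307} applies because $\bar{\omega}_{\lambda} > (\pi+3\beta)/4 > \pi/2$ by \autoref{lma601}, so a $\vartheta_{C} > \pi/2$ is available; this yields strict monotonicity \eqref{Yao0345} on $\Gamma_{N}$.

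The core step is extending slightly below $\bar{\Lambda}$ for $\vartheta \in [\vartheta_{B}(\lambda), \bar{\omega}_{\lambda}]$. The obstacle is that for $\lambda < {l}_{\perp}$ we have $2\vartheta - \pi < 0$, so $\Gamma^{2A}$ is no longer a Neumann piece with a controlled sign. Here I would use the doubled domain $\widetilde{\Sigma}$ and the triangle $\widetilde{D}_{\lambda,\vartheta}$ under \eqref{Yao0625c}, and prove \eqref{Yao0626a}--\eqref{Yao0626b} by the scheme of \autoref{lma301}: narrowness/small volume near the top, then continuity in $(\lambda,\vartheta)$.

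The boundary data are checked piece by piece:
\begin{enumerate}[label = {\rm(\roman*)}]
\item On $\widetilde{\Gamma}^{0}$, ${w} = 0$.
\item On $\widetilde{\Gamma}^{2B}$ and on $\widetilde{\Gamma}^{1}$ (which lies in $\Sigma_{*}$), the needed sign of $\partial_\nu w$ or $w$ reduces, after unfolding the even reflection across $\Gamma_{N}^{-}$, to \eqref{Yao0315} on the lines ${T}_{{h}^{i},\cdot}$ with ${h}^{i}_{\lambda,\vartheta} \le {l}_{N}$. For ${h}^{i} \ge \bar{\Lambda}$ this comes from the induction hypothesis or from $\mathfrak{S}^{\bar\Lambda}$ via \autoref{lma604A}.
\item On $\widetilde{\Gamma}^{2A}$, which meets the region where $\vartheta \notin {J}_\lambda$, I would invoke \autoref{lma606} to get the directional-derivative sign on the portion ${T}_{\lambda,\vartheta}\cap\Sigma_{\Lambda,\vartheta_A(\Lambda)}$. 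This portion is exactly where the reflected segment lands, so $\partial_\nu w \le 0$ there.
\end{enumerate}

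The starting configuration of the continuity is $\lambda = {l}_{\perp}$, $\vartheta = \pi/2$, where negativity is known from \autoref{lma401}. From there we continue in $\vartheta$ down to $\vartheta_{B}(\lambda)$, using that $\widetilde{D}_{\lambda,\vartheta}$ degenerates in the relevant limit, and then in $\lambda$.

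Once \eqref{Yao0626b} holds for $\vartheta = \vartheta_{B}(\lambda)$ with $\lambda \in [\bar{\Lambda}-\varepsilon, \bar{\Lambda}]$, \autoref{lma308} gives $(0,\vartheta^{A}(\lambda)]$. The range $[\vartheta_{B}, \bar{\omega}_{\lambda}]$ is then filled by \autoref{lma301} with the 2B condition supplied by \autoref{lma604A} (as in Step 2 of \autoref{lma604B}). This contradicts the minimality of $\bar{\Lambda}$.

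I expect the main obstacle to be verifying the sign on $\widetilde{\Gamma}^{2A}$ and $\widetilde{\Gamma}^{1}$ uniformly near $\bar{\Lambda}$. Concretely, one must check geometrically that the relevant reflected points stay inside $\Sigma_{\bar\Lambda,\vartheta_{A}(\bar\Lambda)}$ (where \autoref{lma606} applies) and that all ${h}^{i}$ stay in $[\bar{\Lambda}, {l}_{N}]$. This is where $\beta > \pi/3$ (so ${l}_{\perp} < {l}_{*} < {l}_{N}/2$) and $\beta < \pi/2$ (for \autoref{lma601}(c)) are used.
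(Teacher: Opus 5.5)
There is a gap in the boundary verification on $\partial\widetilde{D}_{\lambda,\vartheta}$, which is the heart of the proof. Your overall architecture is the same as the paper's: an infimum $\bar{\Lambda}\le {l}_{\perp}$, the doubled domain $\widetilde{\Sigma}$, the triangles $\widetilde{D}_{\lambda,\vartheta}$ under \eqref{Yao0625c}, a continuation started from the known negativity at $({l}_{\perp},\pi/2)$, then \autoref{lma308}, Step 2 of \autoref{lma604B}, and \autoref{lma604A}. The problem is that you attach the difficulty, and \autoref{lma606}, to the wrong boundary piece.

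\textbf{The easy piece.} $\widetilde{\Gamma}^{2A}_{\lambda,\vartheta}\subset\Sigma$ is the segment $H_1H_0$ of ${T}_{{h}^{1}_{\lambda,\vartheta},2\vartheta-\beta}$, not of ${T}_{\lambda,\vartheta}$. Here $H_1={P}_{{h}^{1}_{\lambda,\vartheta}}\in\Gamma_N^-$ and $H_0$ is the point where ${T}_{\lambda,\vartheta}$ meets $\Gamma_N^+$. Its mirror image under ${T}_{\lambda,\vartheta}$ lies on $\Gamma_N^+$, so $\partial_\nu {w}^{\lambda,\vartheta}=\partial_\nu {u}$ there. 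This line joins $\Gamma_N^-$ to $\Gamma_N^+$ with $|{V}H_0|={h}^{0}_{\lambda,\vartheta}$, so \eqref{Yao0631a} places it in $\mathfrak{S}^{\bar{\Lambda}}_{2}$. The sign is therefore just the inductive hypothesis. \autoref{lma606} plays no role here, and your picture of a reflected segment landing on ${T}_{\lambda,\vartheta}\cap\Sigma_{\Lambda,\vartheta_{A}(\Lambda)}$ does not match the geometry.

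\textbf{The delicate piece.} It is $\widetilde{\Gamma}^{2B}_{\lambda,\vartheta}=H_1V_*\subset\Sigma_*$, where $V_*$ is the mirror image of ${V}$ in ${T}_{\lambda,\vartheta}$. Unfolded across $\Gamma_N^-$, it lies on ${T}_{{h}^{1}_{\lambda,\vartheta},\,\pi+\beta-2\vartheta}$. Your claim (ii), that its sign comes from the inductive hypothesis or from $\mathfrak{S}^{\bar{\Lambda}}$, fails when $\vartheta_{A}({h}^{1}_{\lambda,\vartheta})<\pi+\beta-2\vartheta<\vartheta_{B}({h}^{1}_{\lambda,\vartheta})$.
\begin{itemize}
\item In that case the line runs from $\Gamma_N^-$ to $\Gamma_D$. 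The angle is not in ${J}_{{h}^{1}_{\lambda,\vartheta}}$, and the pair lies in neither $\mathfrak{S}^{\bar{\Lambda}}_{1}$ nor $\mathfrak{S}^{\bar{\Lambda}}_{2}$. In fact \eqref{Yao0315a} genuinely fails on the full segment near $\Gamma_D$, by \autoref{lma206GNN}.
\item This is exactly where \autoref{lma606} is needed, together with a containment you do not supply. In this case $\vartheta>(\pi+2\beta)/4$, hence
$\angle {A}_{1}{P}_{\lambda}V_*=\pi-2\vartheta<\pi/2-\beta<\beta/2<\vartheta_{A}(\lambda)=\angle {A}_{1}{P}_{\lambda}O_*$, where $O_*$ is the mirror of ${O}$ across $\Gamma_N^-$. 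This uses $\beta>\pi/3$, and it puts the unfolded segment inside the cap $\Sigma_{\lambda,\vartheta_{A}(\lambda)}$ where \autoref{lma606} gives the sign.
\item When $\pi+\beta-2\vartheta\ge\vartheta_{B}({h}^{1}_{\lambda,\vartheta})$, you still need ${h}^{4}_{\lambda,\vartheta}=\hat{\lambda}(\lambda,\pi/2+\beta-\vartheta)>\imath(\bar{\Lambda})$. Here $H_4$ is where the line of $\widetilde{\Gamma}^{2}_{\lambda,\vartheta}$ meets the Neumann side of $\Sigma_*$. Membership in $\mathfrak{S}^{\bar{\Lambda}}_{2}$ requires the larger intercept to exceed $\imath(\bar{\Lambda})$, not merely $\bar{\Lambda}$. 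The paper obtains this bound by observing that ${P}_{\lambda}$ is the incenter of the triangle ${V}H_0H_4$.
\end{itemize}
Your reduction covers only the case $\pi+\beta-2\vartheta\le\vartheta_{A}({h}^{1}_{\lambda,\vartheta})$. Without the full three-case analysis, the boundary hypotheses of \autoref{lma301} are not verified on $\widetilde{D}_{\lambda,\vartheta}$, and the continuation cannot be run.

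\textbf{A minor point.} $\widetilde{D}_{\lambda,\vartheta}$ is congruent to the triangle ${V}H_0H_2$, where $H_2$ is the point where ${T}_{\lambda,\vartheta}$ meets the Neumann side of $\Sigma_*$. It therefore never degenerates for $\lambda>0$, so there is no narrow-domain start. The continuation must come from the configuration at $({l}_{\perp},\pi/2)$, which you also mention.
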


\begin{proof}
From \autoref{lma401}, we have
\begin{equation*}
{w}^{\lambda, \vartheta_{B}(\lambda)} < 0 \text{ in } {D}_{\lambda, \vartheta_{B}(\lambda), 2\vartheta_{B}(\lambda) - \pi} \text{ for } \lambda \geq {l}_{\perp}, 
\end{equation*}
and in particular, 
\begin{equation} \label{Yao0614c}
{w}^{\lambda, \vartheta_{B}(\lambda)} < 0 \text{ in } \widetilde{D}_{\lambda, \vartheta_{B}(\lambda)} \text{ for } \lambda = {l}_{\perp}. 
\end{equation}
From \autoref{lma604B}, \eqref{Yao0316} holds for $(\lambda, \vartheta) \in \mathfrak{S}^{{l}_{\perp}}$. 
Hence, we define
\begin{equation} \label{Yao0628}
\bar{\Lambda} = \inf\left\{\Lambda \in (0, {l}_{*}]: \; \eqref{Yao0316} \text{ holds for } (\lambda, \vartheta) \in \mathfrak{S}^{\Lambda} \right\}. 
\end{equation}
To prove the assertion (i.e., $\bar{\Lambda} = 0$), we argue by contradiction, assuming $\bar{\Lambda} > 0$ and thus $\bar{\Lambda} \in (0, {l}_{\perp}]$. 

\textbf{Step 1}. 
We claim that \eqref{Yao0316a} holds when $\vartheta \in [\vartheta_{B}(\lambda), \pi - \vartheta_{B}(\lambda)]$ and $\lambda$ is close to $\bar{\Lambda}$. 

First, we note that
\begin{equation} \label{Yao0629}
{h}_{\lambda, \vartheta}^{i} > \lambda, \quad
{h}_{\lambda, \vartheta}^{0} > \imath(\lambda), \quad
\max\{ {h}_{\lambda, \vartheta}^{2}, {h}_{\lambda, \vartheta}^{3} \} > \imath(\lambda)
\end{equation}
for $\vartheta \in [\vartheta_{B}(\lambda), \pi/2]$ and $\lambda \in (0, {l}_{\perp}]$. Indeed, it is clear that ${h}_{\lambda, \vartheta}^{i} > \lambda$ for $i = 0, 1, 3$, while ${h}_{\lambda, \vartheta}^{2} > \lambda$ since $\vartheta > \pi - \beta - \vartheta$ and $\beta > \pi/3$. 
Recalling that
\begin{equation*}
\bar{\omega}_{\lambda} > \frac{\pi + 3\beta}{4} > \frac{\pi}{2} \geq \vartheta_{B}(\lambda) \quad \text{for } \lambda \in (0, {l}_{\perp}], 
\end{equation*}
it follows by the definitions that
\begin{gather*}
{h}_{\lambda, \vartheta}^{0} = \hat{\lambda}(\lambda, \vartheta) > \hat{\lambda}(\lambda, \tfrac{\pi}{2}) > \imath(\lambda), 
\\
\max\{{h}_{\lambda, \vartheta}^{2}, {h}_{\lambda, \vartheta}^{3}\} \geq \hat{\lambda}(\lambda, \tfrac{\pi + 3\beta}{4}) > \imath(\lambda). 
\end{gather*}

From \eqref{Yao0629}, there exists a positive constant $\bar{\Lambda}_{1}$ (close to and less than $\bar{\Lambda}$) such that
\begin{gather}
\label{Yao0631a}
\bar{\Lambda} < {h}_{\lambda, \vartheta}^{1} < {l}_{N}, \quad \imath(\bar{\Lambda}) < {h}_{\lambda, \vartheta}^{0} \leq {l}_{N}, 
\\
\label{Yao0631b}
\min\{ {h}_{\lambda, \vartheta}^{2}, {h}_{\lambda, \vartheta}^{3}\} > \bar{\Lambda}, \quad \imath(\bar{\Lambda}) < \max\{{h}_{\lambda, \vartheta}^{2}, {h}_{\lambda, \vartheta}^{3}\} \leq {l}_{N}
 \end{gather}
whenever $(\lambda, \vartheta) \in \Xi^{\bar{\Lambda}_{1}}$, where
\begin{equation*}
\Xi^{\Lambda} = \{(\lambda, \vartheta): \; \vartheta \in [\vartheta_{B}(\lambda), \tfrac{\pi}{2}], \; \lambda \in [\Lambda, {l}_{\perp}] \}. 
\end{equation*}
From \eqref{Yao0631b} and the definition of $\bar{\Lambda}$ in \eqref{Yao0628}, 
\begin{equation} \label{Yao0634a}
\nabla {w}^{\lambda, \vartheta} \cdot \mathbf{e}_{2\vartheta + \beta/2 + \pi/2} = \nabla {u} \cdot \mathbf{e}_{2\vartheta + \beta/2 + \pi/2} < 0 \text{ on } \widetilde{\Gamma}_{\lambda, \vartheta}^{1}
\end{equation}
for $(\lambda, \vartheta) \in \Xi^{\bar{\Lambda}_{1}}$, where $\mathbf{e}_{2\vartheta + \beta/2 + \pi/2}$ denotes the outer normal direction to $\widetilde{\Gamma}_{\lambda, \vartheta}^{1}$. 
Similarly, from \eqref{Yao0631a} and the definition of $\bar{\Lambda}$, 
\begin{equation} \label{Yao0634b}
\nabla {w}^{\lambda, \vartheta} \cdot \mathbf{e}_{2\vartheta - 3\beta/2 - \pi/2} = \nabla {u} \cdot \mathbf{e}_{2\vartheta - 3\beta/2 - \pi/2} < 0 \text{ on } \widetilde{\Gamma}_{\lambda, \vartheta}^{2A}
\end{equation}
for $(\lambda, \vartheta) \in \Xi^{\bar{\Lambda}_{1}}$, where $\mathbf{e}_{2\vartheta - 3\beta/2 - \pi/2}$ is the outer normal to $\widetilde{\Gamma}_{\lambda, \vartheta}^{2}$. 

Now we turn to the boundary condition for ${w}^{\lambda, \vartheta}$ on $\widetilde{\Gamma}_{\lambda, \vartheta}^{2B}$, which is more delicate. 
Let ${A}_{1}$ and ${A}_{2}$ be the lower and upper mixed boundary points of $\Sigma$, respectively. The points ${P}$ and ${H}_{0}$ denote the intersections of ${T}_{\lambda, \vartheta}$ and $\Gamma_{N}^{\mp}$. Moreover, ${V}_{*}$ is the reflection of the vertex ${V}$ with respect to ${T}_{\lambda, \vartheta}$, and ${O}_{*}$ is the reflection of the center ${O}$ with respect to $\Gamma_{N}^{ - }$. 
Let ${H}_{1}$ be the intersection point between $\widetilde{\Gamma}_{\lambda, \vartheta}^{2}$ and $\Gamma_{N}^{ - }$. It is clear that $\angle {V}_{*}{H}_{1}{A}_{1} = \pi + \beta - 2\vartheta$; see \autoref{Fig07DoubleDomains}. 
From \eqref{Yao0627}, 
\begin{equation*}
\tfrac{\pi + \beta}{2} - \left( \pi + \beta - 2\vartheta \right) = 2 (\vartheta - \beta) + \tfrac{3\beta - \pi}{2} > 0. 
\end{equation*}
Note that $\widetilde{\Gamma}_{\lambda, \vartheta}^{2B}$ is the (compact) line segment ${H}_{1}{V}_{*}$, which lies in ${T}_{{h}_{\lambda, \vartheta}^{1}, 2\vartheta - \beta} \cap (\Sigma_{*} \cup \Gamma_{N}^{ - })$ with $0 < \pi + \beta - 2\vartheta < (\pi + \beta)/2$. 
Now let
\begin{equation*}
\vartheta \in [\vartheta_{B}(\lambda), \tfrac{\pi}{2}], \quad \lambda \in [\bar{\Lambda}, {l}_{\perp}]. 
\end{equation*}
There are three cases: 

\textit{Case 1}: $\angle {V}_{*}{H}_{1}{A}_{1} \leq \vartheta_{A}({h}_{\lambda, \vartheta}^{1})$, see the left panel of \autoref{Fig07DoubleDomains}. 
Note that \eqref{Yao0631a} implies ${h}_{\lambda, \vartheta}^{1} \in (\bar{\Lambda}, {l}_{N})$. By \autoref{lma604B}, we have
\begin{equation} \label{Yao0635}
{u}_{{x}_{1}} \sin(2\vartheta - \tfrac{3\beta}{2}) - {u}_{{x}_{2}} \cos(2\vartheta - \tfrac{3\beta}{2}) < 0 \text{ on } \widetilde{\Gamma}_{\lambda, \vartheta}^{2B}, 
\end{equation}
where the left-hand side gives the outer normal derivative of ${u}$ along $\widetilde{\Gamma}_{\lambda, \vartheta}^{2}$. 

\textit{Case 2}: $\vartheta_{A}({h}_{\lambda, \vartheta}^{1}) < \angle {V}_{*}{H}_{1}{A}_{1} < \vartheta_{B}({h}_{\lambda, \vartheta}^{1})$, see the middle panel of \autoref{Fig07DoubleDomains}. 
In this case, ${h}_{\lambda, \vartheta}^{4} > {l}_{N}$, ${h}_{\lambda, \vartheta}^{1} \leq {l}_{N}$. Thus, $\pi + \beta - 2\vartheta < \pi/2$, i.e., $\angle {H}_{0}{P}{A}_{1} = \vartheta > (\pi + 2\beta)/4$. 
So, 
\begin{equation*}
\angle {A}_{1}{P}{V}_{*} = \pi - 2\angle {Q}{P}{A}_{1} \leq \pi - \frac{\pi + 2\beta}{2} = \frac{\pi}{2} - \beta < \frac{\beta}{2}
\end{equation*}
(using \eqref{Yao0627}) and then
\begin{equation*}
\angle {A}_{1}{P}{V}_{*} < \angle {A}_{1}{P}{O}_{*}. 
\end{equation*}
Thus, ${V}_{*}$ and $\widetilde{\Gamma}_{\lambda, \vartheta}^{2B}$ are contained in the right cap cut by the line $PO_{*}$ from $\Sigma_{*}$. By \autoref{lma606}, \eqref{Yao0635} holds. 

\textit{Case 3}: $\angle {V}_{*}{H}_{1}{A}_{1} \geq \vartheta_{B}({h}_{\lambda, \vartheta}^{1})$, see the right panel of \autoref{Fig07DoubleDomains}. 
Here, ${h}_{\lambda, \vartheta}^{4} \leq {l}_{N}$, where ${h}_{\lambda, \vartheta}^{4} = \hat{\lambda}({h}_{\lambda, \vartheta}^{1}, \pi + \beta - 2\vartheta)$. Let ${H}_{4}$ be the intersection of ${T}_{{h}_{\lambda, \vartheta}^{1}, 2\vartheta - \beta}$ and ${T}_{0, - \beta}$, as shown in the right panel of \autoref{Fig07DoubleDomains}. Then ${h}_{\lambda, \vartheta}^{4}$ equals the length of the segment ${V}{H}_{4}$. 
Since ${P}$ is the incenter of triangle ${V}{H}_{0}{H}_{4}$, 
\begin{equation*}
\angle {H}_{4}{P}{A}_{1} = \left(\frac{\pi}{2} + \angle {H}_{0}{V}{H}_{4}\right) - \angle {H}_{0}{P}{A}_{1} = \frac{\pi}{2} + \beta - \vartheta, 
\end{equation*}
and
\begin{equation}
{h}_{\lambda, \vartheta}^{4} = \hat{\lambda}(\lambda, \frac{\pi}{2} + \beta - \vartheta) \quad \text{for } \vartheta \in (0, \frac{\pi}{2}). 
\end{equation}
Hence, using $\vartheta > \vartheta_{B}(\lambda)$, $\bar{\omega}_{\lambda}$, and $\lambda \in [\bar{\Lambda}, {l}_{\perp}]$, we obtain
\begin{equation*}
{h}_{\lambda, \vartheta}^{4} = \hat{\lambda}(\lambda, \frac{\pi}{2} + \beta - \vartheta) > \hat{\lambda}(\lambda, \frac{\pi}{2}) > \imath(\lambda) \geq \imath(\bar{\Lambda}). 
\end{equation*}
It follows that
\begin{equation*}
{h}_{\lambda, \vartheta}^{4} \in (\imath(\bar{\Lambda}), {l}_{N}], \quad {h}_{\lambda, \vartheta}^{1} \in (\bar{\Lambda}, {l}_{N}). 
\end{equation*}
Thus, by the definition of $\bar{\Lambda}$ in \eqref{Yao0628}, \eqref{Yao0635} holds. 

In all three cases, \eqref{Yao0635} is valid for $\vartheta \in [\vartheta_{B}(\bar{\Lambda}), \pi/2]$, $\lambda \in [\bar{\Lambda}, {l}_{\perp}]$. By continuity, there exists $\bar{\Lambda}_{2} \in (0, \bar{\Lambda})$ (with $\bar{\Lambda}_{2} > \bar{\Lambda}_{1}$) such that
\begin{equation} \label{Yao0634c}
{u}_{{x}_{1}} \sin(2\vartheta - \tfrac{3\beta}{2}) - {u}_{{x}_{2}} \cos(2\vartheta - \tfrac{3\beta}{2}) < 0 \text{ on } \widetilde{\Gamma}_{\lambda, \vartheta}^{2B}
\end{equation}
holds for $(\lambda, \vartheta) \in \Xi^{\bar{\Lambda}_{2}}$. 

Combining \eqref{Yao0634a}, \eqref{Yao0634b}, and \eqref{Yao0634c}, we know ${w}^{\lambda, \vartheta}$ satisfies
\begin{equation*}
\begin{cases}
\Delta {w}^{\lambda, \vartheta} + {c}^{\lambda}({x}) {w}^{\lambda, \vartheta} = 0 & \text{in } \widetilde{D}_{\lambda, \vartheta}, \\
{w}^{\lambda, \vartheta} = 0 & \text{on } \widetilde{\Gamma}_{\lambda, \vartheta}^{0}, \\
\nabla {w}^{\lambda, \vartheta} \cdot \nu < 0 & \text{on } \widetilde{\Gamma}_{\lambda, \vartheta}^{1} \cup \widetilde{\Gamma}_{\lambda, \vartheta}^{2}
\end{cases}
\end{equation*}
for $(\lambda, \vartheta) \in \Xi^{\bar{\Lambda}_{2}}$. 
Thanks to \eqref{Yao0614c}, one can use the same argument as in \autoref{lma301} to show that \eqref{Yao0626a} and \eqref{Yao0626b} hold for $(\lambda, \vartheta) \in \Xi^{\bar{\Lambda}_{2}}$. Noting that \eqref{Yao0626b} holds for points in $\Sigma$ and its reflection $\Sigma_{*}$, it follows that \eqref{Yao0316a} holds for $\vartheta \in [\vartheta_{B}(\lambda), \pi - \vartheta_{B}(\lambda)]$ and $\lambda \in [\bar{\Lambda}_{2}, \bar{\Lambda}]$. 

\textbf{Step 2}. 
\eqref{Yao0316a} holds for $(\lambda, \vartheta) \in \mathfrak{S}_{1}^{\bar{\Lambda}_{2}}$. 
This step combines step 1 and \autoref{lma308}. 

\textbf{Step 3}. 
\eqref{Yao0316a} holds for $\lambda \in [\bar{\Lambda}_{3}, {l}_{\perp}]$ and $\vartheta \in [\pi/2, \bar{\omega}_{\lambda}]$ where
\begin{equation*}
\bar{\Lambda}_{3} = \max\Big\{ \jmath(\bar{\Lambda}), \frac{\bar{\Lambda}}{1 + \sin\beta}, \frac{\imath(\bar{\Lambda})}{1 + 2\sin(\beta/2)}, \bar{\Lambda}_{2} \Big\}. 
\end{equation*}
This step follows by the same arguments as in step 2 of \autoref{lma604B}. 

\textbf{Step 4}. 
We conclude the proof. 
From steps 1 -- 3, we obtain \eqref{Yao0316a} for $\lambda \in [\bar{\Lambda}_{3}, {l}_{\perp}]$ and $\vartheta \in {J}_{\lambda} \cap (0, \bar{\omega}_{\lambda}]$. 
The same process yields \eqref{Yao0316b} for $\lambda \in [\bar{\Lambda}_{4}, {l}_{\perp}]$ and $\vartheta \in {J}_{\lambda}\cap (0, \bar{\omega}_{\lambda}]$ for some $\bar{\Lambda}_{4} \in (0, \bar{\Lambda})$. Thanks to \autoref{lma604A}, \eqref{Yao0316} holds for $(\lambda, \vartheta) \in \mathfrak{S}^{\Lambda}$ with $\Lambda = \max\{\bar{\Lambda}_{3}, \bar{\Lambda}_{4}\} \in (0, \bar{\Lambda})$. 
This contradicts the minimality of $\bar{\Lambda}$, and therefore $\bar{\Lambda} = 0$. Hence, \eqref{Yao0316} holds for every $\vartheta \in {J}_{\lambda}$ and $\lambda > 0$. 

As a direct consequence, we have ${u}_{{x}_{1}} < 0$ in $\Sigma$, and ${x}_{2}{u}_{{x}_{2}} < 0$ in $\Sigma \cap \{{x}_{2} \neq 0\}$. The symmetry property of ${u}$ follows from part 2 of \autoref{thm403}. 
\end{proof}



\section*{Acknowledgments}
The authors declare no conflicts of interest.
This work was supported by Guangdong Basic and Applied Basic Research Foundation (Grant No. 2025A1515011856) and the National Natural Science Foundation of China (Grant No. 12001543).




\begin{thebibliography}{10}

\bibitem{AF03}
Robert~A. Adams and John J.~F. Fournier.
\newblock {\em Sobolev Spaces}, volume 140 of {\em Pure and Applied Mathematics
(Amsterdam)}.
\newblock Elsevier/Academic Press, Amsterdam, second edition, 2003.

\bibitem{AR25}
Nausica Aldeghi and Jonathan Rohleder.
\newblock On the first eigenvalue and eigenfunction of the {L}aplacian with
mixed boundary conditions.
\newblock {\em J. Differential Equations}, 427:689--718, 2025.

\bibitem{Ale56}
A.~D. Aleksandrov.
\newblock Uniqueness theorems for surfaces in the large. {I}.
\newblock {\em Vestnik Leningrad. Univ.}, 11(19):5--17, 1956.

\bibitem{AB04}
Rami Atar and Krzysztof Burdzy.
\newblock On {N}eumann eigenfunctions in lip domains.
\newblock {\em J. Amer. Math. Soc.}, 17(2):243--265, 2004.

\bibitem{BCN97a}
Henri Berestycki, Luis Caffarelli, and Louis Nirenberg.
\newblock Further qualitative properties for elliptic equations in unbounded
domains.
\newblock {\em Ann. Scuola Norm. Sup. Pisa Cl. Sci. (4)}, 25(1-2):69--94, 1997.
\newblock Dedicated to Ennio De Giorgi.

\bibitem{BCN97b}
Henri Berestycki, Luis Caffarelli, and Louis Nirenberg.
\newblock Monotonicity for elliptic equations in unbounded {L}ipschitz domains.
\newblock {\em Comm. Pure Appl. Math.}, 50(11):1089--1111, 1997.

\bibitem{BN91}
Henri Berestycki and Louis Nirenberg.
\newblock On the method of moving planes and the sliding method.
\newblock {\em Bol. Soc. Brasil. Mat. (N.S.)}, 22(1):1--37, 1991.

\bibitem{BNV94}
Henri Berestycki, Louis Nirenberg, and Srinivasa~R.S. Varadhan.
\newblock The principal eigenvalue and maximum principle for second-order
elliptic operators in general domains.
\newblock {\em Comm. Pure Appl. Math.}, 47(1):47--92, 1994.

\bibitem{BP89}
Henri Berestycki and Filomena Pacella.
\newblock Symmetry properties for positive solutions of elliptic equations with
mixed boundary conditions.
\newblock {\em J. Funct. Anal.}, 87(1):177--211, 1989.

\bibitem{BR15}
Henri Berestycki and Luca Rossi.
\newblock Generalizations and properties of the principal eigenvalue of
elliptic operators in unbounded domains.
\newblock {\em Comm. Pure Appl. Math.}, 68(6):1014--1065, 2015.

\bibitem{CGS89}
Luis~A. Caffarelli, Basilis Gidas, and Joel Spruck.
\newblock Asymptotic symmetry and local behavior of semilinear elliptic
equations with critical {S}obolev growth.
\newblock {\em Comm. Pure Appl. Math.}, 42(3):271--297, 1989.

\bibitem{CGY26}
Hongbin Chen, Changfeng Gui, and Ruofei Yao.
\newblock Uniqueness of the critical point of the second {N}eumann
eigenfunction in triangle.
\newblock {\em Invent. Math.}, 2026.
\newblock Online.

\bibitem{CLY21}
Hongbin Chen, Rui Li, and Ruofei Yao.
\newblock Symmetry of positive solutions of elliptic equations with mixed
boundary conditions in a sub-spherical sector.
\newblock {\em Nonlinearity}, 34(6):3858--3878, 2021.

\bibitem{CWY23}
Hongbin Chen, Ke~Wu, and Ruofei Yao.
\newblock Qualitative properties of nonnegative solutions of some semilinear
elliptic equations in cylindrical domains.
\newblock {\em Calc. Var. Partial Differential Equations}, 62(6):Paper No. 178,
23, 2023.

\bibitem{CWY26}
Hongbin Chen, Ke~Wu, and Ruofei Yao.
\newblock Monotone properties of the second even {N}eumann eigenfunction on
symmetric domains.
\newblock {\em Ann. Mat. Pura Appl. (4)}, 2025.
\newblock Online.

\bibitem{CY18}
Hongbin Chen and Ruofei Yao.
\newblock Symmetry and monotonicity of positive solution of elliptic equation
with mixed boundary condition in a spherical cone.
\newblock {\em J. Math. Anal. Appl.}, 461(1):641--656, 2018.

\bibitem{CL91}
Wenxiong Chen and Congming Li.
\newblock Classification of solutions of some nonlinear elliptic equations.
\newblock {\em Duke Math. J.}, 63(3):615--622, 1991.

\bibitem{CL18}
Wenxiong Chen and Congming Li.
\newblock Maximum principles for the fractional {$p$}-{L}aplacian and symmetry
of solutions.
\newblock {\em Adv. Math.}, 335:735--758, 2018.

\bibitem{CLL17}
Wenxiong Chen, Congming Li, and Yan Li.
\newblock A direct method of moving planes for the fractional {L}aplacian.
\newblock {\em Adv. Math.}, 308:404--437, 2017.

\bibitem{CW92}
Chie-Ping Chu and Hwai-Chiuan Wang.
\newblock Symmetry properties of positive solutions of elliptic equations in an
infinite sectorial cone.
\newblock {\em Proc. Roy. Soc. Edinburgh Sect. A}, 122(1-2):137--160, 1992.

\bibitem{DQ23}
Wei Dai and Guolin Qin.
\newblock Liouville-type theorems for fractional and higher-order
{H}\'{e}non-{H}ardy type equations via the method of scaling spheres.
\newblock {\em Int. Math. Res. Not. IMRN}, 2023(11):9001--9070, 2023.

\bibitem{DP19}
Lucio Damascelli and Filomena Pacella.
\newblock Morse index and symmetry for elliptic problems with nonlinear mixed
boundary conditions.
\newblock {\em Proc. Roy. Soc. Edinburgh Sect. A}, 149(2):305--324, 2019.

\bibitem{DP20}
Lucio Damascelli and Filomena Pacella.
\newblock Sectional symmetry of solutions of elliptic systems in cylindrical
domains.
\newblock {\em Discrete Contin. Dyn. Syst.}, 40(6):3305--3325, 2020.

\bibitem{DR04}
Juan D\'{a}vila and Julio~D. Rossi.
\newblock Self-similar solutions of the porous medium equation in a half-space
with a nonlinear boundary condition: existence and symmetry.
\newblock {\em J. Math. Anal. Appl.}, 296(2):634--649, 2004.

\bibitem{dCP89}
Matteo De~Cesare and Filomena Pacella.
\newblock Geometrical properties of positive solutions of semilinear elliptic
equations in some unbounded domains.
\newblock {\em Boll. Un. Mat. Ital. B (7)}, 3(3):691--703, 1989.

\bibitem{dPFW00}
Manuel del Pino, Patricio~L. Felmer, and Juncheng Wei.
\newblock Multi-peak solutions for some singular perturbation problems.
\newblock {\em Calc. Var. Partial Differential Equations}, 10(2):119--134,
2000.

\bibitem{DSV17}
Serena Dipierro, Nicola Soave, and Enrico Valdinoci.
\newblock On fractional elliptic equations in {L}ipschitz sets and epigraphs:
regularity, monotonicity and rigidity results.
\newblock {\em Math. Ann.}, 369(3-4):1283--1326, 2017.

\bibitem{EFMS22}
Francesco Esposito, Alberto Farina, Luigi Montoro, and Berardino Sciunzi.
\newblock On the {G}ibbons' conjecture for equations involving the
{$p$}-{L}aplacian.
\newblock {\em Math. Ann.}, 382(1-2):943--974, 2022.

\bibitem{FMR16}
Alberto Farina, Andrea Malchiodi, and Matteo Rizzi.
\newblock Symmetry properties of some solutions to some semilinear elliptic
equations.
\newblock {\em Ann. Sc. Norm. Super. Pisa Cl. Sci. (5)}, 16(4):1209--1234,
2016.

\bibitem{FV13}
Alberto Farina and Enrico Valdinoci.
\newblock On partially and globally overdetermined problems of elliptic type.
\newblock {\em Amer. J. Math.}, 135(6):1699--1726, 2013.

\bibitem{GNN79}
Basilis Gidas, Wei-Ming Ni, and Louis Nirenberg.
\newblock Symmetry and related properties via the maximum principle.
\newblock {\em Comm. Math. Phys.}, 68(3):209--243, 1979.

\bibitem{GT83}
David Gilbarg and Neil~S. Trudinger.
\newblock {\em Elliptic Partial Differential Equations of Second Order}, volume
224 of {\em Grundlehren der mathematischen Wissenschaften [Fundamental
Principles of Mathematical Sciences]}.
\newblock Springer-Verlag, Berlin, second edition, 1983.

\bibitem{GG23}
Francesca Gladiali and Antonio Greco.
\newblock Symmetry and monotonicity results for solutions of semilinear {PDE}s
in sector-like domains.
\newblock {\em Ann. Mat. Pura Appl. (4)}, 202(1):431--461, 2023.

\bibitem{GG98}
Changfeng Gui and Nassif Ghoussoub.
\newblock Multi-peak solutions for a semilinear {N}eumann problem involving the
critical {S}obolev exponent.
\newblock {\em Math. Z.}, 229(3):443--474, 1998.

\bibitem{GM18}
Changfeng Gui and Amir Moradifam.
\newblock The sphere covering inequality and its applications.
\newblock {\em Invent. Math.}, 214(3):1169--1204, 2018.

\bibitem{GWW00}
Changfeng Gui, Juncheng Wei, and Matthias Winter.
\newblock Multiple boundary peak solutions for some singularly perturbed
{N}eumann problems.
\newblock {\em Ann. Inst. H. Poincar\'e{} C Anal. Non Lin\'eaire},
17(1):47--82, 2000.

\bibitem{HL11}
Qing Han and Fanghua Lin.
\newblock {\em Elliptic Partial Differential Equations}, volume~1 of {\em
Courant Lecture Notes in Mathematics}.
\newblock Courant Institute of Mathematical Sciences, New York; American
Mathematical Society, Providence, RI, second edition, 2011.

\bibitem{HW53}
Philip Hartman and Aurel Wintner.
\newblock On the local behavior of solutions of non-parabolic partial
differential equations.
\newblock {\em Amer. J. Math.}, 75:449--476, 1953.

\bibitem{Hat24}
Lawford Hatcher.
\newblock First mixed {L}aplace eigenfunctions with no hot spots.
\newblock {\em Proc. Amer. Math. Soc.}, 152(12):5191--5205, 2024.

\bibitem{Hat25a}
Lawford Hatcher.
\newblock The hot spots conjecture for some non-convex polygons.
\newblock {\em SIAM J. Math. Anal.}, 57(4):4588--4608, 2025.

\bibitem{Hat25d}
Lawford Hatcher.
\newblock Hot spots in domains of constant curvature.
\newblock {\em arXiv:2508.13353}, 2025.

\bibitem{Hat25b}
Lawford Hatcher.
\newblock A hot spots theorem for the mixed eigenvalue problem with small
{D}irichlet region.
\newblock {\em J. Spectr. Theory}, 15(3):1367--1382, 2025.

\bibitem{HHHO99}
Bernard Helffer, Maria Hoffmann-Ostenhof, Thomas Hoffmann-Ostenhof, and Mark~P.
Owen.
\newblock Nodal sets for groundstates of {S}chr\"{o}dinger operators with zero
magnetic field in non-simply connected domains.
\newblock {\em Comm. Math. Phys.}, 202(3):629--649, 1999.

\bibitem{HHT09}
Bernard Helffer, Thomas Hoffmann-Ostenhof, and Susanna Terracini.
\newblock Nodal domains and spectral minimal partitions.
\newblock {\em Ann. Inst. H. Poincar\'{e} C Anal. Non Lin\'{e}aire},
26(1):101--138, 2009.

\bibitem{JN00}
David Jerison and Nikolai Nadirashvili.
\newblock The ``hot spots'' conjecture for domains with two axes of symmetry.
\newblock {\em J. Amer. Math. Soc.}, 13(4):741--772, 2000.

\bibitem{LrY24}
Rui Li and Ruofei Yao.
\newblock Monotonicity of positive solutions to semilinear elliptic equations
with mixed boundary conditions in triangles.
\newblock {\em arXiv:2401.17912}, 2024.

\bibitem{LZ95}
Yanyan Li and Meijun Zhu.
\newblock Uniqueness theorems through the method of moving spheres.
\newblock {\em Duke Math. J.}, 80(2):383--417, 1995.

\bibitem{Lin01}
Chang-Shou Lin.
\newblock Locating the peaks of solutions via the maximum principle. {I}. {T}he
{N}eumann problem.
\newblock {\em Comm. Pure Appl. Math.}, 54(9):1065--1095, 2001.

\bibitem{MY26}
Jingyi Mai and Ruofei Yao.
\newblock Symmetry and monotonicity of positive solutions to elliptic equations
with mixed boundary conditions in a kite.
\newblock {\em Discrete Contin. Dyn. Syst.}, 46:305--330, 2026.

\bibitem{Ni11}
Wei-Ming Ni.
\newblock {\em The Mathematics of Diffusion}.
\newblock SIAM, 2011.

\bibitem{NT91}
Wei-Ming Ni and Izumi Takagi.
\newblock On the shape of least-energy solutions to a semilinear {N}eumann
problem.
\newblock {\em Comm. Pure Appl. Math.}, 44(7):819--851, 1991.

\bibitem{Nor21}
Samuel Nordmann.
\newblock Maximum principle and principal eigenvalue in unbounded domains under
general boundary conditions.
\newblock {\em arXiv:2102.07558}, 2021.

\bibitem{PW67}
Murray~H. Protter and Hans~F. Weinberger.
\newblock {\em Maximum Principles in Differential Equations}.
\newblock Prentice-Hall, Inc., Englewood Cliffs, NJ, 1967.

\bibitem{Ser71}
James Serrin.
\newblock A symmetry problem in potential theory.
\newblock {\em Arch. Rational Mech. Anal.}, 43:304--318, 1971.

\bibitem{SGC97}
Xiaoding Shi, Yonggeng Gu, and Jing Chen.
\newblock Symmetry and monotonicity of positive solutions of systems of
semilinear elliptic equations.
\newblock {\em Acta Math. Sci. (Chinese)}, 17(1):1--9, 1997.

\bibitem{Siu15}
Bart{\l}omiej Siudeja.
\newblock Hot spots conjecture for a class of acute triangles.
\newblock {\em Math. Z.}, 280(3-4):783--806, 2015.

\bibitem{Ter95}
Susanna Terracini.
\newblock Symmetry properties of positive solutions to some elliptic equations
with nonlinear boundary conditions.
\newblock {\em Differential Integral Equations}, 8(8):1911--1922, 1995.

\bibitem{WH07}
Weimin Wang and Li~Hong.
\newblock Positive solutions of some semilinear elliptic equations in
{$\mathbb{R}^{n}_{+}$} with {N}eumann boundary conditions.
\newblock {\em Nonlinear Anal.}, 66(4):936--949, 2007.

\bibitem{WC20}
Leyun Wu and Wenxiong Chen.
\newblock The sliding methods for the fractional {$p$}-{L}aplacian.
\newblock {\em Adv. Math.}, 361:106933, 26, 2020.

\bibitem{YCG21}
Ruofei Yao, Hongbin Chen, and Changfeng Gui.
\newblock Symmetry of positive solutions of elliptic equations with mixed
boundary conditions in a super-spherical sector.
\newblock {\em Calc. Var. Partial Differential Equations}, 60(4):Paper No. 130,
25, 2021.

\bibitem{YCL18}
Ruofei Yao, Hongbin Chen, and Yi~Li.
\newblock Symmetry and monotonicity of positive solutions of elliptic equations
with mixed boundary conditions in a super-spherical cone.
\newblock {\em Calc. Var. Partial Differential Equations}, 57(6):Paper No. 154,
28, 2018.

\bibitem{Zhu01}
Meijun Zhu.
\newblock Symmetry properties for positive solutions to some elliptic equations
in sector domains with large amplitude.
\newblock {\em J. Math. Anal. Appl.}, 261(2):733--740, 2001.

\end{thebibliography}

\end{document}